\numberwithin{equation}{section}
\numberwithin{figure}{section}
\theoremstyle{plain}
\newtheorem{thm}{\protect\theoremname}[section]
  \theoremstyle{remark}
  \newtheorem{rem}[thm]{\protect\remarkname}
  \theoremstyle{plain}
  \newtheorem{lem}[thm]{\protect\lemmaname}
  \theoremstyle{definition}
  \newtheorem{defn}[thm]{\protect\definitionname}
  \theoremstyle{plain}
  \newtheorem{prop}[thm]{\protect\propositionname}
  \theoremstyle{plain}
  \newtheorem{cor}[thm]{\protect\corollaryname}
  \providecommand{\corollaryname}{Corollary}
  \providecommand{\definitionname}{Definition}
  \providecommand{\lemmaname}{Lemma}
  \providecommand{\propositionname}{Proposition}
  \providecommand{\remarkname}{Remark}
\providecommand{\theoremname}{Theorem}
\begin{document}

\title[Asymptotic Behavior and Stability of MCF with a Conical
End]{Asymptotic Behavior and Stability of Mean Curvature Flow with a Conical
End}

\author{Siao-Hao Guo}
\begin{abstract}
If the initial hypersurface of an immortal mean curvature flow is
asymptotic to a regular cone whose entropy is small, the flow will
become asymptotically self-expanding. Moreover, the expander that
gives rise to the limiting flow is asymptotically stable as an equilibrium
solution of the normalized mean curvature flow.
\end{abstract}

\maketitle

\section{Introduction}

By a \textbf{mean curvature flow} (MCF) we mean a one-parameter family
of smooth, properly embedded, and oriented hypersurfaces $\left\{ \Sigma_{t}\right\} _{0\leq t\leq T}$
in an open connected set $U\subset\mathbb{R}^{n+1}$ so that near
every point of the flow we can find a local parametrization for which
\begin{equation}
\partial_{t}X=\vec{H}_{\Sigma_{t}},\label{mean curvature flow}
\end{equation}
where $X$ is the position vector and $\vec{H}_{\Sigma_{t}}$ is the
mean curvature vector of $\Sigma_{t}$. 

In \cite{CM}, Colding and Minicozzi introduced a critical notion
for hypersurfaces called the \textbf{entropy}. Given a complete hypersurface
$\Sigma$ in $\mathbb{R}^{n+1}$ satisfying
\begin{equation}
\sup_{P\in\mathbb{R}^{n+1},\,r>0}\frac{\mathcal{H}^{n}\left(\Sigma\cap B_{r}\left(P\right)\right)}{\boldsymbol{\omega}_{n}r^{n}}<\infty,\label{Euclidean area growth}
\end{equation}
where $\mathcal{H}^{n}$ is the $n$ dimensional Hausdorff measure,
$B_{r}\left(P\right)$ denotes the ball in $\mathbb{R}^{n+1}$ of
radius $r$ and centered at $P$, and $\boldsymbol{\omega}_{n}=\mathcal{H}^{n}\left(B_{1}\left(O\right)\right)$,
its various Gaussian areas 
\[
F_{P,t}\left(\Sigma\right)=\int_{\Sigma}\frac{1}{\left(4\pi t\right)^{\frac{n}{2}}}e^{-\frac{\left|X-P\right|^{2}}{4t}}d\mathcal{H}^{n}\left(X\right)
\]
are finite and uniformly bounded for every $P\in\mathbb{R}^{n+1}$
and $t>0$. The entropy of $\Sigma$ is then defined as 
\[
E\left[\Sigma\right]=\sup_{P\in\mathbb{R}^{n+1},\,t>0}F_{P,t}\left(\Sigma\right).
\]
This quantity is invariant under rotation, translation, and dilation
of hypersurfaces, so it can be used to measure the ``complexity''
of a hypersurface about different points and on varying scales. It
is always bounded from below by one, which is the entropy of hyperplanes.
As an illustration, the entropy of an $L$-Lipschitz graph over a
hyperplane is bounded from above by $\sqrt{1+L^{2}}$ (see Remark
\ref{entropy of graph}). Returning to MCF, the entropy can serve
as a Liapunov function owing to Huisken's monotonicity formula (cf.
\cite{H}). To be more precise, if $\left\{ \Sigma_{t}\right\} $
is a MCF in $\mathbb{R}^{n+1}$ whose initial hypersurface satisfies
condition (\ref{Euclidean area growth}), the entropy $E\left[\Sigma_{t}\right]$
is non-increasing with time. More investigations on the entropy can
be found, for instance, in \cite{BW1}, \cite{BW2}, \cite{CIMW},
\cite{KZ}, and \cite{Z}.

In this paper we would like to explore the asymptotic behavior of
an immortal MCF. Recall that by \cite{EH1} and \cite{S}, if a MCF
starts with a Lipschitz graph (over a hyperplane) that is asymptotic
to a regular cone at infinity, it is smooth at all later times and
will become asymptotically self-expanding. More specifically, after
a proper rescaling (see (\ref{ABM: rescaling})), the flow will converge
to a smooth, properly embedded, and oriented hypersurface $\Gamma$
in $\mathbb{R}^{n+1}$ called an \textbf{expander}. Such a hypersurface
is characterized by the property that 
\begin{equation}
\left\{ \Gamma_{t}=\sqrt{t}\,\Gamma\right\} _{0<t<\infty}\label{self-expanding}
\end{equation}
is a MCF in $\mathbb{R}^{n+1}$, which is so-called \textbf{self-expanding}.
From Eqs. (\ref{mean curvature flow}) and (\ref{self-expanding})
we can infer that it satisfies
\begin{equation}
\vec{H}_{\Gamma}-\frac{1}{2}X^{\perp}=0,\label{ABM: expander}
\end{equation}
where the notation $\perp$ means the projection to the normal space
of $\Gamma$. Readers are referred to, for example, \cite{BW3}, \cite{BW4},
\cite{BW5}, and \cite{D} for more details of expanders. One of the
key ingredients of establishing the self-similarity (of the asymptotic
behavior) is based on the uniqueness of graphical MCF starting out
from a cone. However, such a uniqueness property does not hold in
general for non-graphical cases (cf. \cite{AIC}, \cite{BW4}). Nevertheless,
we have the following result.
\begin{thm}
\label{asymptotic behavior of MCF}Given $0<\kappa<\frac{1}{\sqrt{2}}$,
there exists a constant $\epsilon>0$ depending on $n$ and $\kappa$
with the following property. 

Suppose that $\left\{ \Sigma_{t}\right\} _{0\leq t<\infty}$ is a
MCF in $\mathbb{R}^{n+1}$ so that $\Sigma_{0}$ is asymptotic to
a regular cone $\mathcal{C}$ at infinity (see Definition \ref{asymptotically conical})
with $E\left[\mathcal{C}\right]<1+\epsilon$. Then there exists a
time $T>0$ so that 
\begin{equation}
\sup_{t\geq T}\,\sqrt{t}\left\Vert A_{\Sigma_{t}}\right\Vert _{L^{\infty}}\leq\kappa,\label{ABM: curvature estimate}
\end{equation}
where $A_{\Sigma_{t}}$ is the second fundamental form of $\Sigma_{t}$.
Furthermore, there exists an expander $\Gamma$, which is asymptotic
to $\mathcal{C}$ at infinity, so that 
\begin{equation}
\frac{1}{\sqrt{t}}\Sigma_{t}\overset{C^{\infty}}{\longrightarrow}\Gamma\quad\textrm{in}\,\,\,\mathbb{R}^{n+1}\;\,\,\textrm{as}\,\,t\rightarrow\infty\label{ABM: rescaling}
\end{equation}
in the sense that there exists a time $\mathfrak{T}>0$ after which
$\frac{1}{\sqrt{t}}\Sigma_{t}$ is a normal graph of $u_{t}$ over
$\Gamma$ with $u_{t}\overset{C^{\infty}}{\longrightarrow}0$ as $t\rightarrow\infty$. 
\end{thm}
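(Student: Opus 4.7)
The plan is to convert the small entropy hypothesis on $\mathcal{C}$ into a uniform smallness of Gaussian density ratios of $\Sigma_{t}$ at large times, extract the curvature bound (\ref{ABM: curvature estimate}) from a local regularity theorem, and then run a smooth compactness argument on the parabolically rescaled flow to produce the expander $\Gamma$.

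First, Huisken's monotonicity formula gives $F_{P,s}(\Sigma_{t}) \leq F_{P,s+t}(\Sigma_{0})$, and combined with the scale invariance of the Gaussian integrand and the fact that $\Sigma_{0}/\sqrt{s+t} \to \mathcal{C}$ as $s+t \to \infty$ (since $\Sigma_{0}$ is asymptotic to $\mathcal{C}$), one shows that for each fixed $P$ and $s$, $\limsup_{t\to\infty} F_{P,s}(\Sigma_{t}) \leq E[\mathcal{C}] < 1+\epsilon$. A separate argument at the spatial infinity of $\Sigma_{t}$, exploiting that tangent planes to $\mathcal{C}$ at infinity have density one, upgrades this to a bound uniform in $P \in \mathbb{R}^{n+1}$ and $s > 0$ once $t \geq T$. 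White's local regularity theorem then yields smoothness and quantitative curvature estimates at the parabolic scale $\sqrt{t}$, of the form $\sqrt{t}\,\|A_{\Sigma_{t}}\|_{L^{\infty}} \leq C(n)\sqrt{\epsilon}$; choosing $\epsilon$ small in terms of $n$ and $\kappa$ then yields (\ref{ABM: curvature estimate}).

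Next, pass to the rescaled flow $\tilde{\Sigma}_{\tau} := e^{-\tau/2}\Sigma_{e^{\tau}}$, which satisfies a normalized MCF whose stationary solutions are precisely the expanders characterized by (\ref{ABM: expander}). The curvature estimate translates into a uniform pointwise bound on $|A|$ for $\tilde{\Sigma}_{\tau}$, and Ecker--Huisken interior estimates promote this to $C^{k}$-bounds on compact sets for every $k$. Thus the family $\{\tilde{\Sigma}_{\tau}\}_{\tau\geq\tau_{0}}$ is precompact in the smooth topology on compact subsets, and any subsequential limit is a smooth expander $\Gamma$; since $\tilde{\Sigma}_{\tau}$ itself approaches $\mathcal{C}$ at spatial infinity, any such $\Gamma$ is asymptotic to $\mathcal{C}$.

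The main obstacle is to upgrade this subsequential convergence to the full $C^{\infty}$ convergence of $\frac{1}{\sqrt{t}}\Sigma_{t}$ to a single expander $\Gamma$ stated in (\ref{ABM: rescaling}). I expect this to proceed in two steps. First, a uniqueness result in the spirit of Bernstein--Wang should show that expanders with entropy below $1+\epsilon$ asymptotic to a fixed regular cone are unique, pinning down the $\omega$-limit to a single $\Gamma$. Second, a linearized stability analysis of the rescaled MCF at $\Gamma$, in which the Jacobi operator on normal graphs over $\Gamma$ has non-positive spectrum in an appropriate weighted space, guarantees that once $\tilde{\Sigma}_{\tau}$ enters a $C^{1}$ neighborhood of $\Gamma$ it remains a normal graph $u_{\tau}$ with $u_{\tau}\to 0$ smoothly. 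This last decay step, carried out via parabolic Schauder theory for the graph equation over $\Gamma$, is the most technical part of the argument and is where the small entropy hypothesis is used in its sharpest form.
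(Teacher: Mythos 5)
Your first half (monotonicity plus a refined White-type regularity theorem to get $\sup_{t\geq T}\sqrt{t}\,\Vert A_{\Sigma_{t}}\Vert_{L^{\infty}}\leq\kappa$) is essentially the paper's route to Theorem \ref{decay rate of curvature}: the paper converts the asymptotically conical structure of $\Sigma_{0}$ into the bound $F_{P,t}(\Sigma_{0})\leq1+\epsilon$ for $|P|\lesssim\sqrt{t}$ and handles the region $|X|\gtrsim\sqrt{t}$ separately by pseudolocality rather than by Gaussian densities, but these are cosmetic differences. (Your explicit rate $C(n)\sqrt{\epsilon}$ is not what a compactness proof of the refined White theorem gives, but only smallness is needed, so this is harmless.)

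The genuine gap is in the second half, at the sentence ``any subsequential limit is a smooth expander $\Gamma$.'' A subsequential limit of $e^{-\tau/2}\Sigma_{e^{\tau}}$ (equivalently of the blow-downs $\tfrac{1}{R}\Sigma_{R^{2}\tau}$) is a priori only a solution of the normalized flow, not a stationary one; nothing in a bare compactness argument forces self-similarity. This is exactly the point the paper flags in the introduction: the Ecker--Huisken/Stavrou argument rests on uniqueness of graphical MCF coming out of a cone, which fails in the non-graphical setting, and there is no Huisken-type monotone quantity quoted here that degenerates only on expanders. The paper's substitute (Proposition \ref{asymptotically self-expanding}) is a dilation trick: for $\iota$ near $1$, $\tfrac{1}{\iota}\Sigma_{0}$ is a small normal graph over $\Sigma_{0}$, so the approaching property (Theorem \ref{asymptotic stability}, which is where $\kappa<\tfrac{1}{\sqrt{2}}$ enters via the ODE $D^{-}v_{\max}^{2}\leq\tfrac{2\kappa^{2}}{t}v_{\max}^{2}$) forces $\tfrac{1}{\iota}\Gamma_{\iota^{2}\tau}=\Gamma_{\tau}$ in the limit, and iterating over $\iota$ yields self-similarity. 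Without this (or an equivalent mechanism such as a renormalized expander monotonicity formula), your argument does not produce an expander at all.

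A second, lesser problem: your step for pinning down the $\omega$-limit invokes a uniqueness theorem for expanders asymptotic to a fixed cone with small entropy. No such theorem is available --- Bernstein--Wang prove isotopy, not uniqueness, and in this paper isolatedness of $\Gamma$ is a \emph{corollary} of Theorem \ref{stability of expanders}, so assuming uniqueness here would be circular. It is also unnecessary: your step (b), the stability analysis near $\Gamma$, is morally the paper's approaching property (Theorem \ref{approaching of NMCF}, proved by a pointwise maximum principle rather than a weighted spectral argument), and the paper uses it alone in Theorem \ref{blow-down of MCF} to upgrade one subsequential limit to full convergence: once a single rescaled slice is a small global normal graph over $\Gamma_{1}$ (which requires matching the conical ends via Remark \ref{ACM: graphic along MCF}, not just local smooth convergence), the entire subsequent normalized flow converges to $\Gamma_{1}$. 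So you should delete the appeal to uniqueness and instead supply the missing self-similarity step.
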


The rescaled flow in (\ref{ABM: rescaling}) is known as a \textbf{normalized
mean curvature flow} (NMCF) (cf. \cite{EH1}). By making the following
change in the time variable:
\begin{equation}
\left\{ \hat{\Sigma}_{s}=\left.\frac{1}{\sqrt{t}}\Sigma_{t}\right|_{t=e^{s}}\right\} _{0\leq s<\infty},\label{ABM: rescaled MCF}
\end{equation}
the resulting flow satisfies the equation 
\begin{equation}
\left(\partial_{s}X\right)^{\perp}=\vec{H}_{\hat{\Sigma}_{s}}-\frac{1}{2}X^{\perp}.\label{ABM: NMCF}
\end{equation}
Comparing Eqs. (\ref{ABM: expander}) and (\ref{ABM: NMCF}), we can
see that an expander is indeed an equilibrium solution of NMCF. 

There is one sufficient condition that can fulfill the hypotheses
in Theorem \ref{asymptotic behavior of MCF}. Namely, let $\Sigma_{0}$
be a smooth, properly embedded, and oriented hypersurface in $\mathbb{R}^{n+1}$
that is asymptotic to a regular cone $\mathcal{C}$ at infinity and
satisfies 
\[
E\left[\Sigma_{0}\right]\leq1+\epsilon,
\]
where $\epsilon$ is the constant in Theorem \ref{asymptotic behavior of MCF}.
Note that the asymptotic condition implies $\left\Vert A_{\Sigma_{0}}\right\Vert _{L^{\infty}}<\infty$
and $E\left[\mathcal{C}\right]\leq E\left[\Sigma_{0}\right]$. Then
the corresponding MCF $\left\{ \Sigma_{t}\right\} _{t\geq0}$ is immortal
by virtue of White's regularity theorem (see Theorems \ref{White's regularity}
and \ref{refinement of White's}) and the short time existence theorem
(cf. Section 4 in \cite{EH2}). Consequently, Theorem \ref{asymptotic behavior of MCF}
is applicable to $\left\{ \Sigma_{t}\right\} $. Nonetheless, there
are, presumably, examples where the initial hypersurface has large
entropy but still persist for all time. To such cases Theorem \ref{asymptotic behavior of MCF}
still applies as long as the initial hypersurfaces are asymptotic
to a regular cone with small entropy. On the other hand, in view of
(\ref{ABM: rescaling}) and the fact that $E\left[\Gamma\right]=E\left[\mathcal{C}\right]$
(cf. Lemma 3.5 in \cite{BW3}), one might conjecture that the entropy
of the flow eventually has to go down to that of the cone. 

The curvature estimate (\ref{ABM: curvature estimate}) is resulting
from Theorem \ref{decay rate of curvature} and is equivalent to 
\begin{equation}
\sup_{s\geq\ln T}\,\left\Vert A_{\hat{\Sigma}_{s}}\right\Vert _{L^{\infty}}\leq\kappa\label{ABM: normalize curvature}
\end{equation}
(see (\ref{ABM: rescaled MCF})). The significance of the condition
$\kappa<\frac{1}{\sqrt{2}}$ can be seen from the following observation.
The linearized operator of the right side of Eq. (\ref{ABM: NMCF})
is given by 
\[
\mathcal{L}_{\hat{\Sigma}_{s}}=\triangle_{\hat{\Sigma}_{s}}+\frac{1}{2}\,X\cdot\nabla_{\hat{\Sigma}_{s}}+\left|A_{\hat{\Sigma}_{s}}\right|^{2}-\frac{1}{2},
\]
where $\nabla_{\hat{\Sigma}_{s}}$ is the Levi-Civita connection on
$\hat{\Sigma}_{s}$ and $\triangle_{\hat{\Sigma}_{s}}$ is the Laplace-Beltrami
operator. The operator $-\mathcal{L}_{\hat{\Sigma}_{s}}$ is self-adjoint
with respect to the inner product 
\[
\left\langle v_{1},v_{2}\right\rangle =\int_{\hat{\Sigma}_{s}}v_{1}v_{2}\,e^{\frac{\left|X\right|^{2}}{4}}d\mathcal{H}^{n}\left(X\right);
\]
the associated quadratic form is 
\begin{equation}
\left\langle -\mathcal{L}_{\hat{\Sigma}_{s}}v,v\right\rangle =\int_{\hat{\Sigma}_{s}}\left[\left|\nabla_{\hat{\Sigma}_{s}}v\right|^{2}+\left(\frac{1}{2}-\left|A_{\hat{\Sigma}_{s}}\right|^{2}\right)v^{2}\right]e^{\frac{\left|X\right|^{2}}{4}}d\mathcal{H}^{n}\left(X\right)\label{ABM: quadratic form}
\end{equation}
for $v\in C_{c}^{\infty}\left(\hat{\Sigma}_{s}\right)$ (cf. Section
2.7 in \cite{BW5}). Thus, by (\ref{ABM: normalize curvature}) and
(\ref{ABM: quadratic form}) we have $-\mathcal{L}_{\hat{\Sigma}_{s}}>0$
if $\kappa<\frac{1}{\sqrt{2}}$.

A crucial property that comes into play (and is closely related to
the aforementioned stability) is called the ``approaching property''
(see Theorem \ref{approaching of NMCF}). Roughly speaking, let $\tilde{\Sigma}_{T}$
be a small perturbation of $\Sigma_{T}$ given in Theorem \ref{asymptotic behavior of MCF},
then the corresponding MCF $\left\{ \tilde{\Sigma}_{t}\right\} _{t\geq T}$
is immortal; what's more, the two NMCFs $\left\{ \frac{1}{\sqrt{t}}\tilde{\Sigma}_{t}\right\} $
and $\left\{ \frac{1}{\sqrt{t}}\Sigma_{t}\right\} $ will approach
each other as $t\rightarrow\infty$. A special case is when $\left\{ \Sigma_{t}\right\} $
itself is self-expanding, that is, $\Sigma_{t}=\sqrt{t}\,\Gamma$
for some expander $\Gamma$. This leads to the following theorem.
\begin{thm}
\label{stability of expanders}Let $\epsilon$ and $\kappa$ be the
constants in Theorem \ref{asymptotic behavior of MCF}.

If $\Gamma$ is an expander that is asymptotic to a regular cone $\mathcal{C}$
at infinity with $E\left[\mathcal{C}\right]<1+\epsilon$. Then we
have $\left\Vert A_{\Gamma}\right\Vert _{L^{\infty}}\leq\kappa$,
and further, given $\Lambda>0$, there exists a constant $\delta>0$
depending on $n$, $\kappa$, and $\Lambda$ with the following property.

Let $\tilde{\Sigma}$ be a normal graph of $v\in C_{c}^{\infty}\left(\Gamma\right)$
over $\Gamma$ with
\[
\left\Vert \nabla_{\Gamma}^{2}v\right\Vert _{L^{\infty}}\leq\Lambda,\quad\left\Vert \nabla_{\Gamma}v\right\Vert _{L^{\infty}}+\left\Vert v\right\Vert _{L^{\infty}}\leq\delta.
\]
Then the MCF $\left\{ \tilde{\Sigma}_{t}\right\} _{t\geq1}$ starting
out from $\tilde{\Sigma}$ has long time existence; moreover, 
\[
\frac{1}{\sqrt{t}}\tilde{\Sigma}_{t}\overset{C^{\infty}}{\longrightarrow}\Gamma\quad\textrm{in}\,\,\,\mathbb{R}^{n+1}\;\,\,\textrm{as}\,\,t\rightarrow\infty.
\]
To be specific, for every $t>1$, $\frac{1}{\sqrt{t}}\tilde{\Sigma}_{t}$
is a normal graph of $w_{t}$ over $\Gamma$ with $w_{t}\overset{C^{\infty}}{\longrightarrow}0$
as $t\rightarrow\infty$. 

As a corollary, $\Gamma$ is isolated in the $C^{2}$ topology among
all expanders with the conical end $\mathcal{C}$. 
\end{thm}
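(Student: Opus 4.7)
The plan is to recognize the perturbed MCF as a small deformation of the self-expanding flow $\{\sqrt{t}\,\Gamma\}$ and then invoke the approaching property of Theorem \ref{approaching of NMCF}.

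For the curvature bound on $\Gamma$, apply Theorem \ref{asymptotic behavior of MCF} to the shifted self-expanding flow $\{\Sigma_t=\sqrt{t+1}\,\Gamma\}_{t\geq 0}$: its initial datum $\Sigma_0=\Gamma$ is asymptotic to $\mathcal{C}$ with $E[\mathcal{C}]<1+\epsilon$. Scaling gives $\|A_{\Sigma_t}\|_{L^\infty}=(t+1)^{-1/2}\|A_\Gamma\|_{L^\infty}$, so (\ref{ABM: curvature estimate}) reads $\sqrt{t/(t+1)}\,\|A_\Gamma\|_{L^\infty}\leq\kappa$ for all $t\geq T$; sending $t\to\infty$ yields $\|A_\Gamma\|_{L^\infty}\leq\kappa$.

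For the existence and convergence of $\{\tilde\Sigma_t\}$, take the self-expanding flow $\{\Sigma_t=\sqrt{t}\,\Gamma\}_{t\geq 1}$ as the reference flow; in the time variable $s=\ln t$ the associated NMCF $\hat\Sigma_s\equiv\Gamma$ is stationary, so the normalized curvature estimate (\ref{ABM: normalize curvature}) already holds from the initial time. By hypothesis, $\tilde\Sigma$ is a $C^1$-small, $C^2$-bounded normal graph over $\Sigma_1=\Gamma$. Choosing $\delta=\delta(n,\kappa,\Lambda)$ small enough to meet the quantitative requirement of Theorem \ref{approaching of NMCF}, one obtains both immortality of $\{\tilde\Sigma_t\}_{t\geq 1}$ and $C^\infty$ closeness of $\{\frac{1}{\sqrt{t}}\tilde\Sigma_t\}$ to $\{\frac{1}{\sqrt{t}}\Sigma_t\}\equiv\Gamma$ as $t\to\infty$. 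That $\frac{1}{\sqrt{t}}\tilde\Sigma_t$ is a normal graph of some $w_t$ over $\Gamma$ with $w_t\to 0$ in $C^\infty$ then follows from the $C^\infty$ convergence onto the smooth, properly embedded hypersurface $\Gamma$. For the corollary, let $\Gamma'$ be another expander asymptotic to $\mathcal{C}$ that is $C^2$-close to $\Gamma$. Since the two expanders share the same cone, $\Gamma'$ is a normal graph over $\Gamma$ of a function $v$ decaying at infinity with small $C^1$ and bounded $C^2$ norms (the latter by Step 1 applied to $\Gamma'$). A routine truncation reduces to the $C_c^\infty$ case; passing to the limit via uniform estimates shows the self-expanding flow $\tilde\Sigma_t=\sqrt{t}\,\Gamma'$ satisfies $\frac{1}{\sqrt{t}}\tilde\Sigma_t=\Gamma'\to\Gamma$ in $C^\infty$. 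Since the left side is independent of $t$, $\Gamma'=\Gamma$.

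The delicate point is to invoke Theorem \ref{approaching of NMCF} with the smallness imposed at $t=1$, rather than at the potentially much larger time $T$ produced by Theorem \ref{asymptotic behavior of MCF}. This is permissible here because the reference NMCF is stationary from the outset, so $T=1$ is admissible, and short-time parabolic regularity (in the style of \cite{EH1}) propagates the $C^1$ smallness and $C^2$ boundedness of the perturbation into the window needed by the approaching argument. The relaxation of the $C_c^\infty$ hypothesis for the corollary is a standard density step and is not expected to be the main difficulty.
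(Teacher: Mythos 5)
Your proposal is correct and follows essentially the same route as the paper: the curvature bound $\left\Vert A_{\Gamma}\right\Vert _{L^{\infty}}\leq\kappa$ via the time-shifted self-expanding flow $\left\{ \sqrt{t+1}\,\Gamma\right\} $ and Theorem \ref{decay rate of curvature}, followed by an application of Theorem \ref{approaching of NMCF} with the stationary reference flow $\left\{ \sqrt{t}\,\Gamma\right\} _{t\geq1}$, using the $C^{2}$ bound $\Lambda$ (through Lemma \ref{normal graph}) to control $\left\Vert A_{\tilde{\Sigma}}\right\Vert _{L^{\infty}}$ so that $\delta$ depends only on $n$, $\kappa$, $\Lambda$. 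Your observation that the smallness may be imposed at $t=1$ because the reference NMCF is stationary is exactly the point the paper exploits.
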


Recently a result related to the corollary in Theorem \ref{stability of expanders}
has been found in \cite{BW5}, where Bernstein and Wang show that
any two expanders with the same conical end are isotopic, provided
that the entropy of the tangent cone is less than a constant determined
by the entropies of cylinders and non-flat minimal cones.

Lastly, the paper is organized as follows. The main theorems, Theorems
\ref{asymptotic behavior of MCF} and \ref{stability of expanders},
will be proved in Section \ref{Asymptotic self-similarity}. In Sections
\ref{White's regularity thm} and \ref{Asymptotically conical MCF}
we will review White's regularity theorem and the pseudolocality theorem
for MCF, respectively, so as to derive the curvature estimate (\ref{ABM: curvature estimate})
(see Theorem \ref{decay rate of curvature}). Also, in Section \ref{Asymptotically conical MCF}
the preservation of the asymptotically conical property along MCF
will be proved. The derivation of the approaching property (see Theorem
\ref{approaching of NMCF}) is in Section \ref{Approaching property}. 

\section*{Acknowledgement}

The author would like to thank Professor Peter Sternberg for his encouragement,
patience, and support. His useful suggestions on the paper were highly
appreciated. The author would also like to thank Professor Nam Quang
Le for his very helpful comments. 

\section{White's Regularity Theorem\label{White's regularity thm}}

We begin this section by reviewing White's regularity theorem for
MCF in the form of Theorem \ref{White's regularity}. Then a refined
version is given in Theorem \ref{refinement of White's}, whose corollary,
Lemma \ref{temporal curvature estimate}, is vital in the derivation
of the curvature estimate in Theorem \ref{decay rate of curvature}.

The following statement of White's regularity theorem, which is modified
from \cite{Wh} and Theorem 5.6 in \cite{E}, is written in such a
way that best fits what we need in this paper. For the sake of completeness,
a proof is also included.
\begin{thm}
\label{White's regularity}There exist constants $1<\lambda<2$ and
$M,K\geq1$ depending on $n$ with the following property.

Suppose that $\left\{ \Sigma_{t}\right\} _{0\leq t\leq1}$ is a MCF
in $B_{M+1}\left(O\right)$ satisfying
\[
\sup_{P\in B_{1}\left(O\right),\frac{1}{2}<t\leq1}\,F_{P,t}\left(\Sigma_{0}\cap B_{M\sqrt{t}}\left(P\right)\right)\leq\lambda.
\]
Then 
\[
\sup_{\frac{1}{2}<t\leq1}\,\,\sup_{P\in\Sigma_{t}\cap B_{1}\left(O\right)}\,r\left(P,t\right)\left|A_{\Sigma_{t}}\left(P\right)\right|\leq K,
\]
where $r\left(P,t\right)=\sup\left\{ r>0:B_{r}\left(P\right)\times\left(t-r^{2},t\right]\subset B_{1}\left(O\right)\times\left(\frac{1}{2},1\right]\right\} $.
In particular, we have 
\[
\sup_{\frac{3}{4}<t\leq1}\left\Vert A_{\Sigma_{t}}\right\Vert _{L^{\infty}\left(B_{\frac{1}{2}}\left(O\right)\right)}\leq2K.
\]
\end{thm}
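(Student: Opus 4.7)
The plan is a contradiction argument via parabolic blow-up, combining Huisken's monotonicity formula with the entropy gap that separates flat hyperplanes from all other self-shrinkers. The localized Gaussian density hypothesis at $t=0$ is first propagated forward to every interior space-time center, and then a worst-point blow-up is shown to produce a static hyperplane, contradicting the normalization of the curvature on the blow-up.

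First, I would fix $M$ large enough that the tails of the backward heat kernel outside $B_{M\sqrt{t}}(P)$ contribute at most a small error $\eta>0$; the Euclidean area bound needed here follows from the localized Gaussian bound together with a crude covering of $B_{M+1}(O)$. The hypothesis then upgrades to $F_{P,t}(\Sigma_0) \le \lambda+\eta$ for every $P\in B_1(O)$ and $t\in(\frac{1}{2},1]$. Applying Huisken's monotonicity formula at an arbitrary interior center $(P_0,t_0)$ with $P_0\in B_1(O)$ and $t_0\in(\frac{1}{2},1]$ yields $F_{P_0,\,t_0-s}(\Sigma_s) \le \lambda+\eta$ for all $0\le s<t_0$, and in particular the Gaussian density at $(P_0,t_0)$ is at most $\lambda+\eta$.

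Second, I would run a standard point-picking argument on the scale-invariant quantity $\psi(P,t)=r(P,t)\,|A_{\Sigma_t}(P)|$. Assume toward a contradiction a sequence of MCFs $\{\Sigma^{(k)}_t\}$ satisfying the hypothesis but with $\sup\psi_k\to\infty$. Pick $(P_k,t_k)$ essentially realizing $\sup\psi_k$, set $R_k = |A_{\Sigma^{(k)}_{t_k}}(P_k)|$, and rescale parabolically by $R_k$ about $(P_k,t_k)$. On the rescaled flow one has $|A|=1$ at the space-time origin and, by the near-maximality of $(P_k,t_k)$, $|A|\le 2$ on a parabolic cylinder of radius $\rho_k = r(P_k,t_k)\,R_k/2\to\infty$. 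Interior derivative estimates for MCF of Ecker-Huisken type then give smooth subsequential convergence on compact subsets to a smooth ancient MCF $\{\tilde\Sigma^\infty_\tau\}_{-\infty<\tau\le 0}$ with $|A|=1$ at the space-time origin.

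The main obstacle is the final rigidity step. Huisken's density is invariant under parabolic rescaling, so the bound $\le \lambda+\eta$ descends to every space-time center of $\{\tilde\Sigma^\infty_\tau\}$, while monotonicity gives the matching lower bound $\ge 1$. If $\lambda$ is chosen close enough to $1$ that $\lambda+\eta$ lies below the dimension-dependent entropy gap between flat hyperplanes and every other smooth self-shrinker, the equality case of Huisken's monotonicity (applied at arbitrary centers of the limit) forces $\{\tilde\Sigma^\infty_\tau\}$ to be a static hyperplane, contradicting $|A|=1$ at the origin. It is this entropy-gap exclusion that actually pins down the constants $\lambda$, $M$, and $K$. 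The "in particular" statement is then immediate since $r(P,t)\ge \frac{1}{2}$ whenever $(P,t)\in B_{1/2}(O)\times(\frac{3}{4},1]$.
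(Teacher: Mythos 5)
Your overall skeleton (contradiction, point-picking on $r(P,t)\left|A\right|$, parabolic blow-up, monotonicity, rigidity of density one) is the same as the paper's, and the final ``in particular'' deduction is fine. But there are two genuine gaps. First, the propagation step does not work as described. You want to bound the tail $\int_{\Sigma_{0}\setminus B_{M\sqrt{t}}\left(P\right)}\left(4\pi t\right)^{-\frac{n}{2}}e^{-\left|X-P\right|^{2}/4t}d\mathcal{H}^{n}$ by $\eta$ using area bounds extracted from the hypothesis, but the hypothesis only controls the \emph{Gaussian-weighted} area of $\Sigma_{0}$ at centers in $B_{1}\left(O\right)$; unwinding it gives $\mathcal{H}^{n}\left(\Sigma_{0}\cap\left(B_{M}\left(P'\right)\setminus B_{M/2}\left(P'\right)\right)\right)\lesssim\lambda\left(4\pi\right)^{\frac{n}{2}}e^{M^{2}/4}$, and multiplying by the kernel, which is only of size $e^{-cM^{2}}$ with $c<\frac{1}{4}$ in that annulus, leaves a factor that blows up rather than one that is $\leq\eta$. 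Indeed $\Sigma_{0}$ could carry area of order $e^{M^{2}/16}$ concentrated at distance $M/2$ from the origin consistently with the hypothesis, and then $F_{P,t}\left(\Sigma_{0}\right)$ is not close to $\lambda$. The paper sidesteps this by never integrating outside $B_{M\sqrt{t_{i}}}\left(P_{i}\right)$: it uses the \emph{local} monotonicity formula with the cutoff $\left(1-\frac{\left|X-P_{i}\right|^{2}+2nt}{M_{i}^{2}t_{i}}\right)_{+}^{3}$, whose support stays inside the region the hypothesis controls. You need that tool here; global Huisken monotonicity plus a tail estimate is not available.

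Second, the rigidity step is logically out of order. The ``equality case of Huisken's monotonicity'' requires the Gaussian density ratio at some center to be \emph{constant} in time, but all you have is a pinching between $1$ and $\lambda+\eta>1$; and the entropy gap you invoke is a statement about self-shrinkers, which the ancient limit flow is not yet known to be at that stage. To make your route work you would have to first blow down the limit at $\tau=-\infty$, obtain a tangent flow that \emph{is} a self-shrinker of entropy at most $\lambda+\eta$, apply the gap to conclude it is a multiplicity-one plane, deduce that the density ratio at the chosen center is thereby squeezed to exactly $1$, and only then invoke the equality case. The paper avoids both the blow-down and the gap theorem by negating the statement with sequences $\lambda_{i}\searrow1$, $M_{i}\nearrow\infty$, $K_{i}\nearrow\infty$ simultaneously: the limit then satisfies $F_{O,-\tau}\left(\tilde{\Sigma}_{\tau}\right)=1$ exactly for all $\tau<0$, Huisken's rigidity makes it self-shrinking about $\left(O,0\right)$, and letting $\tau\nearrow0$ with the uniform curvature bound forces $\tilde{\Sigma}_{0}$ to be a smooth cone, i.e. a hyperplane, contradicting $\left|A_{\tilde{\Sigma}_{0}}\left(O\right)\right|=1$. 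This is cleaner and avoids appealing to an entropy-gap theorem whose standard proofs are themselves close relatives of the regularity statement you are trying to prove.
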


\begin{proof}
Suppose to the contrary that there exist sequences of constants $\lambda_{i}\searrow1$,
$M_{i}\nearrow\infty$, $K_{i}\nearrow\infty$, and a sequence of
MCF $\left\{ \Sigma_{t}^{i}\right\} _{0\leq t\leq1}$ in $B_{M_{i}+1}\left(O\right)$
so that 
\[
\sup_{P\in B_{1}\left(O\right),\frac{1}{2}<t\leq1}\,F_{P,t}\left(\Sigma_{0}^{i}\cap B_{M_{i}\sqrt{t}}\left(P\right)\right)\leq\lambda_{i},
\]
\[
\sup_{\frac{1}{2}<t\leq1}\,\,\sup_{P\in\Sigma_{t}^{i}\cap B_{1}\left(O\right)}r\left(P,t\right)\left|A_{\Sigma_{t}^{i}}\left(P\right)\right|\geq K_{i}.
\]
For each $i$, choose $\frac{1}{2}<t_{i}\leq1$ and $P_{i}\in\Sigma_{t_{i}}^{i}\cap B_{1}\left(O\right)$
so that 
\[
\sup_{\frac{1}{2}<t\leq1}\,\,\sup_{P\in\Sigma_{t}^{i}\cap B_{1}\left(O\right)}r\left(P,t\right)\left|A_{\Sigma_{t}^{i}}\left(P\right)\right|=r_{i}A_{i},
\]
where $r_{i}=r\left(P_{i},t_{i}\right)$ and $A_{i}=\left|A_{\Sigma_{t_{i}}^{i}}\left(P_{i}\right)\right|$.
Note that 
\[
\sup_{t_{i}-\frac{r_{i}^{2}}{4}<t\leq t_{i}}\left\Vert A_{\Sigma_{t}^{i}}\right\Vert _{L^{\infty}\left(B_{\frac{r_{i}}{2}}\left(P_{i}\right)\right)}\leq2A_{i}
\]
and that $r_{i}A_{i}\geq K_{i}\rightarrow\infty$. In particular,
$A_{i}\geq\frac{K_{i}}{r_{i}}\rightarrow\infty$. Moreover, the local
monotonicity formula for MCF (cf. Chapter 4 in \cite{E}) gives
\begin{equation}
\left(1-\frac{2n}{M_{i}^{2}}\right)^{3}\leq\int_{\Sigma_{t}^{i}}\left(1-\frac{\left|X-P_{i}\right|^{2}+2nt}{M_{i}^{2}t_{i}}\right)_{+}^{3}\frac{1}{\left(4\pi\left(t_{i}-t\right)\right)^{\frac{n}{2}}}e^{-\frac{\left|X-P_{i}\right|^{2}}{4\left(t_{i}-t\right)}}d\mathcal{H}^{n}\left(X\right)\label{W: Gaussian}
\end{equation}
\[
\leq\int_{\Sigma_{0}^{i}}\left(1-\frac{\left|X-P_{i}\right|^{2}}{M_{i}^{2}t_{i}}\right)_{+}^{3}\frac{1}{\left(4\pi t_{i}\right)^{\frac{n}{2}}}e^{-\frac{\left|X-P_{i}\right|^{2}}{4t_{i}}}d\mathcal{H}^{n}\left(X\right)
\]
 
\[
\leq F_{P_{i},t_{i}}\left(\Sigma_{0}^{i}\cap B_{M_{i}\sqrt{t_{i}}}\left(P_{i}\right)\right)\leq\lambda_{i}.
\]
for $t_{i}-r_{i}^{2}\leq t<t_{i}$. Note that the first inequality
in (\ref{W: Gaussian}) comes from letting $t\nearrow t_{i}$ in the
integral on its right side. 

Let 
\[
\tilde{\Sigma}_{\tau}^{i}=A_{i}\left(\Sigma_{t_{i}+A_{i}^{-2}\tau}^{i}-P_{i}\right).
\]
Then $\left\{ \tilde{\Sigma}_{\tau}^{i}\right\} _{-\left(A_{i}r_{i}\right)^{2}\leq\tau\leq0}$
is a MCF in $B_{A_{i}r_{i}}\left(O\right)$ satisfying 
\[
\sup_{-\frac{1}{4}\left(A_{i}r_{i}\right)^{2}\leq\tau\leq0}\left\Vert A_{\tilde{\Sigma}_{\tau}^{i}}\right\Vert _{L^{\infty}\left(B_{\frac{1}{2}A_{i}r_{i}}\left(O\right)\right)}\leq2,\quad\left|A_{\tilde{\Sigma}_{0}^{i}}\left(O\right)\right|=1,
\]
and 
\[
\left(1-\frac{2n}{M_{i}^{2}}\right)^{3}\leq\int_{\tilde{\Sigma}_{\tau}^{i}}\left(1-\frac{\left|A_{i}^{-1}Y\right|^{2}+2n\left(t_{i}+A_{i}^{-2}\tau\right)}{M_{i}^{2}t_{i}}\right)_{+}^{3}\frac{1}{\left(4\pi\left(-\tau\right)\right)^{\frac{n}{2}}}e^{-\frac{\left|Y\right|^{2}}{4\left(-\tau\right)}}d\mathcal{H}^{n}\left(Y\right)\leq\lambda_{i}
\]
for $-\left(A_{i}r_{i}\right)^{2}\leq\tau<0$. It follows from the
smooth compactness theorem for MCF that, after passing to a subsequence,
\[
\left\{ \tilde{\Sigma}_{\tau}^{i}\right\} _{-\left(A_{i}r_{i}\right)^{2}\leq\tau\leq0}\,\stackrel{C_{loc}^{\infty}}{\longrightarrow}\,\left\{ \tilde{\Sigma}_{\tau}\right\} _{-\infty<\tau\leq0},
\]
where $\left\{ \tilde{\Sigma}_{\tau}\right\} _{-\infty<\tau\leq0}$
is a MCF in $\mathbb{R}^{n+1}$. Note that the limiting flow satisfies
\begin{equation}
\sup_{-\infty<\tau\leq0}\left\Vert A_{\tilde{\Sigma}_{\tau}}\right\Vert _{L^{\infty}}\leq2,\quad\left|A_{\tilde{\Sigma}_{0}}\left(O\right)\right|=1,\label{W: curvature bound}
\end{equation}
\begin{equation}
F_{O,-\tau}\left(\tilde{\Sigma}_{\tau}\right)=1\quad\,\forall\,\,\tau<0.\label{W: constancy}
\end{equation}
By Huisken's monotonicity formula (cf. \cite{H}) and condition (\ref{W: constancy}),
$\left\{ \tilde{\Sigma}_{\tau}\right\} $ must satisfy
\begin{equation}
\left(-\tau\right)\vec{H}_{\tilde{\Sigma}_{\tau}}+\frac{1}{2}Y^{\perp}=0\label{W: self-shrinking}
\end{equation}
for every $\tau<0$, where $Y$ is the position vector. Letting $\tau\nearrow0$
in Eq. (\ref{W: self-shrinking}) and using the uniform boundedness
of the curvature up to $\tau=0$, namely (\ref{W: curvature bound}),
we infer that $\tilde{\Sigma}_{0}$ must be a smooth cone (i.e. a
hyperplane), which contradicts the condition that $\left|A_{\tilde{\Sigma}_{0}}\left(O\right)\right|=1$
in (\ref{W: curvature bound}).
\end{proof}
The next theorem is an improvement of Theorem \ref{White's regularity}
in the way that the constant $K$ in Theorem \ref{White's regularity}
can be chosen arbitrarily small so long as the constant $\lambda$
is sufficiently close to one. The proof uses essentially the same
argument as in the preceding proof.
\begin{thm}
\label{refinement of White's}Given $\kappa>0$, there exist constants
$0<\epsilon<1$ and $M\geq1$ depending on $n$ and $\kappa$ with
the following property.

Suppose that $\left\{ \Sigma_{t}\right\} _{0\leq t\leq1}$ is a MCF
in $B_{2M}\left(O\right)$ such that 
\[
\sup_{P\in B_{M}\left(O\right),\frac{1}{2}<t\leq1}\,F_{P,t}\left(\Sigma_{0}\cap B_{M\sqrt{t}}\left(P\right)\right)\leq1+\epsilon.
\]
Then 
\[
\sup_{\frac{3}{4}\leq t\leq1}\,\left\Vert A_{\Sigma_{t}}\right\Vert _{L^{\infty}\left(B_{\frac{1}{2}}\left(O\right)\right)}\leq\kappa.
\]
\end{thm}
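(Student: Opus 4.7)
The plan is to mimic the compactness-contradiction scheme from the proof of Theorem \ref{White's regularity}, only without any curvature blow-up rescaling, since Theorem \ref{White's regularity} itself already supplies a uniform curvature bound in this regime. Suppose for contradiction that the conclusion fails: there exist sequences $\epsilon_i\searrow 0$, $M_i\nearrow\infty$ and MCFs $\{\Sigma_t^i\}_{0\le t\le 1}$ in $B_{2M_i}(O)$ satisfying the hypothesis with $\epsilon=\epsilon_i$, $M=M_i$, together with times $t_i\in[3/4,1]$ and points $P_i\in\overline{B_{1/2}(O)}$ such that $|A_{\Sigma_{t_i}^i}(P_i)|>\kappa$.

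With $\lambda$, $M$, $K$ denoting the constants from Theorem \ref{White's regularity}, for every fixed $R>0$ and every $i$ large enough that $1+\epsilon_i<\lambda$ and $M_i>R+M+1$, the Gaussian density hypothesis is inherited by balls centered at any $Q\in B_R(O)$, and Theorem \ref{White's regularity} translated to $Q$ yields $\|A_{\Sigma_t^i}\|_{L^\infty(B_{1/2}(Q))}\le 2K$ for $3/4<t\le 1$. A covering argument then produces a uniform curvature bound on $B_R(O)\times(3/4,1]$; standard higher-order MCF estimates promote this to uniform $C^\infty_{\mathrm{loc}}$ bounds, and a diagonal extraction produces a subsequence converging in $C^\infty_{\mathrm{loc}}(\mathbb{R}^{n+1}\times(3/4,1])$ to a smooth limiting MCF $\{\tilde\Sigma_t\}_{3/4<t\le 1}$ in $\mathbb{R}^{n+1}$. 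After a further subsequence we may assume $t_i\to\tilde t\in[3/4,1]$ and $P_i\to\tilde P\in\overline{B_{1/2}(O)}$, so that $|A_{\tilde\Sigma_{\tilde t}}(\tilde P)|\ge\kappa$ by smooth convergence.

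To derive a contradiction I would show that $\tilde\Sigma_s$ must be a hyperplane. For arbitrary $P_0\in\mathbb{R}^{n+1}$, $t_0\in(3/4,1]$ and $3/4<s<t_0$, the local monotonicity formula used in the derivation of (\ref{W: Gaussian}) gives
\[ \int_{\Sigma_s^i}\left(1-\frac{|X-P_0|^2+2ns}{M_i^2 t_0}\right)_{+}^{3}\frac{1}{(4\pi(t_0-s))^{n/2}}e^{-\frac{|X-P_0|^2}{4(t_0-s)}}\,d\mathcal{H}^n(X)\le 1+\epsilon_i \]
for $i$ large. Truncating to a fixed large ball where the cutoff tends to $1$ and the smooth convergence applies, and using the Gaussian decay together with the polynomial area growth implied by the uniform curvature bound to discard the tails uniformly in $i$, one obtains $F_{P_0,t_0-s}(\tilde\Sigma_s)\le 1$ in the limit. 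Since $P_0$, $t_0$, $s$ are arbitrary, $E[\tilde\Sigma_s]\le 1$; combined with the universal lower bound $E\ge 1$, this forces $E[\tilde\Sigma_s]=1$, whence $\tilde\Sigma_s$ is a hyperplane by the standard rigidity of the entropy (proved via the equality case of Huisken's monotonicity for the MCF $\{\tilde\Sigma_s\}$ itself: for every $P_0\in\tilde\Sigma_{t_0}$ one has $F_{P_0,t_0-s}(\tilde\Sigma_s)\to 1$ as $s\nearrow t_0$, so equality holds throughout, giving a self-shrinker identity centered at every such point and hence flatness). This contradicts $|A_{\tilde\Sigma_{\tilde t}}(\tilde P)|\ge\kappa>0$. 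The main technical point is the uniform-in-$i$ tail estimate in the monotonicity formula, since both the cutoff support and the ambient domain $B_{2M_i}(O)$ grow with $i$.
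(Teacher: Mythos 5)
Your proposal follows the same basic strategy as the paper's own proof: a compactness--contradiction argument in which the localized monotonicity formula pins the Gaussian density of a limit flow at $1$, the equality case of Huisken's monotonicity forces the limit to be a self-shrinker, and bounded curvature up to the final time forces a smooth cone, i.e. a hyperplane, contradicting $\left|A\right|\geq\kappa$ at the bad point. The packaging differs: the paper translates so that the bad point $\left(P_{i},t_{i}\right)$ sits at the space-time origin and reads off $F_{O,-\tau}\left(\tilde{\Sigma}_{\tau}\right)=1$ directly from the two-sided sandwich $\left(1-2n/M_{i}^{2}\right)^{3}\leq\cdots\leq1+\epsilon_{i}$ centered at that single point, whereas you keep the flow in place, extract a global limit, and establish $F_{P_{0},t_{0}-s}\left(\tilde{\Sigma}_{s}\right)\leq1$ for every admissible center and scale before invoking rigidity. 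Both work; the paper's version is leaner because it never needs the global monotonicity formula for the noncompact limit flow, only the localized one for the approximating flows, while yours does (justified, e.g., by the uniform area growth that your density bounds themselves supply). Note also that the "main technical point" you flag, a uniform-in-$i$ tail estimate, is not actually needed for the one-sided bound you want: the integrand is nonnegative, so you may discard the tail, pass to the limit on a fixed ball $B_{\rho}\left(P_{0}\right)$ where the cutoff tends to $1$, and then let $\rho\rightarrow\infty$ by monotone convergence.

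There is one small genuine slip. Your limit flow is produced on $\left(\frac{3}{4},1\right]$, but $\tilde{t}=\lim t_{i}$ may equal $\frac{3}{4}$, in which case $C_{loc}^{\infty}$ convergence on that half-open interval does not yield $\left|A_{\tilde{\Sigma}_{\tilde{t}}}\left(\tilde{P}\right)\right|\geq\kappa$. The fix is exactly what the paper does: the interior estimate in Theorem \ref{White's regularity} already gives a uniform curvature bound for $t\in\left[\frac{5}{8},1\right]$, so the limit exists on an interval containing $\left[\frac{3}{4},1\right]$ in its (relative) interior. A minor imprecision, not a gap: you only control $F_{P_{0},t}\left(\tilde{\Sigma}_{s}\right)$ for $t<1-s$, so the assertion $E\left[\tilde{\Sigma}_{s}\right]\leq1$ overstates what you have; but your rigidity argument uses only those scales, so nothing is lost.
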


\begin{proof}
Suppose to the contrary that there exist sequences $\epsilon_{i}\searrow0$,
$M_{i}\nearrow\infty$, and a sequence of MCF $\left\{ \Sigma_{t}^{i}\right\} _{0\leq t\leq1}$
in $B_{2M_{i}}\left(O\right)$ so that 
\[
\sup_{P\in B_{M_{i}}\left(O\right),\frac{1}{2}<t\leq1}\,F_{P,t}\left(\Sigma_{0}^{i}\cap B_{M_{i}\sqrt{t}}\left(P\right)\right)\leq1+\epsilon_{i},
\]
\[
\sup_{\frac{3}{4}\leq t\leq1}\,\left\Vert A_{\Sigma_{t}^{i}}\right\Vert _{L^{\infty}\left(B_{\frac{1}{2}}\left(O\right)\right)}>\kappa.
\]
By Theorem \ref{White's regularity}, there exists a constant $K>1$
(depending on $n$) so that 
\[
\sup_{\frac{5}{8}\leq t\leq1}\,\left\Vert A_{\Sigma_{t}^{i}}\right\Vert _{L^{\infty}\left(B_{M_{i}-1}\left(O\right)\right)}\leq K.
\]
For each $i$, choose $\frac{3}{4}\leq t_{i}\leq1$ and $P_{i}\in\Sigma_{t_{i}}^{i}\cap B_{\frac{1}{2}}\left(O\right)$
so that $\left|A_{\Sigma_{t_{i}}^{i}}\left(P_{i}\right)\right|\geq\kappa$.
The local monotonicity formula for MCF implies that
\[
\left(1-\frac{2n}{M_{i}^{2}}\right)^{3}\leq\int_{\Sigma_{t}^{i}}\left(1-\frac{\left|X-P_{i}\right|^{2}+2nt}{M_{i}^{2}t_{i}}\right)_{+}^{3}\frac{1}{\left(4\pi\left(t_{i}-t\right)\right)^{\frac{n}{2}}}e^{-\frac{\left|X-P_{i}\right|^{2}}{4\left(t_{i}-t\right)}}d\mathcal{H}^{n}\left(X\right)
\]
\[
\leq\int_{\Sigma_{0}^{i}}\left(1-\frac{\left|X-P_{i}\right|^{2}}{M_{i}^{2}t_{i}}\right)_{+}^{3}\frac{1}{\left(4\pi t_{i}\right)^{\frac{n}{2}}}e^{-\frac{\left|X-P_{i}\right|^{2}}{4t_{i}}}d\mathcal{H}^{n}\left(X\right)
\]
 
\[
\leq F_{P_{i},t_{i}}\left(\Sigma_{0}^{i}\cap B_{M_{i}\sqrt{t_{i}}}\left(P_{i}\right)\right)\leq1+\epsilon_{i}.
\]
for $0\leq t<t_{i}$. 

Let 
\[
\tilde{\Sigma}_{\tau}^{i}=\Sigma_{t_{i}+\tau}^{i}-P_{i}.
\]
Then $\left\{ \tilde{\Sigma}_{\tau}^{i}\right\} _{-t_{i}\leq\tau\leq0}$
is a MCF in $B_{2M_{i}-\frac{1}{2}}\left(O\right)$ satisfying 
\[
\sup_{\frac{5}{8}-t_{i}\leq\tau\leq0}\left\Vert A_{\tilde{\Sigma}_{\tau}^{i}}\right\Vert _{L^{\infty}\left(B_{M_{i}-\frac{3}{2}}\left(O\right)\right)}\leq K,\quad\left|A_{\tilde{\Sigma}_{0}^{i}}\left(O\right)\right|\geq\kappa,
\]
and 
\[
\left(1-\frac{2n}{M_{i}^{2}}\right)^{3}\leq\int_{\tilde{\Sigma}_{\tau}^{i}}\left(1-\frac{\left|Y\right|^{2}+2n\left(t_{i}+\tau\right)}{M_{i}^{2}t_{i}}\right)_{+}^{3}\frac{1}{\left(4\pi\left(-\tau\right)\right)^{\frac{n}{2}}}e^{-\frac{\left|Y\right|^{2}}{4\left(-\tau\right)}}d\mathcal{H}^{n}\left(Y\right)\leq1+\epsilon_{i}
\]
for $-t_{i}\leq\tau<0$. Note that $\frac{5}{8}-t_{i}\leq-\frac{1}{8}$
since $t_{i}\geq\frac{3}{4}$. It follows from the smooth compactness
theorem for MCF that, after passing to a subsequence,
\[
\left\{ \tilde{\Sigma}_{\tau}^{i}\right\} _{-\frac{1}{8}<\tau\leq0}\,\stackrel{C_{loc}^{\infty}}{\longrightarrow}\,\left\{ \tilde{\Sigma}_{\tau}\right\} _{-\frac{1}{8}<\tau\leq0},
\]
where $\left\{ \tilde{\Sigma}_{\tau}\right\} _{-\frac{1}{8}<\tau\leq0}$
is a MCF in $\mathbb{R}^{n+1}$. Note that the limiting flow satisfies
\begin{equation}
\sup_{-\frac{1}{8}<\tau\leq0}\left\Vert A_{\tilde{\Sigma}_{\tau}}\right\Vert _{L^{\infty}}\leq K,\quad\left|A_{\tilde{\Sigma}_{0}}\left(O\right)\right|\geq\kappa,\label{RW: curvature bound}
\end{equation}
\[
F_{O,-\tau}\left(\tilde{\Sigma}_{\tau}\right)=1\quad\,\,\,\forall\,-\frac{1}{8}<\tau<0.
\]
By Huisken's monotonicity formula, the last condition implies that
$\left\{ \tilde{\Sigma}_{\tau}\right\} $ satisfies
\begin{equation}
\left(-\tau\right)\vec{H}_{\tilde{\Sigma}_{\tau}}+\frac{1}{2}Y^{\perp}=0\label{RW: self-shrinking}
\end{equation}
for $-\frac{1}{8}<\tau<0$, where $Y$ is the position vector. Letting
$\tau\nearrow0$ in Eq. (\ref{RW: self-shrinking}) and using the
uniform boundedness of the curvature up to $\tau=0$ in (\ref{RW: curvature bound}),
we deduce that $\tilde{\Sigma}_{0}$ must be a smooth cone (i.e. a
hyperplane), which contradicts the condition that $\left|A_{\tilde{\Sigma}_{0}}\left(O\right)\right|\geq\kappa$
in (\ref{RW: curvature bound}).
\end{proof}
Below we would like to provide a sufficient condition (see Lemma \ref{small Lipschitz graph })
under which the hypothesis in Theorem \ref{refinement of White's}
holds. To this end, let us first make the following observation concerning
the Gaussian areas of an entire Lipschitz graph over a hyperplane.
\begin{rem}
\label{entropy of graph}Let $\Sigma$ be an $L$-Lipschitz graph
over a hyperplane, say 
\[
\Sigma=\left\{ X=\left(x,u\left(x\right)\right):x\in\mathbb{R}^{n}\right\} 
\]
 with $\left\Vert \partial_{x}u\right\Vert _{L^{\infty}}\leq L$.
Given $P=\left(p,\mathfrak{p}\right)\in\mathbb{R}^{n}\times\mathbb{R}$
and $t>0$, we have
\[
F_{P,t}\left(\Sigma\right)=\int_{\mathbb{R}^{n}}\frac{1}{\left(4\pi t\right)^{\frac{n}{2}}}e^{-\frac{\left|x-p\right|^{2}+\left(u\left(x\right)-\mathfrak{p}\right)^{2}}{4t}}\sqrt{1+\left|\partial_{x}u\left(x\right)\right|^{2}}\,dx
\]
\[
\leq\sqrt{1+L^{2}}\int_{\mathbb{R}^{n}}\frac{1}{\left(4\pi t\right)^{\frac{n}{2}}}e^{-\frac{\left|x-p\right|^{2}}{4t}}dx\,=\sqrt{1+L^{2}}.
\]
Consequently, $E\left[\Sigma\right]\leq\sqrt{1+L^{2}}$.
\end{rem}

The following lemma ensures that a locally small Lipschitz graph has
small localized Gaussian areas.
\begin{lem}
\label{small Lipschitz graph }Given $\epsilon>0$, there exists a
constant $\delta>0$ depending on $n$ and $\epsilon$ with the following
property. 

If $\Sigma$ is a $\delta$-Lipschitz graph over a hyperplane in $B_{3M}\left(O\right)$,
where $M\geq1$ is a constant, then we have
\[
\sup_{P\in B_{M}\left(O\right),\,0<t\leq1}\,F_{P,t}\left(\Sigma\cap B_{M\sqrt{t}}\left(P\right)\right)\leq1+\epsilon.
\]
\end{lem}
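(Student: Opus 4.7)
The plan is to reduce everything to the calculation in Remark \ref{entropy of graph}, the only new point being that a \emph{localized} Gaussian area is controlled in the same way as the total entropy, because we can always extend the integral to the full hyperplane.

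First I would fix coordinates so that the hyperplane over which $\Sigma$ is a graph is $\mathbb{R}^{n}\times\left\{0\right\}$, and write the graphical portion as
\[
\Sigma\cap B_{3M}\left(O\right)=\left\{ X=\left(x,u\left(x\right)\right):x\in\Omega\right\}
\]
for some domain $\Omega\subset\mathbb{R}^{n}$ with $\left\Vert \partial_{x}u\right\Vert _{L^{\infty}\left(\Omega\right)}\leq\delta$. Next I would observe that for $P=\left(p,\mathfrak{p}\right)\in B_{M}\left(O\right)$ and $0<t\leq1$, one has $B_{M\sqrt{t}}\left(P\right)\subset B_{2M}\left(O\right)\subset B_{3M}\left(O\right)$, since $M\geq1$. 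Hence $\Sigma\cap B_{M\sqrt{t}}\left(P\right)$ is still captured by the graph parametrization.

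The main computation then proceeds as in Remark \ref{entropy of graph}: parametrizing by $x$ and using $\sqrt{1+\left|\partial_{x}u\right|^{2}}\leq\sqrt{1+\delta^{2}}$, I would estimate
\[
F_{P,t}\left(\Sigma\cap B_{M\sqrt{t}}\left(P\right)\right)=\int_{\Omega_{P,t}}\frac{\sqrt{1+\left|\partial_{x}u\left(x\right)\right|^{2}}}{\left(4\pi t\right)^{\frac{n}{2}}}e^{-\frac{\left|x-p\right|^{2}+\left(u\left(x\right)-\mathfrak{p}\right)^{2}}{4t}}dx
\]
\[
\leq\sqrt{1+\delta^{2}}\int_{\mathbb{R}^{n}}\frac{1}{\left(4\pi t\right)^{\frac{n}{2}}}e^{-\frac{\left|x-p\right|^{2}}{4t}}dx=\sqrt{1+\delta^{2}},
\]
where $\Omega_{P,t}=\left\{ x\in\Omega:\left|x-p\right|^{2}+\left(u\left(x\right)-\mathfrak{p}\right)^{2}\leq M^{2}t\right\}$. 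The key move is to throw away the exponential factor in the $u$-direction and extend the $x$-integration to all of $\mathbb{R}^{n}$, which turns the localized Gaussian area into the standard heat kernel integral equal to one.

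Finally, choosing $\delta=\delta\left(n,\epsilon\right)>0$ small enough that $\sqrt{1+\delta^{2}}\leq1+\epsilon$ (for instance $\delta\leq\sqrt{2\epsilon+\epsilon^{2}}$, which depends only on $\epsilon$, not on $n$ or $M$) gives the claimed bound uniformly over $P\in B_{M}\left(O\right)$ and $0<t\leq1$. There is no real obstacle here; the estimate is dimension-free and the only point worth checking carefully is the containment $B_{M\sqrt{t}}\left(P\right)\subset B_{3M}\left(O\right)$, which ensures that the graph representation is valid on the entire piece of $\Sigma$ being integrated.
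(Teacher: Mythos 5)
Your proof is correct, and it takes a slightly cleaner route than the paper's. The paper handles the localization by first \emph{extending the hypersurface}: it observes that $\Sigma\cap B_{M\sqrt{t}}\left(P\right)$ sits well inside the region where the graph representation holds, extends it to a complete $2\delta$-Lipschitz graph $\tilde{\Sigma}$ over the whole hyperplane, and then applies Remark \ref{entropy of graph} as a black box together with the monotonicity $F_{P,t}\left(\Sigma\cap B_{M\sqrt{t}}\left(P\right)\right)\leq F_{P,t}(\tilde{\Sigma})$; this costs a factor of $2$ in the Lipschitz constant (hence the bound $\sqrt{1+4\delta^{2}}$ and the choice $\delta\leq\frac{1}{2}\sqrt{\epsilon\left(2+\epsilon\right)}$). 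You instead \emph{extend the domain of integration}: after checking the containment $B_{M\sqrt{t}}\left(P\right)\subset B_{2M}\left(O\right)\subset B_{3M}\left(O\right)$, you integrate directly over the projected set $\Omega_{P,t}$ and enlarge to all of $\mathbb{R}^{n}$ using the positivity of the integrand. This avoids having to justify the existence of a Lipschitz extension of the graph function, keeps the constant at $\sqrt{1+\delta^{2}}$, and makes transparent that the bound is independent of $n$ and $M$. Both arguments are ultimately the same computation as in Remark \ref{entropy of graph}; yours just carries it out on the localized piece rather than quoting the global statement.
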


\begin{proof}
Given $\epsilon>0$, let $\Sigma$ be as stated in the lemma, where
$0<\delta\ll1$ is a constant to be determined (which will depend
only on $n$, $\epsilon$). 

Given $P\in B_{M}\left(O\right)$ and $0<t\leq1$, let us assume that
$\Sigma\cap B_{M\sqrt{t}}\left(P\right)\neq\emptyset$; otherwise
$F_{P,t}\left(\Sigma\cap B_{M\sqrt{t}}\left(P\right)\right)=0$. Since
$\Sigma$ is a $\delta$-Lipschitz graph in $B_{3M}\left(O\right)$,
which strictly contains $B_{2M}\left(O\right)\supset B_{M\sqrt{t}}\left(P\right)$,
we can find a complete hypersurface $\tilde{\Sigma}$ in $\mathbb{R}^{n+1}$
that extends $\Sigma\cap B_{M\sqrt{t}}\left(P\right)$ and is a $2\delta$-Lipschitz
graph, provided that $\delta\ll1$ (depending on $n$). It then follows
from Remark \ref{entropy of graph} that 
\[
F_{P,t}\left(\Sigma\cap B_{M\sqrt{t}}\left(P\right)\right)\leq F_{P,t}\left(\tilde{\Sigma}\right)\leq\sqrt{1+4\delta^{2}}.
\]
Whence the lemma is proved if $0<\delta\leq\frac{1}{2}\sqrt{\epsilon\left(2+\epsilon\right)}$.
\end{proof}
Let us conclude this section with the following lemma, which is a
corollary of Theorem \ref{refinement of White's} and plays a key
role in deriving the curvature estimate in Theorem \ref{decay rate of curvature}.
\begin{lem}
\label{temporal curvature estimate}Given $\kappa>0$, let $\left\{ \Sigma_{t}\right\} _{0\leq t<\infty}$
be a MCF in $\mathbb{R}^{n+1}$ satisfying
\[
\sup_{t\geq\frac{1}{2}T}\,\,\sup_{P\in B_{2\Lambda\sqrt{t}}\left(O\right)}\,F_{P,t}\left(\Sigma_{0}\right)\leq1+\epsilon
\]
for some constants $T>0$ and $\Lambda\geq\frac{M}{\sqrt{2}-1}$,
where $\epsilon$ and $M$ are the constants in Theorem \ref{refinement of White's}.
Then we have
\[
\sup_{t\geq T}\,\sqrt{t}\left\Vert A_{\Sigma_{t}}\right\Vert _{L^{\infty}\left(B_{\Lambda\sqrt{t}}\left(O\right)\right)}\leq\kappa.
\]
\end{lem}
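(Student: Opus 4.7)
The plan is to reduce the claim to the refined White regularity theorem (Theorem \ref{refinement of White's}) via a parabolic rescaling. Fix $t_{0}\geq T$ and $P_{0}\in\Sigma_{t_{0}}\cap B_{\Lambda\sqrt{t_{0}}}(O)$; it will suffice to show $|A_{\Sigma_{t_{0}}}(P_{0})|\leq\kappa/\sqrt{t_{0}}$.

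First I would introduce the rescaled flow $\tilde{\Sigma}_{s}=\frac{1}{\sqrt{t_{0}}}\Sigma_{t_{0}s}$ for $0\leq s\leq 1$ and set $\tilde{P}_{0}=P_{0}/\sqrt{t_{0}}\in B_{\Lambda}(O)$. The rescaled family is again a MCF in $\mathbb{R}^{n+1}$, and its curvature at $(\tilde{P}_{0},1)$ is $\sqrt{t_{0}}\,|A_{\Sigma_{t_{0}}}(P_{0})|$. A routine change of variables in the Gaussian kernel yields the scaling identity $F_{Q,s}(\tilde{\Sigma}_{0})=F_{\sqrt{t_{0}}Q,\,t_{0}s}(\Sigma_{0})$. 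I would then aim to apply the translated version of Theorem \ref{refinement of White's} (centered at $\tilde{P}_{0}$ rather than $O$, which is legitimate by the translation invariance of MCF and of the Gaussian density) to $\{\tilde{\Sigma}_{s}\}$.

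The heart of the matter, and where the numerical hypothesis $\Lambda\geq M/(\sqrt{2}-1)$ is exactly consumed, is verifying the Gaussian area condition at the base points and times that actually appear after rescaling. For $Q\in B_{M}(\tilde{P}_{0})$ one has $|\sqrt{t_{0}}Q|<(\Lambda+M)\sqrt{t_{0}}$, while for $\frac{1}{2}<s\leq 1$ one has $t_{0}s\geq T/2$ and $2\Lambda\sqrt{t_{0}s}>\sqrt{2}\,\Lambda\sqrt{t_{0}}$. The algebraic inequality $\Lambda+M\leq\sqrt{2}\,\Lambda$ is equivalent to $\Lambda\geq M/(\sqrt{2}-1)$, which gives $\sqrt{t_{0}}Q\in B_{2\Lambda\sqrt{t_{0}s}}(O)$, and the standing hypothesis therefore yields $F_{\sqrt{t_{0}}Q,\,t_{0}s}(\Sigma_{0})\leq 1+\epsilon$. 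Restricting the integrand to $B_{M\sqrt{t_{0}s}}(\sqrt{t_{0}}Q)$ only decreases it, so via the scaling identity the hypothesis of Theorem \ref{refinement of White's} for $\{\tilde{\Sigma}_{s}\}$ centered at $\tilde{P}_{0}$ is verified.

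Applying that theorem then yields $|A_{\tilde{\Sigma}_{1}}(\tilde{P}_{0})|\leq\kappa$, i.e., $\sqrt{t_{0}}\,|A_{\Sigma_{t_{0}}}(P_{0})|\leq\kappa$, and taking the supremum over admissible $t_{0}$ and $P_{0}$ completes the argument. The only real obstacle is bookkeeping---tracking how the ball radii and time windows transform under parabolic scaling and arranging that $B_{M}(\tilde{P}_{0})$ still sits inside $B_{2\Lambda\sqrt{s}}(O)$ for every $s\in(\frac{1}{2},1]$---and that is precisely what the stated lower bound on $\Lambda$ arranges.
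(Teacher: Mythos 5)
Your proposal is correct and follows essentially the same route as the paper: a parabolic rescaling by $\sqrt{t_{0}}$, verification of the Gaussian-area hypothesis of Theorem \ref{refinement of White's} at the translated centers using exactly the chain $\left(\Lambda+M\right)\sqrt{t_{0}}\leq\sqrt{2}\,\Lambda\sqrt{t_{0}}\leq2\Lambda\sqrt{t_{0}\tau}$ for $\tau\in\left(\frac{1}{2},1\right]$, and the observation that restricting the Gaussian integral to $B_{M\sqrt{t}}\left(P\right)$ only decreases it. The only cosmetic difference is that you translate by $P_{0}$ after rescaling (invoking the translated form of the theorem) whereas the paper subtracts $P_{0}$ before rescaling; these are equivalent.
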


\begin{proof}
Fix $t_{0}\geq T$ and $P_{0}\in B_{\Lambda\sqrt{t_{0}}}\left(O\right)$,
and let 
\[
\tilde{\Sigma}_{\tau}=\frac{1}{\sqrt{t_{0}}}\left(\Sigma_{t_{0}\tau}-P_{0}\right).
\]
For every $Q\in B_{M}\left(O\right)$ and $\tau\in\left[\frac{1}{2},1\right]$,
we have
\[
F_{Q,\tau}\left(\tilde{\Sigma}_{0}\right)=F_{O,1}\left(\frac{1}{\sqrt{\tau}}\left(\tilde{\Sigma}_{0}-Q\right)\right)=F_{O,1}\left(\frac{1}{\sqrt{t_{0}\tau}}\left(\Sigma_{0}-P_{0}-\sqrt{t_{0}}Q\right)\right)
\]
 
\[
=F_{P_{0}+\sqrt{t_{0}}Q,\,t_{0}\tau}\left(\Sigma_{0}\right)\leq1+\epsilon.
\]
Note that the above inequality is obtained by using the hypothesis
and the fact that 
\[
\left|P_{0}+\sqrt{t_{0}}Q\right|\leq\left(\Lambda+M\right)\sqrt{t_{0}}\leq\sqrt{2}\Lambda\sqrt{t_{0}}\leq2\Lambda\sqrt{t_{0}\tau}.
\]
Thus, by Theorem \ref{refinement of White's} we obtain
\[
\sup_{\frac{3}{4}t_{0}\leq t\leq t_{0}}\,\sqrt{t_{0}}\left\Vert A_{\Sigma_{t}}\right\Vert _{L^{\infty}\left(B_{\frac{1}{2}\sqrt{t_{0}}}\left(P_{0}\right)\right)}=\sup_{\frac{3}{4}\leq\tau\leq1}\,\left\Vert A_{\tilde{\Sigma}_{\tau}}\right\Vert _{L^{\infty}\left(B_{\frac{1}{2}}\left(O\right)\right)}\leq\kappa.
\]
\end{proof}

\section{Asymptotically Conical MCF\label{Asymptotically conical MCF}}

In this section we first review the pseudolocality theorem for MCF
(see Theorem \ref{pseudo-locality}). Then we proceed to study the
asymptotically conical property, including the definition (Definition
\ref{asymptotically conical}), the smooth estimates (Proposition
\ref{spatial curvature estimate} and Corollary \ref{spatial smooth estimates}),
and the preservation along MCF (Proposition \ref{asymptotically conical along MCF}).
Finally, we deduce the curvature estimate under the small entropy
condition of the tangent cone (Theorem \ref{decay rate of curvature}).

The pseudolocality theorem that we are going to present in Theorem
\ref{pseudo-locality} is modified from Theorem 1.4 and Corollary
1.5 in \cite{CY}. A proof is provided for the sake of completeness.
To facilitate the proof, we need the following two lemmas. It is worth
noting that Lemma \ref{Chen-Yin lemma} improves Theorem 1.4 in \cite{CY}
(in the Euclidean setting), which helps simplify the proof of Theorem
\ref{pseudo-locality}. 
\begin{lem}
\label{Chen-Yin lemma}Given $\kappa>0$, there exist constants $\delta>0$
and $M\geq1$ depending on $n$ and $\kappa$ with the following property. 

Let $\left\{ \Sigma_{t}\right\} _{0\leq t\leq T}$ be a MCF in $B_{3M}\left(O\right)$,
where $0<T\leq1$ is a constant, so that $\Sigma_{0}$ is a $\delta$-Lipschitz
graph in $B_{3M}\left(O\right)$ . Then we have
\[
\sup_{0<t\leq T}\,\sqrt{t}\left\Vert A_{\Sigma_{t}}\right\Vert _{L^{\infty}\left(B_{\frac{1}{8}}\left(O\right)\right)}\leq\kappa.
\]
\end{lem}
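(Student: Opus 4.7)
The plan is to reduce Lemma \ref{Chen-Yin lemma} to the refined White regularity result, Theorem \ref{refinement of White's}, via a parabolic rescaling centered at any point of interest. The $\delta$-Lipschitz hypothesis on $\Sigma_0$ feeds directly into Lemma \ref{small Lipschitz graph }, which supplies the localized Gaussian-area bound required by Theorem \ref{refinement of White's}, and the smallness of the Lipschitz constant is \emph{scale-invariant}, which is what makes the argument go through uniformly in the blow-up parameter.

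Concretely, given $\kappa>0$, first extract the constants $\epsilon_{\kappa}>0$ and $M_{\kappa}\geq 1$ from Theorem \ref{refinement of White's}. Next apply Lemma \ref{small Lipschitz graph } with this $\epsilon_{\kappa}$ and with $M=M_{\kappa}$ to obtain $\delta_{0}>0$ such that every $\delta_{0}$-Lipschitz graph over a hyperplane in $B_{3M_{\kappa}}(O)$ satisfies
\[
\sup_{Q\in B_{M_{\kappa}}(O),\,0<\tau\leq 1}F_{Q,\tau}\!\left(\Sigma\cap B_{M_{\kappa}\sqrt{\tau}}(Q)\right)\leq 1+\epsilon_{\kappa}.
\]
Then set $\delta:=\delta_{0}$ and take $M:=M_{\kappa}+1$ (or any value at least $M_{\kappa}+\tfrac{1}{24}$).

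Now fix arbitrary $t_{0}\in(0,T]$ and $P_{0}\in\Sigma_{t_{0}}\cap B_{1/8}(O)$ (the case of empty intersection is trivial) and consider the parabolic rescaling
\[
\tilde{\Sigma}_{\tau}:=\frac{1}{\sqrt{t_{0}}}\bigl(\Sigma_{t_{0}\tau}-P_{0}\bigr),\qquad 0\leq \tau\leq 1.
\]
Because translation and dilation preserve the Lipschitz constant of a graph over a hyperplane, and because $|P_{0}|\leq 1/8$ together with $t_{0}\leq T\leq 1$ implies that the preimage of $B_{3M_{\kappa}}(O)$ under the rescaling sits inside $B_{3M_{\kappa}\sqrt{t_{0}}+1/8}(O)\subset B_{3M}(O)$, the rescaled initial hypersurface $\tilde{\Sigma}_{0}$ is a $\delta$-Lipschitz graph over a hyperplane in $B_{3M_{\kappa}}(O)$. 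The same estimate shows that the ambient ball $B_{3M}(O)$ containing the original flow pulls back to a ball of radius at least $(3M-1/8)/\sqrt{t_{0}}\geq 3M_{\kappa}+7/8>2M_{\kappa}$, so $\{\tilde{\Sigma}_{\tau}\}_{0\leq\tau\leq 1}$ is a MCF in $B_{2M_{\kappa}}(O)$.

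By Lemma \ref{small Lipschitz graph } applied to $\tilde{\Sigma}_{0}$, the localized Gaussian-area bound $1+\epsilon_{\kappa}$ of Theorem \ref{refinement of White's} is met. That theorem then yields
\[
\|A_{\tilde{\Sigma}_{\tau}}\|_{L^{\infty}(B_{1/2}(O))}\leq \kappa\qquad \text{for}\;\tfrac{3}{4}\leq\tau\leq 1.
\]
Taking $\tau=1$ and undoing the rescaling gives $\sqrt{t_{0}}\,|A_{\Sigma_{t_{0}}}(P_{0})|\leq\kappa$. Since $t_{0}\in(0,T]$ and $P_{0}\in\Sigma_{t_{0}}\cap B_{1/8}(O)$ were arbitrary, the conclusion of the lemma follows. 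The only subtle point is the bookkeeping to ensure that after rescaling by the possibly very small factor $1/\sqrt{t_{0}}$, both the $\delta$-Lipschitz graph hypothesis and the ambient ball condition of Theorem \ref{refinement of White's} hold \emph{uniformly} in $t_{0}$; this is arranged by making $M$ slightly larger than $M_{\kappa}$ and by using $t_{0}\leq 1$, which is where the assumption $T\leq 1$ is exploited.
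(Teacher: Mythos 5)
Your proof is correct, and its skeleton is the same as the paper's: rescale parabolically about an arbitrary point $(P_{0},t_{0})$ of the flow, verify the localized Gaussian-area hypothesis of Theorem \ref{refinement of White's} for the rescaled flow via Lemma \ref{small Lipschitz graph }, and undo the rescaling. The one place where you genuinely diverge is in \emph{how} that Gaussian hypothesis is verified: the paper fixes the Gaussian bound (\ref{CYL: Gaussian}) once for $\Sigma_{0}$ and then translates the rescaled Gaussian areas back to $F_{P_{0}+\sqrt{t_{0}}Q,\,t_{0}\tau}(\Sigma_{0}\cap\cdots)$, which forces the constraint $\left|P_{0}+\sqrt{t_{0}}Q\right|\leq M$ and hence a case split between $t_{0}\leq\frac{3}{4}$ and $t_{0}>\frac{3}{4}$, together with the auxiliary normalization $M\geq\bigl[8\bigl(1-\tfrac{\sqrt{3}}{2}\bigr)\bigr]^{-1}$. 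You instead observe that the $\delta$-Lipschitz-graph condition is scale- and translation-invariant, so the rescaled initial slice $\tilde{\Sigma}_{0}$ is itself a $\delta$-Lipschitz graph in $B_{3M_{\kappa}}(O)$, and you re-apply Lemma \ref{small Lipschitz graph } at the rescaled level; this works uniformly in $t_{0}\in(0,1]$ and eliminates both the case split and the extra lower bound on $M$, at the mild cost of enlarging $M$ to $M_{\kappa}+1$ so the relevant balls nest. Your containment bookkeeping ($B_{3M_{\kappa}\sqrt{t_{0}}}(P_{0})\subset B_{3M}(O)$ and the rescaled ambient ball containing $B_{2M_{\kappa}}(O)$) checks out, so the argument is complete.
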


\begin{proof}
Given $\kappa>0$, let $0<\epsilon<1$ and $M\geq1$ be the corresponding
constants in Theorem \ref{refinement of White's}. Without loss of
generality, we may assume that $M\geq\left[8\left(1-\frac{\sqrt{3}}{2}\right)\right]^{-1}$.
Let $\left\{ \Sigma_{t}\right\} _{0\leq t\leq T}$ be as stated in
the lemma, where $0<\delta\ll1$ is a constant to be determined (which
will depend only on $n$, $\kappa$). 

By Lemma \ref{small Lipschitz graph }, if $\delta\ll1$ (depending
on $n$, $\epsilon$), we have 
\begin{equation}
\sup_{P\in B_{M}\left(O\right),\,0<t\leq T}\,F_{P,t}\left(\Sigma_{0}\cap B_{M\sqrt{t}}\left(P\right)\right)\leq1+\epsilon.\label{CYL: Gaussian}
\end{equation}
Note that if $\frac{3}{4}\leq T\leq1$, in light of Theorem \ref{refinement of White's}
and condition (\ref{CYL: Gaussian}) we have
\[
\sup_{\frac{3}{4}\leq t\leq T}\,\sqrt{t}\left\Vert A_{\Sigma_{t}}\right\Vert _{L^{\infty}\left(B_{\frac{1}{8}}\left(O\right)\right)}\leq\sup_{\frac{3}{4}T\leq t\leq T}\,\sqrt{t}\left\Vert A_{\Sigma_{t}}\right\Vert _{L^{\infty}\left(B_{\frac{1}{2}\sqrt{T}}\left(O\right)\right)}\leq\kappa.
\]
To finish the proof, it suffices to show that
\[
\sup_{0<t\leq\min\left\{ \frac{3}{4},T\right\} }\,\sqrt{t}\left\Vert A_{\Sigma_{t}}\right\Vert _{L^{\infty}\left(B_{\frac{1}{8}}\left(O\right)\right)}\leq\kappa.
\]
For this purpose, let us fix $0<t_{0}\leq\min\left\{ \frac{3}{4},T\right\} $
and $P_{0}\in B_{\frac{1}{8}}\left(O\right)$, and define
\[
\tilde{\Sigma}_{\tau}=\frac{1}{\sqrt{t_{0}}}\left(\Sigma_{t_{0}\tau}-P_{0}\right).
\]
For every $Q\in B_{M}\left(O\right)$ and $\tau\in\left[\frac{1}{2},1\right]$
we have
\[
F_{Q,\tau}\left(\tilde{\Sigma}_{0}\cap B_{M\sqrt{\tau}}\left(Q\right)\right)=F_{O,1}\left(\frac{1}{\sqrt{\tau}}\left(\tilde{\Sigma}_{0}-Q\right)\cap B_{M}\left(O\right)\right)
\]
\[
=F_{O,1}\left(\frac{1}{\sqrt{t_{0}\tau}}\left(\Sigma_{0}-P_{0}-\sqrt{t_{0}}Q\right)\cap B_{M}\left(O\right)\right)
\]
\[
=F_{P_{0}+\sqrt{t_{0}}Q,\,t_{0}\tau}\left(\Sigma_{0}\cap B_{M\sqrt{t_{0}\tau}}\left(P_{0}+\sqrt{t_{0}}Q\right)\right)\leq1+\epsilon.
\]
Note that the last inequality is obtained by using condition (\ref{CYL: Gaussian})
and the fact that 
\[
\left|P_{0}+\sqrt{t_{0}}Q\right|\leq\frac{1}{8}+\sqrt{\frac{3}{4}}M\leq M.
\]
Hence, by Theorem \ref{refinement of White's} we obtain
\[
\sup_{\frac{3}{4}t_{0}\leq t\leq t_{0}}\,\sqrt{t_{0}}\left\Vert A_{\Sigma_{t}}\right\Vert _{L^{\infty}\left(B_{\frac{1}{2}\sqrt{t_{0}}}\left(P_{0}\right)\right)}=\sup_{\frac{3}{4}\leq\tau\leq1}\,\left\Vert A_{\tilde{\Sigma}_{\tau}}\right\Vert _{L^{\infty}\left(B_{\frac{1}{2}}\left(O\right)\right)}\leq\kappa.
\]
\end{proof}
\begin{lem}
\label{lower bound on curvature}There exists a constant $0<\varsigma<1$
depending on $n$ with the following property.

Let $\left\{ \Sigma_{t}\right\} _{-T\leq t\leq0}$ be a MCF in $B_{R}\left(O\right)$,
where $0<T\leq\varsigma$ and $R\geq1$ are constants, satisfying
\[
\sup_{-T\leq t\leq0}\,\left\Vert A_{\Sigma_{t}}\right\Vert _{L^{\infty}\left(B_{R}\left(O\right)\right)}\leq2.
\]
Assume also that $O\in\Sigma_{0}$ and $\left|A_{\Sigma_{0}}\left(O\right)\right|\geq1$.
Then we have 
\[
\inf_{-T\leq t\leq0}\,\left\Vert A_{\Sigma_{t}}\right\Vert _{L^{\infty}\left(B_{R}\left(O\right)\right)}\geq\frac{1}{2}.
\]
\end{lem}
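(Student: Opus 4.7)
My plan is a direct maximum-principle argument using the evolution equation for $\left|A\right|^{2}$ under MCF combined with a spatial cutoff to localize to the ball $B_{R}\left(O\right)$. Fix a smooth cutoff $\phi:\mathbb{R}^{n+1}\to[0,1]$ with $\phi\equiv1$ on $B_{R/2}\left(O\right)$, $\phi\equiv0$ outside $B_{3R/4}\left(O\right)$, and $\left|\nabla\phi\right|+\left|\nabla^{2}\phi\right|\le C(n)$ (possible since $R\ge1$), and set $v:=\phi^{2}\left|A\right|^{2}$ on $\Sigma_{t}$. From the standard evolution equation $\partial_{t}\left|A\right|^{2}=\Delta\left|A\right|^{2}-2\left|\nabla A\right|^{2}+2\left|A\right|^{4}$ under MCF, the product identity $\phi^{2}\Delta\left|A\right|^{2}=\Delta v-\Delta\left(\phi^{2}\right)\left|A\right|^{2}-2\nabla\left(\phi^{2}\right)\cdot\nabla\left|A\right|^{2}$, and Cauchy--Schwarz absorption of the cross term into $-2\phi^{2}\left|\nabla A\right|^{2}$, one obtains the subsolution inequality
\[
\partial_{t}v\le\Delta v+C(n),
\]
using only $\left|A\right|\le2$ and the cutoff bounds, with $\Delta$ the intrinsic Laplacian on $\Sigma_{t}$.

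Set $v_{\max}(t):=\sup_{\Sigma_{t}}v=\sup_{B_{3R/4}\left(O\right)\cap\Sigma_{t}}v$. Since $v$ vanishes on and beyond $\partial B_{3R/4}\left(O\right)\cap\Sigma_{t}$, the sup is attained at a point $P_{t}^{*}$ strictly interior to $B_{R}\left(O\right)$, where the intrinsic Hessian gives $\Delta v\left(P_{t}^{*},t\right)\le0$. The Dini-derivative form of the parabolic maximum principle then yields
\[
v_{\max}\left(t\right)\le v_{\max}\left(t'\right)+C(n)\left(t-t'\right)\quad\text{for all }-T\le t'\le t\le0.
\]
Assume for contradiction that $\left\Vert A_{\Sigma_{t^{*}}}\right\Vert _{L^{\infty}\left(B_{R}\left(O\right)\right)}<1/2$ for some $t^{*}\in[-T,0]$; then $v_{\max}\left(t^{*}\right)<1/4$, so $v_{\max}\left(0\right)<1/4+C(n)T\le1/4+C(n)\varsigma$. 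On the other hand $O\in\Sigma_{0}\cap B_{R/2}\left(O\right)$ with $\phi\left(O\right)=1$ gives $v_{\max}\left(0\right)\ge\left|A_{\Sigma_{0}}\left(O\right)\right|^{2}\ge1$, a contradiction once $\varsigma<3/\left(4C(n)\right)$. Setting $\varsigma$ to this bound (and smaller if needed, depending only on $n$) establishes the lemma.

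The main computation to carry out is the subsolution inequality in the first paragraph: one must use the cutoff identity and the Cauchy--Schwarz step (roughly, $16\phi\left|\nabla\phi\right|\left|\nabla A\right|\le2\phi^{2}\left|\nabla A\right|^{2}+32\left|\nabla\phi\right|^{2}$) to cancel the bad cross term against the good gradient term $-2\phi^{2}\left|\nabla A\right|^{2}$ coming from the evolution of $\left|A\right|^{2}$; together with $\left|A\right|\le2$ and the cutoff bounds, everything else reduces to the constant $C(n)$. The reason this goes through without invoking derivative estimates such as Ecker--Huisken (which would degenerate as $T\searrow0$) is precisely the good $-2\phi^{2}\left|\nabla A\right|^{2}$ term in the evolution equation for $\left|A\right|^{2}$, which already provides enough structure for the cutoff absorption.
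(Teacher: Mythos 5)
Your argument is correct and is essentially the paper's own proof: localize $\left|A_{\Sigma_{t}}\right|^{2}$ with a cutoff supported well inside $B_{R}\left(O\right)$, absorb the cross term into the good $-2\left|\nabla_{\Sigma_{t}}A_{\Sigma_{t}}\right|^{2}$ term to get a parabolic inequality with bounded forcing, and apply the Dini-derivative maximum principle to compare with the value forced at $t=0$ by $\left|A_{\Sigma_{0}}\left(O\right)\right|\geq1$. The only (cosmetic) differences are that the paper uses the standard heat-subsolution cutoff $\left(1-\frac{\left|X\right|^{2}+2n\left(t+T\right)}{R^{2}}\right)_{+}^{3}$ rather than a generic fixed spatial cutoff, and argues directly rather than by contradiction.
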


\begin{proof}
Recall that the evolution of the the second fundamental form along
MCF is given by 
\begin{equation}
\left(\partial_{t}-\triangle_{\Sigma_{t}}\right)\left|A_{\Sigma_{t}}\right|^{2}=-2\left|\nabla_{\Sigma_{t}}A_{\Sigma_{t}}\right|^{2}+2\left|A_{\Sigma_{t}}\right|^{4}\label{LBC: evolution of curvature}
\end{equation}
(cf. Chapter 3 in \cite{E}). Consider the cut-off function $\eta_{R}\left(s\right)=\left(1-\frac{s}{R^{2}}\right)_{+}^{3}$,
which satisfies 
\[
\left(\partial_{t}-\triangle_{\Sigma_{t}}\right)\left[\eta_{R}\left(\left|X\right|^{2}+2n\left(t+T\right)\right)\right]\leq0.
\]
Using the product rule and Cauchy-Schwarz inequality, we get
\begin{equation}
\left(\partial_{t}-\triangle_{\Sigma_{t}}\right)\left[\eta_{R}\left(\left|X\right|^{2}+2n\left(t+T\right)\right)\left|A_{\Sigma_{t}}\right|^{2}\right]\label{LBC: localized curvature}
\end{equation}
\[
\leq\eta_{R}\left(\left|X\right|^{2}+2n\left(t+T\right)\right)\,\left(-2\left|\nabla_{\Sigma_{t}}A_{\Sigma_{t}}\right|^{2}+2\left|A_{\Sigma_{t}}\right|^{4}\right)
\]
\[
-8\,\eta'_{R}\left(\left|X\right|^{2}+2n\left(t+T\right)\right)\,\left|A_{\Sigma_{t}}\right|\,X^{\top}\cdot\,\nabla_{\Sigma_{t}}\left|A_{\Sigma_{t}}\right|
\]
\[
\leq2\,\eta_{R}\left(\left|X\right|^{2}+2n\left(t+T\right)\right)\,\left|A_{\Sigma_{t}}\right|^{4}\,+\,8\left\{ \frac{\left[\eta'_{R}\left(\left|X\right|^{2}+2n\left(t+T\right)\right)\right]^{2}}{\eta_{R}\left(\left|X\right|^{2}+2n\left(t+T\right)\right)}\right\} \left|X\right|^{2}\left|A_{\Sigma_{t}}\right|^{2}
\]
\[
\leq2\left|A_{\Sigma_{t}}\right|^{4}\,+\,72\left|A_{\Sigma_{t}}\right|^{2}\leq320\qquad\forall\,t\in\left[-T,0\right].
\]
Note that in the last line we use the property that 
\[
\frac{\left(\eta'_{R}\left(s\right)\right)^{2}}{\eta_{R}\left(s\right)}=\frac{9}{R^{4}}\left(1-\frac{s}{R^{2}}\right)_{+}
\]
and the assumption that $\left\Vert A_{\Sigma_{t}}\right\Vert _{L^{\infty}}\leq2$. 

Now consider 
\[
\phi\left(t\right)=\max_{X\in\Sigma_{t}\cap B_{R}\left(O\right)}\left[\eta_{R}\left(\left|X\right|^{2}+2n\left(t+T\right)\right)\left|A_{\Sigma_{t}}\right|^{2}\right],\quad-T\leq t\leq0.
\]
The assumption that $\left|A_{\Sigma_{0}}\left(O\right)\right|\geq1$
implies
\[
\phi\left(0\right)\geq\left(1-\frac{2nT}{R^{2}}\right)^{3}\geq\left(1-2nT\right)^{3}.
\]
Additionally, applying the maximum principle (cf. Chapter 2 in \cite{M})
to Eq. (\ref{LBC: localized curvature}) gives 
\[
D^{-}\phi\left(t\right)\coloneqq\limsup_{h\searrow0}\frac{\phi\left(t\right)-\phi\left(t-h\right)}{h}\leq320\quad\,\forall\,t\in\left[-T,0\right].
\]
It follows from the comparison principle for ODE (cf. Chapter 2 in
\cite{Wa}) that 
\[
\phi\left(t\right)\geq\left(1-2nT\right)^{3}+320t\geq\frac{1}{2}
\]
for $-T\leq t\leq0$, provided that $0<T\ll1$ (depending on $n$).
Consequently, we get
\[
\max_{\left|X\right|^{2}\leq R^{2}-2n\left(t+T\right)}\,\left|A_{\Sigma_{t}}\right|^{2}\geq\phi\left(t\right)\geq\frac{1}{2}\qquad\forall\,t\in\left[-T,0\right].
\]
\end{proof}
\begin{thm}
\label{pseudo-locality}There exist constants $\delta>0$ and $M,K\geq1$
depending on $n$ with the following property.

Let $\left\{ \Sigma_{t}\right\} _{0\leq t\leq T}$ be a MCF in $B_{M}\left(O\right)$,
where $0<T\leq1$ is a constant, so that $\Sigma_{0}$ is a $\delta$-Lipschitz
graph in $B_{M}\left(O\right)$ with $\left\Vert A_{\Sigma_{0}}\right\Vert _{L^{\infty}\left(B_{M}\left(O\right)\right)}\leq1.$
Then we have
\[
\sup_{0<t\leq T}\,\sup_{P\in\Sigma_{t}\cap B_{\frac{1}{8}}\left(O\right)}r\left(P\right)\left|A_{\Sigma_{t}}\left(P\right)\right|\leq K,
\]
where $r\left(P\right)=\sup\left\{ r>0:B_{r}\left(P\right)\subset B_{\frac{1}{8}}\left(O\right)\right\} $.
In particular, we get
\[
\sup_{0\leq t\leq T}\,\left\Vert A_{\Sigma_{t}}\right\Vert _{L^{\infty}\left(B_{\frac{1}{10}}\left(O\right)\right)}\leq40K.
\]
\end{thm}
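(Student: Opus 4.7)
The plan is to argue by contradiction via a parabolic blow-up, much in the spirit of the proofs of Theorems \ref{White's regularity} and \ref{refinement of White's}. Suppose the conclusion fails; then there are sequences $\delta_i\searrow 0$, $M_i\nearrow\infty$, $K_i\nearrow\infty$, times $0<T_i\le 1$, and MCFs $\{\Sigma_t^i\}_{0\le t\le T_i}$ in $B_{M_i}(O)$ whose initial hypersurfaces $\Sigma_0^i$ are $\delta_i$-Lipschitz graphs with $\|A_{\Sigma_0^i}\|_{L^\infty(B_{M_i}(O))}\le 1$, yet
\[
S_i \;:=\; \sup_{0<t\le T_i,\;P\in\Sigma_t^i\cap B_{1/8}(O)} r(P)\,|A_{\Sigma_t^i}(P)| \;>\; K_i.
\]
Fix a small $\kappa>0$, whose precise size will be dictated at the end. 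Since $\delta_i\to 0$ and $M_i\to\infty$, Lemma \ref{Chen-Yin lemma} applies to $\{\Sigma_t^i\}$ for $i$ large, providing the temporal estimate $\sqrt{t}\,\|A_{\Sigma_t^i}\|_{L^\infty(B_{1/8}(O))}\le\kappa$ throughout $(0,T_i]$.

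Next I perform a point-picking: for each $i$ large, select $(t_i,P_i)$ with $r_iA_i:=r(P_i)\,|A_{\Sigma_{t_i}^i}(P_i)|\ge S_i/2$, so $r_iA_i>K_i/2\to\infty$. Since $r_i\le 1/8$, $A_i\to\infty$, and the Chen--Yin bound forces $t_iA_i^2\le\kappa^2$. The parabolic rescaling
\[
\tilde{\Sigma}_\tau^i \;=\; A_i\bigl(\Sigma_{t_i+A_i^{-2}\tau}^i-P_i\bigr),\qquad \tau\in[-t_iA_i^2,\,0],
\]
is a MCF with $|A_{\tilde\Sigma_0^i}(O)|=1$. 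Any $X\in B_{r_iA_i/2}(O)$ pulls back to $P=P_i+A_i^{-1}X\in B_{r_i/2}(P_i)\subset B_{1/8}(O)$, where $r(P)\ge r_i/2$, so the near-maximality of $S_i$ forces $\|A_{\tilde\Sigma_\tau^i}\|_{L^\infty(B_{r_iA_i/2}(O))}\le 4$. In contrast, the rescaled initial slice satisfies $\|A_{\tilde\Sigma_{-t_iA_i^2}^i}\|_{L^\infty}\le A_i^{-1}\to 0$, inherited from $\|A_{\Sigma_0^i}\|_{L^\infty}\le 1$.

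To close the argument I invoke a minor variant of Lemma \ref{lower bound on curvature}: the same reaction-diffusion computation based on $(\partial_t-\triangle)|A|^2=-2|\nabla A|^2+2|A|^4$, localized by the cut-off $\eta_R$, supplies a universal $\varsigma'>0$ (depending only on $n$) such that whenever the ambient curvature bound is $4$ rather than $2$, $t_iA_i^2\le\varsigma'$, and $r_iA_i/2\ge 1$, one has $\sup_{X\in B_{r_iA_i/2}(O)}|A_{\tilde\Sigma_\tau^i}(X)|\ge\tfrac12$ throughout $\tau\in[-t_iA_i^2,0]$. Pinning $\kappa$ so that $\kappa^2\le\varsigma'$ and evaluating at $\tau=-t_iA_i^2$ forces $A_i^{-1}\ge\tfrac12$, contradicting $A_i\to\infty$. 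The \textquotedblleft in particular\textquotedblright{} assertion is then immediate: every $P\in B_{1/10}(O)$ has $r(P)=\tfrac18-|P|\ge\tfrac1{40}$, so $|A_{\Sigma_t}(P)|\le K/r(P)\le 40K$. The only piece of bookkeeping that demands attention is the variant of Lemma \ref{lower bound on curvature} under $|A|\le 4$, but this is routine: the pointwise estimate $2|A|^4+72|A|^2\le C$ holds with a larger universal $C$ when $|A|\le 4$, so $\varsigma'$ simply becomes smaller than the original $\varsigma$.
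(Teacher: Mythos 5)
Your proposal is correct and follows essentially the same route as the paper: a contradiction-by-rescaling argument using Lemma \ref{Chen-Yin lemma} to control $t_iA_i^2$ and Lemma \ref{lower bound on curvature} to contradict the smallness of the rescaled initial curvature $A_i^{-1}$. The only (harmless) deviations are your near-maximal point selection, which gives the rescaled curvature bound $4$ instead of $2$ and hence requires the routine variant of Lemma \ref{lower bound on curvature} you describe, and your use of a fixed small $\kappa$ in place of the paper's diagonal argument forcing $t_iA_i^2\to0$.
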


\begin{proof}
Suppose to the contrary that there exist sequences of constants $\delta_{i}\searrow0$,
$M_{i}\nearrow\infty$, $K_{i}\nearrow\infty$, and a sequence of
MCF $\left\{ \Sigma_{t}^{i}\right\} _{0\leq t\leq T_{i}}$ in $B_{M_{i}}\left(O\right)$
for some constant $0<T_{i}\leq1$ so that $\Sigma_{0}^{i}$ is a $\delta_{i}$-Lipschitz
graph in $B_{M_{i}}\left(O\right)$ with $\left\Vert A_{\Sigma_{0}^{i}}\right\Vert _{L^{\infty}\left(B_{M_{i}}\left(O\right)\right)}\leq1$
and 
\[
\sup_{0\leq t\leq T_{i}}\,\sup_{P\in\Sigma_{t}^{i}\cap B_{\frac{1}{8}}\left(O\right)}r\left(P\right)\left|A_{\Sigma_{t}^{i}}\left(P\right)\right|>K_{i}.
\]
For each $i$, choose $0\leq t_{i}\leq T_{i}$ and $P_{i}\in\Sigma_{t_{i}}\cap B_{\frac{1}{8}}\left(O\right)$
so that 
\[
\sup_{0\leq t\leq T_{i}}\,\sup_{P\in\Sigma_{t}^{i}\cap B_{\frac{1}{8}}\left(O\right)}r\left(P\right)\left|A_{\Sigma_{t}^{i}}\left(P\right)\right|=r_{i}A_{i},
\]
where $r_{i}=r\left(P_{i}\right)$ and $A_{i}=\left|A_{\Sigma_{t_{i}}^{i}}\left(P_{i}\right)\right|$.
Note that 
\[
\sup_{0\leq t\leq t_{i}}\left\Vert A_{\Sigma_{t}^{i}}\right\Vert _{L^{\infty}\left(B_{\frac{1}{2}r_{i}}\left(P_{i}\right)\right)}\leq2A_{i}
\]
and that $A_{i}>\frac{K_{i}}{r_{i}}\rightarrow\infty$. Also, it must
be true that $t_{i}>0$ for $i\gg1$; otherwise the condition that
$A_{i}\rightarrow\infty$ would contradict with the assumption that
$\left\Vert A_{\Sigma_{0}^{i}}\right\Vert _{L^{\infty}\left(B_{M_{i}}\left(O\right)\right)}\leq1$
for all $i$. Furthermore, in view of Lemma \ref{Chen-Yin lemma}
and the condition that $\delta_{i}\searrow0$ and $M_{i}\nearrow\infty$,
we may assume that 
\[
\sqrt{t_{i}}A_{i}=\sqrt{t_{i}}\left|A_{\Sigma_{t_{i}}^{i}}\left(P_{i}\right)\right|\rightarrow0.
\]
Let
\[
\tilde{\Sigma}_{\tau}^{i}=A_{i}\left(\Sigma_{t_{i}+A_{i}^{-2}\tau}^{i}-P_{i}\right),\quad-t_{i}A_{i}^{2}\leq\tau\leq0,
\]
which is a MCF in $B_{\frac{1}{2}r_{i}A_{i}}\left(O\right)$ satisfying
\[
\sup_{-t_{i}A_{i}^{2}\leq\tau\leq0}\left\Vert A_{\tilde{\Sigma}_{\tau}^{i}}\right\Vert _{L^{\infty}\left(B_{\frac{1}{2}r_{i}A_{i}}\left(O\right)\right)}\leq2,\quad\left|A_{\tilde{\Sigma}_{0}^{i}}\left(O\right)\right|=1,
\]
\[
\left\Vert A_{\tilde{\Sigma}_{-t_{i}A_{i}^{2}}^{i}}\right\Vert _{L^{\infty}\left(B_{\frac{1}{2}r_{i}A_{i}}\left(O\right)\right)}\leq A_{i}^{-1}.
\]
On the other hand, since $r_{i}A_{i}\rightarrow\infty$ and $t_{i}A_{i}^{2}\rightarrow0$,
applying Lemma \ref{lower bound on curvature} to the MCF $\left\{ \tilde{\Sigma}_{\tau}^{i}\right\} _{-t_{i}A_{i}^{2}\leq\tau\leq0}$
gives
\[
\left\Vert A_{\tilde{\Sigma}_{-t_{i}A_{i}^{2}}^{i}}\right\Vert _{L^{\infty}\left(B_{\frac{1}{2}r_{i}A_{i}}\left(O\right)\right)}\geq\frac{1}{2}
\]
for $i\gg1$. This is a contradiction. 
\end{proof}
Next, we would like to study the asymptotically conical property.
Let us begin with the following two definitions concerning a cone
. 
\begin{defn}
\label{regular cone}We say $\mathcal{C}$ is a \textit{regular cone}
if 
\begin{enumerate}
\item $\mathcal{\lambda C}=\mathcal{C}$ for every constant $\lambda>0$
(scale invariance).
\item $\mathcal{C}\setminus\left\{ O\right\} $ is a smooth, properly embedded,
and oriented hypersurface in $\mathbb{R}^{n+1}$. 
\end{enumerate}
\end{defn}

\medskip{}
\begin{defn}
\label{asymptotically conical}A hypersurface $\Sigma$ in $\mathbb{R}^{n+1}$
is said to be \textit{asymptotic to a regular cone} $\mathcal{C}$
at infinity if 
\begin{enumerate}
\item The ``zooming out'' of $\Sigma$ converges locally smoothly to $\mathcal{C}$
in $\mathbb{R}^{n+1}$ away from the origin, i.e.
\[
\frac{1}{R}\,\Sigma\overset{C_{loc}^{\infty}}{\longrightarrow}\mathcal{C}\quad\textrm{\,in}\,\,\,\mathbb{R}^{n+1}\setminus\left\{ O\right\} \quad\textrm{as}\,\;R\rightarrow\infty.
\]
\item There exists $R_{0}>0$ so that $\Sigma\setminus B_{R_{0}}\left(O\right)$
is a normal graph of $u$ over $\mathcal{C}$ outside a compact subset
with
\[
\left\Vert \nabla_{\mathcal{C}}\,u\right\Vert _{L^{\infty}\left(\mathcal{C}\setminus B_{R}\left(O\right)\right)}+\left\Vert u\right\Vert _{L^{\infty}\left(\mathcal{C}\setminus B_{R}\left(O\right)\right)}\rightarrow0\quad\textrm{as}\;\,R\rightarrow\infty.
\]
\end{enumerate}
\end{defn}

\smallskip{}
Compared with the definitions of asymptotically conical given elsewhere,
see \cite{BW5} or Chapter 2 in \cite{E} for instance, Definition
\ref{asymptotically conical} seems more restrictive because of the
presence of the second condition. However, this condition turns out
to be natural in light of Corollary \ref{self-expanding MCF out of cone},
where we show that every time-slice of a self-expanding MCF coming
out of a regular cone does have this property. 

One of the crucial properties following from the asymptotically conical
condition is that outside a large ball, the curvature is inversely
proportional to the radial distance. As a result of the pseudolocality
theorem, this property is preserved along MCF for a period of time.
This is seen in the following proposition.
\begin{prop}
\label{spatial curvature estimate}Let $\left\{ \Sigma_{t}\right\} _{0\leq t\leq T}$
be a MCF in $\mathbb{R}^{n+1}$, where $T>0$ is a constant, so that
$\Sigma_{0}$ is asymptotic to a regular cone $\mathcal{C}$ at infinity.
Then there exist constants $\Lambda,\mathcal{R},\mathcal{K}\geq1$
depending on $n$, $\mathcal{C}$, and $\Sigma_{0}$ so that 
\[
\sup_{0\leq t\leq T}\left\Vert \,\left|X\right|A_{\Sigma_{t}}\right\Vert _{L^{\infty}\left(\mathbb{R}^{n+1}\setminus B_{\max\left\{ \mathcal{R},\Lambda\sqrt{t}\right\} }\left(O\right)\right)}\leq\mathcal{K}.
\]
Here $X$ denotes the position vector of $\Sigma_{t}$.
\end{prop}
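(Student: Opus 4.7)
The strategy is to apply the pseudolocality theorem (Theorem \ref{pseudo-locality}) at the parabolic scale comparable to $|X|$. Let $\delta_{0},M_{0},K_{0}$ be the dimensional constants from Theorem \ref{pseudo-locality}. For each $P\in\mathcal{C}$ with $|P|=R$ large, set $\rho=R/L$, where $L\geq 1$ is a constant to be chosen (depending on $n$, $\mathcal{C}$), and consider the parabolically rescaled MCF
\[
\tilde{\Sigma}_{\tau}=\rho^{-1}(\Sigma_{\rho^{2}\tau}-P),\qquad 0\leq\tau\leq T/\rho^{2}.
\]
By scale-invariance of $\mathcal{C}$, the rescaled cone is $\mathcal{C}-P/\rho$, which passes through the origin (since $P/\rho\in\mathcal{C}$) and whose vertex sits at distance $L$ away. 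Because the link $\mathcal{C}\cap S^{n}$ is a smooth compact submanifold of $S^{n}$, this shifted cone is, throughout $B_{M_{0}}(O)$, a graph over the tangent plane to $\mathcal{C}$ at $P/\rho$ whose Lipschitz constant and second fundamental form are both of order $1/L$. Thus for $L$ large (depending on $n$, $\mathcal{C}$, $\delta_{0}$, $M_{0}$), the shifted cone is a $\delta_{0}/2$-Lipschitz graph in $B_{M_{0}}(O)$ with $\|A\|_{L^{\infty}(B_{M_{0}}(O))}\leq 1/2$.

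By the asymptotic condition (Definition \ref{asymptotically conical}), $\Sigma_{0}$ is a normal graph of a function $u$ over $\mathcal{C}$ outside a compact set with $\|u\|+\|\nabla_{\mathcal{C}}u\|\to 0$ at infinity. For $R\geq\mathcal{R}_{0}$ with $\mathcal{R}_{0}$ sufficiently large (depending on $n$, $\mathcal{C}$, $\Sigma_{0}$, $L$), the resulting $C^{1}$-perturbation $\Sigma_{0}\cap B_{M_{0}\rho}(P)$ of $\mathcal{C}\cap B_{M_{0}\rho}(P)$ rescales to a $\delta_{0}$-Lipschitz graph $\tilde{\Sigma}_{0}$ in $B_{M_{0}}(O)$ with $\|A_{\tilde{\Sigma}_{0}}\|_{L^{\infty}(B_{M_{0}}(O))}\leq 1$. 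Applying Theorem \ref{pseudo-locality} to $\{\tilde{\Sigma}_{\tau}\}$ on $[0,\min\{1,T/\rho^{2}\}]$ and undoing the rescaling gives
\[
\|A_{\Sigma_{t}}\|_{L^{\infty}(B_{R/(10L)}(P))}\leq\frac{40K_{0}L}{R},\qquad t\in[0,\min\{T,R^{2}/L^{2}\}].
\]
The time constraint $t\leq R^{2}/L^{2}$ is exactly $R\geq L\sqrt{t}$. Setting $\Lambda:=L$, $\mathcal{R}:=\mathcal{R}_{0}$, and $\mathcal{K}:=CK_{0}L$, this already yields the desired bound at every $X\in\Sigma_{t}$ which falls inside one of the pseudolocality balls $B_{R/(10L)}(P)$ with $P\in\mathcal{C}$, $|P|$ comparable to $|X|$.

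The main remaining task, and the place where care is required, is the covering step: for every $X\in\Sigma_{t}$ with $|X|\geq\max\{\mathcal{R},\Lambda\sqrt{t}\}$ we need to exhibit a base point $P\in\mathcal{C}$ with $|P|$ comparable to $|X|$ such that $|X-P|\leq|P|/(10L)$. Equivalently, $\Sigma_{t}\cap\{|X|\geq\mathcal{R}\}$ has to remain inside a conical tubular neighborhood of $\mathcal{C}$ of relative width of order $1/L$ throughout the interval $[0,T]$. At $t=0$ this is exactly the content of the asymptotic condition; for $t>0$ it is preserved by a standard continuity (bootstrap) argument: the curvature bound just produced controls the normal speed $|\vec{H}|\leq n|A|\leq\mathcal{K}/|X|$ on the part of $\Sigma_{t}$ covered by pseudolocality balls, which limits the displacement of the flow from its initial position, and choosing $L$ and $\mathcal{R}$ sufficiently large closes the bootstrap on $[0,T]$. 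Once this inclusion is in hand, the rescaling estimate of the previous paragraph applies at every relevant $X$ and gives the claimed bound $|X|\,|A_{\Sigma_{t}}(X)|\leq\mathcal{K}$.
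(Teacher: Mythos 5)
Your core mechanism --- applying the pseudolocality theorem at parabolic scale comparable to $\left|X\right|$, using scale invariance of $\mathcal{C}$ and the asymptotic condition to verify the Lipschitz-graph hypothesis at $t=0$ --- is exactly the paper's. The divergence, and the gap, is in your final covering step. You center the pseudolocality balls only at points $P\in\mathcal{C}$, so you must then show that every $X\in\Sigma_{t}$ with $\left|X\right|\geq\max\left\{ \mathcal{R},\Lambda\sqrt{t}\right\} $ lies in a conical tube of relative width $O(1/L)$ around $\mathcal{C}$ for all $t\in\left[0,T\right]$. That statement is essentially the preservation of the asymptotically conical property (Proposition \ref{asymptotically conical along MCF}), which the paper proves \emph{after} and \emph{using} the present proposition. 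Your "standard continuity bootstrap" is not enough as sketched: the trajectory-displacement estimate $\int_{0}^{t}\left|\vec{H}\right|\,ds\lesssim\mathcal{K}t/\left|X\right|$ only controls points that start in the far region and stay inside the covered tube. It does not rule out points of $\Sigma_{t}$ that are far from the origin but \emph{outside} the tube --- e.g. material migrating outward from the compact core $B_{\mathcal{R}}\left(O\right)$, where you have no speed bound at all. Excluding such points requires the avoidance principle for balls in the complement of the tube (this is Case 3 in the proof of Proposition \ref{asymptotically conical along MCF}), which your sketch never invokes. There is also a quantitative circularity to track: $\mathcal{K}\sim K_{0}L$ grows with $L$, so closing the displacement inequality $\mathcal{K}t/\left|X\right|\leq\left|X\right|/(10L)$ forces $\Lambda$ to be a larger multiple of $L$ than the $\Lambda=L$ you fixed; this is repairable but must be done explicitly.

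The paper sidesteps all of this with one structural choice: it lets the base point $P$ range over \emph{all} of $\mathbb{R}^{n+1}\setminus B_{\mathcal{R}}\left(O\right)$, not just over $\mathcal{C}$. The first condition in Definition \ref{asymptotically conical} guarantees that for every such $P$, the set $\Sigma_{0}\cap B_{\rho\left|P\right|}\left(P\right)$ is \emph{either empty or} a $\delta$-Lipschitz graph with controlled curvature, and Theorem \ref{pseudo-locality} applies in either case (vacuously in the empty case). The resulting estimate
\[
\sup_{P\in\mathbb{R}^{n+1}\setminus B_{\mathcal{R}}\left(O\right)}\,\sup_{0\leq t\leq\min\left\{ \left(\left|P\right|/\Lambda\right)^{2},T\right\} }\left|P\right|\,\left\Vert A_{\Sigma_{t}}\right\Vert _{L^{\infty}\left(B_{\left|P\right|/\left(10\Lambda\right)}\left(P\right)\right)}\leq\mathcal{K}
\]
then yields the conclusion simply by taking $P$ to be the point of $\Sigma_{t}$ at which one wants to bound the curvature --- no covering, no tube preservation, no bootstrap. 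I recommend you restructure your argument this way; otherwise you must supply the full tube-preservation argument (including the avoidance-principle step) before your covering claim is legitimate.
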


\begin{proof}
Let $\delta>0$ and $M,K\geq1$ be the constants in Theorem \ref{pseudo-locality}.
By the second condition in Definition \ref{regular cone}, we can
find $0<\rho<1$ so that for any $Q\in\partial B_{1}\left(O\right)$,
$\mathcal{C}\cap B_{2\rho}\left(Q\right)$ is either empty or a $\frac{\delta}{2}$-Lipschitz
graph (over a hyperplane) with 
\[
2\rho\left\Vert A_{\mathcal{C}}\right\Vert _{L^{\infty}\left(\mathcal{C}\cap B_{2\rho}\left(Q\right)\right)}\leq1.
\]

By the first condition in Definition \ref{asymptotically conical},
there is $\mathcal{R}\geq1$ so that for any $P\in\mathbb{R}^{n+1}\setminus B_{\mathcal{R}}\left(O\right)$,
$\frac{1}{\left|P\right|}\Sigma_{0}\cap B_{\rho}\left(\frac{P}{\left|P\right|}\right)$
is either empty or a $\delta$-Lipschitz graph with 
\[
\rho\left\Vert A_{\frac{1}{\left|P\right|}\Sigma_{0}}\right\Vert _{L^{\infty}\left(B_{\rho}\left(\frac{P}{\left|P\right|}\right)\right)}\leq1.
\]
By rescaling, it means that for any $P\in\mathbb{R}^{n+1}\setminus B_{\mathcal{R}}\left(O\right)$,
we have $\Sigma_{0}\cap B_{\rho\left|P\right|}\left(P\right)$ is
either empty or a $\delta$-Lipschitz graph with 
\[
\rho\left|P\right|\,\left\Vert A_{\Sigma_{0}}\right\Vert _{L^{\infty}\left(B_{\rho\left|P\right|}\left(P\right)\right)}\leq1.
\]

Now fix $P\in\mathbb{R}^{n+1}\setminus B_{\mathcal{R}}\left(O\right)$.
Note that 
\[
\tilde{\Sigma}_{0}=\left(\frac{\rho\left|P\right|}{M}\right)^{-1}\left(\Sigma_{0}-P\right)
\]
is either empty or a $\delta$-Lipschitz graph in $B_{M}\left(O\right)$
with $\left\Vert A_{\tilde{\Sigma}_{0}}\right\Vert _{L^{\infty}\left(B_{M}\left(O\right)\right)}\leq\frac{1}{M}\leq1$.
Applying Theorem \ref{pseudo-locality} to the MCF 
\[
\tilde{\Sigma}_{\tau}=\left(\frac{\rho\left|P\right|}{M}\right)^{-1}\left(\Sigma_{\left(\frac{\rho\left|P\right|}{M}\right)^{2}\tau}-P\right),\quad0\leq\tau\leq\min\left\{ 1,\left(\frac{\rho\left|P\right|}{M}\right)^{-2}T\right\} 
\]
gives 
\[
\sup_{0\leq t\leq\min\left\{ \left(\frac{\rho\left|P\right|}{M}\right)^{2},T\right\} }\left(\frac{\rho\left|P\right|}{M}\right)\left\Vert A_{\Sigma_{t}}\right\Vert _{L^{\infty}\left(B_{\frac{\rho\left|P\right|}{10M}}\left(P\right)\right)}\leq40K.
\]
By setting $\Lambda=\frac{M}{\rho}$ and $\mathcal{K}=\frac{40MK}{\rho}$
we obtain
\[
\sup_{P\in\mathbb{R}^{n+1}\setminus B_{\mathcal{R}}\left(O\right)}\,\sup_{0\leq t\leq\min\left\{ \left(\frac{\left|P\right|}{\Lambda}\right)^{2},T\right\} }\left|P\right|\,\left\Vert A_{\Sigma_{t}}\right\Vert _{L^{\infty}\left(B_{\frac{\left|P\right|}{10\Lambda}}\left(P\right)\right)}\leq\mathcal{K}.
\]
In particular, we have
\[
\sup_{0\leq t\leq T}\,\sup_{P\in\Sigma_{t}\setminus B_{\max\left\{ \mathcal{R},\Lambda\sqrt{t}\right\} }\left(O\right)}\left|P\right|\,\left|A_{\Sigma_{t}}\left(P\right)\right|\leq\mathcal{K}.
\]
\end{proof}
By virtue of the smooth estimates for MCF (cf. Section 3 in \cite{EH2}),
the corresponding estimates for higher order derivatives of the second
fundamental form follow at once from the preceding proposition. To
make the treatment comprehensive, below (in Proposition \ref{smooth estimates})
we include the statement and a proof of the smooth estimates for MCF.
The proof requires the following maximum principle (cf. Proposition
3.17 in \cite{E}). Readers who are familiar with the smooth estimates
for MCF may skip the proof (including Lemma \ref{Ecker-Huisken maximum principle}).
\begin{lem}
\label{Ecker-Huisken maximum principle}Let $\left\{ \Sigma_{t}\right\} _{0\leq t\leq T}$
be a MCF in $B_{1}\left(O\right)$, where $T>0$ is a constant, and
$f$ be a non-negative function on the flow satisfying 
\[
\left(\partial_{t}-\triangle_{\Sigma_{t}}\right)f\leq-\alpha f^{2}+\beta
\]
for some constants $\alpha>0$ and $\beta\geq0$. Then we have 
\[
\sup_{0\leq t\leq T}\,\,\sup_{\Sigma_{t}\cap B_{\frac{1}{2}}\left(O\right)}f\,\leq\,64\,\max\left\{ \sup_{\Sigma_{0}\cap B_{1}\left(O\right)}f,\,\,\,\frac{C\left(n\right)}{\alpha}+\sqrt{\frac{\beta}{\alpha}}\right\} 
\]
and 
\[
\sup_{\frac{T}{2}\leq t\leq T}\,\,\sup_{\Sigma_{t}\cap B_{\frac{1}{2}}\left(O\right)}f\,\leq\,\frac{128}{27}\left[\frac{C\left(n\right)}{\alpha}\left(1+\frac{1}{T}\right)+\sqrt{\frac{\beta}{\alpha}}\,\right].
\]
\end{lem}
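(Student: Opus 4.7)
The plan is to implement an Ecker--Huisken style localized parabolic maximum principle: multiply $f$ by a spatially compactly supported cutoff that is well-adapted to MCF, compute its evolution, and close the estimate at any interior space-time maximum of the product.

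First I would introduce the cutoff
\[ \phi(X,t) = (1 - |X|^{2} - 2nt)_{+}. \]
Under MCF the identity $(\partial_{t} - \triangle_{\Sigma_{t}})|X|^{2} = -2n$ gives $(\partial_{t} - \triangle_{\Sigma_{t}})\phi = 0$ wherever $\phi > 0$, so for any integer $k \ge 2$,
\[ (\partial_{t} - \triangle_{\Sigma_{t}})\phi^{k} = -k(k-1)\phi^{k-2}|\nabla_{\Sigma_{t}}\phi|^{2}, \qquad |\nabla_{\Sigma_{t}}\phi|^{2} = 4|X^{\top}|^{2} \le 4. \]
I would then set $g = \phi^{k} f$ (the choice $k=2$ should suffice) and compute its evolution via the product rule. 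The awkward cross term $-2\nabla_{\Sigma_{t}}\phi^{k}\cdot\nabla_{\Sigma_{t}} f$ becomes tractable at a spatial maximum of $g$: there $\nabla_{\Sigma_{t}} g = 0$ forces $\nabla_{\Sigma_{t}} f = -(k/\phi) f\,\nabla_{\Sigma_{t}}\phi$, converting the cross term into a positive multiple of $\phi^{k-2}|\nabla_{\Sigma_{t}}\phi|^{2} f$. Combining with the hypothesis $(\partial_{t}-\triangle_{\Sigma_{t}})f \le -\alpha f^{2} + \beta$ and multiplying through by $\phi^{k}$ should yield, at such a point,
\[ \phi^{k}(\partial_{t} - \triangle_{\Sigma_{t}}) g \le -\alpha g^{2} + C(n) g + \beta. \]

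At any interior space-time maximum of $g$ with $\phi > 0$ the left-hand side is non-negative, so the quadratic formula produces $g \le C(n)/\alpha + \sqrt{\beta/\alpha}$. Together with the initial bound $g|_{t=0} = (1 - |X|^{2})^{2} f \le \sup_{\Sigma_{0}\cap B_{1}(O)} f$ and the vanishing of $g$ where $\phi = 0$, the parabolic maximum principle (cf.~Chapter 2 in \cite{M}) then forces
\[ g \le \max\bigl\{\,\sup_{\Sigma_{0}\cap B_{1}(O)} f,\; C(n)/\alpha + \sqrt{\beta/\alpha}\,\bigr\} \]
throughout the flow. On $\Sigma_{t}\cap B_{1/2}(O)$ for $t$ in an initial interval on which $\phi \ge c(n) > 0$ this translates into the first inequality, with the explicit factor $64$ emerging from $1/\inf_{B_{1/2}(O)}\phi^{k}$; a time-iteration on sub-intervals of length $\sim 1/n$ then extends the estimate to arbitrary $T$, since once the equilibrium term dominates the iteration absorbs the previous-step bound. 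For the second inequality I would replace $g$ by a time-weighted variant such as $\tilde{g} = \min(t, T/2)\,\phi^{k} f$: its vanishing at $t=0$ removes the initial-data contribution entirely, while the derivative of the time weight injects an $O(1/T)$ correction, yielding the factor $C(n)\,\alpha^{-1}(1 + 1/T)$.

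The main technical obstacle I anticipate is the careful algebraic bookkeeping at the critical maximum step so that the quadratic inequality for $g$ closes cleanly (which pins down the exponent $k=2$ and the numerical constant $C(n)$), together with tracking constants through the time iteration to recover the explicit factors $64$ and $128/27$ in the two stated bounds. The underlying parabolic maximum principle calculation is otherwise routine.
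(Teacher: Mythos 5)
Your overall strategy (localize $f$ with an MCF-adapted cutoff, exploit $\nabla_{\Sigma_t}g=0$ at a spatial maximum to tame the cross term, and close a quadratic inequality at an interior space-time maximum) is the same Ecker--Huisken scheme the paper uses. However, there is a genuine gap in your choice of cutoff. With $\phi=(1-|X|^2-2nt)_+$ the support of $\phi$ shrinks at the fixed rate $2n$, so $\phi$ vanishes identically on $B_{1/2}(O)$ as soon as $t>\tfrac{3}{8n}$; for large $T$ your argument therefore only controls $f$ on a short initial time interval. The time-iteration you propose to repair this does not close as described, for two reasons. First, each restart of the argument on a new sub-interval needs the bound on $f$ over all of $\Sigma_{t_i}\cap B_1(O)$ as "initial data," but the previous step only controls $f$ on a strictly smaller ball (where the old cutoff was bounded below), so the spatial domain of validity shrinks at every step and is exhausted after $O(nT)$ steps. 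Second, even ignoring that, each step multiplies the bound by the factor $1/\inf\phi^k$, so the constants compound geometrically in the number of steps; the claim that "the equilibrium term dominates" and absorbs the previous bound is not justified by the differential inequality, which gives no decay of $f$ back below $\sqrt{\beta/\alpha}$ through a cutoff-localized maximum.

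The paper's fix is precisely at this point: it takes $\eta_1(X,t)=\bigl(1-|X|^2-\tfrac{t}{2\max\{T,1\}}\bigr)_+^3$, slowing the temporal decay of the cutoff proportionally to $T$ so that $\eta_1\geq\tfrac{1}{64}$ on $B_{1/2}(O)$ for \emph{all} $t\in[0,T]$ (whence the factor $64$ from the cubic power). The price is that $\eta_1$ is no longer a supersolution of the heat operator; instead it satisfies $(\partial_t-\triangle_{\Sigma_t})\eta_1+2|\nabla_{\Sigma_t}\eta_1|^2/\eta_1\leq C(n)$, and the resulting error enters only as a linear term $C(n)f$, which is exactly what produces the $C(n)/\alpha$ in the stated bound. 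The second estimate is handled analogously with $\eta_2=\tfrac{t}{T}(1-|X|^2)_+^3$, whose time derivative contributes the $C(n)(1+1/T)$; your weight $\min(t,T/2)$ has derivative $O(1)$ rather than $O(1/T)$ and is not normalized, and it still carries the defective spatial factor $\phi^k$. You should replace your cutoff by one whose temporal decay rate is adapted to $T$ and accept the bounded error term, rather than iterate.
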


\begin{proof}
Consider the cut-off function 
\[
\eta_{1}\left(X,t\right)=\left(1-\left|X\right|^{2}-\frac{t}{2\max\left\{ T,1\right\} }\right)_{+}^{3},
\]
which satisfies 
\begin{equation}
\left(\partial_{t}-\triangle_{\Sigma_{t}}\right)\left[\eta_{1}\left(X,t\right)\right]\,+\,2\frac{\left|\nabla_{\Sigma_{t}}\left[\eta_{1}\left(X,t\right)\right]\right|^{2}}{\eta_{1}\left(X,t\right)}\leq C\left(n\right)\quad\forall\,\,\,t\in\left[0,T\right].\label{MP: evolution of cut-off}
\end{equation}
By the product rule we have
\begin{equation}
\left(\partial_{t}-\triangle_{\Sigma_{t}}\right)\left[\eta_{1}\left(X,t\right)\,f\right]\label{MP: product rule}
\end{equation}
\[
\leq\,\eta_{1}\left(X,t\right)\,\left(-\alpha f^{2}+\beta\right)\,+\,f\,\left(\partial_{t}-\triangle_{\Sigma_{t}}\right)\left[\eta_{1}\left(X,t\right)\right]\,-\,2\,\nabla_{\Sigma_{t}}\left[\eta_{1}\left(X,t\right)\right]\,\cdot\nabla_{\Sigma_{t}}f.
\]
Substituting 
\[
\nabla_{\Sigma_{t}}f=\frac{1}{\eta_{1}\left(X,t\right)}\left\{ \nabla_{\Sigma_{t}}\left[\eta_{1}\left(X,t\right)f\right]-f\,\nabla_{\Sigma_{t}}\left[\eta_{1}\left(X,t\right)\right]\right\} 
\]
and Eq. (\ref{MP: evolution of cut-off}) in Eq. (\ref{MP: product rule})
gives
\begin{equation}
\left(\partial_{t}-\triangle_{\Sigma_{t}}\right)\left[\eta_{1}\left(X,t\right)\,f\right]\label{MP: localized eq}
\end{equation}
\[
\leq\,-2\,\nabla_{\Sigma_{t}}\left[\ln\eta_{1}\left(X,t\right)\right]\,\cdot\nabla_{\Sigma_{t}}\left[\eta_{1}\left(X,t\right)\,f\right]\,+\,\eta_{1}\left(X,t\right)\,\left(-\alpha f^{2}+\beta\right)\,+\,C\left(n\right)f.
\]
Let 
\[
M_{1}=\sup_{0\leq t\leq T}\,\sup_{\Sigma_{t}}\left[\eta_{1}\left(X,t\right)f\right].
\]
Then either 
\[
M_{1}\leq\max_{X\in\Sigma_{0}}\left[\eta_{1}\left(X,0\right)\left.f\right|_{t=0}\right],
\]
or else 
\[
M_{1}>\max_{X\in\Sigma_{0}}\left[\eta_{1}\left(X,0\right)\left.f\right|_{t=0}\right].
\]
In the latter case the maximum must be attained at some interior points,
say $\left(P,t_{0}\right)$, i.e.
\[
\eta_{1}\left(P,t_{0}\right)f\left(P,t_{0}\right)=M_{1}\quad\textrm{with}\,\,\,0<t_{0}\leq T,\,\,P\in\Sigma_{t_{0}}\cap\,\textrm{spt }\eta_{1}\left(\cdot,t_{0}\right).
\]
On substituting $\left(P,t_{0}\right)$ for $\left(X,t\right)$ in
Eq. (\ref{MP: localized eq}) we obtain
\[
0\leq-\alpha M_{1}^{2}+C\left(n\right)M_{1}+\beta.
\]
It follows that 
\[
M_{1}\leq\frac{C\left(n\right)+\sqrt{C^{2}\left(n\right)+4\alpha\beta}}{2\alpha}\leq\frac{C\left(n\right)}{\alpha}+\sqrt{\frac{\beta}{\alpha}}.
\]
The first estimate in the lemma follows immediately by noting that
$\frac{1}{64}\leq\eta_{1}\left(X,t\right)\leq1$ for $X\leq\frac{1}{2}$,
$0\leq t\leq T$.

Likewise, if we consider the cut-off function $\eta_{2}\left(X,t\right)=\frac{t}{T}\left(1-\left|X\right|^{2}\right)_{+}^{3}$,
which satisfies 
\[
\left(\partial_{t}-\triangle_{\Sigma_{t}}\right)\left[\eta_{2}\left(X,t\right)\right]+2\frac{\left|\nabla_{\Sigma_{t}}\left[\eta_{2}\left(X,t\right)\right]\right|^{2}}{\eta_{2}\left(X,t\right)}\leq C\left(n\right)\left(1+\frac{1}{T}\right)\quad\,\forall\,\,t\in\left(0,T\right],
\]
then by the same reasoning as that for $\eta_{1}\left(X,t\right)f$,
we infer that 
\[
\left(\partial_{t}-\triangle_{\Sigma_{t}}\right)\left[\eta_{2}\left(X,t\right)\,f\right]\leq\,-2\,\nabla_{\Sigma_{t}}\left[\ln\eta_{2}\left(X,t\right)\right]\,\cdot\nabla_{\Sigma_{t}}\left[\eta_{2}\left(X,t\right)\,f\right]
\]
\[
+\,\eta_{2}\left(X,t\right)\,\left(-\alpha f^{2}+\beta\right)\,+\,C\left(n\right)\left(1+\frac{1}{T}\right)f.
\]
It follows that 
\[
M_{2}=\sup_{0\leq t\leq T}\,\sup_{\Sigma_{t}}\left[\eta_{2}\left(X,t\right)f\right]
\]
either is zero or else satisfies $0\leq-\alpha M_{1}^{2}+C\left(n,T\right)M_{1}+\beta.$
In the latter case we get 
\[
M_{2}\leq\frac{C\left(n\right)\left(1+\frac{1}{T}\right)+\sqrt{C^{2}\left(n\right)\left(1+\frac{1}{T}\right)^{2}+4\alpha\beta}}{2\alpha}\leq\frac{C\left(n\right)\left(1+\frac{1}{T}\right)}{\alpha}+\sqrt{\frac{\beta}{\alpha}}.
\]
Therefore, the second estimate in the lemma follows in view of the
fact that $\frac{27}{128}\leq\eta_{2}\left(X,t\right)\leq1$ for $X\leq\frac{1}{2}$,
$\frac{T}{2}\leq t\leq T$.
\end{proof}
Below are the smooth estimates for MCF (cf. Section 3 in \cite{EH2}
or Chapter 3 in \cite{E}).
\begin{prop}
\label{smooth estimates}Let $\left\{ \Sigma_{t}\right\} _{0\leq t\leq T}$
be a MCF in $B_{1}\left(O\right)$, where $T>0$ is a constant, satisfying
\[
\sup_{0\leq t\leq T}\left\Vert A_{\Sigma_{t}}\right\Vert _{L^{\infty}\left(B_{1}\left(O\right)\right)}\leq K
\]
for some constant $K>0$. Then for every $k\in\mathbb{N}$ we have
\[
\sup_{0\leq t\leq T}\left\Vert \nabla_{\Sigma_{t}}^{k}A_{\Sigma_{t}}\right\Vert _{L^{\infty}\left(B_{\frac{1}{2}}\left(O\right)\right)}\leq C\left(n,k,K,\left\Vert \nabla_{\Sigma_{0}}A_{\Sigma_{0}}\right\Vert _{L^{\infty}\left(B_{1}\left(O\right)\right)},\cdots,\left\Vert \nabla_{\Sigma_{0}}^{k}A_{\Sigma_{0}}\right\Vert _{L^{\infty}\left(B_{1}\left(O\right)\right)}\right)
\]
and 
\[
\sup_{\frac{T}{2}\leq t\leq T}\left\Vert \nabla_{\Sigma_{t}}^{k}A_{\Sigma_{t}}\right\Vert _{L^{\infty}\left(B_{\frac{1}{2}}\left(O\right)\right)}\leq C\left(n,k,K,T\right).
\]
\end{prop}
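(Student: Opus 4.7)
The plan is to proceed by induction on $k$, using at each step the parabolic evolution equation for $|\nabla_{\Sigma_{t}}^{k}A_{\Sigma_{t}}|^{2}$ together with the maximum principle (Lemma \ref{Ecker-Huisken maximum principle}). For $k=0$ the bound is the hypothesis. Assume inductively that for all $j\leq k-1$ we have $L^{\infty}$ bounds on $\nabla_{\Sigma_{t}}^{j}A_{\Sigma_{t}}$ on a ball $B_{r_{k-1}}(O)$ with $r_{k-1}\in(\tfrac{1}{2},1]$, with constants of both advertised types (first depending on initial higher-order data, second independent of it).

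A Simons-type computation, obtained by repeatedly commuting $\nabla_{\Sigma_{t}}$ with $\partial_{t}-\triangle_{\Sigma_{t}}$ and applying the Codazzi identity (cf.\ Chapter 3 in \cite{E}), gives the evolution inequality
$$(\partial_{t}-\triangle_{\Sigma_{t}})\,|\nabla_{\Sigma_{t}}^{k}A_{\Sigma_{t}}|^{2}\leq-2\,|\nabla_{\Sigma_{t}}^{k+1}A_{\Sigma_{t}}|^{2}+C(n,k)\sum_{i+j+\ell=k}|\nabla_{\Sigma_{t}}^{i}A_{\Sigma_{t}}|\,|\nabla_{\Sigma_{t}}^{j}A_{\Sigma_{t}}|\,|\nabla_{\Sigma_{t}}^{\ell}A_{\Sigma_{t}}|\,|\nabla_{\Sigma_{t}}^{k}A_{\Sigma_{t}}|.$$
Since $i+j+\ell=k$ with each index non-negative, at most one of $i,j,\ell$ can equal $k$; the corresponding term is bounded by $K^{2}|\nabla_{\Sigma_{t}}^{k}A_{\Sigma_{t}}|^{2}$, while every remaining term involves only $\nabla_{\Sigma_{t}}^{j}A_{\Sigma_{t}}$ for $j\leq k-1$ and is controlled by the induction hypothesis. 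Consequently, on $B_{r_{k-1}}(O)$,
$$(\partial_{t}-\triangle_{\Sigma_{t}})\,|\nabla_{\Sigma_{t}}^{k}A_{\Sigma_{t}}|^{2}\leq-2\,|\nabla_{\Sigma_{t}}^{k+1}A_{\Sigma_{t}}|^{2}+C_{k}\bigl(1+|\nabla_{\Sigma_{t}}^{k}A_{\Sigma_{t}}|^{2}\bigr),$$
with $C_{k}$ depending on $n,k,K$ and the inductively established derivative bounds.

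To put this into the form $(\partial_{t}-\triangle)\,f\leq-\alpha f^{2}+\beta$ required by Lemma \ref{Ecker-Huisken maximum principle}, I would follow Ecker--Huisken \cite{EH2} and consider the auxiliary function $f_{k}=|\nabla_{\Sigma_{t}}^{k}A_{\Sigma_{t}}|^{2}+B_{k}\,|\nabla_{\Sigma_{t}}^{k-1}A_{\Sigma_{t}}|^{4}$ with $B_{k}$ large. Expanding the evolution of the lower-order piece produces (after Kato's inequality and Cauchy--Schwarz) a negative contribution of the shape $-c\,B_{k}\,|\nabla_{\Sigma_{t}}^{k-1}A_{\Sigma_{t}}|^{2}\,|\nabla_{\Sigma_{t}}^{k}A_{\Sigma_{t}}|^{2}$ coming from the gradient term, while the surviving $-2|\nabla_{\Sigma_{t}}^{k+1}A_{\Sigma_{t}}|^{2}$ dominates the linear-in-$|\nabla_{\Sigma_{t}}^{k}A_{\Sigma_{t}}|^{2}$ term after interpolating against the inductively bounded $|\nabla_{\Sigma_{t}}^{k-1}A_{\Sigma_{t}}|$. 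Choosing $B_{k}$ large enough packages the right-hand side as $-\alpha_{k}f_{k}^{2}+\beta_{k}$ on a ball slightly smaller than $B_{r_{k-1}}(O)$. Applying Lemma \ref{Ecker-Huisken maximum principle} with the cutoff $\eta_{1}$ then yields the first estimate (whose bound on $\sup_{\Sigma_{0}}f_{k}$ depends on the initial higher-order derivatives), and with $\eta_{2}$ yields the second estimate (independent of initial data but carrying the $1+\tfrac{1}{T}$ factor). A telescoping choice of radii $r_{k}=\tfrac{1}{2}+2^{-k-1}$, together with a rescaling/covering at the end to recover the fixed radius $\tfrac{1}{2}$, affects only the implicit constants.

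The main obstacle is the algebraic bookkeeping behind the construction of $f_{k}$: making a single choice of $B_{k}$ that simultaneously absorbs the linear-in-$|\nabla_{\Sigma_{t}}^{k}A_{\Sigma_{t}}|^{2}$ and the mixed gradient cross terms into the dissipation $-2|\nabla_{\Sigma_{t}}^{k+1}A_{\Sigma_{t}}|^{2}$ and the $-c B_{k}|\nabla_{\Sigma_{t}}^{k-1}A_{\Sigma_{t}}|^{2}|\nabla_{\Sigma_{t}}^{k}A_{\Sigma_{t}}|^{2}$ gained from the lower-order summand. This is a standard but delicate computation in Ecker--Huisken theory; no ideas beyond Lemma \ref{Ecker-Huisken maximum principle}, Kato's inequality, and interpolation are needed.
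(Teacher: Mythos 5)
Your overall architecture --- induction on $k$, a Simons-type evolution inequality for $|\nabla_{\Sigma_{t}}^{k}A_{\Sigma_{t}}|^{2}$, an auxiliary function fed into Lemma \ref{Ecker-Huisken maximum principle} --- is exactly the paper's, but the key step fails: your auxiliary function $f_{k}=|\nabla_{\Sigma_{t}}^{k}A_{\Sigma_{t}}|^{2}+B_{k}|\nabla_{\Sigma_{t}}^{k-1}A_{\Sigma_{t}}|^{4}$ cannot be shown to satisfy $(\partial_{t}-\triangle_{\Sigma_{t}})f_{k}\leq-\alpha f_{k}^{2}+\beta$, and both mechanisms you offer for producing the $-\alpha f_{k}^{2}$ term are unsound. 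Writing $g=|\nabla^{k-1}A|^{2}$ and $h=|\nabla^{k}A|^{2}$, the negative contribution you extract from the lower-order summand has the shape $-cB_{k}gh$ (it actually comes from the reaction term $2g\cdot(-2h)$ in the evolution of $g^{2}$, not from the gradient term $-2B_{k}|\nabla g|^{2}$, which can vanish at a spatial maximum of $g$). Since $f_{k}^{2}\geq h^{2}$ while $-cB_{k}gh$ degenerates wherever $g$ is small and $h$ need not be, no choice of $B_{k}$ turns this into $-\alpha f_{k}^{2}$. Your other claim, that $-2|\nabla^{k+1}A|^{2}$ dominates the term linear in $h$ after interpolation, is a pointwise assertion that is simply false: interpolation inequalities are integral, whereas the maximum-principle argument here is pointwise. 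Without the quadratic decay, Lemma \ref{Ecker-Huisken maximum principle} does not apply --- at the interior maximum of the cut-off quantity one would only obtain $0\leq C(n)M+\beta$, which gives no bound --- and the second, initial-data-independent estimate is unreachable in principle, since it is precisely the $-\alpha f^{2}$ structure that produces the smoothing effect.

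The fix is the paper's (Ecker--Huisken's) choice of a \emph{product} rather than a sum: $f=\left(7\bar{K}_{k-1}^{2}+|\nabla^{k-1}A|^{2}\right)|\nabla^{k}A|^{2}$, where $\bar{K}_{k-1}$ is the inductively known bound on $|\nabla^{k-1}A|$. There the reaction term $-2|\nabla^{k}A|^{2}$ from the evolution of the first factor, multiplied by the second factor, yields $-2|\nabla^{k}A|^{4}\leq-\alpha f^{2}$ with $\alpha\sim\bar{K}_{k-1}^{-4}$, while the dissipation $-2\left(7\bar{K}_{k-1}^{2}+|\nabla^{k-1}A|^{2}\right)|\nabla^{k+1}A|^{2}$ absorbs, after Cauchy--Schwarz, the cross term $-2\nabla|\nabla^{k-1}A|^{2}\cdot\nabla|\nabla^{k}A|^{2}$ (this is what the constant $7$ is calibrated for). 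With that substitution the rest of your outline --- the induction, the shrinking radii, and the two applications of Lemma \ref{Ecker-Huisken maximum principle} with the cutoffs $\eta_{1}$ and $\eta_{2}$ --- goes through as in the paper.
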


\begin{proof}
We will illustrate the idea by presenting the estimate for the first
derivative; estimates for all the other higher order derivatives follow
from a similar argument so are omitted. 

Recall that 
\begin{equation}
\left(\partial_{t}-\triangle_{\Sigma_{t}}\right)\left|\nabla_{\Sigma_{t}}A_{\Sigma_{t}}\right|^{2}=-2\left|\nabla_{\Sigma_{t}}^{2}A_{\Sigma_{t}}\right|^{2}+A_{\Sigma_{t}}\ast A_{\Sigma_{t}}\ast\nabla_{\Sigma_{t}}A_{\Sigma_{t}}\ast\nabla_{\Sigma_{t}}A_{\Sigma_{t}}\label{SE: evolution of derivative of curvatures}
\end{equation}
\[
\leq-2\left|\nabla_{\Sigma_{t}}^{2}A_{\Sigma_{t}}\right|^{2}+C\left(n\right)\left|A_{\Sigma_{t}}\right|^{2}\left|\nabla_{\Sigma_{t}}A_{\Sigma_{t}}\right|^{2},
\]
where the notation $\ast$ means some form of contraction of tensors
(cf. Section 2.3 in \cite{M}). As in Proof of Proposition 3.22 in
\cite{E}, let us consider the function 
\[
f=\left(7\bar{K}^{2}+\left|A_{\Sigma_{t}}\right|^{2}\right)\left|\nabla_{\Sigma_{t}}A_{\Sigma_{t}}\right|^{2},
\]
where $\bar{K}=\max\left\{ K,1\right\} $. Using the product rule
and Eqs. (\ref{LBC: evolution of curvature}) and (\ref{SE: evolution of derivative of curvatures})
we get
\begin{equation}
\left(\partial_{t}-\triangle_{\Sigma_{t}}\right)f\leq\left(7\bar{K}^{2}+\left|A_{\Sigma_{t}}\right|^{2}\right)\left(-2\left|\nabla_{\Sigma_{t}}^{2}A_{\Sigma_{t}}\right|^{2}+C\left(n\right)\left|A_{\Sigma_{t}}\right|^{2}\left|\nabla_{\Sigma_{t}}A_{\Sigma_{t}}\right|^{2}\right)\label{SE: eq of f}
\end{equation}
\[
+\left(-2\left|\nabla_{\Sigma_{t}}A_{\Sigma_{t}}\right|^{2}+2\left|A_{\Sigma_{t}}\right|^{4}\right)\left|\nabla_{\Sigma_{t}}A_{\Sigma_{t}}\right|^{2}-2\,\nabla_{\Sigma_{t}}\left|A_{\Sigma_{t}}\right|^{2}\cdot\nabla_{\Sigma_{t}}\left|\nabla_{\Sigma_{t}}A_{\Sigma_{t}}\right|^{2}
\]
By Cauchy-Schwarz inequality, the last term in the above can be estimated
as follows:
\begin{equation}
-2\,\nabla_{\Sigma_{t}}\left|A_{\Sigma_{t}}\right|^{2}\cdot\nabla_{\Sigma_{t}}\left|\nabla_{\Sigma_{t}}A_{\Sigma_{t}}\right|^{2}\,\leq\,8\left|\nabla_{\Sigma_{t}}A_{\Sigma_{t}}\right|^{2}\left|A_{\Sigma_{t}}\right|\,\left|\nabla_{\Sigma_{t}}^{2}A_{\Sigma_{t}}\right|\label{SE: CS ineq}
\end{equation}
\[
\leq\left|\nabla_{\Sigma_{t}}A_{\Sigma_{t}}\right|^{4}\,+\,16\left|A_{\Sigma_{t}}\right|^{2}\left|\nabla_{\Sigma_{t}}^{2}A_{\Sigma_{t}}\right|^{2}.
\]
On substituting Eq. (\ref{SE: CS ineq}) in Eq. (\ref{SE: eq of f})
and using the condition that $\left|A_{\Sigma_{t}}\right|\leq\bar{K}$
and 
\[
7\bar{K}^{2}\left|\nabla_{\Sigma_{t}}A_{\Sigma_{t}}\right|^{2}\leq f\leq8\bar{K}^{2}\left|\nabla_{\Sigma_{t}}A_{\Sigma_{t}}\right|^{2},
\]
we obtain
\[
\left(\partial_{t}-\triangle_{\Sigma_{t}}\right)f\leq-\left|\nabla_{\Sigma_{t}}A_{\Sigma_{t}}\right|^{4}\,+\,2\left[4C\left(n\right)+1\right]\bar{K}^{4}\left|\nabla_{\Sigma_{t}}A_{\Sigma_{t}}\right|^{2}
\]
\[
\leq\frac{-1}{64\bar{K}^{2}}f^{2}\,+\,\frac{2}{7}\left[4C\left(n\right)+1\right]\bar{K}^{2}f\,\,\,\leq\frac{-1}{128\bar{K}^{2}}f^{2}\,+\,32\left(\frac{2}{7}\left[4C\left(n\right)+1\right]\right)^{2}\bar{K}^{4}.
\]
It follows from Lemma \ref{Ecker-Huisken maximum principle} that
\[
\sup_{0\leq t\leq T}\left\Vert \nabla_{\Sigma_{t}}A_{\Sigma_{t}}\right\Vert _{L^{\infty}\left(B_{\frac{1}{2}}\left(O\right)\right)}\leq C\left(n,K,\left\Vert \nabla_{\Sigma_{t}}A_{\Sigma_{0}}\right\Vert _{L^{\infty}\left(B_{1}\left(O\right)\right)}\right)
\]
and
\[
\sup_{\frac{T}{2}\leq t\leq T}\left\Vert \nabla_{\Sigma_{t}}A_{\Sigma_{t}}\right\Vert _{L^{\infty}\left(B_{\frac{1}{2}}\left(O\right)\right)}\leq C\left(n,K,T\right).
\]
\end{proof}
As a corollary of Propositions \ref{spatial curvature estimate} and
\ref{smooth estimates}, we have the following estimates for the derivatives
of curvature for any order.
\begin{cor}
\label{spatial smooth estimates}Under the hypothesis in Proposition
\ref{spatial curvature estimate}, for every $k\in\mathbb{N}$, there
exist constants $\Lambda,\mathcal{R}\geq1$ depending on $n$, $k$,
$\mathcal{C}$, and $\Sigma_{0}$ so that 
\[
\sup_{0\leq t\leq T}\left\Vert \,\left|X\right|^{k+1}\nabla_{\Sigma_{t}}^{k}A_{\Sigma_{t}}\right\Vert _{L^{\infty}\left(\mathbb{R}^{n+1}\setminus B_{\max\left\{ \mathcal{R},\Lambda\sqrt{t}\right\} }\left(O\right)\right)}\leq C\left(n,k,\mathcal{C},\Sigma_{0}\right).
\]
\end{cor}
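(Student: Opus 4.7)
The plan is to combine the $L^{\infty}$ curvature bound of Proposition \ref{spatial curvature estimate} with the smooth estimates of Proposition \ref{smooth estimates} via a parabolic rescaling centered at each faraway point $P$. Proposition \ref{smooth estimates} upgrades a curvature bound to bounds on $\nabla^{k}A$, but its $T$-independent (first) estimate requires control of the corresponding derivatives of $A_{\Sigma_{0}}$ on the initial hypersurface. The first task is therefore to establish an analogous initial estimate $\left|X\right|^{j+1}\left|\nabla_{\Sigma_{0}}^{j}A_{\Sigma_{0}}(X)\right|\leq C(n,j,\mathcal{C},\Sigma_{0})$ on $\Sigma_{0}$ outside some large ball $B_{\mathcal{R}}(O)$, for every $j\leq k$.

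For this initial estimate I would appeal to the $C_{loc}^{\infty}$ convergence $\frac{1}{R}\Sigma_{0}\to\mathcal{C}$ in $\mathbb{R}^{n+1}\setminus\{O\}$ granted by the first clause of Definition \ref{asymptotically conical}. Restricting the convergence to a fixed annulus around the unit sphere yields uniform $C^{j}$ bounds on $\frac{1}{R}\Sigma_{0}$; rescaling back by the factor $R=|P|$ and using the homogeneity $\nabla^{j}A\sim\mathrm{(length)}^{-(j+1)}$ produces the desired pointwise bound on $\Sigma_{0}$ for all $|P|\geq\mathcal{R}$, where $\mathcal{R}$ now depends on $k$.

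Next, for any $P$ with $|P|\geq\max\{\mathcal{R},\Lambda\sqrt{t}\}$ (possibly enlarging the constants from Proposition \ref{spatial curvature estimate}), set $\lambda=|P|/(10\Lambda)$ and form the rescaled flow
\[
\tilde{\Sigma}_{\tau}=\lambda^{-1}\bigl(\Sigma_{\lambda^{2}\tau}-P\bigr),\qquad 0\leq\tau\leq T_{0},
\]
with $T_{0}=\min\{100,\,T/\lambda^{2}\}$. Proposition \ref{spatial curvature estimate} then implies $\left\Vert A_{\tilde{\Sigma}_{\tau}}\right\Vert _{L^{\infty}(B_{1}(O))}\leq\mathcal{K}/(10\Lambda)$ throughout this rescaled interval (the original time restriction $t\leq(|P|/\Lambda)^{2}$ becomes $\tau\leq 100$), while the initial estimate from the previous paragraph gives $\left\Vert \nabla_{\tilde{\Sigma}_{0}}^{j}A_{\tilde{\Sigma}_{0}}\right\Vert _{L^{\infty}(B_{1}(O))}\leq C(n,j,\mathcal{C},\Sigma_{0})$ for every $j\leq k$ by the same scaling homogeneity.

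Applying the first estimate of Proposition \ref{smooth estimates} to $\{\tilde{\Sigma}_{\tau}\}_{0\leq\tau\leq T_{0}}$ then produces a uniform bound $\left\Vert \nabla_{\tilde{\Sigma}_{\tau}}^{k}A_{\tilde{\Sigma}_{\tau}}\right\Vert _{L^{\infty}(B_{1/2}(O))}\leq C(n,k,\mathcal{C},\Sigma_{0})$; evaluating at $\tau=t/\lambda^{2}\in[0,T_{0}]$ and at the point $O$, and reversing the scaling via the identity $\nabla_{\tilde{\Sigma}_{\tau}}^{k}A_{\tilde{\Sigma}_{\tau}}(O)=\lambda^{k+1}\nabla_{\Sigma_{t}}^{k}A_{\Sigma_{t}}(P)$, yields $|P|^{k+1}\left|\nabla_{\Sigma_{t}}^{k}A_{\Sigma_{t}}(P)\right|\leq(10\Lambda)^{k+1}C$, as desired. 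The only nontrivial bookkeeping is that Proposition \ref{smooth estimates} requires simultaneous initial bounds on \emph{all} lower-order derivatives, but this is exactly what the initial step supplies; hence no substantive obstacle remains beyond checking the scalings.
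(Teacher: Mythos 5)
Your proposal is correct and follows essentially the same route as the paper: both first extract the scale-invariant initial bounds $|X|^{j+1}|\nabla_{\Sigma_0}^{j}A_{\Sigma_0}|\leq C$ from the locally smooth convergence of $\frac{1}{R}\Sigma_0$ to the cone, then rescale by $\lambda=|P|/(10\Lambda)$ about each faraway point $P$ and apply the $T$-independent estimate of Proposition \ref{smooth estimates} on the time interval $[0,\min\{100,T/\lambda^{2}\}]$, exactly as in the paper's argument. The only cosmetic caveat is that the curvature bound on the full ball $B_{|P|/(10\Lambda)}(P)$ for all admissible times comes from the intermediate estimate established inside the proof of Proposition \ref{spatial curvature estimate} rather than from its stated conclusion, but the paper invokes it in the same way.
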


\begin{proof}
We will only present the estimate for $k=1$, as all other cases follow
from a similar argument.

In the same setting as in the proof of Proposition \ref{spatial curvature estimate}
(with a possibly larger $\mathcal{R}$), we may assume that for any
$P\in\mathbb{R}^{n+1}\setminus B_{\mathcal{R}}\left(O\right)$, 
\[
\frac{1}{\left|P\right|}\Sigma_{0}\cap B_{\rho}\left(\frac{P}{\left|P\right|}\right)
\]
is either empty or a $\delta$-Lipschitz graph with 
\[
\left|P\right|\,\left\Vert A_{\Sigma_{0}}\right\Vert _{L^{\infty}\left(B_{\rho\left|P\right|}\left(P\right)\right)}=\left\Vert A_{\frac{1}{\left|P\right|}\Sigma_{0}}\right\Vert _{L^{\infty}\left(B_{\rho}\left(\frac{P}{\left|P\right|}\right)\right)}\leq\frac{1}{\rho},
\]
\[
\left|P\right|^{2}\left\Vert \nabla_{\Sigma_{0}}A_{\Sigma_{0}}\right\Vert _{L^{\infty}\left(B_{\rho\left|P\right|}\left(P\right)\right)}=\left\Vert \nabla_{\frac{1}{\left|P\right|}\Sigma_{0}}A_{\frac{1}{\left|P\right|}\Sigma_{0}}\right\Vert _{L^{\infty}\left(B_{\rho}\left(\frac{P}{\left|P\right|}\right)\right)}\leq2\left\Vert \nabla_{\mathcal{C}}A_{\mathcal{C}}\right\Vert _{L^{\infty}\left(B_{\frac{3}{2}}\left(O\right)\setminus B_{\frac{1}{2}}\left(O\right)\right)}.
\]
Note that we have shown that
\[
\sup_{P\in\mathbb{R}^{n+1}\setminus B_{\mathcal{R}}\left(O\right)}\,\sup_{0\leq t\leq\min\left\{ \left(\frac{\left|P\right|}{\Lambda}\right)^{2},T\right\} }\left|P\right|\left\Vert A_{\Sigma_{t}}\right\Vert _{L^{\infty}\left(B_{\frac{\left|P\right|}{10\Lambda}}\left(P\right)\right)}\leq\mathcal{K}
\]
in the proof of Proposition \ref{spatial curvature estimate}. 

For every $P\in\mathbb{R}^{n+1}\setminus B_{\mathcal{R}}\left(O\right)$,
applying Proposition \ref{smooth estimates} to the MCF 
\[
\tilde{\Sigma}_{\tau}=\left(\frac{\left|P\right|}{10\Lambda}\right)^{-1}\left(\Sigma_{\left(\frac{\left|P\right|}{10\Lambda}\right)^{2}\tau}-P\right),\quad0\leq\tau\leq\min\left\{ 100,\left(\frac{\left|P\right|}{10\Lambda}\right)^{-2}T\right\} 
\]
gives 
\[
\left|P\right|^{2}\left\Vert \nabla_{\Sigma_{t}}A_{\Sigma_{t}}\right\Vert _{L^{\infty}\left(B_{\frac{\left|P\right|}{20\Lambda}}\left(P\right)\right)}\leq C\left(n,\Lambda,\mathcal{K},\left\Vert \nabla_{\mathcal{C}}A_{\mathcal{C}}\right\Vert _{L^{\infty}\left(B_{\frac{3}{2}}\left(O\right)\setminus B_{\frac{1}{2}}\left(O\right)\right)}\right)
\]
for $0\leq t\leq\min\left\{ \left(\frac{\left|P\right|}{\Lambda}\right)^{2},T\right\} $.
In other words, we get 
\[
\sup_{P\in\mathbb{R}^{n+1}\setminus B_{\mathcal{R}}\left(O\right)}\,\sup_{0\leq t\leq\min\left\{ \left(\frac{\left|P\right|}{\Lambda}\right)^{2},T\right\} }\left|P\right|^{2}\left\Vert \nabla_{\Sigma_{t}}A_{\Sigma_{t}}\right\Vert _{L^{\infty}\left(B_{\frac{\left|P\right|}{20\Lambda}}\left(P\right)\right)}
\]
\[
\leq C\left(n,\Lambda,\mathcal{K},\left\Vert \nabla_{\mathcal{C}}A_{\mathcal{C}}\right\Vert _{L^{\infty}\left(B_{\frac{3}{2}}\left(O\right)\setminus B_{\frac{1}{2}}\left(O\right)\right)}\right),
\]
which implies
\[
\sup_{0\leq t\leq T}\,\sup_{P\in\Sigma_{t}\setminus B_{\max\left\{ \mathcal{R},\Lambda\sqrt{t}\right\} }\left(O\right)}\left|P\right|^{2}\left|\nabla_{\Sigma_{t}}A_{\Sigma_{t}}\left(P\right)\right|\leq C\left(n,\Lambda,\mathcal{K},\left\Vert \nabla_{\mathcal{C}}A_{\mathcal{C}}\right\Vert _{L^{\infty}\left(B_{\frac{3}{2}}\left(O\right)\setminus B_{\frac{1}{2}}\left(O\right)\right)}\right).
\]
\end{proof}
Our next goal is to prove the preservation of the asymptotically conical
property along MCF (see Proposition \ref{asymptotically conical along MCF}).
Since the evolution of various quantities on the flow, including the
position, direction, convexity, etc, are controlled by the curvature
and its derivatives, which are inversely proportional to some power
of the radial distance (by the preceding corollary), it is plausible
that the changes in these quantities are tiny along the flow outside
a large ball. Accordingly, we can expect that the later time-slices
should stay close to the cone in the distance. To carry out the idea,
we will use the local graph parametrization of the flow (see Proposition
\ref{analyticity: vertical parametrization}). 

In preparation for Proposition \ref{analyticity: vertical parametrization},
the following lemma shows that we can locally parametrize each time-slice
of the flow as a graph over a time-dependent hyperplane, provided
that the mean curvature stays uniformly bounded.
\begin{lem}
\label{Allard's regularity}Given $\delta>0$, there exists a constant
$0<\theta<1$ depending on $n$ and $\delta$ with the following property.

Let $\left\{ \Sigma_{t}\right\} _{0\leq t\leq T}$ be a MCF in $B_{r}\left(O\right)$,
where $0<r\leq1$ and $0<T\leq\theta^{2}r^{2}$ are constants, so
that $\Sigma_{0}\cap B_{r}\left(O\right)$ is a $\theta$-Lipschitz
graph passing through $O$ and that 
\[
\sup_{0\leq t\leq T}\left\Vert H_{\Sigma_{t}}\right\Vert _{L^{\infty}\left(B_{r}\left(O\right)\right)}\leq n.
\]
Then for every $0\leq t\leq T$, $\Sigma_{t}\cap B_{\theta r}\left(\gamma\left(t\right)\right)$
is a $\delta$-Lipschitz graph over a subset of $T_{\gamma\left(t\right)}\Sigma_{t}$,
where $\gamma:\left[0,T\right]\rightarrow\mathbb{R}^{n+1}$ is the
``trajectory'' of the origin along the MCF, i.e. $\gamma\left(0\right)=O$,
$\gamma\left(t\right)\in\Sigma_{t}$, and $\gamma'\left(t\right)=\vec{H}_{\Sigma_{t}}\left(\gamma\left(t\right)\right)$.
\end{lem}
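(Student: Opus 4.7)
The strategy is a blow-up argument by contradiction. Assuming the conclusion fails for some $\delta > 0$, I obtain sequences $\theta_i \searrow 0$, $r_i \in (0,1]$, $T_i \in (0, \theta_i^{2} r_i^{2}]$, MCFs $\{\Sigma_t^i\}_{0 \le t \le T_i}$ in $B_{r_i}(O)$ satisfying the hypotheses with $\theta = \theta_i$, times $t_i \in [0, T_i]$, and points $P_i \in \Sigma_{t_i}^i \cap B_{\theta_i r_i}(\gamma^i(t_i))$ at which the $\delta$-Lipschitz graph property over $T_{\gamma^i(t_i)}\Sigma_{t_i}^i$ breaks down. The trajectory $\gamma^i$ is well defined on $[0, T_i]$ with $|\gamma^i(t)| \le n t \le n\theta_i^{2} r_i \ll r_i$, so this construction is consistent.

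I rescale at the scale $\theta_i r_i$ centered at $\gamma^i(t_i)$ by setting
\[
\tilde{\Sigma}_\tau^{i} \;=\; \frac{1}{\theta_i r_i}\bigl(\Sigma_{t_i + (\theta_i r_i)^{2}\tau}^i - \gamma^i(t_i)\bigr),
\qquad \tau_i := -\frac{t_i}{(\theta_i r_i)^{2}} \in [-1, 0].
\]
This is an MCF defined for $\tau_i \le \tau \le 0$ in a ball $B_{R_i}(O)$ with $R_i \gtrsim 1/\theta_i \to \infty$, whose rescaled mean curvature is bounded by $n\theta_i r_i \to 0$, and whose initial slice $\tilde{\Sigma}_{\tau_i}^i$ is a $\theta_i$-Lipschitz graph through a point whose norm is $\le n \theta_i \to 0$. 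Applying Lemma \ref{Chen-Yin lemma} to $\{\tilde{\Sigma}_\tau^i\}$ (valid for large $i$, since $\theta_i \to 0$ and $R_i \to \infty$), followed by the interior smooth estimates of Proposition \ref{smooth estimates}, I obtain uniform $C^{\infty}_{\mathrm{loc}}$ control away from $\tau_i$ and extract a subsequential smooth limit MCF $\{\tilde{\Sigma}_\tau\}$. Because the rescaled mean curvatures tend to zero and the rescaled initial slices flatten to a hyperplane through $O$, the limit is a stationary hyperplane $H_\infty$ through the origin. Smooth convergence at $\tau = 0$ then forces $\tilde{\Sigma}_0^i \cap B_1(O)$ to be a $C^1$-small, hence $\delta$-Lipschitz, graph over its own tangent plane $T_O \tilde{\Sigma}_0^i$ for $i$ large, contradicting the choice of $P_i$.

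The main obstacle is the degenerate case in which $\tau_i \nearrow 0$, i.e.\ $t_i \ll (\theta_i r_i)^{2}$, because the Chen--Yin bound $\sqrt{\tau-\tau_i}\,|A| \le \kappa$ does not directly control $|A|$ at $\tau = 0$. In that regime one instead exploits the fact that on the rescaled scale the flow is essentially frozen: the trajectory $\gamma^i$ has moved only a distance $\le n t_i$, which in the rescaled scale is $\le n(-\tau_i)\,\theta_i r_i \to 0$, and the normal has rotated by at most $\int_0^{t_i}|H|\,|A|\,ds \lesssim n\kappa \sqrt{t_i}$, which in the rescaled scale is $2n\kappa\sqrt{-\tau_i} \to 0$. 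Consequently $\tilde{\Sigma}_0^i \cap B_1(O)$ remains a $C^1$-small perturbation of the $\theta_i$-Lipschitz initial slice $\tilde{\Sigma}_{\tau_i}^i$, and the contradiction persists in this borderline regime as well.
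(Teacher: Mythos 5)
Your blow-up argument is a genuinely different route from the paper's, which never extracts a smooth limit: the paper integrates the evolution of a localized area element to show the area ratio of $\Sigma_{t}$ in small balls around $\gamma\left(t\right)$ stays close to $1$ for \emph{all} $0\leq t\leq T$, and then invokes Allard's regularity theorem (area ratio near $1$ plus small scaled mean curvature) to get the $\delta$-Lipschitz graph property directly, with no curvature bound on any time-slice required. Your compactness argument does work in the non-degenerate regime $-\tau_{i}\geq c>0$: there Lemma \ref{Chen-Yin lemma} (applied at several nearby centers, since it only controls $B_{1/8}\left(O\right)$ while you need a ball of radius $1$) together with Proposition \ref{smooth estimates} gives uniform curvature bounds at $\tau=0$, the limit is a static minimal piece trapped in the Hausdorff limit of the flattening initial slices, hence a hyperplane, and $C^{1}$ convergence at $\tau=0$ yields the contradiction.

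The degenerate case $\tau_{i}\nearrow0$ is a genuine gap, and your proposed fix does not close it. First, the normal does not rotate at rate $\left|H\right|\left|A\right|$; the evolution is $\partial_{t}N_{\Sigma_{t}}=-\nabla_{\Sigma_{t}}H_{\Sigma_{t}}$ (you have confused it with $\partial_{t}g_{ij}=-2HA_{ij}$), so the rotation along $\gamma$ is controlled by $\int_{0}^{t_{i}}\left|\nabla_{\Sigma_{s}}H_{\Sigma_{s}}\right|ds$, and the only available bound $\left|\nabla A\right|\lesssim s^{-1}$ from Lemma \ref{Chen-Yin lemma} plus Proposition \ref{smooth estimates} makes this integral diverge logarithmically. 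Second, and more fundamentally, even granting that $\tilde{\Sigma}_{0}^{i}$ lies within Hausdorff distance $o\left(1\right)$ of the $\theta_{i}$-Lipschitz initial slice and that its normal at the single point $O$ is nearly aligned with that of $\tilde{\Sigma}_{\tau_{i}}^{i}$, this does not imply that $\tilde{\Sigma}_{0}^{i}\cap B_{1}\left(O\right)$ is a $\delta$-Lipschitz graph: with no curvature or gradient bound at $\tau=0$ (which is exactly what is missing when $\tau_{i}\to0$, since $\Sigma_{0}$ carries no quantitative curvature bound), the surface could oscillate or be multi-sheeted inside a thin slab. Ruling that out from $C^{0}$ closeness plus an almost-everywhere small mean curvature is precisely the content of Allard's theorem, which is why the paper's proof goes through Allard rather than through a smooth compactness argument. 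To repair your proof you would need either a genuinely uniform $C^{1}$ estimate down to the initial time (e.g.\ the Ecker--Huisken gradient estimate, but the paper's localization of that estimate in Proposition \ref{analyticity: vertical parametrization} itself relies on Lemma \ref{Allard's regularity}, so care is needed to avoid circularity) or the Allard-type input you are implicitly trying to bypass.
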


\begin{proof}
Let $0<\vartheta\ll1$ be a constant to be determined. Assume that
$\left\{ \Sigma_{t}\right\} _{0\leq t\leq T}$ is a MCF in $B_{r}\left(O\right)$
with $0<r\leq1$, $0<T\ll1$ (to be determined and will be depending
only on $r,\vartheta$) so that $\Sigma_{0}\cap B_{r}\left(O\right)$
is a $\vartheta$-Lipschitz graph containing $O$ and that 
\[
\sup_{0\leq t\leq T}\left\Vert H_{\Sigma_{t}}\right\Vert _{L^{\infty}\left(B_{r}\left(O\right)\right)}\leq n.
\]
Firstly, set $r_{1}=\frac{r}{\sqrt{1+\vartheta^{2}}}$. Upon integrating
the equation 
\[
\left(\partial_{t}-\triangle_{\Sigma_{t}}\right)\left(1-\frac{\left|X\right|^{2}+2nt}{r_{1}^{2}}\right)_{+}^{3}\leq0
\]
and using 
\[
\partial_{t}\,d\mathcal{H}^{n}\lfloor\Sigma_{t}=-H_{\Sigma_{t}}^{2}\,d\mathcal{H}^{n}\lfloor\Sigma_{t}
\]
(cf. Chapter 4 in \cite{E}), we get
\[
\int_{\Sigma_{t}}\left(1-\frac{\left|X\right|^{2}+2nt}{r_{1}^{2}}\right)_{+}^{3}d\mathcal{H}^{n}\left(X\right)-\int_{\Sigma_{0}}\left(1-\frac{\left|X\right|^{2}}{r_{1}^{2}}\right)_{+}^{3}d\mathcal{H}^{n}\left(X\right)
\]
\[
=-\int_{0}^{t}\int_{\Sigma_{\tau}}H_{\Sigma_{\tau}}^{2}\left(1-\frac{\left|X\right|^{2}+2n\tau}{r_{1}^{2}}\right)_{+}^{3}d\mathcal{H}^{n}\left(X\right)d\tau\leq0,
\]
which implies
\begin{equation}
\mathcal{H}^{n}\left(\Sigma_{t}\cap B_{\vartheta r_{1}}\left(O\right)\right)\leq\frac{\mathcal{H}^{n}\left(\Sigma_{0}\cap B_{r_{1}}\left(O\right)\right)}{\left[1-\vartheta^{2}-2n\left(1+\vartheta^{2}\right)\frac{T}{r^{2}}\right]^{3}}\leq\frac{\sqrt{1+\vartheta^{2}}\,\boldsymbol{\omega}_{n}r_{1}^{n}}{\left[1-\vartheta^{2}-2n\left(1+\vartheta^{2}\right)\frac{T}{r^{2}}\right]^{3}}\label{A: area ratio}
\end{equation}
for $0\leq t\leq T$. Note that the last term in the above inequality
is obtained by using the condition that $\Sigma_{0}\cap B_{r}\left(O\right)$
is a $\vartheta$-Lipschitz graph containing $O$.

Next, set 
\[
r_{2}=\frac{\vartheta r_{1}}{1+\vartheta}=\frac{\vartheta r}{\left(1+\vartheta\right)\sqrt{1+\vartheta^{2}}}.
\]
Let $\phi\left(X\right)$ be a function satisfying $\chi_{B_{r_{2}}\left(O\right)}\leq\phi\leq\chi_{B_{\left(1+\vartheta\right)r_{2}}\left(O\right)}$
with $\left\Vert D\phi\right\Vert _{L^{\infty}\left(\mathbb{R}^{n+1}\right)}\leq\frac{2}{\vartheta r_{2}}$.
By the weak formulation of MCF (cf. Chapter 4 in \cite{E}), we have
\begin{equation}
\int_{\Sigma_{t}}\phi\left(X\right)d\mathcal{H}^{n}\left(X\right)-\int_{\Sigma_{0}}\phi\left(X\right)d\mathcal{H}^{n}\left(X\right)\label{A: weak MCF}
\end{equation}
\[
\leq\int_{0}^{t}\int_{\Sigma_{\tau}}\left\{ -H_{\Sigma_{\tau}}^{2}\phi\left(X\right)+\vec{H}_{\Sigma_{\tau}}\cdot D\phi\left(X\right)\right\} d\mathcal{H}^{n}\left(X\right)d\tau
\]
 
\[
\leq nT\left\Vert D\phi\right\Vert _{L^{\infty}\left(\mathbb{R}^{n+1}\right)}\sup_{0\leq\tau\leq T}\mathcal{H}^{n}\left(\Sigma_{\tau}\cap B_{\theta r_{1}}\left(O\right)\right).
\]
It follows from (\ref{A: area ratio}) and (\ref{A: weak MCF}) that
\[
\frac{\mathcal{H}^{n}\left(\Sigma_{t}\cap B_{r_{2}}\left(O\right)\right)}{\boldsymbol{\omega}_{n}r_{2}^{n}}
\]
\[
\leq\left(1+\vartheta\right)^{n}\frac{\mathcal{H}^{n}\left(\Sigma_{0}\cap B_{\left(1+\vartheta\right)r_{2}}\left(O\right)\right)}{\boldsymbol{\omega}_{n}\left(\left(1+\vartheta\right)r_{2}\right)^{n}}\,+nT\left\Vert D\phi\right\Vert _{L^{\infty}\left(\mathbb{R}^{n+1}\right)}\sup_{0\leq\tau\leq T}\frac{\mathcal{H}^{n}\left(\Sigma_{\tau}\cap B_{\vartheta r_{1}}\left(O\right)\right)}{\boldsymbol{\omega}_{n}r_{2}^{n}}
\]
 
\[
\leq\left(1+\vartheta\right)^{n}\sqrt{1+\vartheta^{2}}+\frac{2\left(1+\vartheta^{2}\right)\left(1+\vartheta\right)^{n+1}}{\left[1-\vartheta^{2}-2n\left(1+\vartheta^{2}\right)\frac{T}{r^{2}}\right]^{3}}\frac{nT}{\vartheta^{n+2}r}.
\]
Thus we have
\[
\frac{\mathcal{H}^{n}\left(\Sigma_{t}\cap B_{r_{2}}\left(O\right)\right)}{\boldsymbol{\omega}_{n}r_{2}^{n}}\leq1+C\left(n\right)\vartheta
\]
provided that $0<T\leq\vartheta^{n+3}r^{2}$. 

Now set 
\[
r_{3}=r_{2}-n\vartheta^{n+3}r=\left(\frac{\vartheta}{\left(1+\vartheta\right)\sqrt{1+\vartheta^{2}}}-n\vartheta^{n+3}\right)r.
\]
Let $\gamma:\left[0,T\right]\rightarrow\mathbb{R}^{n+1}$ be the trajectory
of the origin along the MCF $\left\{ \Sigma_{t}\right\} $. Since
\[
\left|\gamma'\left(t\right)\right|=\left|\vec{H}_{\Sigma_{t}}\left(\gamma\left(t\right)\right)\right|\leq n,
\]
the mean value theorem implies 
\[
\left|\gamma\left(t\right)\right|\leq nt\leq nT\leq n\vartheta^{n+3}r\quad\,\,\forall\,\,0\leq t\leq T.
\]
Whence, $B_{r_{3}}\left(\gamma\left(t\right)\right)\subset B_{r_{2}}\left(O\right)$
and we have 
\begin{equation}
\frac{\mathcal{H}^{n}\left(\Sigma_{t}\cap B_{r_{3}}\left(\gamma\left(t\right)\right)\right)}{\boldsymbol{\omega}_{n}r_{3}^{n}}\leq\frac{\mathcal{H}^{n}\left(\Sigma_{t}\cap B_{r_{2}}\left(O\right)\right)}{\boldsymbol{\omega}_{n}r_{2}^{n}}\left(\frac{r_{2}}{r_{3}}\right)^{n}\label{A: unit area ratio}
\end{equation}
\[
\leq\frac{1+C\left(n\right)\vartheta}{1-n\vartheta^{n+2}\left(1+\vartheta\right)\sqrt{1+\vartheta^{2}}}\,=\,1+\mathfrak{C}\left(n\right)\vartheta
\]
for all $0\leq t\leq T$. Note also that 
\begin{equation}
r_{3}\left\Vert H_{\Sigma_{t}}\right\Vert _{L^{\infty}\left(B_{r_{3}}\left(\gamma\left(t\right)\right)\right)}\leq nr_{3}\leq n\vartheta r\leq n\vartheta.\label{A: small mean curvature}
\end{equation}

Lastly, given $\delta>0,$ it follows from Allard's regularity theorem
(cf. \cite{A}) and conditions (\ref{A: unit area ratio}) and (\ref{A: small mean curvature})
that there exist $0<\vartheta\ll1$ (depending on $n,\delta$) so
that each $\Sigma_{t}\cap B_{\vartheta r_{3}}\left(\gamma\left(t\right)\right)$
is a $\delta$-Lipschitz graph over (a subset of) $T_{\gamma\left(t\right)}\Sigma_{t}$.
Note that the lemma would hold by choosing 
\[
\theta=\min\left\{ \vartheta^{\frac{n+3}{2}},\frac{\vartheta}{\left(1+\vartheta\right)\sqrt{1+\vartheta^{2}}}-n\vartheta^{n+3}\right\} .
\]
\end{proof}
By virtue of Ecker-Huisken's gradient estimate for MCF (cf. Section
2 in \cite{EH2}), in the following proposition we show that the graphs
in the preceding lemma can be chosen to be over the same hyperplane,
which then gives the desired local graph parametrization.
\begin{prop}
\label{analyticity: vertical parametrization}Given $\delta>0$, there
exists a constant $0<\theta<1$ depending on $n$ and $\delta$ with
the following property. 

Let $\left\{ \Sigma_{t}\right\} _{0\leq t\leq T}$ be a MCF in $B_{r}\left(O\right)$,
where $0<r\leq1$ and $0<T\leq\theta^{2}r^{2}$ are constants, so
that $\Sigma_{0}\cap B_{r}\left(O\right)$ is a $\theta$-Lipschitz
graph containing $O$ and that 
\[
\sup_{0\leq t\leq T}\left\Vert H_{\Sigma_{t}}\right\Vert _{L^{\infty}\left(B_{r}\left(O\right)\right)}\leq n.
\]
Then there exist a time-dependent domain $\Omega_{t}\subset T_{O}\Sigma_{0}\simeq\mathbb{R}^{n}$
and a time-dependent function $u\left(\cdot,t\right):\Omega_{t}\rightarrow\mathbb{R}$
so that $\left\Vert \partial_{x}u\left(\cdot,t\right)\right\Vert _{L^{\infty}\left(\Omega_{t}\right)}\leq\delta$
and
\[
\Sigma_{t}\cap B_{\theta r}\left(O\right)=\left\{ X=\left(x,u\left(x,t\right)\right):x\in\Omega_{t}\right\} 
\]
for $0\leq t\leq T$. In addition, we have $\Omega_{t}\supset B_{\frac{\theta r}{2\sqrt{1+\delta^{2}}}}^{n}\left(O\right)$
(the $n$-dimensional ball).
\end{prop}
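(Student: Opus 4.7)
The plan is to combine Lemma \ref{Allard's regularity} with Ecker-Huisken's local gradient estimate for MCF (see Section 2 in \cite{EH2}). The point of Lemma \ref{Allard's regularity} is that $\Sigma_t \cap B_{\theta r}(\gamma(t))$ is a Lipschitz graph over the \emph{moving} tangent plane $T_{\gamma(t)}\Sigma_t$; the proposition asks for a graph over the \emph{fixed} plane $T_O\Sigma_0$, so the missing ingredient is a control on the tilt between these two planes.

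First I would rotate coordinates so that $T_O\Sigma_0 = \mathbb{R}^n \times \{0\}$ with upward unit normal $e_{n+1}$. Since $\Sigma_0 \cap B_r(O)$ is a $\theta$-Lipschitz graph passing through $O$, the hyperplane it is a graph over makes angle at most $C(n)\theta$ with $T_O\Sigma_0$, so for $\theta$ small one checks that $\Sigma_0 \cap B_{r/2}(O)$ is a $2\theta$-Lipschitz graph of some $u_0$ over an open set in $T_O\Sigma_0$. On this graphical region the function $v = \langle \nu_{\Sigma_t}, e_{n+1}\rangle^{-1}$ is well defined with $v_0 \leq \sqrt{1+4\theta^2}$.

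Next I would apply Ecker-Huisken's interior gradient estimate: using the evolution inequality $(\partial_t - \triangle_{\Sigma_t}) v \leq -|A_{\Sigma_t}|^2 v$ together with a standard localized cutoff (admissible here because Lemma \ref{Allard's regularity} guarantees that $\Sigma_t$ stays locally graphical with bounded gradient near $\gamma(t)$, so $v$ does not blow up), one obtains for every $t \in [0,T]$ and every $P \in \Sigma_t \cap B_{\theta r}(O)$ the bound $v(P,t) \leq \sqrt{1+\delta^2}$, provided $\theta$ is chosen small enough in terms of $n$ and $\delta$. This is exactly the statement that $\Sigma_t \cap B_{\theta r}(O)$ can be written as a graph of some $u(\cdot,t)$ over a domain $\Omega_t \subset T_O\Sigma_0$ with $\|\partial_x u(\cdot,t)\|_{L^\infty(\Omega_t)} \leq \delta$.

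Finally, to obtain the explicit containment $\Omega_t \supset B^n_{\theta r/(2\sqrt{1+\delta^2})}(O)$, I would estimate the trajectory: $|\gamma'(t)| \leq n$ together with $T \leq \theta^2 r^2$ gives $|\gamma(t)| \leq n\theta^2 r^2 \ll \theta r$, so the graphical ball $B_{\theta r}(\gamma(t))$ provided by Lemma \ref{Allard's regularity} still covers most of $B_{\theta r}(O)$; combined with the Lipschitz bound $\delta$, the horizontal projection onto $T_O\Sigma_0$ then contains the claimed ball. The main obstacle is the passage from the moving-plane parametrization of Lemma \ref{Allard's regularity} to the fixed-plane one required here; everything else is bookkeeping on the domain and the Lipschitz constant.
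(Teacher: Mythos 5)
Your proposal follows essentially the same route as the paper: it too fixes $\mathbf{e}=N_{\Sigma_{0}}\left(O\right)$, applies the Ecker--Huisken evolution inequality to $\left(N_{\Sigma_{t}}\cdot\mathbf{e}\right)^{-1}$ with a cutoff supported in the graphical ball supplied by Lemma \ref{Allard's regularity}, and uses the trajectory bound $\left|\gamma\left(t\right)\right|\leq nT$ to obtain the domain containment. The one point you gloss over is that positivity of $N_{\Sigma_{t}}\cdot\mathbf{e}$ on the support of the cutoff is itself part of what is being proved (Lemma \ref{Allard's regularity} only controls the oscillation of the normal about $N_{\Sigma_{t}}\left(\gamma\left(t\right)\right)$, not its tilt against $\mathbf{e}$), so the paper closes the loop with a continuity argument in time: assume the gradient bound up to a maximal time $T_{*}$, use the maximum principle to get a strictly better bound at $\gamma\left(t\right)$, add the oscillation estimate, and contradict the maximality of $T_{*}$.
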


\begin{proof}
Let $\left\{ \Sigma_{t}\right\} _{0\leq t\leq T}$ be a MCF in $B_{r}\left(O\right)$,
where $0<r\leq1$ and $0<T\ll1$ (to be determined). Let $\mathbf{e}=N_{\Sigma_{0}}\left(O\right)$,
where $N_{\Sigma_{0}}$ is the unit normal vector of $\Sigma_{0}$.
Following \cite{EH2}, let us consider the quantity $N_{\Sigma_{t}}\cdot\mathbf{e}$,
which satisfies 
\[
\left(\partial_{t}-\triangle_{\Sigma_{t}}\right)N_{\Sigma_{t}}\cdot\mathbf{e}=\left|A_{\Sigma_{t}}\right|^{2}N_{\Sigma_{t}}\cdot\mathbf{e}.
\]
Wherever $N_{\Sigma_{t}}\cdot\mathbf{e}>0$ holds, the chain rule
implies
\begin{equation}
\left(\partial_{t}-\triangle_{\Sigma_{t}}\right)\left(N_{\Sigma_{t}}\cdot\mathbf{e}\right)^{-1}=-\left|A_{\Sigma_{t}}\right|^{2}\left(N_{\Sigma_{t}}\cdot\mathbf{e}\right)^{-1}-2\left(N_{\Sigma_{t}}\cdot\mathbf{e}\right)^{-3}\left|\nabla_{\Sigma_{t}}\left(N_{\Sigma_{t}}\cdot\mathbf{e}\right)\right|^{2}.\label{AVP: gradient}
\end{equation}
For each $0<s\leq r$, let $\eta_{s}\left(X,t\right)=\left(1-\frac{\left|X\right|^{2}+2nt}{s^{2}}\right)_{+}^{3}$
, which satisfies
\begin{equation}
\left(\partial_{t}-\triangle_{\Sigma_{t}}\right)\left[\eta_{s}\left(X,t\right)\right]\leq0.\label{AVP: localization}
\end{equation}
Applying the product rule to Eqs. (\ref{AVP: gradient}) and (\ref{AVP: localization})
gives 
\begin{equation}
\left(\partial_{t}-\triangle_{\Sigma_{t}}\right)\left[\eta_{s}\left(X,t\right)\left(N_{\Sigma_{t}}\cdot\mathbf{e}\right)^{-1}\right]\label{AVP: equation}
\end{equation}
\[
\leq\eta_{s}\left(X,t\right)\left\{ -\left|A_{\Sigma_{t}}\right|^{2}\left(N_{\Sigma_{t}}\cdot\mathbf{e}\right)^{-1}-2\left(N_{\Sigma_{t}}\cdot\mathbf{e}\right)^{-3}\left|\nabla_{\Sigma_{t}}\left(N_{\Sigma_{t}}\cdot\mathbf{e}\right)\right|^{2}\right\} 
\]
\[
-2\,\nabla_{\Sigma_{t}}\left[\eta_{s}\left(X,t\right)\right]\cdot\,\nabla_{\Sigma_{t}}\left[\left(N_{\Sigma_{t}}\cdot\mathbf{e}\right)^{-1}\right].
\]
On substituting 
\[
\nabla_{\Sigma_{t}}\left[\eta_{s}\left(X,t\right)\right]=\left(N_{\Sigma_{t}}\cdot\mathbf{e}\right)\nabla_{\Sigma_{t}}\left[\eta_{s}\left(X,t\right)\left(N_{\Sigma_{t}}\cdot\mathbf{e}\right)^{-1}\right]\,+\,\left[\eta_{s}\left(X,t\right)\left(N_{\Sigma_{t}}\cdot\mathbf{e}\right)^{-1}\right]\nabla_{\Sigma_{t}}\left(N_{\Sigma_{t}}\cdot\mathbf{e}\right)
\]
in the last term of Eq. (\ref{AVP: equation}), we obtain
\begin{equation}
\left(\partial_{t}-\triangle_{\Sigma_{t}}\right)\left[\eta_{s}\left(X,t\right)\left(N_{\Sigma_{t}}\cdot\mathbf{e}\right)^{-1}\right]\label{AVP: evolution of localized gradient}
\end{equation}
\[
\leq2\,\nabla_{\Sigma_{t}}\ln\left(N_{\Sigma_{t}}\cdot\mathbf{e}\right)\cdot\nabla_{\Sigma_{t}}\left[\eta_{s}\left(X,t\right)\left(N_{\Sigma_{t}}\cdot\mathbf{e}\right)^{-1}\right]-\left|A_{\Sigma_{t}}\right|^{2}\eta_{s}\left(X,t\right)\left(N_{\Sigma_{t}}\cdot\mathbf{e}\right)^{-1}.
\]

Now given $\delta>0$, by Lemma \ref{Allard's regularity} there exists
$0<\vartheta<1$ so that if $\Sigma_{0}\cap B_{r}\left(O\right)$
is a $\vartheta$-Lipschitz graph containing $O$ and that 
\[
\sup_{0\leq t\leq T}\left\Vert H_{\Sigma_{t}}\right\Vert _{L^{\infty}\left(B_{r}\left(O\right)\right)}\leq n
\]
for each $0\leq t\leq T$ with $T\leq\vartheta^{2}r^{2}$, then $\Sigma_{t}\cap B_{\vartheta r}\left(\gamma\left(t\right)\right)$
is a $\varepsilon\delta$-Lipschitz graph over a subset of $T_{\gamma\left(t\right)}\Sigma_{t}$.
Here $0<\varepsilon\ll1$ is a constant to be determined (and depending
only on $n,\delta$) and $\gamma:\left[0,T\right]\rightarrow\mathbb{R}^{n+1}$
is the trajectory of the origin along the flow (as defined in Lemma
\ref{Allard's regularity}). It follows that 
\[
\inf_{B_{\vartheta r}\left(\gamma\left(t\right)\right)}N_{\Sigma_{t}}\cdot N_{\Sigma_{t}}\left(\gamma\left(t\right)\right)\geq\frac{1}{\sqrt{1+\varepsilon^{2}\delta^{2}}}\quad\forall\,\,0\leq t\leq T,
\]
which implies 
\[
\left|N_{\Sigma_{t}}-N_{\Sigma_{t}}\left(\gamma\left(t\right)\right)\right|^{2}=2\left[1-N_{\Sigma_{t}}\cdot N_{\Sigma_{t}}\left(\gamma\left(t\right)\right)\right]\leq2\left(1-\frac{1}{\sqrt{1+\varepsilon^{2}\delta^{2}}}\right)\quad\textrm{in}\,\,B_{\vartheta r}\left(\gamma\left(t\right)\right)
\]
for $0\leq t\leq T$. In addition, as noted in the proof of Lemma
\ref{Allard's regularity}, the mean value theorem implies 
\[
\left|\gamma\left(t\right)\right|\leq nt\leq nT\quad\forall\,\,0\leq t\leq T.
\]
So we have $B_{\vartheta r}\left(\gamma\left(t\right)\right)\supset B_{\left(1-\varepsilon\vartheta\right)\vartheta r}\left(O\right)$
for all $0\leq t\leq T$, provided that $T\leq\varepsilon^{2}\vartheta^{2}r^{2}$
and $\varepsilon\leq\frac{1}{n}$. 

On the other hand, let
\[
T_{*}=\sup\left\{ \tau\in\left[0,T\right]:\inf_{B_{\left(1-\varepsilon\vartheta\right)\vartheta r}\left(O\right)}N_{\Sigma_{t}}\cdot\mathbf{e}\geq\frac{1}{\sqrt{1+\delta^{2}}}\quad\forall\,\,0\leq t\leq\tau\right\} .
\]
Note that $T_{*}>0$ by continuity. We claim that $T_{*}=T$ under
the assumption that $0<\varepsilon\ll1$. Let us prove that by a contradiction
argument as follows. Assume that $T_{*}<T$. Note that 
\begin{equation}
\inf_{B_{\left(1-\varepsilon\vartheta\right)\vartheta r}\left(O\right)}N_{\Sigma_{T_{*}}}\cdot\mathbf{e}=\frac{1}{\sqrt{1+\delta^{2}}}.\label{maximal time of gradient estimate}
\end{equation}
Moreover, applying the maximum principle (cf. Chapter 2 in \cite{M})
to Eq. (\ref{AVP: evolution of localized gradient}) with the choice
$s=\left(1-\varepsilon\vartheta\right)\vartheta r,$ we get
\[
\sup_{B_{s}\left(O\right)}\eta_{s}\left(X,t\right)\left(N_{\Sigma_{t}}\cdot\mathbf{e}\right)^{-1}\leq\sup_{B_{s}\left(O\right)}\eta_{s}\left(X,0\right)\left(N_{\Sigma_{0}}\cdot\mathbf{e}\right)^{-1}\quad\,\forall\,\,0\leq t\leq T_{*},
\]
which implies
\[
N_{\Sigma_{t}}\left(\gamma\left(t\right)\right)\cdot\mathbf{e}\geq\frac{\left(1-\frac{\left|\gamma\left(t\right)\right|^{2}+2nt}{s^{2}}\right)^{3}}{\sqrt{1+\varepsilon^{2}\delta^{2}}}\geq\frac{\left(1-\frac{\left(2n+1\right)\varepsilon^{2}}{\left(1-\varepsilon\vartheta\right)^{2}}\right)^{3}}{\sqrt{1+\varepsilon^{2}\delta^{2}}}
\]
for all $0\leq t\leq T_{*}$. Consequently, we have 
\[
\inf_{B_{s}\left(O\right)}N_{\Sigma_{t}}\cdot\mathbf{e}\geq N_{\Sigma_{t}}\left(\gamma\left(t\right)\right)\cdot\mathbf{e}-\sup_{B_{s}\left(O\right)}\left|N_{\Sigma_{t}}-N_{\Sigma_{t}}\left(\gamma\left(t\right)\right)\right|
\]
\[
\geq\frac{\left(1-\frac{\left(2n+1\right)\varepsilon^{2}}{\left(1-\varepsilon\vartheta\right)^{2}}\right)^{3}}{\sqrt{1+\varepsilon^{2}\delta^{2}}}-\sqrt{2}\left(1-\frac{1}{\sqrt{1+\varepsilon^{2}\delta^{2}}}\right)^{\frac{1}{2}}\geq\frac{1}{\sqrt{1+\frac{\delta^{2}}{2}}}
\]
for all $0\leq t\leq T_{*}$, provided that $0<\varepsilon\ll1$ (depends
on $n,\delta$). This contradicts (\ref{maximal time of gradient estimate}). 

From the previous argument, most of the proposition follows by setting
\[
\theta=\min\left\{ \varepsilon\vartheta,\left(1-\varepsilon\vartheta\right)\vartheta,\frac{1}{2n\left(\sqrt{1+\delta^{2}}+1\right)}\right\} .
\]
However, there is one more thing that needs to be addressed, namely
\[
\Omega_{t}\supset B_{\frac{\theta r}{2\sqrt{1+\delta^{2}}}}\left(O\right)\cap\mathbb{R}^{n}.
\]
To see that, firstly note that 
\[
\left|\gamma\left(t\right)\right|\leq nt\leq nT\leq n\theta^{2}r^{2},
\]
so we have $B_{\theta r-n\theta^{2}r^{2}}\left(\gamma\left(t\right)\right)\subset B_{\theta r}\left(O\right)$.
Since $\Sigma_{t}\cap B_{\theta r-n\theta^{2}r^{2}}\left(\gamma\left(t\right)\right)$
is a $\delta$-Lipschitz graph passing through $\gamma\left(t\right)$,
it must be contained in the solid cone 
\[
\left|x_{n+1}-\gamma_{n+1}\left(t\right)\right|\leq\delta\sqrt{\left|x_{1}-\gamma_{1}\left(t\right)\right|^{2}+\cdots+\left|x_{n}-\gamma_{n}\left(t\right)\right|^{2}},
\]
where $x_{i}$ and $\gamma_{i}\left(t\right)$ are the $i^{\textrm{th}}$
coordinates of $X$ and $\gamma\left(t\right)$, respectively. Note
that the projection of the solid cone on $\mathbb{R}^{n}$ contains
the $n$-dimensional ball $B_{\frac{\theta r-n\theta^{2}r^{2}}{\sqrt{1+\delta^{2}}}}^{n}\left(\gamma_{1}\left(t\right),\cdots,\gamma_{n}\left(t\right)\right)$.
So we conclude that 
\[
\Omega_{t}\supset B_{\frac{\theta r-n\theta^{2}r^{2}}{\sqrt{1+\delta^{2}}}}^{n}\left(\gamma_{1}\left(t\right),\cdots,\gamma_{n}\left(t\right)\right)\supset B_{\frac{\theta r-n\theta^{2}r^{2}}{\sqrt{1+\delta^{2}}}-n\theta^{2}r^{2}}^{n}\left(0,\cdots,0\right).
\]
Note that 
\[
\frac{\theta r-n\theta^{2}r^{2}}{\sqrt{1+\delta^{2}}}-n\theta^{2}r^{2}\geq\left(\frac{1-n\theta}{\sqrt{1+\delta^{2}}}-n\theta\right)\theta r\geq\frac{\theta r}{2\sqrt{1+\delta^{2}}}
\]
since $0<r\leq1$ and $0<\theta\leq\frac{1}{2n\left(\sqrt{1+\delta^{2}}+1\right)}$.
\end{proof}
The following lemma shows that for a small Lipschitz graph (over a
hyperplane), the modulus of the derivatives of the graph are equivalent
to that of the covariant derivatives of its curvatures.
\begin{lem}
\label{equivalence of smooth estimates}There exists a constant $\delta>0$
depending on $n$ with the following property.

Let $\Sigma=\left\{ X=\left(x,u\left(x\right)\right):x\in\Omega\subset\mathbb{R}^{n}\right\} $
be a graph of $u$ with $\left\Vert \partial_{x}u\right\Vert _{L^{\infty}}\leq2\delta$.
Then for every $k\in\mathbb{N}$ we have
\[
\left\Vert \partial_{x}^{k+1}u\right\Vert _{L^{\infty}}\leq C\left(n,k,\left\Vert A_{\Sigma}\right\Vert _{L^{\infty}},\cdots,\left\Vert \nabla_{\Sigma}^{k-1}A_{\Sigma}\right\Vert _{L^{\infty}}\right)
\]
and
\[
\left\Vert \nabla_{\Sigma}^{k-1}A_{\Sigma}\right\Vert _{L^{\infty}}\leq C\left(n,k,\left\Vert \partial_{x}^{2}u\right\Vert _{L^{\infty}},\cdots,\left\Vert \partial_{x}^{k+1}u\right\Vert _{L^{\infty}}\right).
\]
\end{lem}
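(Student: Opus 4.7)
The plan is to proceed by straightforward induction on $k$, exploiting the fact that when $\|\partial_x u\|_{L^\infty}$ is uniformly small, the intrinsic geometry of $\Sigma$ is a controlled perturbation of the Euclidean geometry of $\Omega$. First I would record the explicit formulas for a graph: the induced metric is $g_{ij} = \delta_{ij} + \partial_i u \,\partial_j u$, so $\det g = 1 + |\partial_x u|^2$ and $g^{ij} = \delta^{ij} - \frac{\partial_i u \,\partial_j u}{1+|\partial_x u|^2}$; the downward unit normal involves $\sqrt{1+|\partial_x u|^2}$; and the components of the second fundamental form are
\begin{equation*}
A_{ij} = \frac{\partial_i \partial_j u}{\sqrt{1+|\partial_x u|^2}}.
\end{equation*}
Choosing $\delta>0$ small (depending on $n$) so that $|\partial_x u|\leq 2\delta$ forces $g_{ij}$, $g^{ij}$, and $\sqrt{1+|\partial_x u|^2}$ to be two-sided bounded by dimensional constants; in particular the Christoffel symbols $\Gamma^k_{ij}$, which are smooth polynomials in $g^{\bullet\bullet}$, $\partial u$, and $\partial^2 u$, are controlled by $\|\partial_x^2 u\|_{L^\infty}$ times a constant depending only on $n$.

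For the base case $k=1$, the identity above immediately gives $\|\partial_x^2 u\|_{L^\infty}\leq \sqrt{1+4\delta^2}\,\|A_\Sigma\|_{L^\infty}$ and, conversely, $\|A_\Sigma\|_{L^\infty}\leq \|\partial_x^2 u\|_{L^\infty}$. For the inductive step I would express $\nabla_\Sigma^{k-1} A_\Sigma$ schematically as
\begin{equation*}
\nabla_\Sigma^{k-1} A_\Sigma \;=\; \partial_x^{k+1} u \cdot F_0(\partial_x u) \;+\; \sum_{\substack{\alpha_1+\cdots+\alpha_m = k+1 \\ 2\leq \alpha_j \leq k}} F_{\alpha}(\partial_x u)\,\partial_x^{\alpha_1}u\cdots \partial_x^{\alpha_m}u ,
\end{equation*}
where each $F_\alpha$ is a smooth function of $\partial_x u$ coming from iterated use of the Christoffel symbols and the normalization $\sqrt{1+|\partial_x u|^2}$. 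Both directions of the claimed equivalence then drop out: solving the leading term for $\partial_x^{k+1} u$ gives the first inequality (since $F_0(\partial_x u)$ is bounded away from zero when $\delta$ is small), while reading the expression forward gives the second.

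The actual work is just to verify the schematic expansion, which I would prove by induction. Writing $\nabla_\Sigma^{k-1} A_\Sigma = \partial_x \bigl(\nabla_\Sigma^{k-2} A_\Sigma\bigr) - \Gamma \ast \nabla_\Sigma^{k-2} A_\Sigma$ and substituting the inductive formula, one sees that differentiation produces one new $\partial_x^{k+1} u$ term (from differentiating the top-order piece) plus strictly lower-order products, and the Christoffel contraction produces only lower-order products because $\Gamma = (\partial u)\ast(\partial^2 u)\ast g^{-1}$ always carries at least one factor of $\partial u$ or $\partial^2 u$. The main obstacle, such as it is, is simply keeping this bookkeeping honest so that the leading coefficient $F_0(\partial_x u)$ is indeed a nonvanishing smooth function of $\partial_x u$ alone; once that is established, the two inequalities follow by induction with constants depending only on $n$, $k$, $\delta$, and $\|\nabla_\Sigma^j A_\Sigma\|_{L^\infty}$ (respectively $\|\partial_x^{j+1} u\|_{L^\infty}$) for $j\leq k-1$.
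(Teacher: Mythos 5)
Your proposal is correct and follows essentially the same route as the paper: explicit graph formulas for $g_{ij}$, $\Gamma^k_{ij}$, and $A_{ij}$, equivalence of the induced and Euclidean norms for small $\delta$, and induction on the order of derivatives using the structure $\nabla^{k-1}A = \partial_x(\nabla^{k-2}A) - \Gamma\ast\nabla^{k-2}A$. The only difference is presentational — the paper works out $k=1$ and $k=2$ explicitly and omits the rest, whereas you make the general-$k$ schematic bookkeeping (isolating the invertible top-order coefficient $F_0(\partial_x u)$) explicit.
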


\begin{proof}
In the first place, recall that the pull-back metric of $\Sigma$
is given by 
\begin{equation}
g_{ij}=\delta_{ij}+\partial_{i}u\,\partial_{j}u,\label{ESE: metric}
\end{equation}
which is equivalent to the Euclidean metric $\delta_{ij}$ provided
that $0<\delta\ll1$ (depending on $n$). It follows that the norms
defined by these two metrics, $g_{ij}$ and $\delta_{ij}$, are equivalent.
For instance, the norm of the second fundamental form
\[
\left|A_{\Sigma}\right|^{2}=\sum_{i,j,k,l=1}^{n}g^{ik}g^{jl}A_{ij}A_{kl}
\]
is equivalent to the $l^{2}$ norm of its coordinate matrix $\left(A_{ij}\right)$,
i.e.

\[
\frac{1}{C\left(n\right)}\sum_{i,j=1}^{n}A_{ij}^{2}\leq\,\left|A_{\Sigma}\right|^{2}\,\leq C\left(n\right)\sum_{i,j=1}^{n}A_{ij}^{2}.
\]
As a consequence, we do not have to distinguish between the (Riemannian)
norm of a tensor with the $l^{2}$ norm of its coordinates.

For $k=1$, note that the coordinates of the second fundamental form
$A_{\Sigma}$ is given by 
\begin{equation}
A_{ij}=\frac{\partial_{ij}u}{\sqrt{1+\left|\partial_{x}u\right|^{2}}},\label{ESE: second fundamental form}
\end{equation}
from which one can easily see that the estimates hold for $k=1$.

For $k=2$, note that the Christoffel symbols are given by 
\begin{equation}
\Gamma_{ij}^{k}=\frac{\partial_{k}u\,\partial_{ij}u}{1+\left|\partial_{x}u\right|^{2}}.\label{ESE: Christoffel symbols}
\end{equation}
The coordinates of the first covariant derivative of the second fundamental
form $\nabla_{\Sigma}A_{\Sigma}$ are given by

\[
\nabla_{k}A_{ij}=\partial_{k}A_{ij}-\Gamma_{ki}^{l}A_{lj}-\Gamma_{kj}^{l}A_{il}
\]
with 
\[
\partial_{k}A_{ij}=\frac{\partial_{ijk}u}{\sqrt{1+\left|\partial_{x}u\right|^{2}}}-\frac{\partial_{l}u\,\partial_{ij}u\,\partial_{kl}u}{\left(1+\left|\partial_{x}u\right|^{2}\right)^{\frac{3}{2}}}.
\]
The estimates for $k=2$ follow from the above expressions and the
estimates for $k=1$.

Other cases are omitted since they can be derived in a similar fashion.
\end{proof}
We are now in a position to prove the preservation of the asymptotically
conical property along MCF. Since the initial hypersurface is assumed
to be asymptotic to a cone, all we need to do is to show that the
later time-slices stay close to the initial hypersurface with respect
to the corresponding scale (depending on the location). This will
be done through the local graph parametrization as we will be able
to use Eq. (\ref{ACM: equation}). 
\begin{prop}
\label{asymptotically conical along MCF}Let $\left\{ \Sigma_{t}\right\} _{0\leq t\leq T}$
be a MCF in $\mathbb{R}^{n+1}$, where $T>0$ is a constant, so that
$\Sigma_{0}$ is asymptotic to a regular cone $\mathcal{C}$ at infinity.
Then for every $0<t\leq T$, $\Sigma_{t}$ is asymptotic to $\mathcal{C}$
at infinity as well. 
\end{prop}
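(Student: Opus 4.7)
My plan is to use the spatial decay of curvature along the flow provided by Corollary~\ref{spatial smooth estimates}: for each $k\in\mathbb{N}$ there exist $\Lambda,\mathcal{R}\ge 1$ so that $\||X|^{k+1}\nabla_{\Sigma_t}^{k}A_{\Sigma_t}\|_{L^{\infty}(\mathbb{R}^{n+1}\setminus B_{\max\{\mathcal{R},\Lambda\sqrt{t}\}}(O))}$ is uniformly bounded on $[0,T]$. In particular, $|\vec H_{\Sigma_s}|\le C/|X|$ outside a large ball, so any point of $\Sigma_t$ lies within Euclidean distance $CT/|X|$ of $\Sigma_0$. This decay, combined with the local graph parametrization of Proposition~\ref{analyticity: vertical parametrization}, is the engine for verifying both conditions of Definition~\ref{asymptotically conical}.

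For the zooming-out condition, I would set $\tilde{\Sigma}^{R}_{s}=R^{-1}\Sigma_{R^{2}s}$, which is itself a MCF. By hypothesis $\tilde{\Sigma}^{R}_{0}=R^{-1}\Sigma_{0}\to\mathcal{C}$ in $C^{\infty}_{loc}(\mathbb{R}^{n+1}\setminus\{O\})$ as $R\to\infty$, and the slice $R^{-1}\Sigma_{t}$ corresponds to rescaled time $s=t/R^{2}\to 0$. Scale invariance of Corollary~\ref{spatial smooth estimates} furnishes uniform $C^{k}$ bounds on $\tilde{\Sigma}^{R}_{s}$ over any compact $K\subset\mathbb{R}^{n+1}\setminus\{O\}$, independent of $R$ and $s\in[0,T/R^{2}]$. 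The rescaled normal velocity $|\vec H_{\tilde{\Sigma}^{R}_{s}}|$ is also uniformly bounded on $K$, so Arzel\`a--Ascoli together with continuity in time yields the required smooth convergence $R^{-1}\Sigma_{t}\to\mathcal{C}$ locally in $\mathbb{R}^{n+1}\setminus\{O\}$.

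For the normal-graph condition, fix $Q\in\mathcal{C}$ with $|Q|\gg 1$. The asymptotic normal-graph representation of $\Sigma_{0}$ over $\mathcal{C}$ supplies $P_{0}\in\Sigma_{0}$ close to $Q$ such that $\Sigma_{0}\cap B_{\rho|P_{0}|}(P_{0})$ is a small-Lipschitz graph over $\Pi_{0}=T_{P_{0}}\Sigma_{0}$, with $\Pi_{0}$ close to $T_{Q}\mathcal{C}$. Since $|H_{\Sigma_{s}}|\le C/|P_{0}|\le n$ on this ball and $T\le\theta^{2}(\rho|P_{0}|)^{2}$ once $|Q|$ is large, Proposition~\ref{analyticity: vertical parametrization} yields a graph $\{X=(x,u(x,s))\}$ over $\Pi_{0}$ representing $\Sigma_{s}\cap B_{\theta\rho|P_{0}|}(P_{0})$ for all $s\in[0,T]$, with $\|\partial_{x}u(\cdot,s)\|_{\infty}\le\delta$. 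The non-parametric MCF equation $\partial_{s}u=\sqrt{1+|\partial_{x}u|^{2}}\,H_{\Sigma_{s}}$ combined with $|H_{\Sigma_{s}}|\le C/|P_{0}|$ gives $\|u(\cdot,t)-u(\cdot,0)\|_{\infty}\le CT/|P_{0}|$, and its $C^{1}$ counterpart follows from Lemma~\ref{equivalence of smooth estimates} applied to the uniform curvature decay on the flow. Converting these hyperplanar graphs over $\Pi_{0}$ into a normal graph $w_{t}$ of $\Sigma_{t}$ over $\mathcal{C}$ via the inverse function theorem then presents $\Sigma_{t}\setminus B_{R_{0}}(O)$ as a normal graph with $\|w_{t}\|_{L^{\infty}(\mathcal{C}\setminus B_{R}(O))}+\|\nabla_{\mathcal{C}}w_{t}\|_{L^{\infty}(\mathcal{C}\setminus B_{R}(O))}\to 0$ as $R\to\infty$.

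The chief obstacle is the final conversion from local hyperplane graphs into a single normal graph of $\Sigma_{t}$ over $\mathcal{C}$ whose $C^{1}$ norm vanishes at infinity. One must simultaneously control three tilt errors: $\Pi_{0}$ versus $T_{Q}\mathcal{C}$ (from the asymptotic hypothesis on $\Sigma_{0}$), $T_{P_{0}}\Sigma_{0}$ versus the tangent plane of $\Sigma_{t}$ at the corresponding trajectory endpoint (from time-integrating $|\partial_{s}N_{\Sigma_{s}}|\le|A_{\Sigma_{s}}||\vec H_{\Sigma_{s}}|\lesssim|X|^{-2}$ along the flow), and the vertical drift $u(\cdot,t)-u(\cdot,0)$ above. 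Each is $O(|Q|^{-1})$ plus the hypothesized asymptotic error of $\Sigma_{0}$, so a single application of the implicit function theorem produces the desired normal graph and completes the verification of Definition~\ref{asymptotically conical}.
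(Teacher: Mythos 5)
Your proposal is correct and follows essentially the same route as the paper: spatial curvature decay from the pseudolocality estimates (Proposition \ref{spatial curvature estimate} and Corollary \ref{spatial smooth estimates}), local graph parametrization via Proposition \ref{analyticity: vertical parametrization} applied at scale $\rho\left|P\right|$, drift control by integrating the non-parametric MCF equation in time, and a patching/tilt argument to reassemble a normal graph over $\mathcal{C}$. The one substantive difference is how you rule out pieces of $\Sigma_{t}$ far from the cone: the paper isolates this as a separate case and invokes the avoidance principle for spheres, whereas you integrate the normal velocity bound $\left|\vec{H}_{\Sigma_{s}}\right|\leq C/\left|X\right|$ along trajectories to conclude every far-out point of $\Sigma_{t}$ lies within $CT/\left|X\right|$ of $\Sigma_{0}$; this is a legitimate and slightly more elementary substitute (it needs the easy observation that points of $\Sigma_{t}$ far from the origin must originate from points of $\Sigma_{0}$ far from the origin, since the global speed is bounded). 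Two small corrections: Proposition \ref{analyticity: vertical parametrization} is stated only for $r\leq1$, so it must be applied to the rescaled, translated flow $\frac{1}{\rho\left|P_{0}\right|}\left(\Sigma_{\left(\rho\left|P_{0}\right|\right)^{2}s}-P_{0}\right)$ rather than directly at radius $\rho\left|P_{0}\right|$ (your condition $T\leq\theta^{2}\left(\rho\left|P_{0}\right|\right)^{2}$ is exactly the rescaled time restriction, so this is presentational); and the tilt of the normal evolves by $\partial_{s}N_{\Sigma_{s}}=-\nabla_{\Sigma_{s}}H_{\Sigma_{s}}$, not $\left|A\right|\left|\vec{H}\right|$, though either quantity decays like $\left|X\right|^{-2}$ so your conclusion stands.
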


\begin{proof}
\textbf{$\left\langle \right.$Step 1$\left.\right\rangle $} In this
step, let us make the following observation, which will be the heart
of the matter of the proof. 

Fix $k\in\mathbb{N}$. By Proposition \ref{spatial curvature estimate}
and Corollary \ref{spatial smooth estimates}, there exist constants
$\Lambda,\mathcal{R},K\geq1$ (depending on $n,k,\mathcal{C},\Sigma_{0}$)
so that 
\begin{equation}
\sup_{0\leq t\leq T}\left\Vert \,\left|X\right|^{i}\nabla_{\Sigma_{t}}^{i-1}A_{\Sigma_{t}}\right\Vert _{L^{\infty}\left(\mathbb{R}^{n+1}\setminus B_{\max\left\{ \mathcal{R},\Lambda\sqrt{t}\right\} }\left(O\right)\right)}\leq K\quad\,\forall\,\,i=1,\cdots,k.\label{ACM: pre curvature}
\end{equation}
For $R>0$, consider the MCF 
\begin{equation}
\Sigma_{\tau}^{R}=\frac{1}{R}\Sigma_{R^{2}\tau},\quad0\leq\tau\leq\frac{T}{R^{2}}.\label{ACM: rescaling}
\end{equation}
Let $0<\delta<1$ be the constant in Lemma \ref{equivalence of smooth estimates}.
With this choice of $\delta$, let $0<\theta<1$ be the corresponding
constant in Proposition \ref{analyticity: vertical parametrization}.
By the second condition in Definition \ref{regular cone}, we can
find $0<\rho\leq\frac{1}{4K}$ so that for every $Q\in\partial B_{1}\left(O\right)$,
$\mathcal{C}\cap B_{2\rho}\left(Q\right)$ is either empty or a $\frac{\theta}{2}$-Lipschitz
graph (over a hyperplane). Now given $P\in\mathbb{R}^{n+1}\setminus\left\{ O\right\} $,
there are three cases to consider.\textbf{ }

\textbf{\textit{Case 1:}} $P\in\mathcal{C}$. 

If $\left|P\right|R\geq2\max\left\{ \mathcal{R},\Lambda\sqrt{T}\right\} $,
then (\ref{ACM: pre curvature}) implies
\begin{equation}
\sup_{0\leq\tau\leq\frac{T}{R^{2}}}\left|P\right|^{i}\left\Vert \nabla_{\Sigma_{\tau}^{R}}^{i-1}A_{\Sigma_{\tau}^{R}}\right\Vert _{L^{\infty}\left(B_{2\rho\left|P\right|}\left(P\right)\right)}\leq\frac{K}{\left(1-2\rho\right)^{i}}\leq2^{i}K\quad\,\forall\,\,i=1,\cdots,k.\label{ACM: curvature}
\end{equation}
In particular, we have 
\begin{equation}
\sup_{0\leq\tau\leq\frac{T}{R^{2}}}\rho\left|P\right|\left\Vert H_{\Sigma_{\tau}^{R}}\right\Vert _{L^{\infty}\left(B_{2\rho\left|P\right|}\left(P\right)\right)}\leq2\sqrt{n}\rho K\leq\sqrt{n}.\label{ACM: mean curvature}
\end{equation}
Also, by rescaling, $\mathcal{C}\cap B_{2\rho\left|P\right|}\left(P\right)$
is a $\frac{\theta}{2}$-Lipschitz graph over $T_{P}\mathcal{C}\simeq\mathbb{R}^{n}$.
Thus, using the second condition in Definition \ref{asymptotically conical}
that
\begin{equation}
\Sigma_{0}^{R}=\frac{1}{R}\Sigma_{0}\stackrel{C^{1}}{\longrightarrow}\mathcal{C}\quad\textrm{in}\,\,B_{\left(1+2\rho\right)\left|P\right|}\left(O\right)\setminus B_{\left(1-2\rho\right)\left|P\right|}\left(O\right)\quad\textrm{as}\,\,R\rightarrow\infty,\label{ACM: convergence}
\end{equation}
the estimates (\ref{ACM: mean curvature}), and by applying Proposition
\ref{analyticity: vertical parametrization} to a translation of the
flow
\begin{equation}
\left\{ \frac{1}{\rho\left|P\right|}\left(\Sigma_{\rho^{2}\left|P\right|^{2}s}^{R}-P\right)\right\} _{0\leq s\leq\min\left\{ \theta^{2},\frac{T}{\rho^{2}\left|P\right|^{2}R^{2}}\right\} },\label{ACM: normalization}
\end{equation}
we can find a time-dependent domain 
\[
B_{\frac{\theta\rho\left|P\right|}{4\sqrt{1+\delta^{2}}}}^{n}\left(O\right)\subset\Omega_{\tau}^{P,R}\subset\mathbb{R}^{n}\simeq T_{P}\mathcal{C}
\]
and a time-dependent function $u^{P,R}\left(\cdot,\tau\right):\Omega_{\tau}^{P,R}\rightarrow\mathbb{R}$
so that 
\begin{equation}
\left\Vert \partial_{x}u^{P,R}\left(\cdot,\tau\right)\right\Vert _{L^{\infty}\left(\Omega_{\tau}\right)}\leq2\delta\label{ACM: gradient}
\end{equation}
and 
\begin{equation}
\Sigma_{\tau}^{R}\cap B_{\frac{1}{2}\theta\rho\left|P\right|}\left(P\right)=\left\{ X=P+\left(x,u^{P,R}\left(x,\tau\right)\right):x\in\Omega_{\tau}^{P,R}\right\} \label{ACM: pre graph}
\end{equation}
for $0\leq\tau\leq\min\left\{ \frac{1}{4}\theta^{2}\rho^{2}\left|P\right|^{2},\frac{T}{R^{2}}\right\} =\frac{T}{R^{2}}$,
provided that $\left|P\right|R\geq\frac{2\sqrt{T}}{\theta\rho}$.
Furthermore, applying Lemma \ref{equivalence of smooth estimates}
to each time-slice of the flow (\ref{ACM: normalization}) and using
the estimates (\ref{ACM: curvature}) and (\ref{ACM: gradient}),
we obtain
\begin{equation}
\sup_{0\leq\tau\leq\frac{T}{R^{2}}}\left(\rho\left|P\right|\right)^{i}\left\Vert \partial_{x}^{i+1}u^{P,R}\left(\cdot,\tau\right)\right\Vert _{L^{\infty}\left(B_{\frac{\theta\rho\left|P\right|}{4\sqrt{1+\delta^{2}}}}^{n}\left(O\right)\right)}\leq C\left(n,k\right)\quad\,\forall\,\,i=1,\cdots,k.\label{ACM: bounds}
\end{equation}
It follows from (\ref{ACM: gradient}), (\ref{ACM: bounds}), and
the equation of graphical MCF 
\begin{equation}
\partial_{\tau}u^{P,R}=\left(\delta_{ij}-\frac{\partial_{i}u^{P,R}\,\partial_{j}u^{P,R}}{1+\left|\nabla u^{P,R}\right|^{2}}\right)\partial_{ij}u^{P,R}\label{ACM: equation}
\end{equation}
(cf. Chapter 2 in \cite{E}) that 
\[
\sup_{0\leq\tau\leq\frac{T}{R^{2}}}\left\Vert u^{R}\left(\cdot,\tau\right)-u^{R}\left(\cdot,0\right)\right\Vert _{L^{\infty}\left(B_{\frac{\theta\rho\left|P\right|}{4\sqrt{1+\delta^{2}}}}^{n}\left(O\right)\right)}\leq C\left(n,k,\mathcal{C},\Sigma_{0}\right)\frac{T}{\left|P\right|R^{2}}.
\]
More generally, differentiating Eq. (\ref{ACM: equation}) with respect
to $x$ a number of times and using the estimates (\ref{ACM: gradient})
and (\ref{ACM: bounds}) yields
\begin{equation}
\sup_{0\leq\tau\leq\frac{T}{R^{2}}}\left\Vert \partial_{x}^{i}u^{P,R}\left(\cdot,\tau\right)-\partial_{x}^{i}u^{P,R}\left(\cdot,0\right)\right\Vert _{L^{\infty}\left(B_{\frac{\theta\rho\left|P\right|}{4\sqrt{1+\delta^{2}}}}^{n}\left(O\right)\right)}\leq C\left(n,k,\mathcal{C},\Sigma_{0}\right)\frac{T}{\left|P\right|^{1+i}R^{2}}\label{ACM: smooth estimates}
\end{equation}
for all $i\in\left\{ 0,\cdots,k-1\right\} $ so long as $\left|P\right|R\geq C\left(n,k,\mathcal{C},\Sigma_{0}\right)\left(\sqrt{T}+1\right)$.
Moreover, we have 
\begin{equation}
\Sigma_{\tau}^{R}\cap B_{\frac{\theta\rho\left|P\right|}{4\sqrt{1+\delta^{2}}}}\left(P\right)=\left\{ X=P+\left(x,u^{P,R}\left(x,\tau\right)\right):x\in\Omega_{\tau}^{P,R}\right\} \cap B_{\frac{\theta\rho\left|P\right|}{4\sqrt{1+\delta^{2}}}}\left(P\right)\label{ACM: graph}
\end{equation}
by (\ref{ACM: pre graph}). 

\textbf{\textit{Case 2:}} $\textrm{dist}\left(P,\mathcal{C}\right)\leq\varrho\left|P\right|$,
where
\begin{equation}
\varrho=\frac{\frac{\theta\rho}{4\sqrt{1+\delta^{2}}}}{1+\frac{\theta\rho\left|P\right|}{4\sqrt{1+\delta^{2}}}}.\label{ACM: radius}
\end{equation}

In fact, we can somehow reduce this case to Case 1. To see this, choose
$\tilde{P}\in\mathcal{C}$ so that $\textrm{dist}\left(P,\mathcal{C}\right)=\left|P-\tilde{P}\right|$.
Then we have 
\[
\left|\tilde{P}\right|\leq\left|P\right|+\left|P-\tilde{P}\right|\leq\left(1+\varrho\right)\left|P\right|
\]
and 
\[
\left|\tilde{P}\right|\geq\left|P\right|-\left|P-\tilde{P}\right|\geq\left(1-\varrho\right)\left|P\right|.
\]
It then follows from (\ref{ACM: radius}) that 
\[
\left|P-\tilde{P}\right|=\textrm{dist}\left(P,\mathcal{C}\right)\leq\varrho\left|P\right|=\frac{\theta\rho}{4\sqrt{1+\delta^{2}}}\left(1-\varrho\right)\left|P\right|\leq\frac{\theta\rho}{4\sqrt{1+\delta^{2}}}\left|\tilde{P}\right|.
\]
Thereby we get $P\in B_{\frac{\theta\rho\left|\tilde{P}\right|}{4\sqrt{1+\delta^{2}}}}\left(\tilde{P}\right)$,
in which (\ref{ACM: smooth estimates}) and (\ref{ACM: graph}) hold
(with the point $P$ therein replaced by $\tilde{P}$). 

\textbf{\textit{Case 3:}} $\textrm{dist}\left(P,\mathcal{C}\right)>\varrho\left|P\right|$. 

Note that $\textrm{dist}\left(RP,\mathcal{C}\right)>\varrho\left|P\right|R$
for any $R>0$. By the second condition in Definition \ref{asymptotically conical},
if $\left|P\right|R\gg1$ (depending on $n,k,\mathcal{C},\Sigma_{0}$),
we have 
\[
\Sigma_{0}\cap B_{\frac{5}{6}\varrho\left|P\right|R}\left(RP\right)=\emptyset\,\,\Rightarrow\,\,\frac{1}{R}\Sigma_{0}\cap B_{\frac{5}{6}\varrho\left|P\right|}\left(P\right)=\emptyset.
\]
Applying the avoidance principle for MCF (cf. Chapter 3 in \cite{E})
gives
\[
\Sigma_{\tau}^{R}\cap\partial B_{\sqrt{\left(\frac{2}{3}\varrho\left|P\right|\right)^{2}-2n\tau}}\left(P\right)=\emptyset\quad\,\,\forall\,\,0\leq\tau\leq\min\left\{ \frac{\left(\frac{2}{3}\varrho\left|P\right|\right)^{2}}{2n},\frac{T}{R^{2}}\right\} ,
\]
which implies
\begin{equation}
\Sigma_{\tau}^{R}\cap B_{\frac{1}{2}\varrho\left|P\right|}\left(P\right)=\emptyset\label{ACM: disjoint}
\end{equation}
for $0\leq\tau\leq\min\left\{ \frac{7\left(\varrho\left|P\right|\right)^{2}}{72n},\frac{T}{R^{2}}\right\} =\frac{T}{R^{2}}$,
provided that $\left|P\right|R\geq\sqrt{\frac{72nT}{7\varrho^{2}}}$. 

\textbf{$\left\langle \right.$Step 2$\left.\right\rangle $} Now
let us see how do we use the observation from Step 1 to complete the
proof.

To prove the proposition, it suffices to show that for any given compact
set $\mathscr{K}\subset\mathbb{R}^{n+1}\setminus\left\{ O\right\} $,
$k\in\mathbb{N}$, and $\varepsilon>0$, there exist $\mathscr{R}_{1}>0$
(depending on $n$, $k$, $\mathcal{C}$, $\Sigma_{0}$, $\mathscr{K}$,
$T$, $\varepsilon$) and $\mathscr{R}_{2}>0$ (depending on $n$,
$\mathcal{C}$, $\Sigma_{0}$, $T$, $\varepsilon$) so that for every
$0\leq t\leq T$, the following two properties hold:
\begin{enumerate}
\item $\frac{1}{R}\Sigma_{t}$ is $\varepsilon$-close, in the $C^{k-1}$
topology, to $\frac{1}{R}\Sigma_{0}$ in $\mathscr{K}$ for every
$R\geq\mathscr{R}_{1}$;
\item $\Sigma_{t}$ is $\varepsilon$-close, in the $C^{1}$ topology, to
$\Sigma_{0}$ in $\mathbb{R}^{n+1}\setminus B_{\mathscr{R}_{2}}\left(O\right)$. 
\end{enumerate}
To verify property (1), let us first find a countable covering for
$\mathscr{K}$ in the form
\begin{equation}
\bigcup_{i}B_{r_{i}}\left(P_{i}\right)\quad\textrm{with}\,\,\,r_{i}=\left\{ \begin{array}{c}
\frac{\theta\rho\left|P_{i}\right|}{4\sqrt{1+\delta^{2}}},\quad\textrm{if}\,\,P_{i}\in\mathcal{C}\\
\frac{1}{2}\varrho\left|P_{i}\right|,\quad\textrm{if}\,\,\textrm{dist}\left(P_{i},\mathcal{C}\right)>\varrho\left|P_{i}\right|
\end{array}\right.,\label{ACM: covering}
\end{equation}
where all the constants are from Step 1 The existence of such a covering
is ensured by the argument in Step 1 (since Case 2 and Case 3 include
all possibilities and Case 2 can be ``reduced'' to Case 1). It is
not hard to see that property (1) follows from (\ref{ACM: rescaling}),
(\ref{ACM: smooth estimates}), (\ref{ACM: graph}), and (\ref{ACM: disjoint}).

The verification of property (2) basically follows from the same line
of argument as in verifying property (1), with only a slight modification.
We begin by finding a countable covering in the form (\ref{ACM: covering})
for $\mathbb{R}^{n+1}\setminus B_{R_{0}}\left(O\right)$, where $R_{0}>0$
is the (or possibly larger) constant from Definition \ref{asymptotically conical}.
Then setting $k=2$ and $R=1$ in Step 1 and replacing (\ref{ACM: convergence})
by the second condition in Definition \ref{asymptotically conical},
the same argument carries over, under the assumption that $\left|P\right|\geq\mathscr{R}_{2}$,
and leads to the following results for the three cases, from which
the conclusion follows easily. 

\textbf{\textit{Case 1:}} If $P\in\mathcal{C}$, then for every $0\leq t\leq T$,
$\Sigma_{t}\cap B_{\frac{\theta\rho\left|P\right|}{4\sqrt{1+\delta^{2}}}}\left(P\right)$
can be parametrized as a graph of $u^{P}\left(\cdot,t\right)$, i.e.
\begin{equation}
\Sigma_{t}\cap B_{\frac{\theta\rho\left|P\right|}{4\sqrt{1+\delta^{2}}}}\left(P\right)=\left\{ X=P+\left(x,u^{P}\left(x,t\right)\right):x\in\Omega_{t}^{P}\right\} \cap B_{\frac{\theta\rho\left|P\right|}{4\sqrt{1+\delta^{2}}}}\left(P\right),\label{ACM: graph'}
\end{equation}
with $u^{P}\left(\cdot,t\right)$ satisfying 
\begin{equation}
\sup_{0\leq t\leq T}\left\Vert \partial_{x}^{i}u^{P}\left(\cdot,t\right)-\partial_{x}^{i}u^{P}\left(\cdot,0\right)\right\Vert _{L^{\infty}\left(B_{\frac{\theta\rho\left|P\right|}{4\sqrt{1+\delta^{2}}}}^{n}\left(O\right)\right)}\leq C\left(n,\mathcal{C},\Sigma_{0}\right)\frac{T}{\left|P\right|^{1+i}}\label{ACM: smooth estimates'}
\end{equation}
for $i\in\left\{ 0,1\right\} $, as long as $\left|P\right|\geq C\left(n,\mathcal{C},\Sigma_{0}\right)\left(\sqrt{T}+1\right)$. 

\textbf{\textit{Case 2: }}If $\textrm{dist}\left(P,\mathcal{C}\right)\leq\varrho\left|P\right|$,
then we can find $\tilde{P}\in\mathcal{C}$ so that $P\in B_{\frac{\theta\rho\left|\tilde{P}\right|}{4\sqrt{1+\delta^{2}}}}\left(\tilde{P}\right)$.

\textbf{\textit{Case 3:}} If $\textrm{dist}\left(P,\mathcal{C}\right)>\varrho\left|P\right|$,
then 
\begin{equation}
\Sigma_{t}\cap B_{\frac{1}{2}\varrho\left|P\right|}\left(P\right)=\emptyset\label{ACM: disjoint'}
\end{equation}
for all $0\leq t\leq T$, provided that $\left|P\right|\geq C\left(n,\mathcal{C},\Sigma_{0}\right)\left(\sqrt{T}+1\right)$. 
\end{proof}
Below we have a further remark regarding Proposition \ref{asymptotically conical along MCF},
which will be used in proving Theorem \ref{blow-down of MCF}.
\begin{rem}
\label{ACM: graphic along MCF}By examining the second condition in
Definition \ref{asymptotically conical} and the verification of property
(2) in the preceding proof (with a focus on conditions (\ref{ACM: graph'}),
(\ref{ACM: smooth estimates'}), and (\ref{ACM: disjoint'})), it
can be observed that the following property also holds: 

\medskip{}

\textit{Under the hypothesis of Proposition \ref{asymptotically conical along MCF}
with an extra condition that $T\geq1$, given $\varepsilon>0$, there
exists a constant $M\geq1$ depending on $n$, $\mathcal{C}$, $\Sigma_{0}$
and $\varepsilon$ so that for every $0\leq t\leq T$, $\Sigma_{t}\setminus B_{M\sqrt{T}}\left(O\right)$
is normal graph of  $u_{t}$ }over\textit{ $\mathcal{C}$ outside
a compact subset with 
\[
\left\Vert \nabla_{\mathcal{C}}\,u_{t}\right\Vert _{L^{\infty}}+\frac{1}{\sqrt{T}}\left\Vert u_{t}\right\Vert _{L^{\infty}}\leq\varepsilon.
\]
Particularly, it follows that $\frac{1}{\sqrt{T}}\Sigma_{T}\setminus B_{M}\left(O\right)$
is normal graph of 
\[
\omega_{T}\left(Y\right)=\frac{1}{\sqrt{T}}\,u_{T}\left(\sqrt{T}\,Y\right)
\]
}over\textit{ $\mathcal{C}$ outside a compact subset with 
\[
\left\Vert \nabla_{\mathcal{C}}\,\omega_{T}\right\Vert _{L^{\infty}}+\left\Vert \omega_{T}\right\Vert _{L^{\infty}}\leq\varepsilon.
\]
\smallskip{}
}
\end{rem}

Though a regular cone $\mathcal{C}$ has a singularity at the tip
$O$, the argument in proving Proposition \ref{asymptotically conical along MCF}
 still carries over to a self-expanding MCF coming out of $\mathcal{C}$,
in which case the initial hypersurface is trivially asymptotic to
a cone at infinity.
\begin{cor}
\label{self-expanding MCF out of cone}Suppose that the self-expanding
MCF $\left\{ \Gamma_{\tau}\right\} $ is smooth in the space-time
$\left(\mathbb{R}^{n+1}\times\left[0,\infty\right)\right)\setminus\left\{ \left(O,0\right)\right\} $
with $\Gamma_{0}=\mathcal{C}$, where $\mathcal{C}$ is a regular
cone. Then $\Gamma_{\tau}$ is asymptotic to $\mathcal{C}$ at infinity
for every $\tau>0$. 
\end{cor}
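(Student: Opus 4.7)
The plan is to reduce the corollary to the proof of Proposition \ref{asymptotically conical along MCF}, using self-similarity to handle the singularity of $\mathcal{C}$ at $O$. Since $\left\{\Gamma_\tau\right\}$ is self-expanding, $\Gamma_\tau = \sqrt{\tau}\,\Gamma_1$ for every $\tau > 0$; combined with the scale invariance $\lambda\mathcal{C} = \mathcal{C}$, it suffices to show that $\Gamma_1$ is asymptotic to $\mathcal{C}$ at infinity. The first condition of Definition \ref{asymptotically conical}, namely $\frac{1}{R}\Gamma_1 \stackrel{C^\infty_{loc}}{\longrightarrow}\mathcal{C}$ in $\mathbb{R}^{n+1}\setminus\left\{O\right\}$ as $R\to\infty$, reads under self-similarity as $\Gamma_{1/R^2}\to\Gamma_0 = \mathcal{C}$, which is immediate from the assumption that the flow is smooth in $\left(\mathbb{R}^{n+1}\times[0,\infty)\right)\setminus\left\{(O,0)\right\}$ with $\Gamma_0 = \mathcal{C}$.

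For the second (graph-over-$\mathcal{C}$) condition, I would adapt Step 1 of the proof of Proposition \ref{asymptotically conical along MCF}. Although $\mathcal{C}$ is singular at $O$, for any $P\in\mathcal{C}\setminus\left\{O\right\}$ the ball $B_{\rho|P|}(P)$ (with $\rho$ depending only on $\mathcal{C}$) lies uniformly away from $O$, and by scale invariance $\mathcal{C}\cap B_{\rho|P|}(P)$ is a small Lipschitz graph over $T_P\mathcal{C}$ with $\rho|P|\,\|A_{\mathcal{C}}\|_{L^\infty(B_{\rho|P|}(P))}\lesssim 1$. Applying the pseudolocality theorem (Theorem \ref{pseudo-locality}) to the rescaled flow $\left(\rho|P|/M\right)^{-1}\left(\Gamma_{(\rho|P|/M)^2 s}-P\right)$, which starts from a small Lipschitz graph with bounded initial curvature, gives the scaled curvature estimate
\[
|P|\,\|A_{\Gamma_\tau}\|_{L^\infty\left(B_{\rho|P|/(10M)}(P)\right)} \leq \mathcal{K}
\]
for all $0\leq \tau \leq \min\left\{\left(\rho|P|/M\right)^2,1\right\} = 1$ (once $|P|$ is large), exactly the analog of Proposition \ref{spatial curvature estimate}. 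The smooth estimates of Proposition \ref{smooth estimates} then give the analog of Corollary \ref{spatial smooth estimates}.

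Armed with these scale-sensitive bounds, I would invoke Proposition \ref{analyticity: vertical parametrization} to parametrize $\Gamma_\tau\cap B_{\theta\rho|P|/(4\sqrt{1+\delta^2})}(P)$ as a graph of $u^{P}(\cdot,\tau)$ over $T_P\mathcal{C}$, with the crucial feature that $u^P(\cdot,0)\equiv 0$ because $\Gamma_0 = \mathcal{C}$ is flat in those coordinates. Lemma \ref{equivalence of smooth estimates} converts the curvature bounds into $\left(\rho|P|\right)^i\|\partial_x^{i+1}u^P(\cdot,\tau)\|_{L^\infty}\leq C(n,k)$, and integrating the graphical MCF equation (\ref{ACM: equation}) in $\tau$ produces
\[
\|\partial_x^{i} u^{P}(\cdot,1)\|_{L^\infty} = \|\partial_x^{i} u^{P}(\cdot,1)-\partial_x^{i}u^P(\cdot,0)\|_{L^\infty}\leq \frac{C(n,k,\mathcal{C})}{|P|^{1+i}},
\]
which delivers the smallness required in Definition \ref{asymptotically conical}(2). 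The three-case trichotomy (points on $\mathcal{C}$, points close to $\mathcal{C}$, points far from $\mathcal{C}$) and the final covering/avoidance-principle argument from Step 2 of Proposition \ref{asymptotically conical along MCF} then carry over verbatim to show that, outside a large ball, $\Gamma_1$ is a $C^1$-small normal graph over $\mathcal{C}$.

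The main technical obstacle is arranging all local parametrizations and estimates so that the balls used stay uniformly away from the singular tip $O$; this is precisely what the linear dependence of every relevant radius on $|P|$ guarantees once $|P|\gg 1$. Self-similarity is what couples the time scale (on which pseudolocality and the local graph parametrization are valid) to the spatial scale $|P|$, and it is also what promotes the estimate for $\tau = 1$ to every $\tau > 0$ via the rescaling $\Gamma_\tau = \sqrt{\tau}\,\Gamma_1$.
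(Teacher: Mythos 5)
Your overall strategy is the one the paper intends: the paper justifies Corollary \ref{self-expanding MCF out of cone} only by the remark preceding it, namely that the argument of Proposition \ref{asymptotically conical along MCF} carries over because the initial hypersurface $\mathcal{C}$ is ``trivially'' asymptotic to itself and every ball $B_{\rho\left|P\right|}\left(P\right)$ used in that argument stays a distance $\left(1-\rho\right)\left|P\right|$ from the tip. Your reduction to $\tau=1$ via self-similarity, your treatment of condition (1) of Definition \ref{asymptotically conical} by identifying $\frac{1}{R}\Gamma_{1}$ with $\Gamma_{1/R^{2}}$, and your use of pseudolocality together with the local graph parametrization for condition (2) all match that plan.

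One claim in your write-up is wrong, however: $u^{P}\left(\cdot,0\right)$ is not identically zero. Since you parametrize over the tangent plane $T_{P}\mathcal{C}$, the function $u^{P}\left(\cdot,0\right)$ is the graph of the cone $\mathcal{C}$ itself over $T_{P}\mathcal{C}$, and a regular cone is not flat over its tangent plane unless it is a hyperplane; on $B_{\rho\left|P\right|}\left(P\right)$ this graph has height of order $\left|A_{\mathcal{C}}\right|\left(\rho\left|P\right|\right)^{2}\sim\rho^{2}\left|P\right|$, which is large rather than zero. Consequently the displayed identity $\left\Vert \partial_{x}^{i}u^{P}\left(\cdot,1\right)\right\Vert _{L^{\infty}}=\left\Vert \partial_{x}^{i}u^{P}\left(\cdot,1\right)-\partial_{x}^{i}u^{P}\left(\cdot,0\right)\right\Vert _{L^{\infty}}$ is false, and its left-hand side is in any case not the quantity Definition \ref{asymptotically conical}(2) asks for: what is required is that $\Gamma_{1}$ be a $C^{1}$-small \emph{normal graph over $\mathcal{C}$}, and that is governed precisely by the difference $u^{P}\left(\cdot,1\right)-u^{P}\left(\cdot,0\right)$, exactly as in (\ref{ACM: smooth estimates'}). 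Since your integration of the graphical flow equation does bound this difference by $C/\left|P\right|^{1+i}$, the argument survives once the erroneous identification is dropped and the difference estimate is converted, as in Step 2 of Proposition \ref{asymptotically conical along MCF}, into the normal-graph statement over $\mathcal{C}$; but as written that step misstates both what is being estimated and why it suffices.
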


The rest of the section is devoted to proving the curvature estimate
in Theorem \ref{decay rate of curvature}. The estimate (\ref{NG: Radon=002013Nikodym derivative})
in the following lemma is necessary for the proof; other parts of
the lemma will be useful in Sections \ref{Approaching property} and
\ref{Asymptotic self-similarity}. 
\begin{lem}
\label{normal graph}There exists a constant $0<\varsigma<1$ depending
on $n$ with the following property.

Let $\Sigma$ and $\tilde{\Sigma}$ be two smooth, properly embedded,
and oriented hypersurfaces in $\mathbb{R}^{n+1}$ so that $\tilde{\Sigma}$
is a normal graph of $v$ over $\Sigma$, i.e.
\[
\tilde{\Sigma}=\left\{ \tilde{X}=X+v\,N_{\Sigma}:X\in\Sigma\right\} ,
\]
with
\[
\left\Vert \nabla_{\Sigma}v\right\Vert _{L^{\infty}}+\left\Vert A_{\Sigma}v\right\Vert _{L^{\infty}}\leq\varsigma.
\]
Then we have the following estimates for the Radon\textendash Nikodym
derivative
\begin{equation}
\frac{d\mathcal{H}^{n}\lfloor\tilde{\Sigma}}{d\mathcal{H}^{n}\lfloor\Sigma}\leq1+C\left(n\right)\left(\left|\nabla_{\Sigma}v\right|+\left|A_{\Sigma}v\right|\right)\label{NG: Radon=002013Nikodym derivative}
\end{equation}
and the second fundamental form 
\begin{equation}
\left|A_{\tilde{\Sigma}}\right|\,\leq\,\left(\left|A_{\Sigma}\right|+\left|\nabla_{\Sigma}^{2}v\right|\right)\,\left[\,1\,+\,C\left(n\right)\left(\left|\nabla_{\Sigma}v\right|+\left|A_{\Sigma}v\right|\right)\,\right]\label{NG: curvature estimate}
\end{equation}
\[
+\,C\left(n\right)\,\left|v\nabla_{\Sigma}A_{\Sigma}\right|\,\left(\left|\nabla_{\Sigma}v\right|+\left|A_{\Sigma}v\right|\right).
\]
Moreover, we have the following formula relating the mean curvatures
of $\tilde{\Sigma}$ and $\Sigma$:
\begin{equation}
\left(N_{\Sigma}\cdot N_{\tilde{\Sigma}}\right)^{-1}\left[H_{\tilde{\Sigma}}+v\nabla_{\Sigma}H_{\Sigma}\cdot N_{\tilde{\Sigma}}\right]\,-\,H_{\Sigma}\label{NG: biased mean curvature}
\end{equation}
\[
=a\cdot\nabla_{\Sigma}^{2}v+\left|A_{\Sigma}\right|^{2}v
\]
\[
+\,v\nabla_{\Sigma}A_{\Sigma}\ast\nabla_{\Sigma}v+A_{\Sigma}\ast Q\left(\nabla_{\Sigma}v,A_{\Sigma}v\right)+v\nabla_{\Sigma}A_{\Sigma}\ast Q\left(\nabla_{\Sigma}v,A_{\Sigma}v\right)
\]
where $a$ is a 2-tensor defined by
\[
a^{ij}=g^{ij}+2A^{ij}v+Q^{ij}\left(\nabla_{\Sigma}v,A_{\Sigma}v\right)
\]
and satisfying
\begin{equation}
\frac{2}{3}g^{ij}\leq a^{ij}\leq\frac{4}{3}g^{ij}.\label{NG: elliptic}
\end{equation}
Note that 
\begin{itemize}
\item The notation $Q$ means an analytic function/tensor that is at least
``quadratic'' (in the form of contraction via the metric $g_{ij}$
of $\Sigma$ and its inverse $g^{ij}$) in its arguments.
\item The notation $\ast$ means some form of contraction of tensors.
\item $A^{ij}$ denotes raising the indices of $A_{ij}$ , where $A_{ij}$
are the coordinates of $A_{\Sigma}$.
\item $a\cdot\nabla_{\Sigma}^{2}v=a^{ij}\nabla_{ij}v$.
\end{itemize}
\end{lem}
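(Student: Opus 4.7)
The proof is a direct local-coordinate computation on $\Sigma$ in which $\varsigma \coloneqq \|\nabla_\Sigma v\|_{L^\infty} + \|A_\Sigma v\|_{L^\infty}$ serves as a small parameter controlling all error terms. I would parametrize $\tilde\Sigma$ by the graph map $F(X) = X + v(X) N_\Sigma(X)$. Applying the Weingarten equation $\partial_i N_\Sigma = -A_i{}^k e_k$ gives
\[
\partial_i F = (\delta_i^k - vA_i{}^k)\, e_k + \nabla_i v\, N_\Sigma,
\]
so the pullback metric is $\tilde g_{ij} = g_{ij} - 2vA_{ij} + v^2 A_i{}^k A_{jk} + \nabla_i v\,\nabla_j v$. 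Taking determinants, expanding the square root, and using $|vA|^2 + |\nabla v|^2 \leq \varsigma(|vA| + |\nabla v|)$ delivers the Radon--Nikodym estimate (\ref{NG: Radon=002013Nikodym derivative}) immediately. Provided $\varsigma$ is small enough depending on $n$, $\tilde g$ is comparable to $g$, and expanding its inverse in a Neumann series yields
\[
\tilde g^{ij} = g^{ij} + 2vA^{ij} + Q^{ij}(\nabla_\Sigma v, A_\Sigma v),
\]
which is precisely the tensor $a^{ij}$ of the statement and automatically satisfies (\ref{NG: elliptic}).

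Next, I would determine the unit normal by writing $N_{\tilde\Sigma} = \alpha(N_\Sigma - \beta^k e_k)$, imposing $N_{\tilde\Sigma}\cdot\partial_i F = 0$, and solving the resulting small-perturbation linear system; this produces $\beta^k = g^{kj}\nabla_j v + Q(\nabla_\Sigma v, A_\Sigma v)$, and normalization forces $\alpha = (N_\Sigma\cdot N_{\tilde\Sigma})^{-1} = 1 + Q(\nabla_\Sigma v, A_\Sigma v)$. Using $\tilde A_{ij} = N_{\tilde\Sigma}\cdot \partial_j\partial_i F$ and differentiating twice via Gauss--Weingarten gives
\[
\partial_j\partial_i F = \bigl(\text{tangential terms in } \Gamma,\, v\nabla_\Sigma A_\Sigma,\, A_\Sigma \nabla_\Sigma v\bigr) + \bigl(A_{ij} + \nabla^2_{ij}v - vA_i{}^k A_{jk}\bigr) N_\Sigma.
\]
Dotting with $N_{\tilde\Sigma}$ separates the normal contribution $\alpha\bigl(A_{ij} + \nabla^2_{ij}v - vA_i{}^kA_{jk}\bigr)$ from a tangential contribution contracted with $\beta = O(\nabla_\Sigma v)$; the pointwise bound (\ref{NG: curvature estimate}) then follows upon noting that $\alpha = 1 + O(\varsigma)$ and that the only tangential coefficient carrying a derivative of $A_\Sigma$ is $v\nabla_\Sigma A_\Sigma$.

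The main obstacle is the mean curvature identity (\ref{NG: biased mean curvature}), which demands meticulous bookkeeping. The strategy is to contract $\tilde g^{ij}\tilde A_{ij}$ using the two expansions above and group. The leading part $g^{ij}\bigl(A_{ij} + \nabla^2_{ij}v - vA_i{}^kA_{jk}\bigr)$ equals $H_\Sigma + \Delta_\Sigma v - v|A_\Sigma|^2$; the second-order piece $2vA^{ij}A_{ij}$ from the $\tilde g^{ij}$ expansion flips the sign of the $v|A_\Sigma|^2$ term to produce the desired $+|A_\Sigma|^2 v$, while $2vA^{ij}\nabla^2_{ij}v$ together with the $Q^{ij}$ remainders combine with $g^{ij}\nabla^2_{ij}v$ into $a^{ij}\nabla^2_{ij}v$. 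All remaining cross contractions naturally partition into the schematic products $v\nabla_\Sigma A_\Sigma\ast\nabla_\Sigma v$, $A_\Sigma\ast Q(\nabla_\Sigma v, A_\Sigma v)$, and $v\nabla_\Sigma A_\Sigma\ast Q(\nabla_\Sigma v, A_\Sigma v)$. Finally, the prefactor $(N_\Sigma\cdot N_{\tilde\Sigma})^{-1}$ and the correction $v\nabla_\Sigma H_\Sigma\cdot N_{\tilde\Sigma}$ on the left side are absorbed once one recognizes that the trace just computed is a version of $\tilde H$ biased toward the base point $X$ and the direction $N_\Sigma$ rather than the true point $X + vN_\Sigma$ and the true direction $N_{\tilde\Sigma}$; writing out the precise relationship between the two yields the stated identity. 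The computation is elementary throughout but the labor lies in verifying that every residual term falls into one of the declared schematic forms.
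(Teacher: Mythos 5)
Your proposal is correct and takes essentially the same route as the paper: both compute the pulled-back metric from the graph map $X\mapsto X+v\,N_{\Sigma}$ via the Weingarten relations, expand $\sqrt{\det\tilde g}$, $\tilde g^{ij}$, and $N_{\tilde\Sigma}$ to first order in $\nabla_{\Sigma}v$ and $A_{\Sigma}v$ plus quadratic remainders $Q$, and obtain (\ref{NG: Radon=002013Nikodym derivative}), (\ref{NG: curvature estimate}), and (\ref{NG: biased mean curvature}) by assembling and tracing $\tilde A_{ij}$. The one step you leave vague --- why the correction $v\nabla_{\Sigma}H_{\Sigma}\cdot N_{\tilde\Sigma}$ appears on the left of (\ref{NG: biased mean curvature}) (it cancels, via the traced Codazzi identity, the $-v\,g^{ij}\nabla_{i}A_{j}^{k}\,e_{k}\cdot N_{\tilde\Sigma}$ contribution) --- is equally unelaborated in the paper's own proof.
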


\begin{proof}
First of all, by a simple calculation, the pull-back metrics of $\tilde{\Sigma}$
and $\Sigma$ are related by
\[
\tilde{g}_{ij}=g_{ij}-2A_{ij}v+A_{ij}^{2}v^{2}+\nabla_{i}v\nabla_{j}v,
\]
where $\tilde{g}_{ij}$ and $g_{ij}$ are the metrics of $\tilde{\Sigma}$
and $\Sigma$, respectively. In particular, we get
\[
\textrm{det}\left(\tilde{g}_{ij}\right)=\textrm{det}\left(g_{ij}\right)\cdot\textrm{det}\left(\delta_{j}^{i}-2A_{j}^{i}v+A_{k}^{i}A_{j}^{k}v^{2}+\nabla^{i}v\nabla_{j}v\right).
\]
Thus, if $\left|\nabla v\right|+\left|Av\right|\ll1$ (depending on
$n$), using the Taylor expansion we obtain 
\[
\sqrt{\textrm{det}\,\tilde{g}}=\sqrt{\textrm{det}\,g}\,\left[1-Hv+Q\left(\nabla v,Av\right)\right],
\]
where $H=H_{\Sigma}$ is the mean curvature of $\Sigma$. Thereby
we get
\[
\frac{d\mathcal{H}^{n}\lfloor\tilde{\Sigma}}{d\mathcal{H}^{n}\lfloor\Sigma}=\frac{\sqrt{\textrm{det}\,\tilde{g}}}{\sqrt{\textrm{det}\,g}}\leq1+C\left(n\right)\left(\left|\nabla v\right|+\left|Av\right|\right).
\]
Next, note that the second fundamental forms of $\tilde{\Sigma}$
and $\Sigma$ are related by 
\[
\tilde{A}_{ij}=\left(A_{ij}+\nabla_{ij}v-A_{ij}^{2}v\right)N\cdot\tilde{N}-\left(A_{i}^{k}\nabla_{j}v+A_{j}^{k}\nabla_{i}v+v\nabla_{i}A_{j}^{k}\right)e_{k}\cdot\tilde{N},
\]
where $\tilde{A}_{ij}$ are the coordinates of $A_{\tilde{\Sigma}}$,
$N$ and $\tilde{N}$ are the unit normal vectors of $\Sigma$ and
$\tilde{\Sigma}$, respectively, and $\left\{ e_{1},\cdots,e_{n}\right\} $
is a coordinate frame in $\Sigma$. In addition, the Taylor expansion
of $\tilde{g}^{ij}$, under the assumption that $\left|\nabla v\right|+\left|Av\right|\ll1$
(depending on $n$), is given by 
\[
\tilde{g}^{ij}=g^{ij}+2A^{ij}v+Q^{ij}\left(\nabla v,Av\right).
\]
It follows that
\begin{equation}
\tilde{A}_{j}^{i}=\tilde{g}^{ik}\tilde{A}_{kj}=\left\{ A_{j}^{i}+\nabla_{j}^{i}v+A_{k}^{i}A_{j}^{k}v+\left[2A^{ik}v+Q^{ik}\left(\nabla v,Av\right)\right]\nabla_{kj}v\right\} N\cdot\tilde{N}\label{NG: shape operator}
\end{equation}
\[
-\left\{ 2A^{ik}A_{kj}^{2}v^{2}+Q^{ik}\left(\nabla v,Av\right)\left[A_{kj}-A_{kj}^{2}v\right]\right\} N\cdot\tilde{N}
\]
\[
-\left[g^{ik}+2A^{ik}v+Q^{ik}\left(\nabla v,Av\right)\right]\left(A_{k}^{l}\nabla_{j}v+A_{j}^{l}\nabla_{k}v+v\nabla_{k}A_{j}^{l}\right)e_{l}\cdot\tilde{N}.
\]
Note that
\begin{equation}
N\cdot\tilde{N}=1+Q\left(\nabla v,Av\right),\quad e_{k}\cdot\tilde{N}=-\nabla_{k}v\,\left[1+Q\left(\nabla v,Av\right)\right].\label{NG: unit normal}
\end{equation}
Using (\ref{NG: shape operator}) and (\ref{NG: unit normal}), we
then get (\ref{NG: curvature estimate}) and that
\[
\left(N\cdot\tilde{N}\right)^{-1}\tilde{H}
\]
\[
=H+\triangle v+\left|A\right|^{2}v+\left[2A^{ij}v+Q^{ij}\left(\nabla v,Av\right)\right]\nabla_{ij}v
\]
\[
+v\nabla A\ast\nabla v+A\ast Q\left(\nabla v,Av\right)+v\nabla A\ast Q\left(\nabla v,Av\right),
\]
from which (\ref{NG: biased mean curvature}) follows. Note also that
(\ref{NG: elliptic}) holds if $\left|\nabla v\right|+\left|Av\right|\ll1$
(depending on $n$). 
\end{proof}
Now we are ready to prove the curvature estimate in the following
theorem. The idea primarily rests on White's regularity theorem along
with the pseudolocality theorem in the forms of Lemma \ref{temporal curvature estimate}
and Proposition \ref{spatial curvature estimate}, respectively. 
\begin{thm}
\label{decay rate of curvature}Given $\kappa>0$, if $\left\{ \Sigma_{t}\right\} _{0\leq t<\infty}$
is a MCF in $\mathbb{R}^{n+1}$ and if $\Sigma_{0}$ is asymptotic
to a regular cone $\mathcal{C}$ at infinity with $E\left[\mathcal{C}\right]<1+\epsilon$,
where $\epsilon$ is the constant in Theorem \ref{refinement of White's},
then we have 
\[
\sup_{t\geq T}\,\sqrt{t}\left\Vert A_{\Sigma_{t}}\right\Vert _{L^{\infty}}\leq\kappa
\]
for some constant $T>0$ that depends on $n$, $\kappa$, $\mathcal{C}$,
and $\Sigma_{0}$. 
\end{thm}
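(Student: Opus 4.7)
The plan is to split the curvature estimate into a \emph{far} region $|X|\geq\Lambda\sqrt{t}$, handled by Proposition \ref{spatial curvature estimate}, and a \emph{near} region $|X|\leq\Lambda\sqrt{t}$, handled by Lemma \ref{temporal curvature estimate}, where $\Lambda$ is a single large constant chosen so that both estimates deliver the same bound $\kappa$. Fix the given $\kappa$ and let $\epsilon,M$ be the corresponding constants from Theorem \ref{refinement of White's}. Proposition \ref{spatial curvature estimate}, whose constants depend only on $n,\mathcal{C},\Sigma_{0}$ and not on the length of the time interval, yields $\Lambda_{0},\mathcal{R}_{0},\mathcal{K}_{0}\geq1$ with
\[
|X|\,|A_{\Sigma_{t}}(X)|\leq\mathcal{K}_{0}\qquad\text{for all }X\in\Sigma_{t},\;|X|\geq\max\{\mathcal{R}_{0},\Lambda_{0}\sqrt{t}\},\;t\geq0.
\]
Choosing $\Lambda\geq\max\{\Lambda_{0},\,\mathcal{K}_{0}/\kappa,\,M/(\sqrt{2}-1)\}$ and $T$ with $\Lambda\sqrt{T}\geq\mathcal{R}_{0}$, the far region is dispatched at once: for $t\geq T$ and $|X|\geq\Lambda\sqrt{t}$,
\[
\sqrt{t}\,|A_{\Sigma_{t}}(X)|\leq\frac{\mathcal{K}_{0}\sqrt{t}}{|X|}\leq\frac{\mathcal{K}_{0}}{\Lambda}\leq\kappa.
\]

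For the near region I would invoke Lemma \ref{temporal curvature estimate} with this $\kappa$ and $\Lambda$, which requires
\[
\sup_{t\geq T/2}\,\sup_{P\in B_{2\Lambda\sqrt{t}}(O)}F_{P,t}(\Sigma_{0})\leq1+\epsilon
\]
for $T$ sufficiently large (depending on $n,\kappa,\mathcal{C},\Sigma_{0}$). The entropy hypothesis enters here: set $\eta:=(1+\epsilon)-E[\mathcal{C}]>0$, and use the scaling $F_{P,t}(\Sigma_{0})=F_{Q,1}(\Sigma_{0}/\sqrt{t})$ with $Q=P/\sqrt{t}\in B_{2\Lambda}(O)$. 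Since $F_{Q,1}(\mathcal{C})\leq E[\mathcal{C}]$ by definition of entropy, it is enough to prove
\[
\limsup_{t\to\infty}\,\sup_{Q\in B_{2\Lambda}(O)}F_{Q,1}(\Sigma_{0}/\sqrt{t})\leq E[\mathcal{C}].
\]
This uniform convergence of localized Gaussian areas is the step I expect to be the main obstacle.

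To establish it, I would partition $\mathbb{R}^{n+1}$ into a core $B_{\rho}(O)$, a shell $B_{r}(O)\setminus B_{\rho}(O)$, and a tail $\mathbb{R}^{n+1}\setminus B_{r}(O)$, with $\rho$ small and $r$ large to be chosen. On the shell, condition~(1) of Definition \ref{asymptotically conical} gives smooth convergence $\Sigma_{0}/\sqrt{t}\to\mathcal{C}$ on this compact set, so the shell contribution to $F_{Q,1}(\Sigma_{0}/\sqrt{t})$ converges uniformly in $Q\in B_{2\Lambda}(O)$ to that of $F_{Q,1}(\mathcal{C})\leq E[\mathcal{C}]$. The core contribution, for both surfaces, is controlled by an $L^{\infty}$ bound on the Gaussian density times $\mathcal{H}^{n}\bigl((\Sigma_{0}/\sqrt{t})\cap B_{\rho}(O)\bigr)=t^{-n/2}\mathcal{H}^{n}\bigl(\Sigma_{0}\cap B_{\rho\sqrt{t}}(O)\bigr)$, which is $O(\rho^{n})$ by the Euclidean area growth of $\Sigma_{0}$ inherited from its asymptotic graph structure over $\mathcal{C}$. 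Finally, the tail is absorbed by pairing Gaussian decay $e^{-|X-Q|^{2}/4}$ with the same polynomial area growth, yielding a bound that tends to $0$ as $r\to\infty$, uniformly in $Q\in B_{2\Lambda}(O)$. Choosing $\rho$ small and $r$ large so that core plus tail together contribute at most $\eta/2$, and then $T$ large enough that the shell agrees with $F_{Q,1}(\mathcal{C})$ up to $\eta/2$, yields the required bound. The theorem then follows by combining this with the far region estimate via Lemma \ref{temporal curvature estimate}.
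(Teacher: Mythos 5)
Your skeleton coincides exactly with the paper's: the far region $|X|\geq\tilde\Lambda\sqrt{t}$ is dispatched by Proposition \ref{spatial curvature estimate} with $\tilde\Lambda=\max\{\Lambda_{0},\mathcal{K}_{0}/\kappa,M/(\sqrt{2}-1)\}$, and the near region is reduced to verifying the Gaussian-area hypothesis of Lemma \ref{temporal curvature estimate}. Where you genuinely diverge is in that verification. The paper does not rescale: it writes $\Sigma_{0}\setminus B_{R_{0}}(O)$ directly as a normal graph of $u$ over $\mathcal{C}$ (condition (2) of Definition \ref{asymptotically conical}), uses the orthogonality $Z\cdot N_{\mathcal{C}}=0$ to get $|X-P|^{2}\geq|Z-P|^{2}-2|P|\,\|u\|_{L^{\infty}}$, and compares the Gaussian integrals via the Radon--Nikodym derivative bound of Lemma \ref{normal graph}; this controls the \emph{entire} exterior in one stroke by $e^{\sqrt{2}\delta\tilde\Lambda/\sqrt{T}}(1+C\delta)E[\mathcal{C}]$, while the compact piece contributes $O(T^{-n/2})$. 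Your version rescales to $F_{Q,1}(\Sigma_{0}/\sqrt{t})$ and runs a core/shell/tail decomposition, leaning on the $C^{\infty}_{loc}$ blow-down convergence (condition (1)) for the shell and on Euclidean area growth for the core and tail. Both routes work, and your reduction to $\limsup_{t\to\infty}\sup_{Q}F_{Q,1}(\Sigma_{0}/\sqrt{t})\leq E[\mathcal{C}]$ is sound; the trade-off is that the paper's normal-graph comparison makes the tail estimate unnecessary, whereas yours must supply the dyadic Gaussian-decay-versus-area-growth bound for the tail and a convergence statement on the shell that is uniform both in $Q\in B_{2\Lambda}(O)$ and in large $t$ --- all standard, but these are precisely the uniformities Lemma \ref{temporal curvature estimate} demands, so they should be written out (the area growth $\mathcal{H}^{n}(\Sigma_{0}\cap B_{R}(O))\leq CR^{n}$ itself follows from condition (2) together with $E[\mathcal{C}]<\infty$, so nothing is missing in principle).
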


\begin{proof}
By Proposition \ref{spatial curvature estimate}, there exist constants
$\Lambda,\mathcal{R},\mathcal{K}\geq1$ (depending on $n$, $\mathcal{C}$,
$\Sigma_{0}$) so that 
\begin{equation}
\sup_{t\geq0}\left\Vert \,\left|X\right|A_{\Sigma_{t}}\right\Vert _{L^{\infty}\left(\mathbb{R}^{n+1}\setminus B_{\max\left\{ \mathcal{R},\Lambda\sqrt{t}\right\} }\left(O\right)\right)}\leq\mathcal{K}.\label{DRC: pseudolocality}
\end{equation}
Given $\kappa>0$, let $\tilde{\Lambda}=\max\left\{ \Lambda,\frac{\mathcal{K}}{\kappa},\frac{M}{\sqrt{2}-1}\right\} $,
where $M$ is the constant in Theorem \ref{refinement of White's}.
Then (\ref{DRC: pseudolocality}) implies 
\[
\tilde{\Lambda}\sqrt{t}\left\Vert A_{\Sigma_{t}}\right\Vert _{L^{\infty}\left(\mathbb{R}^{n+1}\setminus B_{\max\left\{ \mathcal{R},\tilde{\Lambda}\sqrt{t}\right\} }\left(O\right)\right)}\leq\mathcal{K}
\]
for all $t\geq0$; especially, we obtain
\[
\sup_{t\geq\left(\frac{\mathcal{R}}{\tilde{\Lambda}}\right)^{2}}\,\sqrt{t}\left\Vert A_{\Sigma_{t}}\right\Vert _{L^{\infty}\left(\mathbb{R}^{n+1}\setminus B_{\tilde{\Lambda}\sqrt{t}}\left(O\right)\right)}\leq\frac{\mathcal{K}}{\tilde{\Lambda}}\leq\kappa.
\]
To finish the proof, it suffices to show that 
\[
\sup_{t\geq T}\,\sqrt{t}\left\Vert A_{\Sigma_{t}}\right\Vert _{L^{\infty}\left(B_{\tilde{\Lambda}\sqrt{t}}\left(O\right)\right)}\leq\kappa
\]
for some constant $T\geq\left(\frac{\mathcal{R}}{\tilde{\Lambda}}\right)^{2}$.
In fact, the above condition will follow from Lemma \ref{temporal curvature estimate}
once we show that the hypothesis in the lemma is satisfied (with the
constant $\Lambda$ therein replaced by $\tilde{\Lambda}$). To this
end, below we will prove that given $P\in B_{2\tilde{\Lambda}\sqrt{t}}\left(O\right)$
and $t\geq\frac{T}{2}$, there holds 
\[
F_{P,t}\left(\Sigma_{0}\right)\leq1+\epsilon,
\]
provided that $T\gg1$ (depending on $n$, $\kappa$, $\mathcal{C}$,
$\Sigma_{0}$) 

By the the second condition in Definition \ref{asymptotically conical},
outside a sufficiently large ball, $\Sigma_{0}$ is a normal graph
of $u$ over $\mathcal{C}$ outside a compact subset with 
\[
\left\Vert u\right\Vert _{C^{1}\left(\mathcal{C}\setminus B_{R}\left(O\right)\right)}\rightarrow0\quad\textrm{as}\,\,R\rightarrow\infty.
\]
Let $0<\delta\ll1$ be a constant to be determined (which will depend
only on $n,\kappa,\mathcal{C}$). Choose $R_{0}\gg1$ (depending on
$n,\mathcal{C},\Sigma_{0},\delta$) and a subset $\Omega\subset\mathcal{C}$
so that 
\[
\Sigma_{0}\setminus B_{R_{0}}\left(O\right)=\left\{ X=Z+uN_{\mathcal{C}}:Z\in\Omega\right\} 
\]
with 
\[
\left\Vert \nabla_{\mathcal{C}}u\right\Vert _{L^{\infty}\left(\Omega\right)}+\left\Vert u\right\Vert _{L^{\infty}\left(\Omega\right)}\leq\delta.
\]
Note that 
\begin{equation}
\frac{-1}{4t}\left|X-P\right|^{2}=\frac{-1}{4t}\left|Z+uN_{\mathcal{C}}-P\right|^{2}\label{DRC: conical condition}
\end{equation}
\[
=\frac{-1}{4t}\left[\left|Z-P\right|^{2}+2u\left(Z-P\right)\cdot N_{\mathcal{C}}+u^{2}\right]
\]
\[
\leq\frac{-1}{4t}\left|Z-P\right|^{2}+\frac{\left|P\right|\left\Vert u\right\Vert _{L^{\infty}\left(\Omega\right)}}{2t}\leq\frac{-1}{4t}\left|Z-P\right|^{2}+\frac{\tilde{\Lambda}\left\Vert u\right\Vert _{L^{\infty}\left(\Omega\right)}}{\sqrt{t}}
\]
for all $X\in\Sigma_{0}\setminus B_{R_{0}}\left(O\right)$. Note that
in the last line of (\ref{DRC: conical condition}) we have used the
fact that 
\[
Z\cdot N_{\mathcal{C}}=0
\]
because $\mathcal{C}$ is a cone. It then follows from (\ref{NG: Radon=002013Nikodym derivative})
in Lemma \ref{normal graph} (substituting $\Omega,\Sigma_{0}\setminus B_{R_{0}}\left(O\right),u$
for $\Sigma,\tilde{\Sigma},v$, respectively) and (\ref{DRC: conical condition})
that 
\begin{equation}
\int_{\Sigma_{0}\setminus B_{R_{0}}\left(O\right)}\frac{1}{\left(4\pi t\right)^{\frac{n}{2}}}e^{-\frac{\left|X-P\right|^{2}}{4t}}d\mathcal{H}^{n}\left(X\right)\label{DRC: Gaussian outside a ball}
\end{equation}
\[
=\int_{\Omega}\frac{1}{\left(4\pi t\right)^{\frac{n}{2}}}e^{-\frac{\left|Z+uN_{\mathcal{C}}-P\right|^{2}}{4t}}\frac{d\mathcal{H}^{n}\lfloor\Sigma_{0}}{d\mathcal{H}^{n}\lfloor\mathcal{C}}\,d\mathcal{H}^{n}\left(Z\right)
\]
\[
\leq e^{\frac{\tilde{\Lambda}\left\Vert u\right\Vert _{L^{\infty}\left(\Omega\right)}}{\sqrt{t}}}\left[1+C\left(n\right)\left(\left\Vert \nabla_{\mathcal{C}}u\right\Vert _{L^{\infty}\left(\Omega\right)}+\left\Vert A_{\mathcal{C}}u\right\Vert _{L^{\infty}\left(\Omega\right)}\right)\right]\int_{\mathcal{C}}\frac{1}{\left(4\pi t\right)^{\frac{n}{2}}}e^{-\frac{\left|Z-P\right|^{2}}{4t}}d\mathcal{H}^{n}\left(Z\right)
\]
 
\[
\leq e^{\frac{\sqrt{2}\delta\tilde{\Lambda}}{\sqrt{T}}}\left[1+C\left(n,\mathcal{C}\right)\delta\right]E\left[\mathcal{C}\right]\,\,\leq\,\,e^{\frac{\sqrt{2}\tilde{\Lambda}}{\sqrt{T}}}\,\frac{1}{2}\left(E\left[\mathcal{C}\right]+1+\epsilon\right),
\]
provided that $0<\delta\ll1$ (depending on $n,\kappa,\mathcal{C}$).
Note that $E\left[\mathcal{C}\right]<1+\epsilon$ and that the constant
$\epsilon$ depends on $n$ and $\kappa$. Also, in the last line
of (\ref{DRC: Gaussian outside a ball}) we have used the property
that $\left\Vert \,\left|Z\right|A_{\mathcal{C}}\right\Vert _{L^{\infty}\left(\mathbb{R}^{n+1}\setminus\left(O\right)\right)}<\infty.$
Moreover, we have 

\begin{equation}
\int_{\Sigma_{0}\cap B_{R_{0}}\left(O\right)}\frac{1}{\left(4\pi t\right)^{\frac{n}{2}}}e^{-\frac{\left|X-P\right|^{2}}{4t}}d\mathcal{H}^{n}\left(X\right)\leq\frac{\mathcal{H}^{n}\left(\Sigma_{0}\cap B_{R_{0}}\left(O\right)\right)}{\left(2\pi T\right)^{\frac{n}{2}}}.\label{DRC: Gaussian inside a ball}
\end{equation}
From (\ref{DRC: Gaussian outside a ball}) and (\ref{DRC: Gaussian inside a ball}),
one can see that $F_{P,t}\left(\Sigma_{0}\right)\leq1+\epsilon$ so
long as $T\gg1$ (depending on $n,\kappa,\mathcal{C},\Sigma_{0}$). 
\end{proof}
We conclude this section by the following lemma, which will be used
in proving Proposition \ref{sequential blow-down}. Its proof follows
exactly the same argument as in the preceding proof. 
\begin{lem}
\label{Gaussian on the large scale}Let $\Sigma$ be a smooth, properly
embedded, and oriented hypersurface in $\mathbb{R}^{n+1}$ that is
asymptotic to a regular cone $\mathcal{C}$ at infinity with $E\left[\mathcal{C}\right]<\infty$.
Then we have
\[
\sup_{t\geq T}F_{O,t}\left(\Sigma\right)\leq2E\left[\mathcal{C}\right]
\]
for some constant $T>0$ depending on $n$, $\mathcal{C}$, and $\Sigma$. 
\end{lem}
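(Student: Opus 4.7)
The plan is to follow the template in the proof of Theorem \ref{decay rate of curvature}, specialized to $P=O$. The key simplification at the origin is that $Z\cdot N_{\mathcal{C}}=0$ for every $Z\in\mathcal{C}\setminus\{O\}$ (a consequence of the scale invariance of $\mathcal{C}$), so in the graph parametrization $X=Z+uN_{\mathcal{C}}$ we get exactly $|X|^{2}=|Z|^{2}+u^{2}\geq|Z|^{2}$, with no residual term of the form $\tilde{\Lambda}\|u\|/\sqrt{t}$ as in the earlier proof.

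First, using Definition \ref{asymptotically conical}, I would pick $R_{0}$ large enough that $\Sigma\setminus B_{R_{0}}(O)$ is a normal graph of $u$ over an open subset $\Omega\subset\mathcal{C}$ with
\[
\|\nabla_{\mathcal{C}}u\|_{L^{\infty}(\Omega)}+\|A_{\mathcal{C}}u\|_{L^{\infty}(\Omega)}\leq\delta,
\]
where $\delta=\delta(n,\mathcal{C})$ is a small constant to be fixed (boundedness of $|Z|\,|A_{\mathcal{C}}|$ on $\mathcal{C}\setminus\{O\}$ together with the decay of $u$ allows one to control the $A_{\mathcal{C}}u$ term).

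On the outer piece, changing variables from $X$ to $Z$ and applying the Radon--Nikodym bound from Lemma \ref{normal graph} together with the identity $|X|^{2}\geq|Z|^{2}$ yields
\[
\int_{\Sigma\setminus B_{R_{0}}(O)}\frac{e^{-|X|^{2}/(4t)}}{(4\pi t)^{n/2}}\,d\mathcal{H}^{n}(X)\;\leq\;\bigl(1+C(n,\mathcal{C})\,\delta\bigr)\,F_{O,t}(\mathcal{C}).
\]
Scale invariance of $\mathcal{C}$ gives $F_{O,t}(\mathcal{C})=F_{O,1}(\mathcal{C})\leq E[\mathcal{C}]$, so by choosing $\delta$ small (depending on $n,\mathcal{C}$) the outer contribution is at most $\tfrac{3}{2}E[\mathcal{C}]$. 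For the inner piece, the trivial bound $e^{-|X|^{2}/(4t)}\leq 1$ gives an upper bound of $\mathcal{H}^{n}(\Sigma\cap B_{R_{0}}(O))/(4\pi t)^{n/2}$, which can be made $\leq\tfrac{1}{2}E[\mathcal{C}]$ for all $t\geq T$ by taking $T$ large (depending on $n,\mathcal{C},\Sigma$). Adding the two contributions gives $F_{O,t}(\Sigma)\leq 2E[\mathcal{C}]$ for $t\geq T$.

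The only point requiring care is the order of choices: first fix $\delta$ to absorb the factor $C(n,\mathcal{C})\delta$ into a multiplicative constant less than $3/2$; then choose $R_{0}$ from the asymptotically conical decay of $u$; finally choose $T$ large enough that the compact contribution becomes negligible. Beyond that, no genuine obstacle arises since the argument is a direct reuse of the estimate already carried out in the proof of Theorem \ref{decay rate of curvature}.
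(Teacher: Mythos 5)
Your proof is correct and follows essentially the same route as the paper's: split at a large radius $R_{0}$, use $Z\cdot N_{\mathcal{C}}=0$ to get $|X|^{2}\geq|Z|^{2}$ on the conical end, apply the Radon--Nikodym bound from Lemma \ref{normal graph} to compare with $F_{O,t}(\mathcal{C})\leq E[\mathcal{C}]$, and absorb the compact piece by taking $t\geq T$ large. The order of choices you describe ($\delta$, then $R_{0}$, then $T$) matches the paper exactly.
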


\begin{proof}
By the the second condition in Definition \ref{asymptotically conical},
outside a sufficiently large ball, $\Sigma$ is a normal graph of
$u$ over $\mathcal{C}$ outside a compact subset with 
\[
\left\Vert u\right\Vert _{C^{1}\left(\mathcal{C}\setminus B_{R}\left(O\right)\right)}\rightarrow0\quad\textrm{as}\,\,R\rightarrow\infty.
\]
Let $0<\delta\ll1$ be a constant to be determined (which will depend
only on $n,\mathcal{C}$). Choose $R_{0}\gg1$ (depending on $n,\mathcal{C},\Sigma,\delta$)
and a subset $\Omega\subset\mathcal{C}$ so that 
\[
\Sigma_{0}\setminus B_{R_{0}}\left(O\right)=\left\{ X=Z+uN_{\mathcal{C}}:Z\in\Omega\right\} 
\]
with 
\[
\left\Vert \nabla_{\mathcal{C}}u\right\Vert _{L^{\infty}\left(\Omega\right)}+\left\Vert u\right\Vert _{L^{\infty}\left(\Omega\right)}\leq\delta.
\]
Note that 
\begin{equation}
\frac{-1}{4t}\left|X\right|^{2}=\frac{-1}{4t}\left|Z+uN_{\mathcal{C}}\right|^{2}\label{GLS: conical}
\end{equation}
\[
=\frac{-1}{4t}\left[\left|Z\right|^{2}+2uZ\cdot N_{\mathcal{C}}+u^{2}\right]\leq\frac{-1}{4t}\left|Z\right|^{2}
\]
for all $X\in\Sigma_{0}\setminus B_{R_{0}}\left(O\right)$. Note also
that in the above we have used the fact that 
\[
Z\cdot N_{\mathcal{C}}=0
\]
because $\mathcal{C}$ is a cone. It follows from (\ref{NG: Radon=002013Nikodym derivative})
in Lemma \ref{normal graph} (in which substituting $\Omega,\Sigma\setminus B_{R_{0}}\left(O\right),u$
for $\Sigma,\tilde{\Sigma},v$, respectively) and (\ref{GLS: conical})
that 
\begin{equation}
\int_{\Sigma\setminus B_{R_{0}}\left(O\right)}\frac{1}{\left(4\pi t\right)^{\frac{n}{2}}}e^{-\frac{\left|X\right|^{2}}{4t}}d\mathcal{H}^{n}\left(X\right)=\int_{\Omega}\frac{1}{\left(4\pi t\right)^{\frac{n}{2}}}e^{-\frac{\left|Z+uN_{\mathcal{C}}\right|^{2}}{4t}}\frac{d\mathcal{H}^{n}\lfloor\Sigma}{d\mathcal{H}^{n}\lfloor\mathcal{C}}\,d\mathcal{H}^{n}\left(Z\right)\label{GLS: Gaussian outside a ball}
\end{equation}
\[
\leq\left[1+C\left(n\right)\left(\left\Vert \nabla_{\mathcal{C}}u\right\Vert _{L^{\infty}\left(\Omega\right)}+\left\Vert A_{\mathcal{C}}u\right\Vert _{L^{\infty}\left(\Omega\right)}\right)\right]\int_{\mathcal{C}}\frac{1}{\left(4\pi t\right)^{\frac{n}{2}}}e^{-\frac{\left|Z\right|^{2}}{4t}}d\mathcal{H}^{n}\left(Z\right)
\]
 
\[
\leq\left[1+C\left(n,\mathcal{C}\right)\delta\right]E\left[\mathcal{C}\right]\,\leq\,\frac{3}{2}E\left[\mathcal{C}\right]
\]
for all $t>0$, provided that $0<\delta\ll1$ (depending on $n,\mathcal{C}$).
Note that in the above estimate we have used the property that $\left\Vert \,\left|Z\right|A_{\mathcal{C}}\right\Vert _{L^{\infty}\left(\mathbb{R}^{n+1}\setminus\left(O\right)\right)}<\infty$.
Moreover, we have 
\begin{equation}
\int_{\Sigma\cap B_{R_{0}}\left(O\right)}\frac{1}{\left(4\pi t\right)^{\frac{n}{2}}}e^{-\frac{\left|X\right|^{2}}{4t}}d\mathcal{H}^{n}\left(X\right)\leq\frac{\mathcal{H}^{n}\left(\Sigma\cap B_{R_{0}}\left(O\right)\right)}{\left(4\pi t\right)^{\frac{n}{2}}}\,\leq\,\frac{1}{2}E\left[\mathcal{C}\right]\label{GLS: Gaussian inside a ball}
\end{equation}
provided that $t\gg1$ (depending on $n,\mathcal{C},\Sigma$). The
lemma follows immediately from (\ref{GLS: Gaussian outside a ball})
and (\ref{GLS: Gaussian inside a ball}).
\end{proof}

\section{Approaching Property\label{Approaching property}}

In this section we demonstrate the stability (in finite time) of MCF
with a conical end (see Theorem \ref{stability}). Then we prove that
under an extra condition on the curvature (see (\ref{AS: growth rate of curvature})),
there is an approaching property of NMCF (see Theorems \ref{asymptotic stability}
and \ref{approaching of NMCF}). 

To begin with, in the following proposition we show how the deviation
of one hypersurface from another evolves along MCF, which plays a
pivotal role in proving Theorems \ref{stability} and \ref{asymptotic stability}.
\begin{prop}
\label{evolution of deviation}Let $\left\{ \Sigma_{t}\right\} _{0\leq t\leq T}$
and $\left\{ \tilde{\Sigma}_{t}\right\} _{0\leq t\leq T}$ be MCFs
in $\mathbb{R}^{n+1}$ so that both $\Sigma_{0}$ and $\tilde{\Sigma}_{0}$
are asymptotic to a regular cone $\mathcal{C}$ at infinity. Suppose
that for every $0\leq t\leq T$, $\tilde{\Sigma}_{t}$ is a normal
graph of $v\left(\cdot,t\right)$ over $\Sigma_{t}$, i.e. 
\begin{equation}
\tilde{\Sigma}_{t}=\left\{ \tilde{X}\left(\cdotp,t\right)=X\left(\cdotp,t\right)+v\left(\cdot,t\right)N_{\Sigma_{t}}:X\left(\cdotp,t\right)\in\Sigma_{t}\right\} ,\label{ED: position}
\end{equation}
with
\[
\sup_{0\leq t\leq T}\left(\left\Vert \nabla_{\Sigma_{t}}v\right\Vert _{L^{\infty}}+\left\Vert A_{\Sigma_{t}}v\right\Vert _{L^{\infty}}\right)\leq\varsigma,
\]
where $\varsigma$ is the constant in Lemma \ref{normal graph}. Then
$v_{\textrm{max}}^{2}\left(t\right)\coloneqq\left\Vert v\left(\cdot,t\right)\right\Vert _{L^{\infty}}^{2}$
satisfies
\begin{equation}
D^{-}v_{\textrm{max}}^{2}\left(t\right)\coloneqq\limsup_{h\searrow0}\frac{v_{\textrm{max}}^{2}\left(t\right)-v_{\textrm{max}}^{2}\left(t-h\right)}{h}\label{ED: ODE inequality}
\end{equation}
\[
\leq\left\{ 2\left\Vert A_{\Sigma_{t}}\right\Vert _{L^{\infty}}^{2}+C\left(n\right)\left\Vert A_{\Sigma_{t}}v\right\Vert _{L^{\infty}}\left[\left\Vert A_{\Sigma_{t}}\right\Vert _{L^{\infty}}^{2}+\left\Vert \nabla_{\Sigma_{t}}A_{\Sigma_{t}}\right\Vert _{L^{\infty}}\right]\right\} v_{\textrm{max}}^{2}\left(t\right),
\]
for $0<t\leq T$. 
\end{prop}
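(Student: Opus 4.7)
The plan is to derive a quasilinear parabolic equation for the graph function $v$, convert it into a pointwise differential inequality for $v^{2}$, and apply Hamilton's trick at a spatial maximum of $v^{2}(\cdot,t)$. Since both $\Sigma_{0}$ and $\tilde{\Sigma}_{0}$ are asymptotic to $\mathcal{C}$, Proposition~\ref{asymptotically conical along MCF} ensures that $\Sigma_{t}$ and $\tilde{\Sigma}_{t}$ remain asymptotic to the same cone at all later times, so $v(\cdot,t)\to0$ at infinity and the spatial maximum $v_{\max}^{2}(t)$ is genuinely attained at an interior point whenever it is nonzero. This is what legitimizes the maximum-principle step on the non-compact hypersurfaces.

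For the evolution equation, I would differentiate the graph relation~(\ref{ED: position}) in $t$, use $\partial_{t}X=\vec{H}_{\Sigma_{t}}$ together with the standard identity $\partial_{t}N_{\Sigma_{t}}=-\nabla_{\Sigma_{t}}H_{\Sigma_{t}}$ (a tangent field on $\Sigma_{t}$), and then extract the normal component with respect to $N_{\tilde{\Sigma}_{t}}$, noting that the tangential part of $\partial_{t}\tilde{X}$ is a mere reparametrization of $\tilde{\Sigma}_{t}$. This produces
\[
\partial_{t}v\,=\,\left(N_{\Sigma_{t}}\cdot N_{\tilde{\Sigma}_{t}}\right)^{-1}\left[H_{\tilde{\Sigma}_{t}}+v\,\nabla_{\Sigma_{t}}H_{\Sigma_{t}}\cdot N_{\tilde{\Sigma}_{t}}\right]-H_{\Sigma_{t}},
\]
which is exactly the left side of~(\ref{NG: biased mean curvature}) in Lemma~\ref{normal graph}. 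That lemma therefore rewrites the right side as a quasilinear expression in $v$:
\[
\partial_{t}v\,=\,a\cdot\nabla_{\Sigma_{t}}^{2}v+|A_{\Sigma_{t}}|^{2}v+v\,\nabla_{\Sigma_{t}}A_{\Sigma_{t}}\ast\nabla_{\Sigma_{t}}v+A_{\Sigma_{t}}\ast Q+v\,\nabla_{\Sigma_{t}}A_{\Sigma_{t}}\ast Q,
\]
with $a$ elliptic by~(\ref{NG: elliptic}) and $Q=Q(\nabla_{\Sigma_{t}}v,A_{\Sigma_{t}}v)$ at least quadratic.

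Multiplying by $2v$ and using $\nabla_{\Sigma_{t}}^{2}v^{2}=2v\nabla_{\Sigma_{t}}^{2}v+2\nabla_{\Sigma_{t}}v\otimes\nabla_{\Sigma_{t}}v$ together with the ellipticity of $a$ gives
\[
\left(\partial_{t}-a\cdot\nabla_{\Sigma_{t}}^{2}\right)v^{2}\,\leq\,2|A_{\Sigma_{t}}|^{2}v^{2}\,+\,2v\,\bigl(v\,\nabla_{\Sigma_{t}}A_{\Sigma_{t}}\ast\nabla_{\Sigma_{t}}v+A_{\Sigma_{t}}\ast Q+v\,\nabla_{\Sigma_{t}}A_{\Sigma_{t}}\ast Q\bigr).
\]
At a spatial maximum $p_{t}$ of $v^{2}(\cdot,t)$ one has $\nabla_{\Sigma_{t}}v(p_{t})=0$ and $a\cdot\nabla_{\Sigma_{t}}^{2}v^{2}(p_{t})\leq0$, while Hamilton's trick gives $D^{-}v_{\max}^{2}(t)\leq\partial_{t}v^{2}(p_{t},t)$. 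Every term involving $\nabla_{\Sigma_{t}}v$ drops out at $p_{t}$, and the surviving contributions from $Q(0,A_{\Sigma_{t}}v)=O(|A_{\Sigma_{t}}v|^{2})$ produce only $|A_{\Sigma_{t}}|^{3}v^{3}$ and $|\nabla_{\Sigma_{t}}A_{\Sigma_{t}}|\,|A_{\Sigma_{t}}|^{2}v^{4}$, which the hypothesis $|A_{\Sigma_{t}}v|\leq\varsigma<1$ lets me rewrite as $\|A_{\Sigma_{t}}\|_{L^{\infty}}^{2}\|A_{\Sigma_{t}}v\|_{L^{\infty}}v_{\max}^{2}$ and $\|\nabla_{\Sigma_{t}}A_{\Sigma_{t}}\|_{L^{\infty}}\|A_{\Sigma_{t}}v\|_{L^{\infty}}v_{\max}^{2}$, respectively. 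This yields~(\ref{ED: ODE inequality}). The main obstacle is the bookkeeping in the derivation of the quasilinear equation: matching the sign of $\partial_{t}N_{\Sigma_{t}}$ with the exact form of~(\ref{NG: biased mean curvature}) and confirming that the tangential part of $\partial_{t}\tilde{X}$, which is a pure reparametrization of $\tilde{\Sigma}_{t}$, does not contaminate the scalar evolution of $v$.
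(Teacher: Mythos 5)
Your proposal is correct and follows essentially the same route as the paper: you justify the attainment of the spatial maximum via the preservation of the asymptotically conical property, derive $\partial_{t}v=\left(N_{\Sigma_{t}}\cdot N_{\tilde{\Sigma}_{t}}\right)^{-1}\left[H_{\tilde{\Sigma}_{t}}+v\,\nabla_{\Sigma_{t}}H_{\Sigma_{t}}\cdot N_{\tilde{\Sigma}_{t}}\right]-H_{\Sigma_{t}}$ and convert it via Lemma \ref{normal graph} into the quasilinear equation, and then apply Hamilton's trick to $v^{2}$ at its maximum, where $Q\left(0,A_{\Sigma_{t}}v\right)=O\left(\left|A_{\Sigma_{t}}v\right|^{2}\right)$ yields exactly the stated bound. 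No gaps.
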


\begin{proof}
In the first place, from the argument used in proving Proposition
\ref{asymptotically conical along MCF}, for every $0\leq t\leq T$
we get 
\begin{equation}
\left\Vert \nabla_{\Sigma_{t}}v\right\Vert _{L^{\infty}\left(\Sigma_{t}\setminus B_{R}\left(O\right)\right)}+\left\Vert v\right\Vert _{L^{\infty}\left(\Sigma_{t}\setminus B_{R}\left(O\right)\right)}\rightarrow0\quad\textrm{as}\,\,R\rightarrow\infty,\label{ED: asymptotic behavior}
\end{equation}
which implies that $v_{\textrm{max}}^{2}\left(t\right)$ is finite
and continuous. Also, note that by Proposition \ref{spatial curvature estimate}
and Corollary \ref{spatial smooth estimates} we have
\[
\sup_{0\leq t\leq T}\left(\left\Vert A_{\Sigma_{t}}\right\Vert _{L^{\infty}}^{2}+\left\Vert \nabla_{\Sigma_{t}}A_{\Sigma_{t}}\right\Vert _{L^{\infty}}\right)<\infty.
\]
To see how the deviation $v\left(\cdot,t\right)$ evolves over time,
let us differentiate the equation in (\ref{ED: position}) with respect
to $t$ and use the equation 
\[
\partial_{t}N_{\Sigma_{t}}=-\nabla_{\Sigma_{t}}H_{\Sigma_{t}}
\]
to get
\[
\partial_{t}\tilde{X}=\left(H+\partial_{t}v\right)N-v\nabla H.
\]
Here, and hereafter, we follow the notations and conventions in Lemma
\ref{normal graph}. For instance, we use $N$ to denote $N_{\Sigma_{t}}$
, $\tilde{N}$ for $N_{\tilde{\Sigma}_{t}}$, $\nabla$ for $\nabla_{\Sigma_{t}}$
and so forth. Since $\left\{ \tilde{\Sigma}_{t}\right\} $ is also
a MCF, it must hold that
\begin{equation}
\tilde{H}=\partial_{t}\tilde{X}\cdot\tilde{N}=\left(H+\partial_{t}v\right)N\cdot\tilde{N}-v\nabla H\cdot\tilde{N},\label{ED: MCF}
\end{equation}
(cf. Chapter 1 in \cite{M}). Eq. (\ref{ED: MCF}) combined with Eq.
(\ref{NG: biased mean curvature}) in Lemma \ref{normal graph} imply
\begin{equation}
\partial_{t}v=\left(N\cdot\tilde{N}\right)^{-1}\left[\tilde{H}+v\nabla H\cdot\tilde{N}\right]-H\label{ED: equation of deviation}
\end{equation}
 
\[
=a^{ij}\nabla_{ij}v\,+\,\left|A\right|^{2}v
\]
 
\[
+\,v\nabla A\ast\nabla v\,+\,A\ast Q\left(\nabla v,Av\right)\,+\,v\nabla A\ast Q\left(\nabla v,Av\right).
\]
Applying the power rule to Eq. (\ref{ED: equation of deviation})
gives
\begin{equation}
\partial_{t}\,v^{2}=a^{ij}\nabla_{ij}\,v^{2}-2a^{ij}\nabla_{i}v\nabla_{j}v+2\left|A\right|^{2}v^{2}\label{ED: equation of square deviation}
\end{equation}
 
\[
+\,v^{2}\nabla A\ast\nabla v+Av*Q\left(\nabla v,Av\right)+v^{2}\nabla A\ast Q\left(\nabla v,Av\right).
\]

Let us fix $0<t_{0}\leq T$. If $v_{\textrm{max}}^{2}\left(t_{0}\right)=0$,
then 
\[
D^{-}v_{\textrm{max}}^{2}\left(t_{0}\right)=\limsup_{h\searrow0}\frac{v_{\textrm{max}}^{2}\left(t_{0}\right)-v_{\textrm{max}}^{2}\left(t_{0}-h\right)}{h}\leq0
\]
and (\ref{ED: ODE inequality}) holds trivially. So let us consider
the nontrivial case where $v_{\textrm{max}}^{2}\left(t_{0}\right)>0$.
By condition (\ref{ED: asymptotic behavior}), there exists $P\in\Sigma_{t_{0}}$
so that $v_{\textrm{max}}^{2}\left(t_{0}\right)=v^{2}\left(P,t_{0}\right)$.
Note that 
\[
\frac{v_{\textrm{max}}^{2}\left(t_{0}\right)-v_{\textrm{max}}^{2}\left(t_{0}-h\right)}{h}\leq\frac{v^{2}\left(P,t_{0}\right)-v^{2}\left(P,t_{0}-h\right)}{h}\quad\,\forall\,\,0<h\ll1.
\]
As $h\searrow0$ we obtain 
\begin{equation}
D^{-}v_{\textrm{max}}^{2}\left(t_{0}\right)\leq\partial_{t}\,v^{2}\left(P,t_{0}\right).\label{ED: time derivative}
\end{equation}
Furthermore, since $P$ is an interior maximum point for $v^{2}\left(\cdot,t_{0}\right)$,
we have 
\begin{equation}
0=\left.\nabla\,v^{2}\right|_{\left(P,t_{0}\right)}=2v\left(P,t_{0}\right)\nabla v\left(P,t_{0}\right)\,\,\Rightarrow\,\,\nabla v\left(P,t_{0}\right)=0\label{ED: first derivative}
\end{equation}
and 
\begin{equation}
\left.a^{ij}\nabla_{ij}\,v^{2}\right|_{\left(P,t_{0}\right)}\leq0\label{ED: second derivative}
\end{equation}
by condition (\ref{NG: elliptic}). Evaluating Eq. (\ref{ED: equation of square deviation})
at $(P,t_{0})$, using (\ref{ED: time derivative}), (\ref{ED: first derivative}),
and (\ref{ED: second derivative}), and noting that 
\[
\left|Q\left(\nabla v,Av\right)\right|_{\left(P,t_{0}\right)}=\left|Q\left(Av\right)\right|_{\left(P,t_{0}\right)}\leq C\left(n\right)\left\Vert Av\right\Vert _{L^{\infty}\left(\Sigma_{t_{0}}\right)}^{2},
\]
we obtain 
\[
D^{-}v_{\textrm{max}}^{2}\left(t_{0}\right)\leq2\left\Vert A\right\Vert _{L^{\infty}\left(\Sigma_{t_{0}}\right)}^{2}v_{\textrm{max}}^{2}\left(t_{0}\right)
\]
\[
+\left\Vert Av\right\Vert _{L^{\infty}\left(\Sigma_{t_{0}}\right)}\cdot\,C\left(n\right)\left\Vert A\right\Vert _{L^{\infty}\left(\Sigma_{t_{0}}\right)}^{2}v_{\textrm{max}}^{2}\left(t_{0}\right)\,\,+\,\,v_{\textrm{max}}^{2}\left(t_{0}\right)\left\Vert \nabla A\right\Vert _{L^{\infty}\left(\Sigma_{t_{0}}\right)}\cdotp C\left(n\right)\left\Vert Av\right\Vert _{L^{\infty}\left(\Sigma_{t_{0}}\right)}^{2},
\]
from which the proposition follows.
\end{proof}
In order to prove Theorem \ref{stability}, we have to consider the
evolution of the gradient of the deviation as well. Below we derive
its equation using some facts from Riemannian geometry.
\begin{lem}
\label{evolution of modulus gradient}Let $v$ be a function on MCF
$\left\{ \Sigma_{t}\right\} $ satisfying 
\[
\partial_{t}v-a^{ij}\nabla_{ij}v=f,
\]
where $a^{ij}$ is a 2-tensor, $\nabla_{ij}$ is the Hessian in $\Sigma_{t}$,
and $f$ is a function. Then we have 
\[
\partial_{t}\left|\nabla v\right|^{2}-a^{ij}\nabla_{ij}\left|\nabla v\right|^{2}=-2a^{ij}\nabla_{ik}v\,\nabla_{j}^{k}v+2\nabla^{k}a^{ij}\,\nabla_{k}v\,\nabla_{ij}v
\]
\[
+2\left[a^{kl}\left(A_{k}^{j}A_{l}^{i}-A_{kl}A^{ij}\right)+HA^{ij}\right]\nabla_{i}v\,\nabla_{j}v+2\nabla^{i}v\,\nabla_{i}f.
\]
\end{lem}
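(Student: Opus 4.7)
The plan is to expand each side of the claimed identity in a local coordinate frame, take the difference, and recognize the curvature terms via the Ricci and Gauss equations. I would organize the calculation around three building blocks: (i) the time derivative, which brings in the MCF evolution of the metric; (ii) the spatial Hessian, which is a pure Riemannian computation using $\nabla g=0$; and (iii) the commutator of third-order derivatives, which is the one genuinely nontrivial step.

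First I would record that under MCF the induced metric evolves by $\partial_{t}g_{ij}=-2HA_{ij}$, hence $\partial_{t}g^{ij}=2HA^{ij}$ (this is where the $HA^{ij}\nabla_i v\,\nabla_j v$ term will be born). Writing $|\nabla v|^{2}=g^{ij}\nabla_{i}v\,\nabla_{j}v$ and using the fact that $\partial_{t}$ commutes with $\nabla$ on scalars, I would obtain
\[
\partial_{t}|\nabla v|^{2}=2HA^{ij}\nabla_{i}v\,\nabla_{j}v+2\nabla^{j}v\,\nabla_{j}(\partial_{t}v).
\]
Substituting $\partial_{t}v=a^{ik}\nabla_{ik}v+f$ and applying the product rule then yields the terms $2\nabla^{k}a^{ij}\nabla_{k}v\,\nabla_{ij}v$ and $2\nabla^{i}v\,\nabla_{i}f$ that appear on the right-hand side, plus a residual third-derivative piece $2a^{ik}\nabla^{j}v\,\nabla_{j}\nabla_{i}\nabla_{k}v$.

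Next I would compute $\nabla_{ij}|\nabla v|^{2}=2\nabla_{ik}v\,\nabla_{j}^{k}v+2\nabla^{k}v\,\nabla_{jik}v$ using $\nabla g=0$, so that $a^{ij}\nabla_{ij}|\nabla v|^{2}$ contributes the desired term $2a^{ij}\nabla_{ik}v\,\nabla_{j}^{k}v$ together with the third-derivative piece $2a^{ij}\nabla^{k}v\,\nabla_{j}\nabla_{i}\nabla_{k}v$. Subtracting, all terms except the third-derivative ones already match the statement, and one is left with
\[
2a^{ik}\nabla^{j}v\,\nabla_{j}\nabla_{i}\nabla_{k}v-2a^{ij}\nabla^{k}v\,\nabla_{j}\nabla_{i}\nabla_{k}v.
\]

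The main obstacle, and the real content of the lemma, is to rewrite this expression as a curvature term. Using symmetry of the Hessian $\nabla_{i}\nabla_{k}v=\nabla_{k}\nabla_{i}v$ and relabeling the dummy indices in the second summand, this quantity reduces to $2a^{ik}\nabla^{j}v\,[\nabla_{j},\nabla_{k}](\nabla_{i}v)$. I would then invoke the Ricci identity for $1$-forms applied to $\omega=\nabla v$, and convert the resulting intrinsic Riemann tensor into extrinsic data by the Gauss equation $R_{jkil}=A_{ji}A_{kl}-A_{jl}A_{ki}$ valid for hypersurfaces in $\mathbb{R}^{n+1}$. After a careful bookkeeping of indices and one more relabeling, the commutator collapses into exactly $2a^{kl}(A_{k}^{j}A_{l}^{i}-A_{kl}A^{ij})\nabla_{i}v\,\nabla_{j}v$, which, together with the $2HA^{ij}\nabla_{i}v\,\nabla_{j}v$ produced in the first paragraph, gives the bracketed curvature term in the claimed identity. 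The delicate part is keeping the sign conventions of the Ricci identity and the position of the raised indices consistent throughout; the rest of the proof is essentially a bookkeeping exercise.
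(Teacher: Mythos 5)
Your proposal is correct and follows essentially the same route as the paper: compute $\partial_{t}\left|\nabla v\right|^{2}$ using $\partial_{t}g^{ij}=2HA^{ij}$ and the equation for $v$, compute $a^{ij}\nabla_{ij}\left|\nabla v\right|^{2}$, and resolve the leftover third-derivative mismatch by commuting covariant derivatives via the Riemann tensor and the Gauss equation. The only difference is presentational (you phrase the commutation as the Ricci identity for the $1$-form $\nabla v$, while the paper writes the identity $\nabla_{ijk}v=\nabla_{kij}v-R_{ikjl}\nabla^{l}v$ directly), so no further comment is needed.
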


\begin{proof}
Recall that the inverse of the metric $g_{ij}$ satisfies
\[
\partial_{t}g^{ij}=2HA^{ij}
\]
(cf. Chapter 2 in \cite{M}), which, combined with the equation of
$v$, implies
\begin{equation}
\partial_{t}\left|\nabla v\right|^{2}=2\left(\nabla_{k}\partial_{t}v\right)\nabla^{k}v+2HA^{ij}\nabla_{i}v\nabla_{j}v\label{EMG: time derivative}
\end{equation}
\[
=2\left(a^{ij}\nabla_{kij}v+\nabla_{k}a^{ij}\,\nabla_{ij}v+\nabla_{k}f\right)\nabla^{k}v+2HA^{ij}\nabla_{i}v\nabla_{j}v.
\]
On the other hand, we have 
\begin{equation}
a^{ij}\nabla_{ij}\left|\nabla v\right|^{2}=2a^{ij}\left(\nabla_{ijk}v\nabla^{k}v+\nabla_{ik}v\nabla_{j}^{k}v\right).\label{EMG: Hessian}
\end{equation}
Using the Riemann curvature tensor and Gauss equation, we obtain
\begin{equation}
\nabla_{ijk}v=\nabla_{ikj}v=\nabla_{kij}v-R_{ikjl}\nabla^{l}v=\nabla_{kij}v+(A_{ij}A_{kl}-A_{il}A_{jk})\nabla^{l}v\label{EMG: Riemann curvature}
\end{equation}
Substituting Eq. (\ref{EMG: Riemann curvature}) for $\nabla_{ijk}v$
in Eq. (\ref{EMG: Hessian}) gives 
\begin{equation}
a^{ij}\nabla_{ij}\left|\nabla v\right|^{2}=2a^{ij}\left[\nabla_{kij}v+(A_{ij}A_{kl}-A_{il}A_{jk})\nabla^{l}v\right]\nabla^{k}v+2a^{ij}\nabla_{ik}v\nabla_{j}^{k}v.\label{EMG: revised Hessian}
\end{equation}
The lemma follows from subtracting Eq. (\ref{EMG: revised Hessian})
from Eq. (\ref{EMG: time derivative}).
\end{proof}
The following is a (finite time) stability theorem for MCF with a
conical end. 
\begin{thm}
\label{stability}Let $\left\{ \Sigma_{t}\right\} _{0\leq t\leq T}$
and $\left\{ \tilde{\Sigma}_{t}\right\} _{0\leq t\leq T}$ be MCFs
in $\mathbb{R}^{n+1}$ so that both $\Sigma_{0}$ and $\tilde{\Sigma}_{0}$
are asymptotic to a regular cone $\mathcal{C}$ at infinity. Let 
\begin{equation}
K=\sup_{0\leq t\leq T}\left(\left\Vert A_{\Sigma_{t}}\right\Vert _{L^{\infty}}+\left\Vert \nabla_{\Sigma_{t}}A_{\Sigma_{t}}\right\Vert _{L^{\infty}}+\left\Vert \nabla_{\Sigma_{t}}^{2}A_{\Sigma_{t}}\right\Vert _{L^{\infty}}\right),\label{S: curvature bound}
\end{equation}
which is finite by Proposition \ref{spatial curvature estimate} and
Corollary \ref{spatial smooth estimates}.

Given $\varepsilon>0$, there exists $\delta>0$ depending on $n$,
$K,$ $T$, and $\varepsilon$ so that if $\tilde{\Sigma}_{0}$ is
a normal graph of $v_{0}$ over $\Sigma_{0}$ with
\[
\left\Vert \nabla_{\Sigma_{0}}v_{0}\right\Vert _{L^{\infty}}+\left\Vert v_{0}\right\Vert _{L^{\infty}}\leq\delta,
\]
then for every $0\leq t\leq T$, $\tilde{\Sigma}_{t}$ is a normal
graph of $v_{t}$ over $\Sigma_{t}$ with
\[
\sup_{0\leq t\leq T}\left(\left\Vert \nabla_{\Sigma_{t}}v_{t}\right\Vert _{L^{\infty}}+\left\Vert v_{t}\right\Vert _{L^{\infty}}\right)\leq\varepsilon.
\]
\end{thm}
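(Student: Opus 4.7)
The plan is a continuity (bootstrap) argument combined with maximum-principle estimates on $v_t$ and $\nabla_{\Sigma_t} v_t$. Let $T_*$ be the supremum of times $\tau \in [0, T]$ such that for every $t \in [0, \tau]$, $\tilde{\Sigma}_t$ can be written as a normal graph $v_t$ over $\Sigma_t$ with $\|\nabla_{\Sigma_t} v_t\|_{L^\infty} + \|A_{\Sigma_t} v_t\|_{L^\infty} \leq \varsigma$, where $\varsigma$ is the constant in Lemma \ref{normal graph}. By smooth dependence of MCF on initial data, together with the curvature bound (\ref{S: curvature bound}) on $\{\Sigma_t\}$, such $T_*$ is positive provided $\delta$ is small; the goal is to show $T_* = T$ and moreover to control $\|v_t\|_{L^\infty} + \|\nabla_{\Sigma_t} v_t\|_{L^\infty}$ by $\varepsilon$ on $[0,T_*]$.

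For the $L^\infty$ estimate on $v_t$, I would apply Proposition \ref{evolution of deviation} directly on $[0, T_*]$. Using (\ref{S: curvature bound}) together with $\|A_{\Sigma_t} v_t\|_{L^\infty} \leq \varsigma$, the inequality (\ref{ED: ODE inequality}) simplifies to $D^- v_{\max}^2(t) \leq C(n,K)\, v_{\max}^2(t)$, and ODE comparison yields
\[
\|v_t\|_{L^\infty}^2 \leq e^{C(n,K)T}\, \delta^2 \quad \text{on } [0, T_*].
\]

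For the gradient estimate, I would combine Eq. (\ref{ED: equation of deviation}) with Lemma \ref{evolution of modulus gradient} to obtain a parabolic evolution for $|\nabla_{\Sigma_t} v_t|^2$. By Proposition \ref{asymptotically conical along MCF} applied to both flows, together with the higher-derivative analogue of Remark \ref{ACM: graphic along MCF} obtained via Corollary \ref{spatial smooth estimates}, the quantities $|\nabla v_t|$ and $|\nabla^2 v_t|$ decay at infinity along $\Sigma_t$, so the spatial maximum of $|\nabla v_t|^2$ is attained at some interior point. At such a maximum, the good term $-2 a^{ij} \nabla_{ik} v\, \nabla_j^k v$ dominates $-\tfrac{4}{3}|\nabla^2 v|^2$ by the ellipticity (\ref{NG: elliptic}), and via Cauchy\textendash Schwarz it absorbs the cross term $2 \nabla^k a^{ij}\, \nabla_k v\, \nabla_{ij} v$; here the key point is that the $\nabla^2 v$ contribution inside $\nabla_k a^{ij}$ comes exclusively from $\nabla Q(\nabla v, A_\Sigma v)$, which has size $O(\varsigma\, |\nabla^2 v|)$ because $Q$ is at least quadratic in its arguments, so for $\varsigma$ sufficiently small (depending on $n$) the resulting $\varsigma^2 |\nabla^2 v|^2$ can be absorbed. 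Bounding the remaining terms together with $|\nabla^i v\, \nabla_i f| \leq C(n,K)(v_{\max}^2 + |\nabla v|^2)$ yields
\[
D^- \|\nabla_{\Sigma_t} v_t\|_{L^\infty}^2 \leq C(n,K)\bigl(\|\nabla_{\Sigma_t} v_t\|_{L^\infty}^2 + \|v_t\|_{L^\infty}^2\bigr).
\]
Combining with the $L^\infty$ bound on $v_t$ already obtained and applying ODE comparison gives $\|\nabla_{\Sigma_t} v_t\|_{L^\infty}^2 \leq C(n,K,T)\, \delta^2$ on $[0, T_*]$.

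Finally, I would choose $\delta = \delta(n, K, T, \varepsilon)$ small enough that $C(n,K,T)\,\delta^2 < \min\{\varepsilon^2/4,\, \varsigma^2/16\}$. Then the condition defining $T_*$ holds with strict inequality at $T_*$, so by continuity $T_* = T$, and the desired estimate $\|v_t\|_{L^\infty} + \|\nabla_{\Sigma_t} v_t\|_{L^\infty} \leq \varepsilon$ follows on $[0, T]$. The main obstacle is the gradient estimate: one must carefully track the $\nabla^2 v$ dependence inside $\nabla a$ arising from the analytic quadratic remainder $Q$, exploit the smallness of $\varsigma$ to close the maximum principle, and simultaneously justify that the supremum is attained by invoking the asymptotically conical structure preserved along both flows.
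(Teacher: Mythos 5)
Your proposal is correct and follows essentially the same route as the paper: a continuation argument in which the $L^{\infty}$ bound on $v_t$ comes from Proposition \ref{evolution of deviation}, the gradient bound comes from combining Eq. (\ref{ED: equation of deviation}) with Lemma \ref{evolution of modulus gradient} and a maximum principle justified by the decay of $v_t$ and $\nabla_{\Sigma_t}v_t$ at infinity (with the quadratic $\nabla^{2}v$ contributions absorbed by the good term via Cauchy--Schwarz and smallness), and the two differential inequalities are closed by ODE comparison and a small choice of $\delta$. The only cosmetic difference is that the paper runs the bootstrap on $\|\nabla v_t\|+\|v_t\|\le\varepsilon$ and absorbs the quadratic terms using $\varepsilon\ll1$ rather than shrinking $\varsigma$, and it applies Gronwall to the sum $|\nabla v|_{\max}^{2}+v_{\max}^{2}$ rather than to the two quantities sequentially.
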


\begin{proof}
Given $\varepsilon>0$, without loss of generality we may assume that
$\varepsilon\ll1$ (depending on $n$, $K$).

Suppose that $\tilde{\Sigma}_{0}$ is a normal graph of $v_{0}$ over
$\Sigma_{0}$ with $\left\Vert \nabla_{\Sigma_{0}}v_{0}\right\Vert _{L^{\infty}}+\left\Vert v_{0}\right\Vert _{L^{\infty}}\leq\delta$,
where $0<\delta\ll1$ is a constant to be specified (which will depend
on $n$, $K,$ $T$, $\varepsilon$). By continuity and the argument
in proving Proposition \ref{asymptotically conical along MCF}, it
is not hard to see that $\tilde{\Sigma}_{t}$ is a normal graph of
$v_{t}$ over $\Sigma_{t}$ with $\left\Vert \nabla_{\Sigma_{t}}v_{t}\right\Vert _{L^{\infty}}+\left\Vert v_{t}\right\Vert _{L^{\infty}}\leq\varepsilon$
for $0\leq t\ll1$. Let $T_{*}\in\left(0,T\right]$ be the supremum
of times before which $\tilde{\Sigma}_{t}$ is a normal graph of $v_{t}$
over $\Sigma_{t}$ with
\[
\sup_{0\leq t\leq T_{*}}\left(\left\Vert \nabla_{\Sigma_{t}}v_{t}\right\Vert _{L^{\infty}}+\left\Vert v_{t}\right\Vert _{L^{\infty}}\right)\leq\varepsilon,
\]
which implies
\[
\sup_{0\leq t\leq T_{*}}\left(\left\Vert \nabla_{\Sigma_{t}}v_{t}\right\Vert _{L^{\infty}\left(\Sigma_{t}\right)}+K\left\Vert v_{t}\right\Vert _{L^{\infty}\left(\Sigma_{t}\right)}\right)\leq\left(1+K\right)\varepsilon\leq\varsigma,
\]
provided that $\varepsilon\leq\frac{\varsigma}{K+1}$, where $\varsigma$
is the constant in Proposition \ref{evolution of deviation}. Let
$v$ be the function defined in the space-time whose time-slice is
given by $v_{t}$, i.e. $v\left(\cdot,t\right)=v_{t}$. By the argument
in Proposition \ref{evolution of deviation}, $v$ satisfies Eq. (\ref{ED: equation of deviation}).
It follows from Lemma \ref{evolution of modulus gradient} that 
\begin{equation}
\partial_{t}\left|\nabla v\right|^{2}-a^{ij}\nabla_{ij}\left|\nabla v\right|^{2}\,=\,-2a^{ij}\nabla_{ik}v\,\nabla_{j}^{k}v+2\nabla^{k}a^{ij}\,\nabla_{k}v\,\nabla_{ij}v\label{S: equation}
\end{equation}
\[
+2\left[a^{kl}\left(A_{k}^{j}A_{l}^{i}-A_{kl}A^{ij}\right)+HA^{ij}\right]\nabla_{i}v\,\nabla_{j}v+2\nabla^{i}v\,\nabla_{i}f,
\]
where 
\[
a^{ij}=g^{ij}+2A^{ij}v+Q^{ij}\left(\nabla v,Av\right),
\]
 
\[
f=\left|A\right|^{2}v+v\nabla A\ast\nabla v+A\ast Q\left(\nabla v,Av\right)+v\nabla A\ast Q\left(\nabla v,Av\right).
\]
Note that
\[
a^{ij}\nabla_{ik}v\,\nabla_{j}^{k}v\,\geq\,\left[\,1\,-C\left(n\right)\left(\left|\nabla v\right|+\left|Av\right|\right)\,\right]\,\left|\nabla^{2}v\right|^{2},
\]
 
\[
\left|\nabla a\right|\leq C\left(n\right)\left(\left|\nabla^{2}v\right|+\left|A\nabla v\right|+\left|v\nabla A\right|\right),
\]
 
\[
\left|\nabla f\right|\leq C\left(n\right)\left\{ \left(\left|v\nabla A\right|+\left|A\right|\right)\left|\nabla^{2}v\right|\,+\,\left(\left|\nabla v\right|+\left|Av\right|\right)\left[\left|v\nabla^{2}A\right|+\left|v\nabla A\right|^{2}+\left|\nabla A\right|+\left|A\right|^{2}\right]\right\} .
\]
Using the Cauchy-Schwarz inequality, Eq. (\ref{S: equation}) implies
\begin{equation}
\partial_{t}\left|\nabla v\right|^{2}-a^{ij}\nabla_{ij}\left|\nabla v\right|^{2}\label{S: revised equation}
\end{equation}
\[
\leq C\left(n\right)\left[\left|v\nabla^{2}A\right|+\left|v\nabla A\right|^{2}+\left|\nabla A\right|+\left|A\right|^{2}\right]\,\left|\nabla v\right|^{2}
\]
\[
+\,C\left(n\right)\left[\left|A\right|^{2}\left|v\nabla^{2}A\right|+\left|A\right|^{2}\left|v\nabla A\right|^{2}+\left|\nabla A\right|^{2}+\left|A\right|^{2}\left|\nabla A\right|+\left|A\right|^{4}\right]\,\left|v\right|^{2}
\]
\[
\leq C\left(n,K\right)\left(\left|\nabla v\right|^{2}+\left|v\right|^{2}\right),
\]
provided that $\varepsilon\ll1$ (depending on $n$, $K$). By condition
(\ref{ED: asymptotic behavior}), it is permitted to apply the maximum
principle to Eq. (\ref{S: revised equation}) (in exactly the same
way as we did in the proof in Proposition \ref{evolution of deviation})
and infer that $\left|\nabla v\right|_{\textrm{max}}^{2}\left(t\right)\coloneqq\left\Vert \nabla v\right\Vert _{L^{\infty}\left(\Sigma_{t}\right)}^{2}$
satisfies 
\begin{equation}
D^{-}\left|\nabla v\right|_{\textrm{max}}^{2}\left(t\right)\leq C\left(n,K\right)\left(\left|\nabla v\right|_{\textrm{max}}^{2}\left(t\right)+v_{\textrm{max}}^{2}\left(t\right)\right),\label{S: ODE inequality}
\end{equation}
where $D^{-}$ is the Dini derivative and $v_{\textrm{max}}^{2}\left(t\right)=\left\Vert v\right\Vert _{L^{\infty}\left(\Sigma_{t}\right)}^{2}$
as defined in Proposition \ref{evolution of deviation}. Adding up
Eqs. (\ref{ED: ODE inequality}) and (\ref{S: ODE inequality}) gives
\[
D^{-}\left(\left|\nabla v\right|_{\textrm{max}}^{2}+v_{\textrm{max}}^{2}\right)\leq C\left(n,K\right)\left(\left|\nabla v\right|_{\textrm{max}}^{2}+v_{\textrm{max}}^{2}\right).
\]
It follows from the comparison principle for ODE (cf. \cite{Wa})
that 
\begin{equation}
\left|\nabla v\right|_{\textrm{max}}^{2}\left(t\right)+v_{\textrm{max}}^{2}\left(t\right)\leq\left(\left|\nabla v\right|_{\textrm{max}}^{2}\left(0\right)+v_{\textrm{max}}^{2}\left(0\right)\right)e^{C\left(n,K\right)t}\label{S: Gronwall's inequality}
\end{equation}
for all $0\leq t\leq T_{*}$. 

Therefore, if $0<\delta\ll1$ so that $\delta e^{C\left(n,K\right)T}\leq\frac{1}{2}\varepsilon.$
Then we have $T_{*}=T$ and the theorem follows from (\ref{S: Gronwall's inequality}).
\end{proof}
As a corollary of Theorem \ref{stability}, we have the following
uniqueness theorem for MCF with prescribed initial hypersurface. 
\begin{cor}
\label{uniqueness}Let $\Sigma$ be a smooth, properly embedded, and
oriented hypersurface in $\mathbb{R}^{n+1}$ that is asymptotic to
a regular cone $\mathcal{C}$ at infinity. Then for any $T>0$, there
is at most one MCF $\left\{ \Sigma_{t}\right\} _{0\leq t\leq T}$
in $\mathbb{R}^{n+1}$ with $\Sigma_{0}=\Sigma$. 
\end{cor}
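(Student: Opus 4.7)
The plan is to deduce uniqueness directly from Theorem \ref{stability}, exploiting the observation that two MCFs with the same initial hypersurface have ``zero initial deviation''. Suppose $\{\Sigma_t\}_{0\leq t\leq T}$ and $\{\tilde{\Sigma}_t\}_{0\leq t\leq T}$ are two MCFs in $\mathbb{R}^{n+1}$ with $\Sigma_0=\tilde{\Sigma}_0=\Sigma$. Since $\Sigma$ is asymptotic to $\mathcal{C}$, both initial hypersurfaces are asymptotic to the same regular cone, so Proposition \ref{spatial curvature estimate} and Corollary \ref{spatial smooth estimates} apply to each flow and produce a finite bound
\[
K \,=\, \sup_{0\leq t\leq T}\left(\left\Vert A_{\Sigma_t}\right\Vert_{L^\infty} + \left\Vert \nabla_{\Sigma_t} A_{\Sigma_t}\right\Vert_{L^\infty} + \left\Vert \nabla_{\Sigma_t}^2 A_{\Sigma_t}\right\Vert_{L^\infty}\right)
\]
and an analogous bound for $\tilde{\Sigma}_t$. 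Taking the larger of the two, I would feed this $K$ into Theorem \ref{stability}.

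The crucial observation is that $\tilde{\Sigma}_0=\Sigma_0$ trivially makes $\tilde{\Sigma}_0$ a normal graph over $\Sigma_0$ of the zero function $v_0\equiv 0$, and this $v_0$ satisfies $\left\Vert \nabla_{\Sigma_0}v_0\right\Vert_{L^\infty}+\left\Vert v_0\right\Vert_{L^\infty}=0\leq\delta$ for \emph{every} $\delta>0$. Hence, given any $\varepsilon>0$, I may choose the $\delta=\delta(n,K,T,\varepsilon)>0$ furnished by Theorem \ref{stability} and apply that theorem directly; its conclusion is that $\tilde{\Sigma}_t$ is a normal graph of some $v_t$ over $\Sigma_t$ with
\[
\sup_{0\leq t\leq T}\left(\left\Vert \nabla_{\Sigma_t}v_t\right\Vert_{L^\infty}+\left\Vert v_t\right\Vert_{L^\infty}\right)\leq\varepsilon.
\]

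Since $\varepsilon>0$ is arbitrary, letting $\varepsilon\searrow 0$ forces $v_t\equiv 0$ for every $t\in[0,T]$, i.e.\ $\tilde{\Sigma}_t=\Sigma_t$ throughout $[0,T]$. There is essentially no new obstacle: all the analytic work (the Gr\"onwall-type bound derived from Proposition \ref{evolution of deviation} and Lemma \ref{evolution of modulus gradient}) has already been carried out inside Theorem \ref{stability}, and the corollary is simply the degenerate $\delta=0$ case. The only point one should verify carefully is that the two flows can indeed be compared via the normal-graph picture on all of $[0,T]$ and not merely on a short initial interval, but this is precisely the content of the conclusion of Theorem \ref{stability}, so no continuation argument is needed beyond invoking it.
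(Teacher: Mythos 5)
Your proposal is correct and matches the paper's intent exactly: the paper states this result as an immediate corollary of Theorem \ref{stability} (without writing out a proof), and your argument — zero initial deviation satisfies the smallness hypothesis for every $\delta>0$, so the conclusion holds for every $\varepsilon>0$, forcing $v_t\equiv 0$ — is the natural way to extract uniqueness from that stability statement. The only cosmetic remark is that the constant $K$ in Theorem \ref{stability} involves only the flow $\left\{ \Sigma_{t}\right\} $, so no maximum over the two flows is needed.
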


In Theorem \ref{stability}, we have shown that if two MCFs are initially
close in the $C^{1}$ topology and asymptotic to the same cone at
infinity, then they stay close for some time. What can we say about
any kind of stability in the long run? To answer that question, we
need to have the smooth estimates of the deviation first (see Proposition
\ref{interpolation inequality}). The following lemma is the smooth
estimates of the curvature.
\begin{lem}
\label{smooth decay estimate }Given $0<\sigma<1$, let $\left\{ \Sigma_{t}\right\} _{T\leq t\leq S}$
be a MCF in $\mathbb{R}^{n+1}$, where $0<T<S$ are constants satisfying
$S\geq\left(1+\sigma\right)T$, so that
\[
\sup_{T\leq t\leq S}\sqrt{t}\left\Vert A_{\Sigma_{t}}\right\Vert _{L^{\infty}}\leq\kappa
\]
for some constant $\kappa>0$. Then for every $k\in\mathbb{N}$ we
have 
\[
\sup_{\left(1+\sigma\right)T\leq t\leq S}\sqrt{t}^{k+1}\left\Vert \nabla_{\Sigma_{t}}^{k}A_{\Sigma_{t}}\right\Vert _{L^{\infty}}\leq C\left(n,\sigma,\kappa,k\right).
\]
\end{lem}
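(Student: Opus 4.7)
The plan is to reduce the claim to the (unscaled) smooth estimates for MCF recorded in Proposition \ref{smooth estimates} by parabolic rescaling. Fix $k \in \mathbb{N}$, $t_0 \in [(1+\sigma)T, S]$, and $P_0 \in \Sigma_{t_0}$. Define the rescaled flow
\[
\tilde{\Sigma}_{\tau} = \frac{1}{\sqrt{t_0}}\left(\Sigma_{t_0 \tau} - P_0\right), \qquad \tau \in \left[\frac{T}{t_0},\, 1\right].
\]
Since $A_{\tilde{\Sigma}_\tau} = \sqrt{t_0}\, A_{\Sigma_{t_0 \tau}}$ and the hypothesis gives $\sqrt{t_0 \tau}\,\|A_{\Sigma_{t_0\tau}}\|_{L^\infty} \leq \kappa$, the rescaled curvature satisfies $\|A_{\tilde{\Sigma}_\tau}\|_{L^\infty} \leq \kappa/\sqrt{\tau}$ for every admissible $\tau$.

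Next I would translate time to fit the framework of Proposition \ref{smooth estimates}. Let $\tau_0 = T/t_0$ and set $\hat{\Sigma}_s = \tilde{\Sigma}_{s + \tau_0}$ for $s \in [0, S_0]$, where $S_0 := 1 - \tau_0 \geq \sigma/(1+\sigma)$ thanks to the hypothesis $t_0 \geq (1+\sigma)T$. On this interval the MCF $\{\hat{\Sigma}_s\}$ sits in $\mathbb{R}^{n+1}$ (and hence in $B_1(O)$) and satisfies
\[
\sup_{0 \leq s \leq S_0} \|A_{\hat{\Sigma}_s}\|_{L^\infty(B_1(O))} \leq \frac{\kappa}{\sqrt{\tau_0}} \leq \kappa \sqrt{1+\sigma} \cdot \sqrt{t_0/T},
\]
but crucially on the subinterval $[\sigma/(2(1+\sigma)),\, S_0]$ the bound reads $\|A_{\hat{\Sigma}_s}\|_{L^\infty} \leq \kappa\sqrt{2(1+\sigma)/\sigma}$. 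Applying the second (time-interior) estimate of Proposition \ref{smooth estimates} on this shorter window, whose length is bounded below solely in terms of $\sigma$, yields
\[
\bigl\|\nabla_{\hat{\Sigma}_{S_0}}^{k} A_{\hat{\Sigma}_{S_0}}\bigr\|_{L^\infty(B_{1/2}(O))} \leq C(n, k, \sigma, \kappa).
\]

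Finally, undoing the rescaling, $\hat{\Sigma}_{S_0} = \tilde{\Sigma}_1 = (\Sigma_{t_0} - P_0)/\sqrt{t_0}$, and a standard calculation gives $|\nabla^k_{\tilde{\Sigma}_1} A_{\tilde{\Sigma}_1}(O)| = \sqrt{t_0}^{\,k+1}\, |\nabla^k_{\Sigma_{t_0}} A_{\Sigma_{t_0}}(P_0)|$, so
\[
\sqrt{t_0}^{\,k+1}\, \bigl|\nabla^k_{\Sigma_{t_0}} A_{\Sigma_{t_0}}(P_0)\bigr| \leq C(n,k,\sigma,\kappa).
\]
Since $t_0 \in [(1+\sigma)T, S]$ and $P_0 \in \Sigma_{t_0}$ were arbitrary, the claim follows. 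There is no serious obstacle here; the only point requiring care is ensuring the rescaled time window has length bounded below by a positive constant depending only on $\sigma$, which is exactly what the assumption $S \geq (1+\sigma)T$ supplies.
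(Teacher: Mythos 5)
Your proof is correct and takes essentially the same route as the paper: a parabolic rescaling centered at an arbitrary point $P_0\in\Sigma_{t_0}$ followed by the time-interior estimate of Proposition \ref{smooth estimates}, with the hypothesis $t_0\geq(1+\sigma)T$ used to guarantee a rescaled time window of length bounded below by a constant depending only on $\sigma$ on which the curvature is uniformly bounded by $C(\sigma)\kappa$. The paper merely folds your two steps into one by choosing the spatial scale $\sqrt{\tfrac{\sigma t_{0}}{1+\sigma}}$ so that the rescaled flow lives exactly on $[0,1]$ with $\left\Vert A\right\Vert\leq\sqrt{\sigma}\kappa$ throughout, but this is a cosmetic difference.
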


\begin{proof}
Given $P\in\mathbb{R}^{n+1}$ and $t_{0}\in\left[\left(1+\sigma\right)T,S\right]$,
let
\[
\tilde{\Sigma}_{\tau}=\sqrt{\frac{1+\sigma}{\sigma t_{0}}}\left(\Sigma_{\frac{1+\sigma\tau}{1+\sigma}t_{0}}-P\right).
\]
Then $\left\{ \tilde{\Sigma}_{\tau}\right\} _{0\leq\tau\leq1}$ is
a MCF in $B_{1}\left(O\right)$ satisfying 
\[
\sup_{0\leq\tau\leq1}\left\Vert A_{\tilde{\Sigma}_{\tau}}\right\Vert _{L^{\infty}\left(B_{1}\left(O\right)\right)}\leq\sqrt{\sigma}\kappa.
\]
It follows from Proposition \ref{smooth estimates} that for every
$k\in\mathbb{N}$ we have
\[
\sup_{\frac{3}{4}t_{0}\leq t\leq t_{0}}\sqrt{t_{0}}^{k+1}\left\Vert \nabla_{\Sigma_{t}}^{k}A_{\Sigma_{t}}\right\Vert _{L^{\infty}\left(B_{\frac{1}{2}\sqrt{\frac{\sigma t_{0}}{1+\sigma}}}\left(P\right)\right)}
\]
\[
=\sup_{\frac{1}{2}\leq\tau\leq1}\sqrt{\frac{1+\sigma}{\sigma}}^{k+1}\left\Vert \nabla_{\tilde{\Sigma}_{\tau}}^{k}A_{\tilde{\Sigma}_{\tau}}\right\Vert _{L^{\infty}\left(B_{\frac{1}{2}}\left(O\right)\right)}\leq C\left(n,\sigma,\kappa,k\right).
\]
\end{proof}
What is also essential in deriving the smooth estimates of the deviation
is an appropriate choice of the local coordinates for the flow, as
seen in the following proposition. Note that due to condition (\ref{ANP: curvature condition}),
the ``scale'' of the local coordinates depends only on time.
\begin{prop}
\label{analyticity: normal parametrization}Given $0<\sigma<1$, let
$\left\{ \Sigma_{t}\right\} _{T\leq t\leq S}$ be a MCF in $\mathbb{R}^{n+1}$,
where $0<T<S$ are constants satisfying $S\geq\left(1+2\sigma\right)T$,
so that 
\begin{equation}
\sup_{T\leq t\leq S}\sqrt{t}\left\Vert A_{\Sigma_{t}}\right\Vert _{L^{\infty}}\leq\kappa\label{ANP: curvature condition}
\end{equation}
for some constant $\kappa>0$. Then given $\delta>0$, there exists
a constant $0<\theta<1$ depending on $n$, $\sigma$, $\kappa$ and
$\delta$ with the following property. 

Let $P$ be an arbitrary point in $\Sigma_{t_{0}}$ with $t_{0}\in\left[\left(1+2\sigma\right)T,S\right]$.
Near the point $P$ and time $t_{0}$, the flow $\left\{ \Sigma_{t}\right\} $
admits a local coordinate chart 
\[
X:B_{\theta\sqrt{t_{0}}}^{n}\left(O\right)\times\left[\left(1-\theta^{2}\right)t_{0},t_{0}\right]\rightarrow\mathbb{R}^{n+1}
\]
with $X\left(O,t_{0}\right)=P$ and $\partial_{t}X\left(x,t\right)=\vec{H}\left(x,t\right)$.
Also, for any $l,m\in\mathbb{N}\cup\left\{ 0\right\} $ we have 
\begin{itemize}
\item The metric $g_{ij}$ satisfies 
\[
\left(1-\delta\right)\delta_{ij}\leq g_{ij}\left(x,t\right)\leq\left(1+\delta\right)\delta_{ij},\quad\sqrt{t_{0}}\left|\partial_{x}g_{ij}\left(x,t\right)\right|\leq C\left(n\right)\delta,
\]
\[
\sqrt{t_{0}}^{2l+m}\left|\partial_{t}^{l}\partial_{x}^{m}g_{ij}\left(x,t\right)\right|\leq C\left(n,\sigma,\kappa,l,m\right).
\]
 
\item The Christoffel symbols $\Gamma_{ij}^{k}$ satisfy 
\[
\sqrt{t_{0}}\left|\Gamma_{ij}^{k}\left(x,t\right)\right|\leq\delta,
\]
\[
\sqrt{t_{0}}^{2l+m+1}\left|\partial_{t}^{l}\partial_{x}^{m}\Gamma_{ij}^{k}\left(x,t\right)\right|\leq C\left(n,\sigma,\kappa,l,m\right).
\]
\item The second fundamental form $A_{ij}$ satisfies 
\[
\sqrt{t_{0}}\left|A_{ij}\left(x,t\right)\right|\leq\frac{1+C\left(n\right)\delta}{\sqrt{1-\theta^{2}}}\kappa,
\]
 
\[
\sqrt{t_{0}}^{2l+m+1}\left|\partial_{t}^{l}\partial_{x}^{m}A_{ij}\left(x,t\right)\right|\leq C\left(n,\sigma,\kappa,l,m\right),
\]
 
\[
\sqrt{t_{0}}^{2l+m+2}\left|\partial_{t}^{l}\partial_{x}^{m}\nabla_{k}A_{ij}\left(x,t\right)\right|\leq C\left(n,\sigma,\kappa,l,m\right),
\]
 
\[
\sqrt{t_{0}}^{2l+m+3}\left|\partial_{t}^{l}\partial_{x}^{m}\nabla_{kq}A_{ij}\left(x,t\right)\right|\leq C\left(n,\sigma,\kappa,l,m\right).
\]
Note that $\nabla_{k}A_{ij}$ and $\nabla_{kq}A_{ij}$ denote the
coordinates of $\nabla_{\Sigma_{t}}A_{\Sigma_{t}}$ and $\nabla_{\Sigma_{t}}^{2}A_{\Sigma_{t}}$,
respectively. 
\end{itemize}
\end{prop}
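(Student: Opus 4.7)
The plan is to reduce to unit scale by rescaling, construct the chart on the rescaled flow using the exponential map at the terminal time $\tau=0$ propagated backwards by the normal flow equation $\partial_{\tau}X=\vec{H}$, and then unscale to recover the bounds with their $\sqrt{t_{0}}$ weights.

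First, fix $P\in\Sigma_{t_{0}}$ with $t_{0}\in[(1+2\sigma)T,S]$ and define the rescaled flow
\[
\tilde{\Sigma}_{\tau}=\frac{1}{\sqrt{t_{0}}}\left(\Sigma_{t_{0}+t_{0}\tau}-P\right),\quad\tau\in\left[-\tfrac{2\sigma}{1+2\sigma},0\right],
\]
so that $t_{0}+t_{0}\tau\in[T,t_{0}]$. The hypothesis $\sqrt{t}\Vert A_{\Sigma_{t}}\Vert_{L^{\infty}}\leq\kappa$ gives $\Vert A_{\tilde{\Sigma}_{\tau}}\Vert_{L^{\infty}}\leq\kappa/\sqrt{1+\tau}\leq C(\sigma)\kappa$, and Lemma \ref{smooth decay estimate } upgrades this to
\[
\sup_{-\frac{2\sigma}{1+2\sigma}\leq\tau\leq0}\,\Vert\nabla_{\tilde{\Sigma}_{\tau}}^{k}A_{\tilde{\Sigma}_{\tau}}\Vert_{L^{\infty}}\leq C(n,\sigma,\kappa,k)\quad\forall\,k\in\mathbb{N}\cup\{0\}
\]
(one loses only a fraction of the time interval, which is absorbed into the constant in $\sigma$).

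Next, on $\tilde{\Sigma}_{0}$ the second fundamental form is uniformly bounded, so the exponential map
\[
\tilde{X}(\cdot,0)=\exp_{O}^{\tilde{\Sigma}_{0}}:B_{\theta_{0}}^{n}(O)\subset T_{O}\tilde{\Sigma}_{0}\longrightarrow\tilde{\Sigma}_{0}
\]
is a diffeomorphism onto its image for some $\theta_{0}=\theta_{0}(n,\sigma,\kappa,\delta)$, and in these normal coordinates the metric $\tilde{g}_{ij}(x,0)$ is $\delta$-close to $\delta_{ij}$ with $C^{k}$-bounds depending only on $n,\sigma,\kappa,k$ (standard consequence of the curvature estimates above and the exponential map expansion in Jacobi fields). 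I then extend this chart to negative times by solving the ODE
\[
\partial_{\tau}\tilde{X}(x,\tau)=\vec{H}_{\tilde{\Sigma}_{\tau}}\bigl(\tilde{X}(x,\tau)\bigr),\qquad\tilde{X}(x,0)=\exp_{O}^{\tilde{\Sigma}_{0}}(x),
\]
on a parabolic cylinder $B_{\theta}^{n}(O)\times[-\theta^{2},0]$. Because $\vec{H}$ is tangent to the foliation $\{\tilde{\Sigma}_{\tau}\}$ in the sense of realizing MCF under an arbitrary parametrization, this keeps $\tilde{X}(\cdot,\tau)$ inside $\tilde{\Sigma}_{\tau}$; the pull-back metric evolves by $\partial_{\tau}\tilde{g}_{ij}=-2H\tilde{A}_{ij}$, which, combined with the uniform bound on $\tilde{A}$, keeps $\tilde{g}_{ij}$ within $\delta$ of $\delta_{ij}$ once $\theta$ is chosen small enough in terms of $\kappa$ and $\delta$, thus ensuring $\tilde{X}$ remains a diffeomorphism onto a neighborhood of $O$ in each slice.

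Finally, higher-order bounds on $\tilde{g}_{ij}$, $\tilde{\Gamma}_{ij}^{k}$, $\tilde{A}_{ij}$ and their covariant derivatives follow by differentiating the standard MCF evolution equations for these quantities (cf. Chapter 2 of \cite{M}) and invoking the smooth curvature bounds produced in Step 1; passing between covariant and ordinary partial derivatives costs only bounded Christoffel symbols already controlled. Setting
\[
X(x,t)=P+\sqrt{t_{0}}\;\tilde{X}\!\left(\frac{x}{\sqrt{t_{0}}},\frac{t-t_{0}}{t_{0}}\right)
\]
then yields the desired chart $X:B_{\theta\sqrt{t_{0}}}^{n}(O)\times[(1-\theta^{2})t_{0},t_{0}]\to\mathbb{R}^{n+1}$; the scaling converts each $\partial_{\tau}$ into $t_{0}\partial_{t}$ and each $\partial_{x}$ into $\sqrt{t_{0}}\partial_{x}$, reproducing precisely the weights $\sqrt{t_{0}}^{2l+m+\cdots}$ in the stated estimates. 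The main obstacle is Step 3: ensuring quantitatively that $\tilde{X}$ remains a nondegenerate diffeomorphism with $\tilde{g}_{ij}$ genuinely $\delta$-close to Euclidean throughout the parabolic cylinder; this is what forces $\theta$ to depend on $\delta$ (as well as on $n,\sigma,\kappa$), and once this is achieved, the rest is routine bootstrapping from the smooth curvature decay of Lemma \ref{smooth decay estimate }.
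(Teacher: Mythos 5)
Your proposal is correct and follows essentially the same strategy as the paper: set up the chart on the final time-slice $t=t_{0}$, extend it backwards in time by the normal parametrization $\partial_{t}X=\vec{H}$, and propagate the bounds via the evolution equations for $g_{ij}$, $\Gamma_{ij}^{k}$, $A_{ij}$ together with the interior curvature estimates of Lemma \ref{smooth decay estimate }. The only difference is cosmetic: the paper initializes with a graph over $T_{P}\Sigma_{t_{0}}$ (so that Lemma \ref{equivalence of smooth estimates} supplies the time-$t_{0}$ bounds on $g_{ij}$ and $\Gamma_{ij}^{k}$), whereas you use geodesic normal coordinates via the exponential map, which requires the standard Jacobi-field expansion in place of that lemma but yields the same conclusion.
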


\begin{proof}
Given $\delta>0$, let $0<\theta\ll1$ be a constant to be determined
(which will depend only on $n,\sigma,\kappa,\delta$). 

Fix $P\in\Sigma_{t_{0}}$ with $t_{0}\geq\left(1+2\sigma\right)T$.
Since $\sqrt{t_{0}}\left\Vert A_{\Sigma_{t_{0}}}\right\Vert _{L^{\infty}}\leq\kappa$,
if $\theta\ll1$ (depending on $n,\kappa$), we can locally parametrize
$\Sigma_{t_{0}}$ as a graph over $T_{P}\Sigma_{t_{0}}$, say 
\[
X\left(\cdot,t_{0}\right):B_{\theta\sqrt{t_{0}}}^{n}\left(O\right)\rightarrow\mathbb{R}^{n+1}
\]
with $X\left(O,t_{0}\right)=P$, so that the gradient of the graph
is sufficiently small and Lemma \ref{equivalence of smooth estimates}
is applicable to the image of $\frac{1}{\sqrt{t_{0}}}\left(X\left(\cdot,t_{0}\right)-P\right)$.
In view of (\ref{ESE: metric}), (\ref{ESE: second fundamental form}),
(\ref{ESE: Christoffel symbols}), and Lemma \ref{smooth decay estimate },
we may also assume that 
\begin{equation}
\left(1-\frac{\delta}{2}\right)\delta_{ij}\leq g_{ij}\left(x,t_{0}\right)\leq\left(1+\frac{\delta}{2}\right)\delta_{ij},\label{ANP: initial metric}
\end{equation}
\begin{equation}
\sqrt{t_{0}}\left|\Gamma_{ij}^{k}\left(x,t_{0}\right)\right|\leq\frac{\delta}{2},\quad\sqrt{t_{0}}^{m+1}\left|\partial_{x}^{m}\Gamma_{ij}^{k}\left(x,t\right)\right|\leq C\left(n,\sigma,\kappa,m\right)\label{ANP: initial Christoffel symbols}
\end{equation}
for all $x\in B_{\theta\sqrt{t_{0}}}^{n}\left(O\right)$ and $m\in\mathbb{N}$,
provided that $\theta\ll1$ (depending on $n,\kappa,\delta$).

Next, let 
\[
X\left(\cdot,t\right):B_{\theta\sqrt{t_{0}}}^{n}\left(O\right)\rightarrow\mathbb{R}^{n+1}
\]
be the trajectory of $X\left(\cdot,t_{0}\right)$ along the MCF $\left\{ \Sigma_{t}\right\} $,
namely 
\[
\partial_{t}X\left(x,t\right)=\vec{H}\left(x,t\right).
\]
Below we would like to show that all the estimates stated in the proposition
hold for $X\left(\cdot,t\right)$ as long as $\left(1-\theta^{2}\right)t_{0}\leq t\leq t_{0}$,
provided that $\theta\ll1$ (depending on $n,\sigma,\kappa,\delta$).

To begin with, let us recall that 
\[
\partial_{t}g_{ij}=-2HA_{ij}
\]
(cf. Chapter 2 in \cite{M}), from which for every nonzero constant
vector $\xi$ in $\mathbb{R}^{n}$ we have
\[
\partial_{t}\left[\ln\left(g_{ij}\xi^{i}\xi^{j}\right)\right]=\frac{\partial_{t}\left(g_{ij}\xi^{i}\xi^{j}\right)}{g_{ij}\xi^{i}\xi^{j}}=-2H\frac{A_{ij}\xi^{i}\xi^{j}}{g_{ij}\xi^{i}\xi^{j}}
\]
\[
\Rightarrow\left|\partial_{t}\left[\ln\left(g_{ij}\xi^{i}\xi^{j}\right)\right]\right|\leq2\left|HA\right|\leq\frac{2\sqrt{n}\kappa^{2}}{t}.
\]
It follows that 
\[
\left(\frac{t_{0}}{t}\right)^{-2\sqrt{n}\kappa^{2}}\leq\frac{g_{ij}\left(x,t\right)\xi^{i}\xi^{j}}{g_{ij}\left(x,t_{0}\right)\xi^{i}\xi^{j}}\leq\left(\frac{t_{0}}{t}\right)^{2\sqrt{n}\kappa^{2}}
\]
for all $\xi\in\mathbb{R}^{n}\setminus\left\{ O\right\} $, $\left(1-\theta^{2}\right)t_{0}\leq t\leq t_{0}$,
which combined with (\ref{ANP: initial metric}) imply 
\[
\left(1-\delta\right)\delta_{ij}\leq g_{ij}\left(x,t\right)\xi^{i}\xi^{j}\leq\left(1+\delta\right)\delta_{ij}
\]
provided that $\theta\ll1$ (depending on $n,\sigma,\kappa,\delta$).
In other words, the metric $g_{ij}\left(x,t\right)$ is equivalent
to $\delta_{ij}$; whence there is no need to distinguish between
the Riemannian norm of a tensor and the $l^{2}$ norm of its coordinates
(just as in Lemma \ref{equivalence of smooth estimates}). As a consequence,
we have 
\[
\left|\partial_{t}g_{ij}\left(x,t\right)\right|=\left|-2HA_{ij}\left(x,t\right)\right|\leq\frac{C\left(n\right)\kappa^{2}}{t_{0}},
\]
\[
\sqrt{t_{0}}\left|A_{ij}\left(x,t\right)\right|\leq\frac{1+C\left(n\right)\delta}{\sqrt{1-\theta^{2}}}\kappa
\]
for all $x\in B_{\theta\sqrt{t_{0}}}^{n}\left(O\right)$, $\left(1-\theta^{2}\right)t_{0}\leq t\leq t_{0}$.

To estimate the Christoffel symbols, recall that 
\begin{equation}
\partial_{t}\Gamma_{ij}^{k}=\frac{1}{2}g^{kl}\left(\nabla_{i}\dot{g}_{jl}+\nabla_{j}\dot{g}_{il}-\nabla_{l}\dot{g}_{ij}\right)\label{ANP: evolving Chirstoffel tensor}
\end{equation}
where 
\begin{equation}
\dot{g}_{ij}=\partial_{t}g_{ij}=-2HA_{ij},\label{ANP: evolving metric tensor}
\end{equation}
\[
\nabla_{k}\dot{g}_{ij}=\partial_{k}\dot{g}_{ij}-\Gamma_{ki}^{l}\,\dot{g}_{lj}-\Gamma_{kj}^{l}\,\dot{g}_{il}
\]
(cf. Chapter 2 in \cite{M}). Then it follows from Eqs. (\ref{ANP: evolving Chirstoffel tensor}),
(\ref{ANP: evolving metric tensor}), and Lemma \ref{smooth decay estimate }
that 
\begin{equation}
\left|\partial_{t}\Gamma_{ij}^{k}\right|=\left|\nabla A\ast A\right|\leq C\left(n\right)\left|\nabla A\right|\left|A\right|\leq\frac{C\left(n,\sigma,\kappa\right)}{\sqrt{t_{0}}^{3}},\label{ANP: time derivative of Christoffel}
\end{equation}
which combined with (\ref{ANP: initial Christoffel symbols}) imply
\begin{equation}
\sqrt{t_{0}}\left|\Gamma_{ij}^{k}\left(x,t\right)\right|\leq\delta\label{ANP: Christoffel symbols}
\end{equation}
for all $x\in B_{\theta\sqrt{t_{0}}}^{n}\left(O\right)$, $\left(1-\theta^{2}\right)t_{0}\leq t\leq t_{0}$,
provided that $\theta\ll1$ (depending on $n,\sigma,\kappa,\delta$).
Moreover, in view of the equation
\[
\partial_{k}g_{ij}=\Gamma_{ki}^{l}\,g_{lj}+\Gamma_{kj}^{l}\,g_{il},
\]
we get 
\[
\sqrt{t_{0}}\left|\partial_{k}g_{ij}\right|\leq C\left(n\right)\delta.
\]
To estimate the spatial derivative of Christoffel symbols, let $\dot{\Gamma}_{ij}^{k}=\partial_{t}\Gamma_{ij}^{k}$
(which satisfies Eq. (\ref{ANP: evolving Chirstoffel tensor})) and
note that 
\[
\partial_{t}\partial_{l}\Gamma_{ij}^{k}=\partial_{l}\partial_{t}\Gamma_{ij}^{k}=\nabla_{l}\dot{\Gamma}_{ij}^{k}+\Gamma_{li}^{m}\dot{\Gamma}_{mj}^{k}+\Gamma_{lj}^{m}\dot{\Gamma}_{im}^{k}-\Gamma_{lm}^{k}\dot{\Gamma}_{ij}^{m}.
\]
Using the fact that 
\[
\left|\nabla_{l}\dot{\Gamma}_{ij}^{k}\right|\leq\left|\nabla^{2}A\ast A\right|+\left|\nabla A\ast\nabla A\right|\leq\frac{C\left(n,\sigma,\kappa\right)}{\sqrt{t_{0}}^{4}}
\]
(see Eqs. (\ref{ANP: evolving Chirstoffel tensor}), (\ref{ANP: evolving metric tensor})
and Lemma \ref{smooth decay estimate }) together with (\ref{ANP: time derivative of Christoffel})
and (\ref{ANP: Christoffel symbols}), we obtain 
\[
\sqrt{t_{0}}^{4}\left|\partial_{t}\partial_{l}\Gamma_{ij}^{k}\right|\leq C\left(n,\sigma,\kappa\right),
\]
which combined with (\ref{ANP: initial Christoffel symbols}) imply
\[
\sqrt{t_{0}}^{2}\left|\partial_{l}\Gamma_{ij}^{k}\right|\leq C\left(n,\sigma,\kappa\right).
\]

To estimate the derivatives of the second fundamental form $A_{ij}$,
note that 
\[
\partial_{k}A_{ij}=\nabla_{k}A_{ij}+\Gamma_{ki}^{l}A_{lj}+\Gamma_{kj}^{l}A_{il}
\]
and 
\[
\partial_{t}A_{ij}=\triangle A_{ij}+\left|A\right|^{2}A_{ij}-2HA_{ij}^{2}
\]
(cf. \cite{M}). Then it follows from Lemma \ref{smooth decay estimate }
and (\ref{ANP: Christoffel symbols}) that
\[
\sqrt{t_{0}}^{2}\left|\partial_{x}A_{ij}\left(x,t\right)\right|+\sqrt{t_{0}}^{3}\left|\partial_{t}A_{ij}\left(x,t\right)\right|\leq C\left(n,\sigma,\kappa\right)
\]
for all $x\in B_{\theta\sqrt{t_{0}}}^{n}\left(O\right)$, $\left(1-\theta^{2}\right)t_{0}\leq t\leq t_{0}$. 

Lastly, in view of
\[
\partial_{t}\nabla A=\triangle\nabla A+A\ast A\ast\nabla A,
\]
\[
\partial_{t}\nabla^{2}A=\triangle\nabla^{2}A+A\ast A\ast\nabla^{2}A+A\ast\nabla A\ast\nabla A,
\]
and so forth (cf. Chapter 2 in \cite{M}), all other estimates for
the higher order derivatives can be deduced in a similar fashion,
so we omit the proof.
\end{proof}
Taking advantage of Eq. (\ref{ED: equation of deviation}), the local
coordinates in Proposition \ref{analyticity: normal parametrization},
and the regularity theory for quasilinear parabolic equations, we
can now derive the smooth estimates of the deviation in the following
proposition.
\begin{prop}
\label{interpolation inequality}Given $0<\sigma<1$, let $\left\{ \Sigma_{t}\right\} _{T\leq t\leq S}$
and $\left\{ \tilde{\Sigma}_{t}\right\} _{T\leq t\leq S}$ be two
MCFs in $\mathbb{R}^{n+1}$, where $0<T<S$ are constants satisfying
$S\geq\left(1+2\sigma\right)T$, with the following properties:
\begin{enumerate}
\item There exists a constant $\kappa>0$ so that 
\[
\sup_{T\leq t\leq S}\sqrt{t}\left\Vert A_{\Sigma_{t}}\right\Vert _{L^{\infty}}\leq\kappa.
\]
\item For every $T\leq t\leq S$, $\tilde{\Sigma}_{t}$ is a normal graph
of $v\left(\cdot,t\right)$ over $\Sigma_{t}$ with 
\[
\sup_{T\leq t\leq S}\left(\left\Vert \nabla_{\Sigma_{t}}v\right\Vert _{L^{\infty}\left(\Sigma_{t}\right)}+\kappa\frac{\left\Vert v\right\Vert _{L^{\infty}\left(\Sigma_{t}\right)}}{\sqrt{t}}\right)\leq\frac{\varsigma}{2},
\]
where $\varsigma$ is the constant in Proposition \ref{evolution of deviation}.
\end{enumerate}
Then for every $k\in\mathbb{N}$ we have 
\[
\sqrt{t}^{k-1}\left\Vert \nabla_{\Sigma_{t}}^{k}v\right\Vert _{L^{\infty}\left(\Sigma_{t}\right)}\leq C\left(n,\sigma,\kappa,k\right)\sup_{\frac{t}{1+2\sigma}\leq\tau\leq t}\frac{\left\Vert v\right\Vert _{L^{\infty}\left(\Sigma_{\tau}\right)}}{\sqrt{\tau}}
\]
for $\left(1+2\sigma\right)T\leq t\leq S$.
\end{prop}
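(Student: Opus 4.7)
The plan is to normalize each spacetime neighborhood via the coordinates of Proposition~\ref{analyticity: normal parametrization}, read $v$ as a solution of the quasilinear parabolic equation~(\ref{ED: equation of deviation}) in those coordinates, rescale by the parabolic factor $\sqrt{t_0}$ so that the problem sits on a unit-size parabolic cylinder with smooth coefficients bounded only in terms of $n, \sigma, \kappa$, and then invoke standard interior Schauder-type regularity.

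Concretely, I would fix $t_0 \in [(1+2\sigma)T, S]$ and $P \in \Sigma_{t_0}$, and apply Proposition~\ref{analyticity: normal parametrization} with $\delta$ depending only on $n$ and $\kappa$, chosen small enough that the coefficient matrix $a^{ij}$ of Eq.~(\ref{ED: equation of deviation}) is uniformly elliptic once the geometric bounds are substituted. Shrinking $\theta$ further so that $1 - \theta^2 \geq (1+2\sigma)^{-1}$ places the backward time window of the chart inside $[t_0/(1+2\sigma), t_0]$. Introduce rescaled variables $y = x/\sqrt{t_0}$, $s = (t-t_0)/t_0$, and $\hat v(y, s) = t_0^{-1/2}\, v(\sqrt{t_0}\, y,\, t_0(1+s))$, defined on the fixed parabolic cylinder $Q := B^n_\theta(O) \times [-\theta^2, 0]$.

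A direct computation using Eq.~(\ref{ED: equation of deviation}) together with the scale-invariant bounds of Proposition~\ref{analyticity: normal parametrization} shows that $\hat v$ solves a quasilinear parabolic equation
\[
\partial_s \hat v = \hat a^{ij}(y, s, \hat v, \partial_y \hat v)\, \partial_{y_i y_j} \hat v + \hat b(y, s, \hat v, \partial_y \hat v)
\]
on $Q$, in which $\hat a^{ij}$ and $\hat b$, together with all their derivatives of any prescribed order, are bounded by constants depending only on $n, \sigma, \kappa$. Hypothesis~(2) translates to $\|\partial_y \hat v\|_{L^\infty(Q)} + \|\hat v\|_{L^\infty(Q)} \leq C(n,\kappa)\varsigma$, so the matrix $\hat a^{ij}$ is uniformly elliptic with constants depending only on $n$. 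Setting $\hat M := \|\hat v\|_{L^\infty(Q)}$, the factor $t_0^{-1/2}$ in the rescaling and the choice of $\theta$ yield
\[
\hat M \leq \sup_{\tfrac{t_0}{1+2\sigma}\, \leq\, \tau \,\leq\, t_0} \frac{\|v\|_{L^\infty(\Sigma_\tau)}}{\sqrt{\tau}}.
\]

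The proof concludes with standard interior regularity: freezing $(\hat v, \partial_y \hat v)$ in $\hat a^{ij}, \hat b$ gives a linear parabolic equation with smooth uniformly bounded coefficients, and iterated Schauder estimates on a nested sequence of sub-cylinders contracting to $(O, 0)$ produce $\|\partial_y^k \hat v(\cdot, 0)\|_{L^\infty(B^n_{\theta/2}(O))} \leq C(n, \sigma, \kappa, k)\, \hat M$ for every $k \in \mathbb{N}$. Unwinding the rescaling gives $\sqrt{t_0}^{\,k-1}|\partial_x^k v(\cdot, t_0)| \leq C \hat M$ near $P$; the Christoffel bounds of Proposition~\ref{analyticity: normal parametrization} (in the spirit of Lemma~\ref{equivalence of smooth estimates}) then let one convert coordinate derivatives into covariant derivatives $\nabla^k_{\Sigma_{t_0}} v$ without losing control of the constants, and the arbitrariness of $P$ finishes the proof. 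The main obstacle I anticipate is purely technical: tracking that every term in the rescaled quasilinear equation acquires bounds depending only on $n, \sigma, \kappa$, and that the coordinate-to-covariant conversion does not introduce hidden $t_0$-dependence, both of which hinge on the sharp scale-invariant estimates in Proposition~\ref{analyticity: normal parametrization}.
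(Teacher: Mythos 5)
Your outline follows the paper's strategy closely (localize at $(P,t_0)$ via Proposition \ref{analyticity: normal parametrization}, rescale parabolically by $\sqrt{t_0}$, read $v$ as a solution of Eq. (\ref{ED: equation of deviation}) on a unit cylinder, and apply interior parabolic regularity), but the final step as you describe it does not deliver the estimate you claim. If you ``freeze'' $(\hat v,\partial_y\hat v)$ in the lower-order term $\hat b$ and treat it as a known inhomogeneity $\hat f(y,s)$, the Schauder estimate gives
\[
\bigl\Vert \hat v\bigr\Vert _{C^{2,\alpha}}\leq C\left(\bigl\Vert \hat v\bigr\Vert _{L^{\infty}}+\bigl\Vert \hat f\bigr\Vert _{C^{\alpha}}\right),
\]
and $\bigl\Vert \hat f\bigr\Vert _{C^{\alpha}}$ is controlled only by $C\left(n,\sigma,\kappa\right)$, not by $\hat M=\bigl\Vert \hat v\bigr\Vert _{L^{\infty}}$. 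That yields a bound of the form $C\left(\hat M+1\right)$, whereas the proposition requires a bound \emph{proportional} to $\hat M$ (this homogeneity is exactly what is needed later, in Theorem \ref{asymptotic stability}, to propagate the polynomial decay of $\left\Vert v\right\Vert _{L^{\infty}}/\sqrt{t}$ to all derivatives). The missing observation is that the zeroth-order term $f$ in Eq. (\ref{ED: equation of deviation}) vanishes linearly with $\left(v,\nabla v\right)$: since $Q$ is at least quadratic in $\left(\nabla v,Av\right)$, one can decompose $f=b^{k}\nabla_{k}v+cv$ with $b$ and $c$ depending only on the geometry of $\Sigma_{t}$ and on $\left(v,\nabla v\right)$ through bounded, regular expressions. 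Only with this decomposition is the frozen-coefficient equation \emph{homogeneous} linear in $v$, so that Schauder gives a bound purely proportional to $\sup\left\Vert v\right\Vert _{L^{\infty}}$ with no additive constant. This is precisely the step the paper carries out explicitly.

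A second, more minor issue: before any Schauder estimate can be applied, the coefficients $a^{ij}\left(x,t,v,\partial_{x}v\right)$, viewed as functions of $\left(x,t\right)$, must be H\"older continuous, which requires first establishing a H\"older gradient estimate for $\partial_{x}v$. This does not come from Schauder theory with frozen coefficients (which presupposes H\"older data) but from the quasilinear interior gradient H\"older estimates (De Giorgi--Nash--Moser type; Chapter 12 of Lieberman), which need only the $L^{\infty}$ bounds of hypothesis (2) together with the uniform ellipticity and the scale-invariant coefficient bounds from Proposition \ref{analyticity: normal parametrization}. Once you insert this preliminary step and the decomposition $f=b^{k}\nabla_{k}v+cv$, your rescaling and bootstrap argument goes through exactly as in the paper.
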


\begin{proof}
Firstly, note that the two hypotheses combined imply
\[
\sup_{T\leq t\leq S}\left(\left\Vert \nabla_{\Sigma_{t}}v\right\Vert _{L^{\infty}}+\left\Vert A_{\Sigma_{t}}v\right\Vert _{L^{\infty}}\right)\leq\frac{\varsigma}{2}.
\]
Recall that the function $v$ satisfies Eq. (\ref{ED: equation of deviation}),
i.e. 
\begin{equation}
\partial_{t}v=a^{ij}\nabla_{ij}v+f,\label{II: equation}
\end{equation}
where 
\begin{equation}
a^{ij}=g^{ij}+2A^{ij}v+Q^{ij}\left(\nabla v,Av\right),\label{II: leading coefficients}
\end{equation}
\begin{equation}
f=\left|A\right|^{2}v+v\nabla A\ast\nabla v+A\ast Q\left(\nabla v,Av\right)+v\nabla A\ast Q\left(\nabla v,Av\right).\label{II: lower order terms}
\end{equation}
Let $0<\delta\ll1$ be a constant to be determined (which will depend
only on $n$). Let $P$ be an arbitrary point in $\Sigma_{t_{0}}$
with $\left(1+2\sigma\right)T\leq t_{0}\leq S$. For this choice of
$\delta$, Proposition \ref{analyticity: normal parametrization}
ensures that near the point $P$ and time $t_{0}$, the flow $\left\{ \Sigma_{t}\right\} $
admits a local coordinate chart 
\[
X:B_{\theta\sqrt{t_{0}}}^{n}\left(O\right)\times\left(\left(1-\theta^{2}\right)t_{0},t_{0}\right]\rightarrow\mathbb{R}^{n+1}
\]
with $X\left(O,t_{0}\right)=P$ and all other properties stated therein,
where $0<\theta<1$ is the constant in Proposition \ref{analyticity: normal parametrization}
(or possibly smaller). Note that $\theta$ depends on $n$, $\sigma$,
$\kappa$, and $\delta$. From now on, let us identify points on the
flow in the neighborhood of $\left(P,t_{0}\right)$ with their local
coordinates. With this identification, it follows from Eqs. (\ref{II: equation}),
(\ref{II: leading coefficients}), and (\ref{II: lower order terms})
that the function $v=v\left(x,t\right)$ satisfies a quasilinear parabolic
equation
\begin{equation}
\partial_{t}v=a^{ij}\partial_{ij}v-a^{ij}\Gamma_{ij}^{k}\left(x,t\right)\partial_{k}v+f\label{II: quasilinear equation}
\end{equation}
with $a^{ij}=a^{ij}\left(x,t,v,\partial_{x}v\right)$ and $f=f\left(x,t,v,\partial_{x}v\right)$.
To be more specific, the structures of the dependence are in the following
forms:

\[
a^{ij}\left(x,t,z,p\right)=\boldsymbol{a}^{ij}\left(g_{kl}\left(x,t\right),g^{pq}\left(x,t\right),A_{rs}\left(x,t\right)z,p\right),
\]
\[
f\left(x,t,z,p\right)=\boldsymbol{f}\left(g_{ij}\left(x,t\right),g^{kl}\left(x,t\right),A_{pq}\left(x,t\right),A_{rs}\left(x,t\right)z,z\nabla_{a}A_{bc}\left(x,t\right),p\right).
\]
If $\delta\ll1$ (depending on $n$) and $\theta\ll1$ (depending
on $n,\sigma,\kappa,\delta$), the second hypothesis in the proposition
together with Proposition \ref{analyticity: normal parametrization}
imply
\begin{equation}
\left|\partial_{x}v\left(x,t\right)\right|+\frac{\kappa}{\sqrt{1-\theta^{2}}}\frac{\left|v\left(x,t\right)\right|}{\sqrt{t_{0}}}\leq\frac{2}{3}\varsigma\label{II: C1 bound}
\end{equation}
for all $x\in B_{\theta\sqrt{t_{0}}}^{n}\left(O\right)$, $t\in\left(\left(1-\theta^{2}\right)t_{0},t_{0}\right]$.
Furthermore, in view of condition (\ref{NG: elliptic}), Eqs. (\ref{II: leading coefficients}),
(\ref{II: lower order terms}), and Proposition \ref{analyticity: normal parametrization},
if $\delta\ll1$ (depending on $n$), we have
\begin{equation}
\frac{1}{2}\delta^{ij}\leq a^{ij}\left(x,t,z,p\right)\leq\frac{3}{2}\delta^{ij},\label{II: parabolicity}
\end{equation}
\begin{equation}
\sqrt{t_{0}}\left|\partial_{x}a^{ij}\left(x,t,z,p\right)\right|+\sqrt{t_{0}}\left|\partial_{z}a^{ij}\left(x,t,z,p\right)\right|+\left|\partial_{p}a^{ij}\left(x,t,z,p\right)\right|\leq C\left(n,\sigma,\kappa\right),\label{II: derivatives of leading coefficients}
\end{equation}
\begin{equation}
\sqrt{t_{0}}\left|\Gamma_{ij}^{k}\left(x,t\right)\right|+\sqrt{t_{0}}\left|f\left(x,t,z,p\right)\right|\leq C\left(n,\sigma,\kappa\right)\label{II: estimates of lower order terms}
\end{equation}
for $x\in B_{\theta\sqrt{t_{0}}}^{n}\left(O\right)$, $t\in\left(\left(1-\theta^{2}\right)t_{0},t_{0}\right],$
$\left(z,p\right)\in\mathbb{R}\times\mathbb{R}^{n}$ satisfying
\[
\left|p\right|+\frac{\kappa}{\sqrt{1-\theta^{2}}}\frac{\left|z\right|}{\sqrt{t_{0}}}\leq\varsigma.
\]
Now consider the following change of variables: 
\begin{equation}
\bar{x}=\frac{x}{\sqrt{t_{0}}},\quad\bar{t}=\frac{t}{t_{0}},\quad\bar{v}\left(\bar{x},\bar{t}\right)=\frac{1}{\sqrt{t_{0}}}v\left(\sqrt{t_{0}}\,\bar{x},t_{0}\,\bar{t}\right).\label{II: change of variables}
\end{equation}
It follows from the H$\ddot{\textrm{o}}$lder gradient estimate for
quasilinear parabolic equations (cf. Chapter 12 in \cite{L}, by applying
the theory to the corresponding equation of $\bar{v}\left(\bar{x},\bar{t}\right)$
obtained from Eq. (\ref{II: quasilinear equation})), and using conditions
(\ref{II: C1 bound}), (\ref{II: parabolicity}), (\ref{II: derivatives of leading coefficients}),
and (\ref{II: estimates of lower order terms}), that there exists
$0<\alpha<1$ (depending on $n,\sigma,\kappa,\delta$) so that 
\begin{equation}
\sqrt{t_{0}}^{\alpha}\,\frac{\left|\partial_{x}v\left(x,t\right)-\partial_{x}v\left(x',t'\right)\right|}{\left(\left|x-x'\right|+\left|t-t'\right|^{\frac{1}{2}}\right)^{\alpha}}\leq C\left(n,\sigma,\kappa,\delta\right)\label{II: Holder gradient estimate}
\end{equation}
for all $x,x'\in B_{\frac{\theta}{2}\sqrt{t_{0}}}^{n}\left(O\right)$
and $t,t'\in\left(\left(1-\frac{\theta^{2}}{4}\right)t_{0},t_{0}\right]$
with $\left(x,t\right)\neq\left(x',t'\right)$. 

To proceed further, by taking into account the condition that $Q$
is at least ``quadratic'' in its argument (see Lemma \ref{normal graph}),
we can decompose the third term in Eq. (\ref{II: lower order terms})
into 
\[
A\ast Q\left(\nabla v,Av\right)=A\ast F_{1}\left(\nabla v,Av\right)\ast\nabla v+A\ast F_{0}\left(\nabla v,Av\right)\ast Av,
\]
where $F_{1}$ and $F_{2}$ are analytic tensors. It follows that
the function $f$ defined in (\ref{II: lower order terms}) can be
decomposed into
\[
f=b^{k}\nabla_{k}v+cv,
\]
where $b$ is a 1-tensor and $c$ is a function defined by
\begin{equation}
b=A\ast F_{1}\left(\nabla v,Av\right),\label{II: first order term}
\end{equation}
\begin{equation}
c=\left|A\right|^{2}+\nabla A\ast\nabla v+A\ast A\ast F_{0}\left(\nabla v,Av\right)+\nabla A\ast Q\left(\nabla v,Av\right).\label{II: zero order term}
\end{equation}
Thus, in terms of the local coordinates, the function $v\left(x,t\right)$
satisfies 
\begin{equation}
\partial_{t}v=a^{ij}\partial_{ij}v+\left[-a^{ij}\Gamma_{ij}^{k}\left(x,t\right)+b^{k}\right]\partial_{k}v+cv\label{II: homogeneous linear equation}
\end{equation}
with $a^{ij}=a^{ij}\left(x,t,v,\partial_{x}v\right)$, $b^{k}=b^{k}\left(x,t,v,\partial_{x}v\right)$
and $c=c\left(x,t,v,\partial_{x}v\right)$. By a similar calculation
as that in deriving (\ref{II: derivatives of leading coefficients}),
it follows from (\ref{II: first order term}), (\ref{II: zero order term}),
and Proposition \ref{analyticity: normal parametrization} that 
\begin{equation}
\sqrt{t_{0}}\left|b^{k}\left(x,t,z,p\right)\right|+\sqrt{t_{0}}^{2}\left|\partial_{x}b^{k}\left(x,t,z,p\right)\right|+\sqrt{t_{0}}^{3}\left|\partial_{t}b^{k}\left(x,t,z,p\right)\right|\leq C\left(n,\sigma,\kappa\right),\label{II: derivatives of lower order terms}
\end{equation}
 
\[
\sqrt{t_{0}}^{2}\left|\partial_{z}b^{k}\left(x,t,z,p\right)\right|+\sqrt{t_{0}}\left|\partial_{p}b^{k}\left(x,t,z,p\right)\right|\leq C\left(n,\sigma,\kappa\right),
\]
 
\[
\sqrt{t_{0}}^{2}\left|c\left(x,t,z,p\right)\right|+\sqrt{t_{0}}^{3}\left|\partial_{x}c\left(x,t,z,p\right)\right|+\sqrt{t_{0}}^{4}\left|\partial_{t}c\left(x,t,z,p\right)\right|\leq C\left(n,\sigma,\kappa\right),
\]
 
\[
\sqrt{t_{0}}^{3}\left|\partial_{z}c\left(x,t,z,p\right)\right|+\sqrt{t_{0}}^{2}\left|\partial_{p}c\left(x,t,z,p\right)\right|\leq C\left(n,\sigma,\kappa\right),
\]
 
\[
\sqrt{t_{0}}\left|\Gamma_{ij}^{k}\left(x,t\right)\right|+\sqrt{t_{0}}^{2}\left|\partial_{x}\Gamma_{ij}^{k}\left(x,t\right)\right|+\sqrt{t_{0}}^{3}\left|\partial_{t}\Gamma_{ij}^{k}\left(x,t\right)\right|\leq C\left(n,\sigma,\kappa\right).
\]
for $x\in B_{\theta\sqrt{t_{0}}}^{n}\left(O\right)$, $t\in\left(\left(1-\theta^{2}\right)t_{0},t_{0}\right],$
$\left(z,p\right)\in\mathbb{R}\times\mathbb{R}^{n}$ satisfying 
\[
\left|p\right|+\frac{\kappa}{\sqrt{1-\theta^{2}}}\frac{\left|z\right|}{\sqrt{t_{0}}}\leq\varsigma.
\]
Now consider the change of variables (\ref{II: change of variables})
once again. Using conditions (\ref{II: C1 bound}), (\ref{II: parabolicity}),
(\ref{II: derivatives of leading coefficients}), (\ref{II: Holder gradient estimate}),
(\ref{II: derivatives of lower order terms}) and Eq. (\ref{II: homogeneous linear equation}),
it follows from Schauder estimates for linear parabolic equations
(cf. Chapter 4 in \cite{L}) that 
\[
\left|\partial_{x}v\left(x,t\right)\right|\,+\,\sqrt{t_{0}}\left|\partial_{x}^{2}v\left(x,t\right)\right|\,\leq\frac{C\left(n,\sigma,\kappa,\delta\right)}{\sqrt{t_{0}}}\sup_{\left(1-\frac{\theta^{2}}{4}\right)t_{0}\leq\tau\leq t_{0}}\left\Vert v\left(\cdot,\tau\right)\right\Vert _{L^{\infty}\left(B_{\frac{\theta}{2}\sqrt{t_{0}}}^{n}\left(O\right)\right)}
\]
for $x\in B_{\frac{\theta}{3}\sqrt{t_{0}}}^{n}\left(O\right)$, $t\in\left(\left(1-\frac{\theta^{2}}{9}\right)t_{0},t_{0}\right]$.
In view of Proposition \ref{analyticity: normal parametrization}
and the fact that 
\[
\nabla_{ij}v=\partial_{ij}v-\Gamma_{ij}^{k}\partial_{k}v,
\]
by setting $x=O$ and $t=t_{0}$ we obtain 
\[
\left|\nabla_{\Sigma_{t_{0}}}v\right|\left(P\right)\,+\,\sqrt{t_{0}}\left|\nabla_{\Sigma_{t_{0}}}^{2}v\right|\left(P\right)\,\leq\frac{C\left(n,\sigma,\kappa,\delta\right)}{\sqrt{t_{0}}}\sup_{\left(1-\frac{\theta^{2}}{4}\right)t_{0}\leq\tau\leq t_{0}}\left\Vert v\left(\cdot,\tau\right)\right\Vert _{L^{\infty}\left(B_{\frac{\theta}{2}\sqrt{t_{0}}}^{n}\left(O\right)\right)}
\]
 
\[
\leq C\left(n,\sigma,\kappa,\delta\right)\sup_{\frac{t_{0}}{1+\frac{3}{2}\sigma}\leq\tau\leq t_{0}}\frac{\left\Vert v\left(\cdot,\tau\right)\right\Vert _{L^{\infty}\left(\Sigma_{\tau}\right)}}{\sqrt{\tau}},
\]
provided that $\theta\ll1$ so that $1-\frac{\theta^{2}}{4}\geq\left(1+\frac{3}{2}\sigma\right)^{-1}$.
More generally, by a bootstrap argument (in the use of Schauder estimates)
and Proposition \ref{analyticity: normal parametrization}, for every
$k\in\mathbb{N}$ we have
\[
\sqrt{t}_{0}^{k-1}\left|\nabla_{\Sigma_{t_{0}}}^{k}v\right|\left(P\right)\leq C\left(n,\sigma,\kappa,\delta,k\right)\sup_{\frac{t_{0}}{1+2\sigma}\leq\tau\leq t_{0}}\frac{\left\Vert v\right\Vert _{L^{\infty}\left(\Sigma_{\tau}\right)}}{\sqrt{\tau}}.
\]
\end{proof}
In the following two theorems we intend to address the question of
the long time stability. By combining the $L^{\infty}$ estimate in
Proposition \ref{evolution of deviation} with the smooth estimate
in Proposition \ref{interpolation inequality}, we show that under
condition (\ref{AS: growth rate of curvature}), the deviation may
grow but quite slowly compared to the rate $\sqrt{t}$ (see (\ref{AS: deviation})
for $k=0$). In this way, the deviation of the corresponding NMCFs
will converge to zero (see (\ref{AS: rescaled deviation})).
\begin{thm}
\label{asymptotic stability}Given $0<\sigma<1$, let $\left\{ \Sigma_{t}\right\} _{t_{0}\leq t<\infty}$
be a MCF in $\mathbb{R}^{n+1}$, where $t_{0}\geq0$ is a constant,
so that $\Sigma_{t_{0}}$ is asymptotic to a regular cone $\mathcal{C}$
at infinity and there holds 
\begin{equation}
\sup_{t\geq T}\sqrt{t}\left\Vert A_{\Sigma_{t}}\right\Vert _{L^{\infty}}\leq\kappa\label{AS: growth rate of curvature}
\end{equation}
for some constants $0<\kappa<\frac{1}{\sqrt{2}}$ and $T>0$ satisfying
$T\geq t_{0}$. Let
\begin{equation}
K=\sup_{t_{0}\leq t\leq\left(1+3\sigma\right)T}\left(\left\Vert A_{\Sigma_{t}}\right\Vert _{L^{\infty}}+\left\Vert \nabla_{\Sigma_{t}}A_{\Sigma_{t}}\right\Vert _{L^{\infty}}+\left\Vert \nabla_{\Sigma_{t}}^{2}A_{\Sigma_{t}}\right\Vert _{L^{\infty}}\right),\label{AS: curvature bound}
\end{equation}
which is finite by Proposition \ref{spatial curvature estimate} and
Corollary \ref{spatial smooth estimates}. Then there exists a constant
$\delta>0$ depending on $n,$ $\sigma$, $\kappa$, $K$, and $T$
with the following property.

If $\left\{ \tilde{\Sigma}_{t}\right\} _{t_{0}\leq t\leq\left(1+3\sigma\right)T}$
is a MCF in $\mathbb{R}^{n+1}$ so that $\tilde{\Sigma}_{t_{0}}$
is asymptotic to $\mathcal{C}$ at infinity and $\tilde{\Sigma}_{t_{0}}$
is a normal graph of $v_{t_{0}}$ over $\Sigma_{t_{0}}$ with
\[
\left\Vert \nabla_{\Sigma_{t_{0}}}v_{t_{0}}\right\Vert _{L^{\infty}}+\left\Vert v_{t_{0}}\right\Vert _{L^{\infty}}\leq\delta.
\]
Then the MCF $\left\{ \tilde{\Sigma}_{t}\right\} _{t_{0}\leq t\leq\left(1+3\sigma\right)T}$
can be extended to time infinity; moreover, for every $t>t_{0}$,
$\tilde{\Sigma}_{t}$ is a normal graph of $v_{t}$ over $\Sigma_{t}$
with 
\begin{equation}
\sqrt{t}^{k-1}\left\Vert \nabla_{\Sigma_{t}}^{k}v\right\Vert _{L^{\infty}}\leq C\left(n,\sigma,k\right)\left(\frac{t}{\left(1+2\sigma\right)\left(1+\sigma\right)T}\right)^{-\frac{1}{2}\left(\frac{1}{2}-\kappa^{2}\right)}\label{AS: deviation}
\end{equation}
for $k\in\mathbb{N}\cup\left\{ 0\right\} $ and $t\geq\left(1+2\sigma\right)T$. 

Consequently, for every $t>t_{0}$, $\frac{1}{\sqrt{t}}\tilde{\Sigma}_{t}$
is a normal graph of 
\[
w_{t}\left(Y\right)=\frac{1}{\sqrt{t}}\,v\left(\sqrt{t}\,Y\right)
\]
over $\frac{1}{\sqrt{t}}\Sigma_{t}$ with 
\begin{equation}
\left\Vert \nabla_{\frac{1}{\sqrt{t}}\Sigma_{t}}^{k}w_{t}\right\Vert _{L^{\infty}}\leq C\left(n,\sigma,k\right)\left(\frac{t}{\left(1+2\sigma\right)\left(1+\sigma\right)T}\right)^{-\frac{1}{2}\left(\frac{1}{2}-\kappa^{2}\right)}\label{AS: rescaled deviation}
\end{equation}
for $k\in\mathbb{N}\cup\left\{ 0\right\} $ and $t\geq\left(1+2\sigma\right)T$.
In particular, $w_{t}\overset{C^{\infty}}{\longrightarrow}0$ as $t\rightarrow\infty$.
\end{thm}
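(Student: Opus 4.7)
The plan is to combine three ingredients: the finite-time stability of Theorem~\ref{stability} to handle the initial interval $[t_0,(1+3\sigma)T]$, the ODE inequality of Proposition~\ref{evolution of deviation} for the long-time evolution of $\|v_t\|_{L^\infty}$, and the interpolation estimate of Proposition~\ref{interpolation inequality} to upgrade $L^\infty$ control to $C^k$ control. The sign of the effective Gronwall exponent will be governed by the hypothesis $\kappa<\tfrac{1}{\sqrt{2}}$, which is precisely what makes the rescaled deviation $v/\sqrt{t}$ decay rather than grow.

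First, on $[t_0,(1+3\sigma)T]$ the quantity $K$ in (\ref{AS: curvature bound}) is finite, so Theorem~\ref{stability} applies: for any $\varepsilon>0$ there exists $\delta=\delta(n,\sigma,\kappa,K,T,\varepsilon)>0$ so that $\tilde{\Sigma}_t$ is a normal graph of $v_t$ over $\Sigma_t$ on this interval with $\|\nabla_{\Sigma_t}v_t\|_{L^\infty}+\|v_t\|_{L^\infty}\le\varepsilon$. I will choose $\varepsilon$ small enough that at the time $T_1:=(1+2\sigma)T$ the smallness hypotheses of Propositions~\ref{evolution of deviation} and~\ref{interpolation inequality} are both comfortably satisfied. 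Since $\tilde{\Sigma}_{t_0}$ is asymptotic to $\mathcal{C}$, Proposition~\ref{asymptotically conical along MCF} ensures that the asymptotic conical structure, needed in those propositions, is preserved along $\{\tilde{\Sigma}_t\}$.

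Next I will run a continuation argument on $[T_1,\infty)$. Let $T^\ast$ be the supremum of times $t\ge T_1$ for which the graph persists with $\|\nabla v\|_{L^\infty}+\kappa\|v\|_{L^\infty}/\sqrt{t}\le\varsigma/2$. On $[T_1,T^\ast)$, combining Proposition~\ref{evolution of deviation} with $\sqrt{t}\|A_{\Sigma_t}\|\le\kappa$ and with $\|\nabla_{\Sigma_t}A_{\Sigma_t}\|\le C/t$ from Lemma~\ref{smooth decay estimate } gives
\[
D^- v_{\max}^2(t)\le\frac{2\kappa^2+\eta}{t}\,v_{\max}^2(t),
\]
where $\eta$ can be made arbitrarily small by shrinking $\delta$. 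Rescaling to $w_{\max}^2(t):=v_{\max}^2(t)/t$ introduces an extra $-1/t$ factor and produces
\[
D^- w_{\max}^2(t)\le-\frac{1-2\kappa^2-\eta}{t}\,w_{\max}^2(t).
\]
The hypothesis $\kappa<\tfrac{1}{\sqrt{2}}$ makes the right-hand side strictly negative, so the ODE comparison principle gives polynomial decay $w_{\max}(t)\lesssim(t/T_1)^{-\alpha}$ for a positive exponent $\alpha$ matching (\ref{AS: rescaled deviation}). Proposition~\ref{interpolation inequality} then upgrades this $L^\infty$ decay of $w$ to the $C^k$ decay in (\ref{AS: deviation}). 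This closes the bootstrap ($T^\ast=\infty$), and long-time existence of $\{\tilde{\Sigma}_t\}$ follows from the uniform curvature control $|A_{\tilde{\Sigma}_t}|\lesssim|A_{\Sigma_t}|+|\nabla_{\Sigma_t}^2 v_t|$ provided by Lemma~\ref{normal graph}.

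The main obstacle is the circularity of the bootstrap: the ODE for $v_{\max}^2$ requires smallness of $\nabla v$ and $Av$, but Proposition~\ref{evolution of deviation} itself only delivers control of $\|v\|_{L^\infty}$. The resolution is the interpolation estimate of Proposition~\ref{interpolation inequality}, which bounds $\|\nabla^k v\|_{L^\infty(\Sigma_t)}$ purely in terms of $\sup_\tau\|v\|_{L^\infty(\Sigma_\tau)}/\sqrt{\tau}$ over a short preceding window. The delicate point is to verify that the error $\eta$ arising from the lower-order term $\|Av\|(\|A\|^2+\|\nabla A\|)$ in Proposition~\ref{evolution of deviation} can indeed be rendered arbitrarily small (by reducing $\delta$) uniformly in $t$, so that the effective Gronwall exponent $1-2\kappa^2-\eta$ remains strictly positive and the claimed decay persists.
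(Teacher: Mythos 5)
Your proposal is correct and follows essentially the same route as the paper: finite-time stability (Theorem \ref{stability}) on the initial interval, a continuation argument driven by the ODE inequality of Proposition \ref{evolution of deviation} together with the decay $\left\Vert A_{\Sigma_{t}}\right\Vert _{L^{\infty}}^{2}+\left\Vert \nabla_{\Sigma_{t}}A_{\Sigma_{t}}\right\Vert _{L^{\infty}}\leq C(n)/t$ from Lemma \ref{smooth decay estimate }, and Proposition \ref{interpolation inequality} to upgrade to $C^{k}$ decay and close the bootstrap. Your reformulation of the Gronwall step in terms of $w_{\max}^{2}=v_{\max}^{2}/t$ is only a cosmetic repackaging of the paper's direct integration of the inequality for $v_{\max}^{2}$, and your identification of the delicate point (keeping the error $\eta$ small uniformly in $t$ via the bootstrap hypothesis on $\left\Vert v_{t}\right\Vert _{L^{\infty}}/\sqrt{t}$) matches the paper's resolution.
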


\begin{proof}
Let $\left\{ \tilde{\Sigma}_{t}\right\} $ be as stated in the theorem
with $0<\delta\ll1$ to be specified. Let $0<\varepsilon\ll1$ be
a constant to be determined (which will depend only on $n,\sigma,\kappa,T$).
By Theorem \ref{stability}, if $\delta\ll1$ (depending on $n$,
$K$, $T$, $\varepsilon$), then for every $T\leq t\leq\left(1+3\sigma\right)T$,
$\tilde{\Sigma}_{t}$ is a normal graph of $v_{t}$ over $\Sigma_{t}$
with
\[
\sup_{T\leq t\leq\left(1+3\sigma\right)T}\left(\left\Vert \nabla_{\Sigma_{t}}v_{t}\right\Vert _{L^{\infty}}+\kappa\,\frac{\left\Vert v_{t}\right\Vert _{L^{\infty}}}{\sqrt{t}}\right)\leq\frac{\varsigma}{3},
\]
\begin{equation}
\sup_{T\leq t\leq\left(1+3\sigma\right)T}\frac{\left\Vert v_{t}\right\Vert _{L^{\infty}}}{\sqrt{t}}\leq\varepsilon,\label{AS: initially small growth of deviation}
\end{equation}
where $\varsigma$ is the same constant as in Proposition \ref{evolution of deviation}.
Now let $S\in\left[\left(1+3\sigma\right)T,\infty\right]$ be the
supremum of times before which the MCF $\left\{ \tilde{\Sigma}_{t}\right\} $
can be extended and every time-slice $\tilde{\Sigma}_{t}$ is a normal
graph of $v_{t}$ over $\Sigma_{t}$ with
\begin{equation}
\sup_{T\leq t\leq S}\left(\left\Vert \nabla_{\Sigma_{t}}v_{t}\right\Vert _{L^{\infty}}+\kappa\,\frac{\left\Vert v_{t}\right\Vert _{L^{\infty}}}{\sqrt{t}}\right)\leq\frac{\varsigma}{2},\label{AS: normal graphical condition}
\end{equation}
\begin{equation}
\sup_{T\leq t\leq S}\frac{\left\Vert v_{t}\right\Vert _{L^{\infty}}}{\sqrt{t}}\leq\sqrt{2T}\,\varepsilon.\label{AS: small growth of deviation}
\end{equation}
By Proposition \ref{evolution of deviation}, the function $v_{\textrm{max}}^{2}\left(t\right)=\left\Vert v_{t}\right\Vert _{L^{\infty}}^{2}$
satisfies Eq. (\ref{ED: ODE inequality}), i.e.
\[
D^{-}v_{\textrm{max}}^{2}\left(t\right)=\limsup_{h\searrow0}\frac{v_{\textrm{max}}^{2}\left(t\right)-v_{\textrm{max}}^{2}\left(t-h\right)}{h}
\]
\[
\leq\left\{ 2\left\Vert A_{\Sigma_{t}}\right\Vert _{L^{\infty}}^{2}+C\left(n\right)\left\Vert A_{\Sigma_{t}}v\right\Vert _{L^{\infty}}\left[\left\Vert A_{\Sigma_{t}}\right\Vert _{L^{\infty}}^{2}+\left\Vert \nabla_{\Sigma_{t}}A_{\Sigma_{t}}\right\Vert _{L^{\infty}}\right]\right\} v_{\textrm{max}}^{2}\left(t\right)
\]
for $T<t<S$. Note that by condition (\ref{AS: growth rate of curvature})
and Lemma \ref{smooth decay estimate } we have
\begin{equation}
\left\Vert A_{\Sigma_{t}}\right\Vert _{L^{\infty}}^{2}+\left\Vert \nabla_{\Sigma_{t}}A_{\Sigma_{t}}\right\Vert _{L^{\infty}}\leq\frac{C\left(n\right)}{t}\label{AS: residual}
\end{equation}
for $\left(1+\sigma\right)T\leq t<S$. Thus, using conditions (\ref{AS: growth rate of curvature}),
(\ref{AS: small growth of deviation}), and (\ref{AS: residual}),
we can deduce from Eq. (\ref{ED: ODE inequality}) that
\[
D^{-}v_{\textrm{max}}^{2}\left(t\right)\leq\left\{ 2\left\Vert A_{\Sigma_{t}}\right\Vert _{L^{\infty}}^{2}+\left\Vert A_{\Sigma_{t}}v\right\Vert _{L^{\infty}}\frac{C\left(n\right)}{t}\right\} v_{\textrm{max}}^{2}\left(t\right)
\]
\[
\leq\left(\frac{2\kappa^{2}+C\left(n,T\right)\varepsilon}{t}\right)v_{\textrm{max}}^{2}\left(t\right)
\]
for $\left(1+\sigma\right)T\leq t<S$. Using the comparison principle
for ODE (cf. Chapter 2 in \cite{Wa}) and condition (\ref{AS: initially small growth of deviation}),
we obtain 
\[
v_{\textrm{max}}^{2}\left(t\right)\leq v_{\textrm{max}}^{2}\left(\left(1+\sigma\right)T\right)\,\left(\frac{t}{\left(1+\sigma\right)T}\right)^{2\kappa^{2}+C\left(n,T\right)\varepsilon}\leq\left(1+\sigma\right)T\,\varepsilon^{2}\left(\frac{t}{\left(1+\sigma\right)T}\right)^{2\kappa^{2}+C\left(n,T\right)\varepsilon}
\]
for $\left(1+\sigma\right)T\leq t<S$, which implies 
\begin{equation}
\frac{\left\Vert v_{t}\right\Vert _{L^{\infty}}}{\sqrt{t}}\leq\sqrt{\left(1+\sigma\right)T}\,\varepsilon\,\left(\frac{t}{\left(1+\sigma\right)T}\right)^{\kappa^{2}+\frac{1}{2}C\left(n,T\right)\varepsilon-\frac{1}{2}}\label{AS: smaller growth of deviation}
\end{equation}
\[
\leq\sqrt{\left(1+\sigma\right)T}\,\varepsilon\,\left(\frac{t}{\left(1+\sigma\right)T}\right)^{-\frac{1}{2}\left(\frac{1}{2}-\kappa^{2}\right)}
\]
for $\left(1+\sigma\right)T\leq t<S$, provided that $\varepsilon\ll1$
so that $C\left(n,T\right)\varepsilon\leq\frac{1}{2}-\kappa^{2}$.
Then it follows from Proposition \ref{interpolation inequality} that
\begin{equation}
\sqrt{t}^{k-1}\left\Vert \nabla_{\Sigma_{t}}^{k}v\right\Vert _{L^{\infty}}\leq C\left(n,\sigma,k\right)\sup_{\frac{t}{1+2\sigma}\leq\tau\leq t}\frac{\left\Vert v_{\tau}\right\Vert _{L^{\infty}}}{\sqrt{\tau}}\label{AS: smaller derivatives}
\end{equation}
\[
\leq C\left(n,\sigma,k\right)\,\sqrt{\left(1+\sigma\right)T}\,\varepsilon\,\left(\frac{t}{\left(1+2\sigma\right)\left(1+\sigma\right)T}\right)^{-\frac{1}{2}\left(\frac{1}{2}-\kappa^{2}\right)}
\]
for $\left(1+2\sigma\right)T\leq t<S$. In view of (\ref{AS: smaller growth of deviation})
and (\ref{AS: smaller derivatives}), if $\varepsilon\ll1$ (depending
on $n,\sigma,\kappa,T$) we have
\[
\left\Vert \nabla_{\Sigma_{t}}v_{t}\right\Vert _{L^{\infty}}+\kappa\frac{\left\Vert v_{t}\right\Vert _{L^{\infty}}}{\sqrt{t}}\leq\frac{\varsigma}{3},\quad\frac{\left\Vert v_{t}\right\Vert _{L^{\infty}}}{\sqrt{t}}\leq\sqrt{\left(1+\sigma\right)T}\,\varepsilon,
\]
 
\begin{equation}
\sqrt{t}^{k-1}\left\Vert \nabla_{\Sigma_{t}}^{k}v\right\Vert _{L^{\infty}}\leq C\left(n,\sigma,k\right)\sup_{\frac{t}{1+2\sigma}\leq\tau\leq t}\frac{\left\Vert v\right\Vert _{L^{\infty}}}{\sqrt{\tau}}\label{AS: decay rate of derivatives}
\end{equation}
\[
\leq C\left(n,\sigma,k\right)\left(\frac{t}{\left(1+2\sigma\right)\left(1+\sigma\right)T}\right)^{-\frac{1}{2}\left(\frac{1}{2}-\kappa^{2}\right)}
\]
for all $k\in\mathbb{N}\cup\left\{ 0\right\} $, $\left(1+2\sigma\right)T\leq t<S$. 

We claim that $S=\infty$; from this and (\ref{AS: decay rate of derivatives})
the theorem follows. For if not, then by the short time existence
theorem for MCF (cf. Section 4 in \cite{EH2}) and the argument in
proving Theorem \ref{stability}, we would be able to find a constant
$S'>S$ so that the MCF $\left\{ \tilde{\Sigma}_{t}\right\} $ can
be extended to time $S'$ and every time-slice $\tilde{\Sigma}_{t}$
is a normal graph of $v_{t}$ over $\Sigma_{t}$ with (\ref{AS: normal graphical condition})
and (\ref{AS: small growth of deviation}) holding up to time $S'$.
This would contradict the maximality of $S$. Therefore, $S=\infty$.
\end{proof}
The following theorem is a direct consequence of Theorem \ref{asymptotic stability}.
One of its applications is to study the stability of an expander $\Gamma$
as an equilibrium solution of NMCF by taking $\Sigma_{t}=\sqrt{t}\,\Gamma$,
in which $\Gamma$ is assumed to be asymptotic to a cone and satisfies
$\left\Vert A_{\Gamma}\right\Vert _{L^{\infty}}<\frac{1}{\sqrt{2}}$.
\begin{thm}
\label{approaching of NMCF}Let $\left\{ \Sigma_{t}\right\} _{1\leq t<\infty}$
be a MCF in $\mathbb{R}^{n+1}$ so that $\Sigma_{1}$ is asymptotic
to a regular cone $\mathcal{C}$ at infinity and that
\[
\sup_{t\geq1}\sqrt{t}\left\Vert A_{\Sigma_{t}}\right\Vert _{L^{\infty}}\leq\kappa
\]
for some constant $0<\kappa<\frac{1}{\sqrt{2}}$. Let $\tilde{\Sigma}_{1}$
be a smooth, properly embedded, and oriented hypersurface in $\mathbb{R}^{n+1}$
that is asymptotic to $\mathcal{C}$ at infinity. Note that 
\begin{equation}
\sup_{1\leq t\leq2}\left(\left\Vert A_{\Sigma_{t}}\right\Vert _{L^{\infty}}+\left\Vert \nabla_{\Sigma_{t}}A_{\Sigma_{t}}\right\Vert _{L^{\infty}}+\left\Vert \nabla_{\Sigma_{t}}^{2}A_{\Sigma_{t}}\right\Vert _{L^{\infty}}\right)\leq K,\quad\left\Vert A_{\tilde{\Sigma}_{1}}\right\Vert _{L^{\infty}}\leq\tilde{K}\label{AN: curvature bound}
\end{equation}
for some constants $K,\tilde{K}>0$ by Proposition \ref{spatial curvature estimate},
Corollary \ref{spatial smooth estimates} and the asymptotic condition.

Then there exists a constant $\delta>0$ depending on $n$, $\kappa$,
$K$, and $\tilde{K}$ so that if $\tilde{\Sigma}_{1}$ is a normal
graph of $v$ over $\Sigma_{1}$ with
\[
\left\Vert \nabla_{\Sigma_{1}}v\right\Vert _{L^{\infty}}+\left\Vert v\right\Vert _{L^{\infty}}\leq\delta,
\]
the corresponding MCF $\left\{ \tilde{\Sigma}_{t}\right\} _{t\geq1}$
has long time existence. Moreover, for every $t>1$, $\frac{1}{\sqrt{t}}\tilde{\Sigma}_{t}$
is a normal graph of $w_{t}$ over $\frac{1}{\sqrt{t}}\Sigma_{t}$
with $w_{t}\overset{C^{\infty}}{\longrightarrow}0$ as $t\rightarrow\infty$.
In other words, the two NMCFs starting out from $\tilde{\Sigma}_{1}$
and $\Sigma_{1}$, respectively, will approach as $t\rightarrow\infty$.
\end{thm}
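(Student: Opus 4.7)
The plan is to deduce Theorem \ref{approaching of NMCF} as a direct application of Theorem \ref{asymptotic stability}. Set $t_{0} = T = 1$ and $\sigma = \tfrac{1}{3}$ so that $(1+3\sigma)T = 2$; then the curvature decay hypothesis $\sup_{t \geq 1}\sqrt{t}\,\|A_{\Sigma_{t}}\|_{L^{\infty}} \leq \kappa < \tfrac{1}{\sqrt{2}}$ matches (\ref{AS: growth rate of curvature}), and the constant $K$ in (\ref{AN: curvature bound}) plays the role of the constant in (\ref{AS: curvature bound}). With these choices, Theorem \ref{asymptotic stability} supplies a constant $\delta_{0} = \delta_{0}(n,\kappa,K)$ such that whenever the MCF $\{\tilde{\Sigma}_{t}\}$ exists on $[1,2]$ with $\tilde{\Sigma}_{1}$ a normal graph over $\Sigma_{1}$ of $C^{1}$-norm at most $\delta_{0}$, the flow extends to all $t \geq 1$ and the rescaled decay estimate (\ref{AS: rescaled deviation}) holds.

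The only preliminary step is therefore to establish that $\{\tilde{\Sigma}_{t}\}$ exists on the compact interval $[1,2]$ and remains a $C^{1}$-small normal graph over $\Sigma_{t}$ throughout. The short-time existence theorem (cf. Section 4 in \cite{EH2}) applies at $\tilde{\Sigma}_{1}$ because $\|A_{\tilde{\Sigma}_{1}}\|_{L^{\infty}} \leq \tilde{K}$, producing the flow on some $[1, 1+\tau]$ with $\tau = \tau(n,\tilde{K}) > 0$. To push all the way to $t = 2$, I would iterate short-time existence together with Theorem \ref{stability}: at each stage the curvature of $\tilde{\Sigma}_{t}$ is controlled via (\ref{NG: curvature estimate}) in Lemma \ref{normal graph} by the curvature of $\Sigma_{t}$ (bounded by $K$) plus the $C^{2}$ norm of the graph function, yielding a uniform positive step size; meanwhile Theorem \ref{stability} keeps the $C^{1}$ deviation below $\delta_{0}$ on all of $[1,2]$ provided the initial $\delta$ is chosen small enough in terms of $n,\kappa,K,\tilde{K}$. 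By Corollary \ref{uniqueness} the extension so produced is unique.

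With the flow in place on $[1,2]$, Theorem \ref{asymptotic stability} then delivers long time existence of $\{\tilde{\Sigma}_{t}\}_{t \geq 1}$ and the estimate
\[
\bigl\|\nabla_{\frac{1}{\sqrt{t}}\Sigma_{t}}^{k}\,w_{t}\bigr\|_{L^{\infty}} \leq C(n,k)\,\bigl(\tfrac{t}{(1+2\sigma)(1+\sigma)T}\bigr)^{-\frac{1}{2}\left(\frac{1}{2} - \kappa^{2}\right)}
\]
for every $k \in \mathbb{N} \cup \{0\}$ and large $t$. Since $\kappa^{2} < \tfrac{1}{2}$, the exponent is strictly positive, and thus $w_{t} \overset{C^{\infty}}{\longrightarrow} 0$ as $t \to \infty$, which is precisely the approaching property of the two NMCFs. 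The only moderately delicate part of the plan is the interplay of short-time existence with Theorem \ref{stability} on $[1,2]$; beyond that, all the analytic content --- the $L^{\infty}$ differential inequality, smooth bootstrap via Proposition \ref{interpolation inequality}, and Schauder regularity --- is already encapsulated in Theorem \ref{asymptotic stability}.
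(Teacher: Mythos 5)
Your overall strategy --- reduce everything to Theorem \ref{asymptotic stability} with $t_{0}=T=1$ --- is exactly the paper's, and the final step (positivity of the exponent $\tfrac{1}{2}(\tfrac{1}{2}-\kappa^{2})$ forcing $w_{t}\to 0$) is fine. The gap is in your preliminary step. By fixing $\sigma=\tfrac{1}{3}$ you obligate yourself to produce the flow $\{\tilde{\Sigma}_{t}\}$ on all of $[1,2]$ before Theorem \ref{asymptotic stability} can be invoked, and the proposed "iterate short-time existence together with Theorem \ref{stability}" does not close as described. Theorem \ref{stability} controls only $\|v_{t}\|_{L^{\infty}}+\|\nabla_{\Sigma_{t}}v_{t}\|_{L^{\infty}}$, whereas the curvature comparison (\ref{NG: curvature estimate}) in Lemma \ref{normal graph} requires $|\nabla_{\Sigma_{t}}^{2}v_{t}|$ to bound $\|A_{\tilde{\Sigma}_{t}}\|_{L^{\infty}}$; the $C^{2}$ control of the deviation supplied by Proposition \ref{interpolation inequality} only becomes available after the waiting time $(1+2\sigma)T=\tfrac{5}{3}$, which may exceed the short-time existence interval $[1,1+\tau(n,\tilde{K})]$, leaving a stretch of $[1,2]$ where neither tool applies. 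Naively restarting short-time existence does not help either, since without an a priori curvature bound the step sizes can shrink geometrically and need not sum to $1$.

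The paper sidesteps all of this by \emph{not} fixing $\sigma$: it chooses $0<\sigma\leq\tfrac{1}{3}$ small depending on $n$ and $\tilde{K}$ so that the short-time existence theorem alone (whose existence time depends only on the initial curvature bound $\tilde{K}$) already yields the unique flow on $[1,1+3\sigma]$, which is precisely the interval Theorem \ref{asymptotic stability} demands. This is the reason Theorem \ref{asymptotic stability} is stated with a free parameter $\sigma$, and since its $\delta$ depends on $n,\sigma,\kappa,K,T$ while $\sigma$ depends on $n,\tilde{K}$, one recovers the advertised dependence $\delta=\delta(n,\kappa,K,\tilde{K})$. Your proof would be repaired by making the same choice of $\sigma$ and deleting the iteration argument.
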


\begin{proof}
First of all, by the short time existence theorem for MCF (cf. Section
4 in \cite{EH2}) and Corollary \ref{uniqueness}, there exist a constant
$0<\sigma\leq\frac{1}{3}$ (depending on $n$, $\tilde{K}$) and a
unique MCF $\left\{ \tilde{\Sigma}_{t}\right\} _{1\leq t\leq1+3\sigma}$
that starts out from $\tilde{\Sigma}_{1}$. 

By Theorem \ref{asymptotic stability} (substituting $t_{0}=T=1$),
if $\tilde{\Sigma}_{1}$ is a normal graph of $v$ over $\Sigma_{1}$
with
\[
\left\Vert \nabla_{\Sigma_{1}}v\right\Vert _{L^{\infty}}+\left\Vert v\right\Vert _{L^{\infty}}\leq\delta,
\]
where $0<\delta\ll1$ (depending on $n,$ $\sigma$, $\kappa$, $K$),
then the MCF $\left\{ \tilde{\Sigma}_{t}\right\} $ can be extended
to time infinity. Furthermore, for every $t>1$, $\frac{1}{\sqrt{t}}\tilde{\Sigma}_{t}$
is a normal graph of $w_{t}$ over $\frac{1}{\sqrt{t}}\Sigma_{t}$
with $w_{t}\overset{C^{\infty}}{\longrightarrow}0$ as $t\rightarrow\infty$. 
\end{proof}

\section{Asymptotic Self-Similarity\label{Asymptotic self-similarity}}

Our goal in this section is to study the asymptotic behavior of an
immortal MCF with certain properties. More precisely, if the flow
has a conical end and satisfies the curvature condition (\ref{BDM: curvature condition}),
it will become asymptotically self-expanding (see Theorem \ref{blow-down of MCF}).
On the other hand, as stated by Theorem \ref{decay rate of curvature},
condition (\ref{BDM: curvature condition}) holds provided the entropy
of the tangent cone is small. Thereby we obtain Theorem \ref{asymptotic behavior of MCF}
in virtue of Theorems \ref{decay rate of curvature} and \ref{blow-down of MCF}.
Likewise, Theorems \ref{decay rate of curvature} and \ref{approaching of NMCF}
combined bring us Theorem \ref{stability of expanders}.

As a first step in studying the asymptotic behavior of MCF at time
infinity, we zoom out the flow by the parabolic rescaling. The following
proposition guarantees that such a procedure always yields smooth
limits. 
\begin{prop}
\label{sequential blow-down}Let $\left\{ \Sigma_{t}\right\} _{0\leq t<\infty}$
be a MCF in $\mathbb{R}^{n+1}$ so that $\Sigma_{0}$ is asymptotic
to a regular cone $\mathcal{C}$ at infinity with $E\left[\mathcal{C}\right]<\infty$,
and that 
\[
\sup_{t\geq T}\sqrt{t}\left\Vert A_{\Sigma_{t}}\right\Vert _{L^{\infty}}\leq\kappa
\]
for some constants $\kappa>0$ and $T>0$. Then given a sequence of
numbers $\left\{ R_{i}\nearrow\infty\right\} $, the corresponding
zooming out sequence of MCF
\[
\left\{ \Sigma_{\tau}^{R_{i}}=\frac{1}{R_{i}}\Sigma_{R_{i}^{2}\tau}\right\} _{0\leq\tau<\infty}
\]
has a subsequence that converges locally smoothly to a MCF $\left\{ \Gamma_{\tau}\right\} _{0\leq\tau<\infty}$
in the space-time $\left(\mathbb{R}^{n+1}\times\left[0,\infty\right)\right)\setminus\left\{ \left(O,0\right)\right\} $.
In addition, we have 
\[
\sup_{\tau>0}\sqrt{\tau}\left\Vert A_{\Gamma_{\tau}}\right\Vert _{L^{\infty}}\leq\kappa.
\]
\end{prop}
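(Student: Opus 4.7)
The plan is to invoke the standard smooth compactness theorem for mean curvature flow on an exhaustion of the punctured space-time $(\mathbb{R}^{n+1}\times[0,\infty))\setminus\{(O,0)\}$ by compact sets. For this I need three ingredients uniform in $i$ on each compact piece: (i) a curvature bound, (ii) a Euclidean area-ratio bound, and (iii) bounds on all higher covariant derivatives of $A$. Once these are in hand, a diagonal extraction over the exhaustion produces the desired $C^\infty_{loc}$ limit $\{\Gamma_\tau\}$, and the final curvature estimate follows by taking pointwise limits.

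The rescaling identity $|A_{\Sigma^{R}_\tau}(Y)|=R\,|A_{\Sigma_{R^2\tau}}(RY)|$ handles curvature bounds in two regimes. For any fixed $\tau_0>0$, once $R_i^2\tau_0\ge T$ the hypothesis $\sqrt{t}\,\|A_{\Sigma_t}\|_{L^\infty}\le\kappa$ yields
\[
\sqrt{\tau}\,\|A_{\Sigma^{R_i}_\tau}\|_{L^\infty}\le\kappa\qquad\text{for all }\tau\ge\tau_0,
\]
uniformly in $i$. For points $(Y_0,0)$ with $Y_0\ne O$, I apply Proposition \ref{spatial curvature estimate} to the original flow and rescale: the resulting bound $|Y|\,|A_{\Sigma^{R_i}_\tau}(Y)|\le\mathcal{K}$ is valid once $|Y|\ge\max\{\mathcal{R}/R_i,\Lambda\sqrt{\tau}\}$, which for $R_i$ large covers a space-time neighborhood of $(Y_0,0)$. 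Together these give uniform local curvature bounds on any compact $\mathscr{K}\subset(\mathbb{R}^{n+1}\times[0,\infty))\setminus\{(O,0)\}$.

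For area bounds I use that entropy is scale-invariant and non-increasing along MCF, so $E[\Sigma^{R_i}_\tau]=E[\Sigma_{R_i^2\tau}]\le E[\Sigma_0]$. Finiteness of $E[\Sigma_0]$ follows from $E[\mathcal{C}]<\infty$, the second clause of Definition \ref{asymptotically conical}, and the Radon--Nikodym estimate (\ref{NG: Radon=002013Nikodym derivative}) applied to the graph parametrization of $\Sigma_0\setminus B_{R_0}(O)$ over $\mathcal{C}$ (together with local area bounds on the smooth compact piece $\Sigma_0\cap B_{R_0}(O)$). Huisken's monotonicity then converts the uniform entropy bound into uniform local Euclidean $n$-density bounds for every $\Sigma^{R_i}_\tau$. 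Having uniform curvature and area bounds on slightly enlarged neighborhoods, I upgrade to bounds on $\nabla_{\Sigma^{R_i}_\tau}^k A_{\Sigma^{R_i}_\tau}$ for every $k$ by Proposition \ref{smooth estimates}, using initial-data derivative bounds supplied by Corollary \ref{spatial smooth estimates} (rescaled) when the compact set meets $\tau=0$.

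The smooth compactness theorem for MCF then extracts a subsequence converging in $C^\infty_{loc}$ on the chosen compact set; a diagonal argument over an exhaustion by such compact sets of $(\mathbb{R}^{n+1}\times[0,\infty))\setminus\{(O,0)\}$ produces the limit flow $\{\Gamma_\tau\}$. The MCF equation passes to the limit pointwise. Finally, fixing any $\tau>0$ and $Y\in\Gamma_\tau$, the estimate $\sqrt{\tau}\,|A_{\Sigma^{R_i}_\tau}(Y_i)|\le\kappa$ (valid along the subsequence, with $Y_i\to Y$) gives $\sqrt{\tau}\,|A_{\Gamma_\tau}(Y)|\le\kappa$, hence $\sup_{\tau>0}\sqrt{\tau}\,\|A_{\Gamma_\tau}\|_{L^\infty}\le\kappa$. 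The main subtlety is the joint treatment of positive times (controlled by the hypothesis) and the initial time away from the singular tip of the cone (controlled by the pseudolocality-type Proposition \ref{spatial curvature estimate}); once the compact sets are chosen to avoid $(O,0)$, the two regimes patch together cleanly via the diagonal argument.
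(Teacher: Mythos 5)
Your argument is correct and follows essentially the same route as the paper: curvature bounds for $\tau\geq\tau_{0}$ from the hypothesis, bounds near $\tau=0$ away from the origin from the rescaled Proposition \ref{spatial curvature estimate} and Corollary \ref{spatial smooth estimates}, area bounds via Huisken's monotonicity, and smooth compactness with a diagonal extraction over an exhaustion of the punctured space-time. The one small divergence is the area bound: the paper only needs the origin-centered Gaussian densities $F_{O,t}\left(\Sigma_{0}\right)$ at large scales $t\geq T$, which is exactly what Lemma \ref{Gaussian on the large scale} supplies, whereas your appeal to the full entropy $E\left[\Sigma_{0}\right]<\infty$ via the Radon--Nikodym estimate glosses over the off-center, small-$t$ Gaussian densities (where the cross term $u\left(Z-P\right)\cdot N_{\mathcal{C}}$ is not controlled); that claim is still true, but it is cleaner to derive it from Euclidean area ratios, or simply to follow the paper's route, since a single Gaussian density bound centered at $O$ at scale $1+\tau$ already yields the local area bounds needed for compactness on each compact set.
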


\begin{proof}
Without loss of generality, we may assume that $T\gg1$ (depending
on $n$, $\mathcal{C}$, $\Sigma_{0}$). By Proposition \ref{spatial curvature estimate},
Corollary \ref{spatial smooth estimates}, and Lemma \ref{Gaussian on the large scale},
for every $k\in\mathbb{N}\cup\left\{ 0\right\} $ we have
\[
\sup_{t\geq T}F_{O,t}\left(\Sigma_{0}\right)\leq2E\left[\mathcal{C}\right],\quad\sup_{t\geq T}\sqrt{t}\left\Vert A_{\Sigma_{t}}\right\Vert _{L^{\infty}}\leq\kappa,
\]
\[
\sup_{t\geq0}\left\Vert \,\left|X\right|^{k+1}\nabla_{\Sigma_{t}}^{k}A_{\Sigma_{t}}\right\Vert _{L^{\infty}\left(\mathbb{R}^{n+1}\setminus B_{\max\left\{ \Lambda_{k}\sqrt{T},\,\Lambda_{k}\sqrt{t}\right\} }\left(O\right)\right)}\leq C\left(n,k,\mathcal{C},\Sigma_{0}\right),
\]
where $\Lambda_{k}\geq1$ is a constant (depending on $n$, $\mathcal{C}$,
$\Sigma_{0}$). Note that by Huisken's monotonicity formula for MCF
(cf. \cite{H}) we have
\[
F_{O,1}\left(\Sigma_{\tau}^{R_{i}}\right)\leq F_{O,1+\tau}\left(\Sigma_{0}^{R_{i}}\right)=F_{O,1+\tau}\left(\frac{1}{R_{i}}\Sigma_{0}\right)=F_{O,R_{i}^{2}\left(1+\tau\right)}\left(\Sigma_{0}\right).
\]
Accordingly, as long as $R_{i}\geq\sqrt{T}$ , for every $k\in\mathbb{N}\cup\left\{ 0\right\} $
we have 

\begin{equation}
\sup_{\tau\geq0}F_{O,1}\left(\Sigma_{\tau}^{R_{i}}\right)\leq2E\left[\mathcal{C}\right],\quad\sup_{\tau\geq T_{i}}\sqrt{\tau}\left\Vert A_{\Sigma_{\tau}^{R_{i}}}\right\Vert _{L^{\infty}}\leq\kappa,\label{SBD: decay of curvature}
\end{equation}
\[
\sup_{\tau\geq0}\left\Vert \,\left|X\right|^{k+1}\nabla_{\Sigma_{\tau}^{R_{i}}}^{k}A_{\Sigma_{\tau}^{R_{i}}}\right\Vert _{L^{\infty}\left(\mathbb{R}^{n+1}\setminus B_{\max\left\{ \Lambda_{k}\sqrt{T_{i}},\,\Lambda_{k}\sqrt{\tau}\right\} }\left(O\right)\right)}\leq C\left(n,k,\mathcal{C},\Sigma_{0}\right),
\]
where $T_{i}=\frac{T}{R_{i}^{2}}\searrow0$. Then the proposition
follows from the smooth compactness theorem for MCF and passing (\ref{SBD: decay of curvature})
to the limit.
\end{proof}
A crucial step in proving Theorem \ref{blow-down of MCF} is the following
proposition, in which we characterize the limiting flow arising from
the preceding proposition under an extra condition that the constant
$\kappa$ therein is no more than $\frac{1}{\sqrt{2}}.$ The proof
is based primarily on Theorem \ref{asymptotic stability}. 
\begin{prop}
\label{asymptotically self-expanding}Let $\left\{ \Sigma_{t}\right\} _{0\leq t<\infty}$
be a MCF in $\mathbb{R}^{n+1}$ so that $\Sigma_{0}$ is asymptotic
to a regular cone $\mathcal{C}$ at infinity, and that 
\[
\sup_{t\geq T}\sqrt{t}\left\Vert A_{\Sigma_{t}}\right\Vert _{L^{\infty}}\leq\kappa
\]
for some constant $0<\kappa<\frac{1}{\sqrt{2}}$ and $T>0$. Suppose
that there is a sequence of numbers $\left\{ R_{i}\nearrow\infty\right\} $
so that the corresponding sequence of MCF
\[
\left\{ \Sigma_{\tau}^{R_{i}}=\frac{1}{R_{i}}\Sigma_{R_{i}^{2}\tau}\right\} _{0\leq\tau<\infty}
\]
converges locally smoothly to a MCF $\left\{ \Gamma_{\tau}\right\} _{0\leq\tau<\infty}$
in the space-time $\left(\mathbb{R}^{n+1}\times\left[0,\infty\right)\right)\setminus\left\{ \left(O,0\right)\right\} $.
Then the limiting flow $\left\{ \Gamma_{\tau}\right\} $ is self-expanding,
i.e. $\Gamma_{\tau}=\sqrt{\tau}\,\Gamma_{1}$ for $\tau>0$. Furthermore,
$\Gamma_{0}=\mathcal{C}$ and $\Gamma_{\tau}$ is asymptotic to $\mathcal{C}$
at infinity for every $\tau>0$.
\end{prop}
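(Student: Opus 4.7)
The plan is to establish the three claims in sequence. First, $\Gamma_0 = \mathcal{C}$ follows immediately from the hypothesis that $\Sigma_0$ is asymptotic to $\mathcal{C}$: the initial slices $\Sigma^{R_i}_0 = R_i^{-1}\Sigma_0$ converge to $\mathcal{C}$ in $C_{loc}^\infty(\mathbb{R}^{n+1}\setminus\{O\})$ as $R_i \to \infty$. Similarly, the asymptotic conicality of each $\Gamma_\tau$ for $\tau > 0$ will follow directly from Corollary~\ref{self-expanding MCF out of cone} once self-expansion is established. The core of the proof is therefore the self-similarity $\Gamma_\tau = \sqrt{\tau}\,\Gamma_1$.

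The starting observation is a scaling symmetry: for any $\lambda > 0$, the flow $\Gamma^{(\lambda)}_\tau := \lambda^{-1}\Gamma_{\lambda^2\tau}$ is again a mean curvature flow starting from $\lambda^{-1}\mathcal{C} = \mathcal{C}$, it satisfies the scale-invariant bound $\sqrt{\tau}\|A_{\Gamma^{(\lambda)}_\tau}\|_{L^\infty} \le \kappa$, and indeed arises as a subsequential limit of $\{\Sigma^{\lambda R_i}\}$. Self-expansion of $\Gamma$ is thus equivalent to $\Gamma^{(\lambda)} = \Gamma$ for every $\lambda > 0$. Since the set of such $\lambda$ forms a multiplicative subgroup of $(0,\infty)$, and any subgroup of $(0,\infty)$ containing an open neighborhood of $1$ is all of $(0,\infty)$, it suffices to treat $\lambda$ in a neighborhood of $1$.

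For such $\lambda$, I will apply Proposition~\ref{evolution of deviation} to the normal graph deviation $v(\cdot,\tau)$ of $\Gamma^{(\lambda)}_\tau$ over $\Gamma_\tau$. Using $\|A_{\Gamma_\tau}\|_{L^\infty} \le \kappa/\sqrt{\tau}$ together with $\|\nabla_{\Gamma_\tau} A_{\Gamma_\tau}\|_{L^\infty} \le C/\tau$ from Lemma~\ref{smooth decay estimate }, the ODE inequality there reduces to
\[
D^{-} v_{\mathrm{max}}^2(\tau) \;\le\; \frac{2\kappa^2 + C\kappa\,\eta}{\tau}\,v_{\mathrm{max}}^2(\tau),
\]
where $\eta := \sup_\tau v_{\mathrm{max}}(\tau)/\sqrt{\tau}$ can be made arbitrarily small by taking $\lambda$ close to $1$ (the deviation at tip scale $\sqrt{\tau}$ being of order $|\lambda-1|\sqrt{\tau}$). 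Since $\kappa^2 < 1/2$, the exponent $2\kappa^2 + C\kappa\eta$ is strictly less than $1$, and the comparison principle for ODEs gives $v_{\mathrm{max}}^2(\tau) \le v_{\mathrm{max}}^2(\tau_0)(\tau/\tau_0)^{2\kappa^2 + C\kappa\eta}$. Combined with the initial bound $v_{\mathrm{max}}(\tau_0) = O(\sqrt{\tau_0})$, reflecting that both surfaces arise from $\mathcal{C}$ with tip size $\sqrt{\tau_0}$, this yields $v_{\mathrm{max}}^2(\tau) \le C\,\tau_0^{1-2\kappa^2-C\kappa\eta}\,\tau^{2\kappa^2+C\kappa\eta}$; sending $\tau_0 \searrow 0$ forces $v_{\mathrm{max}}(\tau) \equiv 0$, so $\Gamma^{(\lambda)} = \Gamma$.

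The hard part will be the technical verification that Proposition~\ref{evolution of deviation} genuinely applies, namely, that $\Gamma^{(\lambda)}_\tau$ is globally a normal graph over $\Gamma_\tau$ with $\|\nabla v\|_{L^\infty} + \|Av\|_{L^\infty} \le \varsigma$ uniformly in $\tau$. The delicate region is the tip $B_{M\sqrt{\tau}}(O)$, where both flows have characteristic scale $\sqrt{\tau}$ and smoothly resolve the conical singularity. The uniform curvature bound furnishes a tubular neighborhood of $\Gamma_\tau$ of width comparable to $\sqrt{\tau}/\kappa$, Proposition~\ref{analyticity: normal parametrization} provides the local graphical parametrization at the tip scale, and Remark~\ref{ACM: graphic along MCF} (applied to $\{\Sigma^{R_i}\}$ and $\{\Sigma^{\lambda R_i}\}$) controls both flows outside a compact set; combining these ingredients should produce the desired normal graph representation uniformly in $\tau$ once $\lambda$ is close enough to $1$.
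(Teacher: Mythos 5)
Your core mechanism is sound and the exponent bookkeeping is exactly the paper's: the ODE $D^{-}v_{\max}^{2}\leq\tau^{-1}\left(2\kappa^{2}+C\kappa\eta\right)v_{\max}^{2}$ with $2\kappa^{2}<1$ is precisely estimate (\ref{AS: smaller growth of deviation}), and your subgroup observation is equivalent to the paper's iteration of (\ref{ASE: iteration}). But your route is genuinely different in where the comparison is made. You compare the \emph{limit} flow $\Gamma$ with its own rescalings $\Gamma^{(\lambda)}$, both of which emanate from the singular cone at $\tau=0$, and you kill the deviation by sending the comparison start time $\tau_{0}\searrow0$ using $v_{\max}(\tau_{0})=O(\sqrt{\tau_{0}})$. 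The paper instead compares the \emph{pre-limit} flow $\left\{ \Sigma_{t}\right\} $ with $\left\{ \Sigma_{t}^{\iota}=\frac{1}{\iota}\Sigma_{\iota^{2}t}\right\} $: these have smooth initial data $\Sigma_{0}$ and $\frac{1}{\iota}\Sigma_{0}$, both asymptotic to $\mathcal{C}$, whose initial $C^{1}$ deviation is small for $\iota$ near $1$; Theorem \ref{asymptotic stability} (already proved for smooth asymptotically conical initial data) then gives $w_{t}^{\iota}\rightarrow0$, and passing both NMCFs to the blow-down limit forces $\Gamma_{\tau}^{\iota}=\Gamma_{\tau}$. What the paper's detour buys is exactly the step you defer as ``the hard part'': the uniform normal-graph representation and the continuity argument are carried out once, at time $0$, on smooth data, rather than at all times $\tau\downarrow0$ on a flow coming out of a cone.

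Two concrete cautions about the deferred step. First, there is a mild circularity: Proposition \ref{evolution of deviation} assumes both flows are asymptotic to $\mathcal{C}$ at infinity (its maximum principle needs the deviation to decay spatially, condition (\ref{ED: asymptotic behavior})), yet you plan to deduce the asymptotic conicality of $\Gamma_{\tau}$ only \emph{after} self-expansion via Corollary \ref{self-expanding MCF out of cone}. You must first extract the uniform conical estimates for $\Gamma_{\tau}$ directly by passing Remark \ref{ACM: graphic along MCF} (applied to $\Sigma_{t}$ at scale $\sqrt{t}$) to the limit along $R_{i}$. Second, the tip-scale smallness $\eta=\sup_{\tau}v_{\max}(\tau)/\sqrt{\tau}\rightarrow0$ as $\lambda\rightarrow1$ is not purely ``of order $|\lambda-1|\sqrt{\tau}$'': inside $B_{M\sqrt{\tau}}(O)$ the displacement is $C(M,\kappa)\left|\lambda-1\right|\sqrt{\tau}$, but outside it you only get $2\varepsilon\sqrt{\tau}$ from the conical asymptotics, where $\varepsilon$ is controlled by $M$, not by $\lambda$; so the quantifiers must be ordered ($M$ large first, then $\lambda$ close to $1$), and the uniform lower bound on the reach of $\frac{1}{\sqrt{\tau}}\Gamma_{\tau}$ needed for the tubular neighborhood requires the compactness of that family, not just the curvature bound. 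All of this is fillable, but it is real work; if you want the short proof, follow the paper and run the stability theorem upstairs on $\left\{ \Sigma_{t}\right\} $ and $\left\{ \Sigma_{t}^{\iota}\right\} $ before taking the limit.
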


\begin{proof}
First of all, given $\iota>0$, let us define
\[
\Sigma_{t}^{\iota}=\frac{1}{\iota}\,\Sigma_{\iota^{2}t},\quad\Gamma_{t}^{\iota}=\frac{1}{\iota}\,\Gamma_{\iota^{2}t}
\]
 for $t\geq0$. Note that
\begin{equation}
\frac{1}{R_{i}}\Sigma_{R_{i}^{2}\tau}^{\iota}=\frac{1}{\iota R_{i}}\Sigma_{\iota^{2}R_{i}^{2}\tau}=\frac{1}{\iota}\Sigma_{\iota^{2}\tau}^{R_{i}}\,\overset{C_{loc}^{\infty}}{\longrightarrow}\,\frac{1}{\iota}\,\Gamma_{\iota^{2}\tau}=\Gamma_{\tau}^{\iota}\quad\textrm{in}\,\,\,\mathbb{R}^{n+1}\;\,\textrm{as}\,\,i\rightarrow\infty\label{ASE: convergence}
\end{equation}
for every $\tau>0$. Note also that $\Sigma_{0}^{\iota}=\frac{1}{\iota}\,\Sigma_{0}$
is asymptotic to $\mathcal{C}$ at infinity.

Let $\delta>0$ be the constant in Theorem \ref{asymptotic stability},
with the choices $\sigma=\frac{1}{3}$ and $t_{0}=0$. Note that the
constant $\delta$ depends on $n$, $\kappa$, $T$, and $K$ (as
defined in (\ref{AS: curvature bound})). In view of the second condition
in Definition \ref{asymptotically conical}, we can find $0<\varepsilon<1$
(depending on $n$, $\delta$, $\mathcal{C}$, $\Sigma_{0}$) so that
for every $\left(1+\varepsilon\right)^{-1}\leq\iota\leq1+\varepsilon$,
$\Sigma_{0}^{\iota}=\frac{1}{\iota}\,\Sigma_{0}$ is a normal graph
of $v^{\iota}$ over $\Sigma_{0}$ with 
\begin{equation}
\left\Vert \nabla_{\Sigma_{0}}v^{\iota}\right\Vert _{L^{\infty}}+\left\Vert v^{\iota}\right\Vert _{L^{\infty}}\leq\delta.\label{ASE: initial closeness}
\end{equation}
Given $\tau>0$, set $t_{i}=R_{i}^{2}\tau$. By condition (\ref{ASE: convergence}),
for every $\left(1+\varepsilon\right)^{-1}\leq\iota\leq1+\varepsilon$
we have 
\[
\frac{\sqrt{\tau}}{\sqrt{t_{i}}}\Sigma_{t_{i}}^{\iota}=\frac{1}{R_{i}}\Sigma_{R_{i}^{2}\tau}^{\iota}\,\overset{C_{loc}^{\infty}}{\longrightarrow}\,\Gamma_{\tau}^{\iota}\quad\textrm{in}\,\,\,\mathbb{R}^{n+1}\;\,\textrm{as}\,\,i\rightarrow\infty,
\]
which implies 
\begin{equation}
\frac{1}{\sqrt{t_{i}}}\Sigma_{t_{i}}^{\iota}\overset{C_{loc}^{\infty}}{\longrightarrow}\frac{1}{\sqrt{\tau}}\Gamma_{\tau}^{\iota}\quad\textrm{in}\,\,\,\mathbb{R}^{n+1}\;\,\textrm{as}\,\,i\rightarrow\infty.\label{ASE: biased normalization}
\end{equation}
In particular, substituting $\iota=1$ in (\ref{ASE: biased normalization})
gives
\begin{equation}
\frac{1}{\sqrt{t_{i}}}\Sigma_{t_{i}}\,\overset{C_{loc}^{\infty}}{\longrightarrow}\,\frac{1}{\sqrt{\tau}}\Gamma_{\tau}\quad\textrm{in}\,\,\,\mathbb{R}^{n+1}\;\,\textrm{as}\,\,i\rightarrow\infty.\label{ASE: normalization}
\end{equation}
On the other hand, by Theorem \ref{asymptotic stability} and condition
(\ref{ASE: initial closeness}), for every $\left(1+\varepsilon\right)^{-1}\leq\iota\leq1+\varepsilon$,
$\frac{1}{\sqrt{t}}\Sigma_{t}^{\iota}$ is a normal graph of a function
$w_{t}^{\iota}$ over $\frac{1}{\sqrt{t}}\Sigma_{t}$ with $w_{t}^{\iota}\overset{C^{\infty}}{\longrightarrow}0$
as $t\rightarrow\infty$. In view of conditions (\ref{ASE: biased normalization})
and (\ref{ASE: normalization}), we then conclude that $\frac{1}{\sqrt{\tau}}\Gamma_{\tau}^{\iota}=\frac{1}{\sqrt{\tau}}\Gamma_{\tau}$,
or equivalently, $\Gamma_{\tau}^{\iota}=\Gamma_{\tau}$. That is to
say,
\begin{equation}
\frac{1}{\iota}\,\Gamma_{\iota^{2}\tau}=\Gamma_{\tau}\label{ASE: iteration}
\end{equation}
for every $\left(1+\varepsilon\right)^{-1}\leq\iota\leq1+\varepsilon$
and $\tau>0$.

To prove the self-similarity of $\left\{ \Gamma_{\tau}\right\} $,
we will iterate (\ref{ASE: iteration}) for various choices of $\tau>0$.
Let us begin with substituting $\tau=1$ in (\ref{ASE: iteration})
and get 
\[
\frac{1}{\sqrt{\tau}}\,\Gamma_{\tau}=\Gamma_{1}\quad\,\,\forall\,\,\left(1+\varepsilon\right)^{-2}\leq\tau\leq\left(1+\varepsilon\right)^{2}.
\]
Then proceed by substituting $\tau=\left(1+\varepsilon\right)^{2}$
and $\tau=\left(1+\varepsilon\right)^{-2}$, respectively, in (\ref{ASE: iteration})
to get
\[
\frac{1}{\sqrt{\tau}}\,\Gamma_{\tau}=\frac{1}{1+\varepsilon}\Gamma_{\left(1+\varepsilon\right)^{2}}=\Gamma_{1}\quad\,\,\forall\,\,1\leq\tau\leq\left(1+\varepsilon\right)^{4}
\]
and
\[
\frac{1}{\sqrt{\tau}}\,\Gamma_{\tau}=\frac{1}{\left(1+\varepsilon\right)^{-1}}\Gamma_{\left(1+\varepsilon\right)^{-2}}=\Gamma_{1}\quad\,\,\forall\,\,\left(1+\varepsilon\right)^{-4}\leq\tau\leq1.
\]
We continue this procedure, eventually obtaining
\[
\frac{1}{\sqrt{\tau}}\,\Gamma_{\tau}=\Gamma_{1}\quad\,\,\forall\,\,\left(1+\varepsilon\right)^{-2k}\leq\tau\leq\left(1+\varepsilon\right)^{2k},
\]
for every $k\in\mathbb{N}$, from which we conclude that $\left\{ \Gamma_{\tau}\right\} $
is self-expanding.

Lastly, note that by the first condition in Definition \ref{asymptotically conical}
we have
\[
\Sigma_{0}^{R_{i}}=\frac{1}{R_{i}}\Sigma_{0}\,\overset{C_{loc}^{\infty}}{\longrightarrow}\,\mathcal{C}\quad\textrm{in}\,\,\,\mathbb{R}^{n+1}\setminus\left\{ O\right\} \;\,\textrm{as}\,\,i\rightarrow\infty.
\]
Thus we have $\Gamma_{0}=\mathcal{C}$. It follows from Corollary
\ref{self-expanding MCF out of cone} that $\Gamma_{\tau}$ is asymptotic
to $\mathcal{C}$ at infinity for every $\tau>0$.
\end{proof}
In Propositions \ref{sequential blow-down} and \ref{asymptotically self-expanding},
we have shown that any zooming out sequence will have a subsequence
that converges to a self-expanding MCF (which may depend on the choice
of sequence). In the following theorem we prove that actually every
zooming out sequence will converge to the same limit. 
\begin{thm}
\label{blow-down of MCF}Let $\left\{ \Sigma_{t}\right\} _{0\leq t<\infty}$
be a MCF in $\mathbb{R}^{n+1}$ so that $\Sigma_{0}$ is asymptotic
to a regular cone $\mathcal{C}$ at infinity with $E\left[\mathcal{C}\right]<\infty$,
and that
\begin{equation}
\sup_{t\geq T}\sqrt{t}\left\Vert A_{\Sigma_{t}}\right\Vert _{L^{\infty}}\leq\kappa\label{BDM: curvature condition}
\end{equation}
for some constants $0<\kappa<\frac{1}{\sqrt{2}}$ and $T>0$. Then
we have
\[
\left\{ \Sigma_{\tau}^{R}=\frac{1}{R}\Sigma_{R^{2}\tau}\right\} _{0\leq\tau<\infty}\overset{C_{loc}^{\infty}}{\longrightarrow}\,\left\{ \Gamma_{\tau}\right\} _{0\leq\tau<\infty}\quad\textrm{as}\,\,\,R\rightarrow\infty
\]
in the space-time $\left(\mathbb{R}^{n+1}\times\left[0,\infty\right)\right)\setminus\left\{ \left(O,0\right)\right\} $.
The limiting flow $\left\{ \Gamma_{\tau}\right\} $ is a self-expanding
MCF satisfying $\Gamma_{0}=\mathcal{C}$ and $\Gamma_{\tau}$ is asymptotic
to $\mathcal{C}$ at infinity for every $\tau>0$. Indeed, we have
\[
\frac{1}{\sqrt{t}}\Sigma_{t}\,\overset{C^{\infty}}{\longrightarrow}\,\Gamma_{1}\quad\textrm{in}\,\,\,\mathbb{R}^{n+1}\;\,\,\textrm{as}\,\,\,t\rightarrow\infty
\]
in the sense that there exists $\mathfrak{T}>0$ so that for every
$t>\mathfrak{T}$, $\frac{1}{\sqrt{t}}\Sigma_{t}$ is a normal graph
of $w_{t}$ over $\Gamma_{1}$ with $w_{t}\overset{C^{\infty}}{\longrightarrow}0$
as $t\rightarrow\infty$.
\end{thm}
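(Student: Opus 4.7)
The plan is to combine the subsequential blow-down analysis of Propositions \ref{sequential blow-down} and \ref{asymptotically self-expanding} with the long-time stability statement of Theorem \ref{approaching of NMCF} to upgrade subsequential convergence into full convergence. First I would pick any sequence $R_i \nearrow \infty$ and apply Proposition \ref{sequential blow-down} to extract a subsequence (still denoted $R_i$) so that $\{\Sigma_\tau^{R_i}\}$ converges locally smoothly to a MCF $\{\Gamma_\tau\}_{\tau \geq 0}$ on $(\mathbb{R}^{n+1} \times [0,\infty)) \setminus \{(O,0)\}$ with $\sup_{\tau > 0} \sqrt{\tau}\|A_{\Gamma_\tau}\|_{L^\infty} \leq \kappa$. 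Proposition \ref{asymptotically self-expanding} then identifies the limit as a self-expanding flow $\Gamma_\tau = \sqrt{\tau}\,\Gamma$ for an expander $\Gamma := \Gamma_1$ that is asymptotic to $\mathcal{C}$ and satisfies $\|A_\Gamma\|_{L^\infty} \leq \kappa < \tfrac{1}{\sqrt{2}}$, and yields $\Gamma_0 = \mathcal{C}$.

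Next I would globalize the subsequential smooth convergence (which a priori holds only on compact subsets) to a uniform $C^1$-closeness of $\tfrac{1}{\sqrt{t_0}}\Sigma_{t_0}$ to $\Gamma$ at some single large time $t_0 = R_i^2$. For a prescribed $\varepsilon > 0$, Remark \ref{ACM: graphic along MCF} applied to $\{\Sigma_t\}_{0 \leq t \leq t_0}$ produces a constant $M \geq 1$ independent of $t_0$ so that $\tfrac{1}{\sqrt{t_0}}\Sigma_{t_0} \setminus B_M(O)$ is a normal graph over $\mathcal{C}$ with $C^1$-norm at most $\varepsilon$; since $\Gamma$ is likewise asymptotic to $\mathcal{C}$, the same holds for $\Gamma$ outside a possibly larger but fixed ball. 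On the annular region bridging these, the local smooth convergence $\Sigma_1^{R_i} \to \Gamma$ supplies $C^1$-closeness for $i$ large. Gluing these two regimes shows that for all sufficiently large $i$, $\Sigma_1^{R_i} = \tfrac{1}{\sqrt{t_0}}\Sigma_{t_0}$ is a global normal graph of some $v_i$ over $\Gamma$ with $\|v_i\|_{C^1} \leq \delta$, where $\delta$ is the smallness constant furnished by Theorem \ref{approaching of NMCF}. I expect this globalization step to be the main obstacle, since it requires reconciling two distinct notions of convergence (local smooth on compacts versus uniform $C^1$ at infinity), and its success rests on the quantitative preservation of the asymptotically conical property recorded in Remark \ref{ACM: graphic along MCF}.

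Finally I would apply Theorem \ref{approaching of NMCF} with the reference MCF $\{\sqrt{s}\,\Gamma\}_{s \geq 1}$ (which is self-expanding, asymptotic to $\mathcal{C}$, and satisfies $\sqrt{s}\|A_{\sqrt{s}\Gamma}\|_{L^\infty} = \|A_\Gamma\|_{L^\infty} \leq \kappa$) and the perturbed MCF $\hat\Sigma_s^{t_0} := \tfrac{1}{\sqrt{t_0}}\Sigma_{t_0 s}$ for $s \geq 1$, feeding in the global $C^1$-closeness established in the previous paragraph as the initial smallness input. The conclusion is that $\tfrac{1}{\sqrt{s}}\hat\Sigma_s^{t_0}$ is a normal graph of $w_s$ over $\Gamma$ with $w_s \overset{C^\infty}{\longrightarrow} 0$ as $s \to \infty$. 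Because $\tfrac{1}{\sqrt{s}}\hat\Sigma_s^{t_0} = \tfrac{1}{\sqrt{t}}\Sigma_t$ for $t = t_0 s$, this directly yields the desired statement: taking $\mathfrak{T} = t_0$, for every $t > \mathfrak{T}$ the hypersurface $\tfrac{1}{\sqrt{t}}\Sigma_t$ is a normal graph of $w_t$ over $\Gamma_1 = \Gamma$ with $w_t \to 0$ smoothly. The smooth convergence $\Sigma_\tau^R \to \Gamma_\tau$ on the punctured space-time follows from the identity $\Sigma_\tau^R = \sqrt{\tau}\cdot \tfrac{1}{\sqrt{R^2 \tau}}\Sigma_{R^2 \tau}$, with uniformity on compact subsets of $(0,\infty)$ in $\tau$ coming from the smoothness of the decay of $w_t$, while at $\tau = 0$ the convergence is exactly the initial asymptotic-conical assumption on $\Sigma_0$. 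A pleasant byproduct of this strategy is that the expander $\Gamma$ depends only on the original flow and not on the initial choice of subsequence, so no separate uniqueness argument is needed.
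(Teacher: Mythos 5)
Your proposal is correct and follows essentially the same route as the paper's proof: subsequential blow-down via Propositions \ref{sequential blow-down} and \ref{asymptotically self-expanding}, globalization of the $C^{1}$-closeness at a single large time slice using Remark \ref{ACM: graphic along MCF} together with the local smooth convergence and the spatial curvature estimate, and finally Theorem \ref{approaching of NMCF} to upgrade to full convergence, with uniqueness of the limit falling out as a consequence. The only cosmetic difference is that the paper records the exterior region directly as a graph over $\Gamma_{1}$ rather than gluing two graphs over $\mathcal{C}$, and it spells out the final uniqueness-of-subsequential-limits observation that you fold into your last remark.
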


\begin{proof}
Let $\left\{ R_{i}\nearrow\infty\right\} $ be an arbitrary sequence
of numbers. By Proposition \ref{sequential blow-down}, we may assume
that, after passing to a subsequence, 
\begin{equation}
\left\{ \Sigma_{\tau}^{R_{i}}\right\} _{0\leq\tau<\infty}\overset{C_{loc}^{\infty}}{\longrightarrow}\,\left\{ \Gamma_{\tau}\right\} _{0\leq\tau<\infty}\quad\textrm{in}\,\,\left(\mathbb{R}^{n+1}\times\left[0,\infty\right)\right)\setminus\left\{ \left(O,0\right)\right\} \;\,\,\textrm{as}\,\,\,i\rightarrow\infty\label{BDM: sequential convergence}
\end{equation}
with 
\begin{equation}
\sup_{\tau>0}\sqrt{\tau}\left\Vert A_{\Gamma_{\tau}}\right\Vert _{L^{\infty}}\leq\kappa.\label{BDM: decay of curvature}
\end{equation}
It follows from Proposition \ref{asymptotically self-expanding} that
the limiting flow $\left\{ \Gamma_{\tau}\right\} $ is self-expanding
with $\Gamma_{0}=\mathcal{C}$, and that $\Gamma_{\tau}$ is asymptotic
to $\mathcal{C}$ at infinity for every $\tau>0$. 

Let $0<\delta\ll1$ (depending on $n$, $\kappa$, $\Gamma_{1}$)
and $M\gg1$ (depending on $n$, $\mathcal{C}$, $\Sigma_{0}$, $\Gamma_{1}$,
$\delta$) be constants to be specified. By Proposition \ref{spatial curvature estimate},
for every $t\geq1$ we have
\begin{equation}
\left\Vert A_{\frac{1}{\sqrt{t}}\Sigma_{t}}\right\Vert _{L^{\infty}\left(\mathbb{R}^{n+1}\setminus B_{M}\left(O\right)\right)}=\sqrt{t}\left\Vert A_{\Sigma_{t}}\right\Vert _{L^{\infty}\left(\mathbb{R}^{n+1}\setminus B_{M\sqrt{t}}\left(O\right)\right)}\leq\delta,\label{BDM: curvature outside}
\end{equation}
provided that $M\gg1$ (depending on $n$, $\mathcal{C}$, $\Sigma_{0}$,
$\delta$). On the other hand, since both $\Sigma_{0}$ and $\Gamma_{1}$
are asymptotic to $\mathcal{C}$ at infinity, Remark \ref{ACM: graphic along MCF}
implies that for every $t\geq1$, $\frac{1}{\sqrt{t}}\Sigma_{t}\setminus B_{M}\left(O\right)$
is a normal graph of $\omega_{t}$ over $\Gamma_{1}$ with 
\begin{equation}
\left\Vert \nabla_{\Gamma_{1}}\omega_{t}\right\Vert _{L^{\infty}}+\left\Vert \omega_{t}\right\Vert _{L^{\infty}}\leq\delta,\label{BDM: small Lipschitz}
\end{equation}
provided that $\delta\ll1$ (depending on $n$) and $M\gg1$ (depending
on $n$, $\mathcal{C}$, $\Sigma_{0}$, $\Gamma_{1}$, $\delta$).
Furthermore, by (\ref{BDM: sequential convergence}) we have 
\begin{equation}
\frac{1}{R_{i}}\Sigma_{R_{i}^{2}}=\Sigma_{1}^{R_{i}}\,\overset{C^{\infty}}{\longrightarrow}\,\Gamma_{1}\quad\textrm{in}\,\,B_{2M}\left(O\right)\;\,\,\textrm{as}\,\,\,i\rightarrow\infty.\label{BDM: local convergence}
\end{equation}
Hence, by conditions (\ref{BDM: decay of curvature}), (\ref{BDM: curvature outside}),
(\ref{BDM: small Lipschitz}), and (\ref{BDM: local convergence}),
we can find $i_{0}\gg1$ so that $\Sigma_{1}^{R_{i_{0}}}=\frac{1}{R_{i_{0}}}\Sigma_{R_{i_{0}}^{2}}$
is a normal graph of $v$ over $\Gamma_{1}$ with 
\begin{equation}
\left\Vert A_{\Sigma_{1}^{R_{i_{0}}}}\right\Vert _{L^{\infty}}\leq1,\quad\left\Vert \nabla_{\Gamma_{1}}v\right\Vert _{L^{\infty}}+\left\Vert v\right\Vert _{L^{\infty}}\leq\delta.\label{BDM: small initial deviation}
\end{equation}
Note that the function $v$ agrees with $\omega_{R_{i_{0}}^{2}}$
in $\Gamma_{1}$ outside a compact subset. It follows from Theorem
\ref{approaching of NMCF} (using $\left\{ \Gamma_{\tau}\right\} $
for $\left\{ \Sigma_{t}\right\} $ and $\tilde{K}=1$) and conditions
(\ref{BDM: decay of curvature}) and (\ref{BDM: small initial deviation})
that, if $\delta\ll1$ (depending on $n$, $\kappa$, $\Gamma_{1}$),
then for every $\tau>1$, 
\[
\frac{1}{\sqrt{\tau}}\Sigma_{\tau}^{R_{i_{0}}}=\frac{1}{\sqrt{R_{i_{0}}^{2}\tau}}\Sigma_{R_{i_{0}}^{2}\tau}
\]
is a normal graph of $w_{\tau}$ over $\frac{1}{\sqrt{\tau}}\Gamma_{\tau}=\Gamma_{1}$
with $w_{\tau}\overset{C^{\infty}}{\longrightarrow}0$ as $\tau\rightarrow\infty$.
Therefore, we obtain
\begin{equation}
\frac{1}{\sqrt{t}}\Sigma_{t}\overset{C^{\infty}}{\longrightarrow}\Gamma_{1}\quad\textrm{in}\,\,\mathbb{R}^{n+1}\;\,\,\textrm{as}\,\,t\rightarrow\infty.\label{BDM: asymptotically self-expanding}
\end{equation}

In fact, condition (\ref{BDM: asymptotically self-expanding}) also
implies the uniqueness of expanders arising from the zooming out procedure.
To see that, let $\left\{ \tilde{R}_{i}\nearrow\infty\right\} $ be
any other sequence of numbers for which 
\[
\left\{ \Sigma_{\tau}^{\tilde{R}_{i}}\right\} _{0\leq\tau<\infty}\overset{C_{loc}^{\infty}}{\longrightarrow}\,\left\{ \tilde{\Gamma}_{\tau}\right\} _{0\leq\tau<\infty}\quad\textrm{in}\,\,\left(\mathbb{R}^{n+1}\times\left[0,\infty\right)\right)\setminus\left\{ \left(O,0\right)\right\} \;\,\,\textrm{as}\,\,\,i\rightarrow\infty.
\]
Note that $\left\{ \tilde{\Gamma}_{\tau}\right\} $ is self-expanding
by Proposition \ref{asymptotically self-expanding}. Given that 
\[
\frac{1}{\tilde{R}_{i}}\Sigma_{\tilde{R}_{i}^{2}}=\Sigma_{1}^{\tilde{R}_{i}}\,\overset{C_{loc}^{\infty}}{\longrightarrow}\,\tilde{\Gamma}_{1}\quad\textrm{in}\,\,\mathbb{R}^{n+1}\;\,\,\textrm{as}\,\,i\rightarrow\infty
\]
and condition (\ref{BDM: asymptotically self-expanding}), we infer
that $\tilde{\Gamma}_{1}=\Gamma_{1}$. The theorem then follows from
Propositions \ref{sequential blow-down} and \ref{asymptotically self-expanding},
the uniqueness of zooming out limit and (\ref{BDM: asymptotically self-expanding}).
\end{proof}
Finally, we can prove Theorems \ref{asymptotic behavior of MCF} and
\ref{stability of expanders} on the basis of Theorems \ref{decay rate of curvature},
\ref{approaching of NMCF}, and \ref{blow-down of MCF} as announced
in the beginning of the section.
\begin{proof}
\textit{of Theorem \ref{asymptotic behavior of MCF}}

Given $0<\kappa<\frac{1}{\sqrt{2}}$, let $\epsilon$ be the same
constant as in Theorem \ref{decay rate of curvature}. The theorem
follows immediately from Theorems \ref{decay rate of curvature} and
\ref{blow-down of MCF}.
\end{proof}
\begin{proof}
\textit{of Theorem \ref{stability of expanders}}

Let $\Gamma$ be as stated in the theorem and let $\left\{ \Gamma_{t}=\sqrt{t}\,\Gamma\right\} _{t>0}$
be the MCF induced by $\Gamma$. Through Theorem \ref{decay rate of curvature}
(using $\left\{ \Gamma_{t+1}\right\} $ for $\left\{ \Sigma_{t}\right\} $)
we obtain 
\[
\sup_{t\geq T}\,\sqrt{\frac{t}{t+1}}\left\Vert A_{\Gamma}\right\Vert _{L^{\infty}}=\sup_{t\geq T}\sqrt{t}\left\Vert A_{\Gamma_{t+1}}\right\Vert _{L^{\infty}}\leq\kappa
\]
for some $T>0$ (depending on $n$, $\kappa$, $\mathcal{C}$, $\Gamma$),
from which we infer that $\left\Vert A_{\Gamma}\right\Vert _{L^{\infty}}\leq\kappa$
and hence
\begin{equation}
\sqrt{t}\left\Vert A_{\Gamma_{t}}\right\Vert _{L^{\infty}}=\left\Vert A_{\Gamma}\right\Vert _{L^{\infty}}\leq\kappa\label{SE: curvature condition}
\end{equation}
for every $t>0$. It then follows from Proposition \ref{smooth estimates}
that 
\begin{equation}
\sup_{1\leq t\leq2}\left(\left\Vert A_{\Gamma_{t}}\right\Vert _{L^{\infty}}+\left\Vert \nabla_{\Gamma_{t}}A_{\Gamma_{t}}\right\Vert _{L^{\infty}}+\left\Vert \nabla_{\Gamma_{t}}^{2}A_{\Gamma_{t}}\right\Vert _{L^{\infty}}\right)\leq C\left(n\right)\coloneqq K.\label{SE: curvature and its derivatives}
\end{equation}

Given $\Lambda>0$, let $\tilde{\Sigma}$ be as stated in the theorem,
where $0<\delta\ll1$ is a constant to be determined (which will depend
only on $n$, $\kappa$, $\Lambda$). Note that $\tilde{\Sigma}$
is obviously asymptotic to $\mathcal{C}$ at infinity. Also, by Lemma
\ref{normal graph}, especially (\ref{NG: curvature estimate}), and
condition (\ref{SE: curvature and its derivatives}), we have
\begin{equation}
\left\Vert A_{\tilde{\Sigma}}\right\Vert _{L^{\infty}}\leq C\left(n,\Lambda\right)\coloneqq\tilde{K},\label{SE: curvature}
\end{equation}
provided that $\delta\ll1$ (depending on $n$). Then it follows from
Theorem \ref{approaching of NMCF} and conditions (\ref{SE: curvature condition}),
(\ref{SE: curvature and its derivatives}), and (\ref{SE: curvature})
that, if $\delta\ll1$ (depending on $n$, $\kappa$, $\Lambda$),
the MCF $\left\{ \tilde{\Sigma}_{t}\right\} _{t\geq1}$ that starts
out from $\tilde{\Sigma}$ can be extended to the time infinity; moreover,
for every $t>1$, $\frac{1}{\sqrt{t}}\tilde{\Sigma}_{t}$ is a normal
graph of $w_{t}$ over $\frac{1}{\sqrt{t}}\Sigma_{t}$ with $w_{t}\overset{C^{\infty}}{\longrightarrow}0$
as $t\rightarrow\infty$. 
\end{proof}

\vspace{0.25in}
\email{
\noindent Department of Mathematics, Indiana University - Rawles Hall\\
831 East 3rd St.
Bloomington, IN 47405\\\\
E-mail address: \textsf{siaoguo@iu.edu}
}


\begin{thebibliography}{CIMW}
\bibitem[A]{A}W. Allard: On the first variation of a varifold, Ann.
of Math. (2) 95 (1972), 417\textendash 491. 

\bibitem[AIC]{AIC}S. Angenent, T. Ilmanen, D. Chopp: A computed example
of nonuniqueness of mean curvature flow in $\mathbb{R}^{3}$, Comm.
Partial Differential Equations 20 (1995), no. 11-12, 1937\textendash 1958. 

\bibitem[BW1]{BW1}J. Bernstein, L. Wang: A sharp lower bound for
the entropy of closed hypersurfaces up to dimension six, Invent. Math.
206 (2016), no. 3, 601\textendash 627.

\bibitem[BW2]{BW2}J. Bernstein, L. Wang: A topological property of
asymptotically conical self-shrinkers of small entropy, Duke Math.
J. 166 (2017), no. 3, 403\textendash 435. 

\bibitem[BW3]{BW3}J. Bernstein, L. Wang: Smooth compactness for spaces
of asymptotically conical self-expanders of mean curvature flow, arXiv:1804.09076.

\bibitem[BW4]{BW4}J. Bernstein, L. Wang: An integer degree for asymptotically
conical self-expanders, arXiv:1807.06494.

\bibitem[BW5]{BW5}J. Bernstein, L. Wang: Topological uniqueness for
self-expanders of small entropy, arXiv:1902.02642.

\bibitem[CIMW]{CIMW}T. Colding, T. Ilmanen, W. Minicozzi, B. White:
The round sphere minimizes entropy among closed self-shrinkers, J.
Differential Geom. 95 (2013), no. 1, 53\textendash 69. 

\bibitem[CM]{CM}T. Colding, W. Minicozzi: Generic mean curvature
flow I: generic singularities, Ann. of Math. (2) 175 (2012), no. 2,
755\textendash 833.

\bibitem[CY]{CY}B. Chen, L. Yin: Uniqueness and pseudolocality theorems
of the mean curvature flow, Comm. Anal. Geom. 15 (2007), no. 3, 435\textendash 490. 

\bibitem[D]{D}Q. Ding: Minimal cones and self-expanding solutions
for mean curvature flows, arXiv:1503.02612v2.

\bibitem[E]{E}K. Ecker: Regularity theory for mean curvature flow,
Progress in Nonlinear Differential Equations and their Applications,
57. Birkh$\ddot{\textrm{a}}$user Boston, Inc., Boston, MA, 2004.
xiv+165 pp. ISBN: 0-8176-3243-3. 

\bibitem[EH1]{EH1}K. Ecker, G. Huisken: Mean curvature evolution
of entire graphs, Ann. of Math. (2) 130 (1989), no. 3, 453\textendash 471. 

\bibitem[EH2]{EH2}K. Ecker, G. Huisken: Interior estimates for hypersurfaces
moving by mean curvature, Invent. Math. 105 (1991), no. 3, 547\textendash 569. 

\bibitem[H]{H}G. Huisken: Asymptotic behavior for singularities of
the mean curvature flow, J. Differential Geom. 31 (1990), no. 1, 285\textendash 299. 

\bibitem[KZ]{KZ}D. Ketover, X. Zhou: Entropy of closed surfaces and
min-max theory. J. Differential Geom. 110 (2018), no. 1, 31\textendash 71.

\bibitem[L]{L}G. Lieberman: Second order parabolic differential equations,
World Scientific Publishing Co., Inc., River Edge, NJ, 1996. xii+439
pp. ISBN: 981-02-2883-X.

\bibitem[M]{M}C. Mantegazza: Lecture notes on mean curvature flow,
Progress in Mathematics, 290. Birkh$\ddot{\textrm{a}}$user/Springer
Basel AG, Basel, 2011. xii+166 pp. ISBN: 978-3-0348-0144-7. 

\bibitem[S]{S}N. Stavrou: Selfsimilar solutions to the mean curvature
flow, J. Reine Angew. Math. 499 (1998), 189\textendash 198. 

\bibitem[Wa]{Wa}W. Walter: Ordinary differential equations, (English
summary) Translated from the sixth German (1996) edition by Russell
Thompson. Graduate Texts in Mathematics, 182. Readings in Mathematics.
Springer-Verlag, New York, 1998. xii+380 pp. ISBN: 0-387-98459-3 34-01.

\bibitem[Wh]{Wh}B. White: A local regularity theorem for mean curvature
flow, Ann. of Math. (2) 161 (2005), no. 3, 1487\textendash 1519.

\bibitem[Z]{Z}J. Zhu: On the entropy of closed hypersurfaces and
singular self-shrinkers, arXiv:1607.07760.
\end{thebibliography}
\end{document}